\theoremstyle{plain}
\patchcmd{\Ginclude@eps}{"#1"}{#1}{}{}
\definecolor{lightblue}{HTML}{1F88CD}
\definecolor{lightgrey}{HTML}{727272}
\definecolor{lightblue2}{HTML}{009EC1}
\definecolor{mypink}{HTML}{FD00B0}
\definecolor{lightred}{HTML}{ff4d4d}
\newtheorem*{theorem*}{Theorem}
\newtheorem{theorem}{Theorem}[section]
\newtheorem{corollary}[theorem]{Corollary}
\newtheorem{lemma}[theorem]{Lemma}
\newtheorem{proposition}[theorem]{Proposition}
\theoremstyle{definition}
\newtheorem{example}[theorem]{Example}
\theoremstyle{definition}
\newtheorem{definition}[theorem]{Definition}
\theoremstyle{definition}
\newtheorem{remark}[theorem]{Remark}
\theoremstyle{definition}
\theoremstyle{definition}
\theoremstyle{definition}
\theoremstyle{definition}
\newtheorem{question}[theorem]{Question}
\theoremstyle{definition}
\theoremstyle{definition}
\newtheorem{question!}[theorem]{Question!}
\theoremstyle{definition}
\newcommand*\sbt{\mathpalette\sbt@{.75}}
\newcommand*\sbt@[2]{\mathbin{\vcenter{\hbox{\scalebox{#2}{$\m@th#1\bullet$}}}}}
\DeclareSymbolFont{cyrletters}{OT2}{wncyr}{m}{n}
\DeclareMathSymbol{\Sha}{\mathalpha}{cyrletters}{"58}
\newcommand{\ra}{\rightarrow}
\newcommand{\xra}{\xrightarrow}
\newcommand{\wt}{\widetilde}
\newcommand{\sst}{\subset}
\newcommand{\bR}{\bm{\mathrm{R}}}
\newcommand{\bL}{\bm{\mathrm{L}}}
\newcommand{\D}{\mathrm{D}}
\newcommand{\KK}{\mathrm{K}}
\newcommand{\ZZ}{\mathbb{Z}}
\newcommand{\NN}{\mathbb{N}}
\newcommand{\QQ}{\mathbb{Q}}
\newcommand{\CC}{\mathbb{C}}
\newcommand{\PP}{\mathbb{P}}
\newcommand{\NS}{\mathrm{NS}}
\newcommand{\prim}{\mathrm{prim}}
\newcommand{\ch}{\mathrm{ch}}
\newcommand{\td}{\mathrm{td}}
\newcommand{\ttop}{\mathrm{top}}
\newcommand{\Hdg}{\mathrm{Hdg}}
\newcommand{\HH}{\mathrm{HH}}
\newcommand{\HT}{\mathrm{HT}}
\newcommand{\Ktop}{\mathrm{K}_0^{\mathrm{top}}}
\newcommand{\pr}{\mathrm{pr}}
\newcommand{\Db}{\mathrm{D}^{\mathrm{b}}}
\newcommand{\Hilb}{\mathrm{Hilb}}
\newcommand{\Knum}{\mathrm{K}_{\mathrm{num}}}
\renewcommand{\Re}{\operatorname{Re}}
\renewcommand{\Im}{\operatorname{Im}}
\DeclareMathOperator{\im}{im}
\DeclareMathOperator{\Coh}{\mathrm{Coh}}
\DeclareMathOperator{\Sym}{Sym}
\DeclareMathOperator{\Ext}{Ext}
\DeclareMathOperator{\Hom}{Hom}
\DeclareMathOperator{\Pic}{Pic}
\DeclareMathOperator{\Stab}{Stab}
\DeclareMathOperator{\Gr}{Gr}
\newcommand{\GL}{\widetilde{\mathrm{GL}}^+_2(\mathbb{R})}
\newcommand{\cX}{\mathcal{X}}
\newcommand{\cY}{\mathcal{Y}}
\newcommand{\cA}{\mathcal{A}}
\newcommand{\cE}{\mathcal{E}}
\newcommand{\cU}{\mathcal{U}}
\newcommand{\cH}{\mathcal{H}}
\newcommand{\cB}{\mathcal{B}}
\newcommand{\cS}{\mathcal{S}}
\newcommand{\cK}{\mathcal{K}}
\newcommand{\cI}{\mathcal{I}}
\newcommand{\cT}{\mathcal{T}}
\newcommand{\Ku}{\mathcal{K}u}
\newcommand{\cD}{\mathcal{D}}
\newcommand{\cL}{\mathcal{L}}
\newcommand{\cN}{\mathcal{N}}
\newcommand{\cM}{\mathcal{M}}
\newcommand{\CH}{\mathrm{CH}}
\newcommand{\At}{\mathrm{At}}
\newcommand{\Dqc}{\mathrm{D}_{\mathrm{qc}}}
\DeclareMathOperator{\cF}{\mathcal{F}}
\DeclareMathOperator{\oh}{\mathcal{O}}
\newcommand{\rH}{\mathrm{H}}
\newcommand{\hf}[1]{\textcolor{purple}{#1}}
\begin{document}

\title[Atomic sheaves via Bridgeland moduli spaces]{Atomic sheaves on hyper-K\"ahler manifolds via Bridgeland moduli spaces}

\subjclass[2020]{Primary 14J42; secondary 14F08, 14C25, 14D20}
\keywords{Hyper-K\"ahler manifolds, Atomic sheaves, Lagrangian submanifolds, Bridgeland moduli spaces}

\address{Shanghai Center for Mathematical Sciences, Fudan University, Jiangwan Campus, 2005 Songhu Road, Shanghai, 200438, P. R. China}
\email{hfguo@fudan.edu.cn}

\address{School of Mathematical Sciences, Zhejiang University, Hangzhou, Zhejiang Province 310058, P. R. China}
\email{jasonlzy0617@gmail.com}

\author{Hanfei Guo, Zhiyu Liu}
\address{}
\email{}

\maketitle

\begin{abstract}
In this paper, we provide new examples of 1-obstructed and atomic sheaves on an infinite series of locally complete families of projective hyper-K\"ahler manifolds. More precisely, 

(1) we prove that the fixed loci of the natural anti-symplectic involutions on the moduli spaces of stable objects in the Kuznetsov component $\Ku(X)$ of a Gushel--Mukai fourfold $X$ are 1-obstructed Lagrangian submanifolds,

(2) we construct a family of immersed atomic Lagrangian submanifolds on each moduli space of stable objects in $\Ku(X)$, and

(3) we construct non-rigid projectively hyperholomorphic twisted bundles on any hyper-K\"ahler manifold of $\mathrm{K3^{[n]}}$-type for infinitely many $n$.

Additionally, we discuss examples of atomic Lagrangian submanifolds satisfying $b_1=20$ in a family of hyper-K\"ahler manifolds of $\mathrm{K3^{[2]}}$-type, as well as atomic sheaves supported on non-atomic Lagrangians.



\end{abstract}

{
\hypersetup{linkcolor=blue}
\setcounter{tocdepth}{1}
\tableofcontents
}

\section{Introduction}
Hyper-K\"ahler manifolds are higher-dimensional analogues of K3 surfaces. 
However, despite their well-studied structural properties, constructing concrete examples remains highly challenging. Currently, up to deformation, all known hyper-K\"ahler manifolds are arising from the moduli spaces of semistable sheaves on symplectic surfaces.

To construct examples of new deformation types, people have been exploring suitable objects on higher-dimensional hyper-K\"ahler manifolds and investigating the moduli spaces they form. A natural idea is to search for alternatives with properties similar to simple coherent sheaves on K3 surfaces.

Let $X$ be a projective hyper-K\"ahler manifold. One effort in this direction is the theory of modular sheaves introduced by O'Grady in \cite{ogrady:modular}, whose discriminant satisfies a certain numerical equation. Then, it is shown in \cite{ogrady:modular} that modular sheaves admit a wall-chamber decomposition in the ample cone and provide potential candidates for hyperholomorphic sheaves in the sense of \cite{Ver96hyperholomorphic}. Later, in \cite{markman:rank-1-obstruction}, the author studies a class of objects of $\Db(X)$, called \emph{1-obstructed objects} (cf.~Section \ref{subsubsec-1-obs}), that can infinitesimally deform along a codimension one subspace of the second Hochschild cohomology~$\HH^2(X)$. At the same time, for $F\in\Db(X)$, rather than $F$ itself, Beckmann investigates when the Mukai vector of $F$ can infinitesimally deform along~$\HH^2(X)$ in codimension one in \cite{beckmann:atomic-object}. If this is the case, such $F$ is called an \emph{atomic object} (cf.~Definition \ref{def-atomic}). By \cite[Theorem 6.13]{markman:rank-1-obstruction}, the extended Mukai vectors of atomic objects are as simple as the Mukai vectors of coherent sheaves on a K3 surface. Moreover, according to \cite[Proposition 1.5]{beckmann:atomic-object}, atomic torsion-free sheaves are modular. 

The aforementioned works have built rich theories and suggested that the first step in constructing hyper-K\"ahler manifolds of new deformation types is to find atomic or $1$-obstructed sheaves on higher dimensional hyper-K\"ahler manifolds with interesting moduli spaces. Among these objects, examples with the potential to achieve this goal are atomic sheaves supported on Lagrangians. 

More precisely, for a projective hyper-K\"ahler manifold $X$, 
we say an immersed Lagrangian submanifold is $1$-obstructed or atomic if the push-forward of its structure sheaf is (cf.~Definition~\ref{def-atomic-lag}). Assume that there exists a suitable connected atomic Lagrangian submanifold $L$ in~$X$ with the first Betti number $b_1(L)>0$ and consider the component $\cM$ of the moduli space of stable sheaves containing $\oh_L$. Then $\cM$ contains a smooth open dense subscheme $\cM^0$ that has a holomorphic symplectic structure. As discussed in \cite[Section 7.4]{markman:rank-1-obstruction}, a suitable compactification $\overline{\cM}$ of $\cM^0$ may yield a projective hyper-K\"ahler manifold with $b_2(\overline{\cM})\geq b_2(X)+1$ and $\dim \overline{\cM}=b_1(L)$.

In practice, starting from an atomic Lagrangian and applying derived equivalences and deformations, Bottini \cite{bottini:towards-OG10} constructs a stable atomic vector bundle $F_0$ on any hyper-K\"ahler fourfold of $\mathrm{K3^{[2]}}$-type, whose deformation space is of 10-dimensional. Furthermore, on a specific hyper-K\"ahler fourfold, the component of the moduli space of stable sheaves containing~$F_0$ is shown to be birational to a hyper-K\"ahler manifold of OG10-type. This result illuminates the potential for constructing hyper-K\"ahler manifolds of new deformation types within the aforementioned framework\footnote{After our paper appeared, Bottini successfully realizes OG-type manifolds as a moduli space of stable atomic bundles on $\mathrm{K3^{[2]}}$-type manifolds in \cite{bottini2024grady}. A way to construct (possibly singular) hyper-K\"ahler varieties is also proposed in \cite{LLX25}, which is applied to the Jacobian fibration in \cite{IM08cubic}.}.

In this paper, we aim to find out more non-rigid atomic Lagrangians in $\mathrm{K3^{[n]}}$-type hyper-K\"ahler manifolds. Based on the above discussion, if the $b_1$ of an atomic Lagrangian is neither $10$ nor $0$, then a smooth hyper-K\"ahler compactification $\overline{\cM}$ obtained through this method, if it exists, is not deformation equivalent to any known hyper-K\"ahler manifolds.





\subsection{Atomic Lagrangians in Bridgeland moduli spaces}
We focus on the hyper-K\"ahler manifolds arising as Bridgeland moduli spaces of stable objects in the Kuznetsov components of Gushel--Mukai (GM) fourfolds or sixfolds.

Given a GM variety $X$ of dimension $n\geq 3$, which is a quadric section of the projective cone over the Grassmannian $\mathrm{Gr}(2,5)$, there is a semiorthogonal decomposition
\[\Db(X)=\langle\Ku(X),\oh_X,\cU^{\vee}_X,\dots,\oh_X((n-3)H),\cU_X^{\vee}((n-3)H)\rangle,\]
where $\cU_X$ is the pull-back of the tautological subbundle on $\Gr(2, 5)$ and $\oh_X(H)$ is the ample generator of $\Pic(X)$. We refer to Section \ref{sec-pre} for a more detailed introduction.



\begin{itemize}
    \item When $n=4$ or $6$, there is a family $\Stab^{\circ}(\Ku(X))$ of stability conditions on $\Ku(X)$ constructed in \cite{perry2019stability}. The numerical K-group $\Knum(\Ku(X))$ contains a canonical rank two lattice $-A_1^{\oplus 2}$, generated by classes $\Lambda_1$ and $\Lambda_2$.

    \item When $n=3$ or $5$, there is a family of stability conditions on $\Ku(X)$ constructed in \cite{bayer2017stability}, now called Serre-invariant stability conditions. The lattice $\Knum(\Ku(X))$ is a rank two lattice, generated by classes $\lambda_1$ and $\lambda_2$.
\end{itemize}

\medskip

For a stability condition $\sigma_X$ on $\Ku(X)$, we denote by $M^X_{\sigma_X}(a,b)$ the Bridgeland moduli space that parametrizes S-equivalence classes of $\sigma_X$-semistable objects in $\Ku(X)$ of class $a\Lambda_1+b\Lambda_2$ (when $\dim X=4$ or $6$) or $a\lambda_1+b\lambda_2$ (when $\dim X=3$ or $5$). If $X$ is a GM fourfold or sixfold, by \cite[Theorem 1.5]{perry2019stability}, $M^X_{\sigma_X}(a,b)$ is a projective hyper-K\"ahler manifold of $\mathrm{K3^{[n]}}$-type when $\sigma_X\in \Stab^{\circ}(\Ku(X))$ and $\gcd(a,b)=1$ (see also Theorem \ref{thm-fourfold-moduli}).

\bigskip

Recently, there are two constructions of Lagrangian subvarieties in $M^X_{\sigma_X}(a,b)$ for a GM fourfold or sixfold $X$ when $\sigma_X\in \Stab^{\circ}(\Ku(X))$ is generic and $\gcd(a,b)=1$, demonstrated in \cite{ppzEnriques2023} and \cite{FGLZ}:

\begin{enumerate}
    \item There exists a natural anti-symplectic involution 
    \[\tau_X\colon M^X_{\sigma_X}(a,b)\to M^X_{\sigma_X}(a,b),\]
and its fixed locus $\mathrm{Fix}(\tau_X)$ is a Lagrangian submanifold by \cite{ppzEnriques2023} (cf.~Theorem \ref{thm-ppz}).

    \item \begin{enumerate}
        \item When $X$ is a general GM fourfold or a very general GM sixfold, for a general hyperplane section $Y\subset X$ and a Serre-invariant stability condition $\sigma_Y$ on $\Ku(Y)$, there is a finite unramified morphism
    \[r\colon  M^Y_{\sigma_Y}(a,b)\to M^X_{\sigma_X}(a,b)\]
    constructed in \cite{FGLZ} (cf.~Theorem \ref{thm-FGLZ-moduli}), whose image is a Lagrangian subvariety (possibly singular). 

    \item When $X$ is a very general GM fourfold and $X\hookrightarrow Y$ is an embedding such that $Y$ is a general GM fivefold, then for any Serre-invariant stability condition $\sigma_Y$ on $\Ku(Y)$, there is a finite unramified morphism
    \[r\colon  M^Y_{\sigma_Y}(a,b)\to M^X_{\sigma_X}(a,b)\]
    constructed in \cite{FGLZ} (cf.~Theorem \ref{thm-FGLZ-moduli}), whose image is a Lagrangian subvariety.
    \end{enumerate}
    
\end{enumerate}

The first main theorem of this paper shows that the above two constructions provide atomic sheaves supported on Lagrangians for each $M^X_{\sigma_X}(a,b)$. 

\begin{theorem}\label{thm-main}
Let $X$ be a GM fourfold or sixfold and $a,b$ be a pair of coprime integers. Fix a stability condition $\sigma_X\in \Stab^{\circ}(\Ku(X))$.

\begin{enumerate}[label=(\roman*)]
    \item \emph{(Theorem \ref{thm-fixed-loci-1-obs})} The fixed locus of the natural anti-symplectic involution
\[\mathrm{Fix}(\tau_X)\subset M^X_{\sigma_X}(a,b)\]
is an $1$-obstructed Lagrangian submanifold.

\item \emph{(Theorem \ref{thm-pushforward-atomic}, \ref{thm-push-atomic-5fold})} In the situation (2)(a) or (2)(b) above, the finite unramified morphism
\[r\colon M^Y_{\sigma_Y}(a,b)\to M^X_{\sigma_X}(a,b)\]
realizes $M^Y_{\sigma_Y}(a,b)$ as an immersed atomic Lagrangian submanifold of $M^X_{\sigma_X}(a,b)$.
\end{enumerate}
\end{theorem}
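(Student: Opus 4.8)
The plan is to work throughout with the push-forward $\oh_L$ of the structure sheaf of the relevant Lagrangian and to analyse, on $M:=M^X_{\sigma_X}(a,b)$, the canonical obstruction map $\HH^2(M)\to\Ext^2_M(\oh_L,\oh_L)$, since by definition $\oh_L$ is $1$-obstructed exactly when this map has rank one (so that $\oh_L$ deforms along a codimension one subspace of $\HH^2(M)$), and, by \cite[Theorem 6.13]{markman:rank-1-obstruction}, $\oh_L$ is atomic exactly when the induced map on Mukai vectors has rank one, equivalently when the extended Mukai vector $\tilde v(\oh_L)$ is a rank one vector in the extended Mukai lattice of $M$. The common preliminary step is to compute $\Ext^\bullet_M(\oh_L,\oh_L)$: because $L$ is Lagrangian we have $N_{L/M}\cong\Omega^1_L$, hence $\Ext^k_M(\oh_L,\oh_L)\cong\bigoplus_{p+q=k}\rH^q(L,\Omega^p_L)$, with the holomorphic symplectic form of $M$ furnishing the comparison isomorphism. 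I would then write out the Hochschild--Kostant--Rosenberg decomposition $\HH^2(M)=\rH^0(\wedge^2 T_M)\oplus\rH^1(T_M)\oplus\rH^2(\oh_M)$ and track each summand into $\Ext^2_M(\oh_L,\oh_L)$, using the semiregularity map to discard the obstructions tangent to the Lagrangian Hilbert scheme.

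For part (i), set $Z=\mathrm{Fix}(\tau_X)$, a smooth Lagrangian by Theorem \ref{thm-ppz}. The decisive extra input is that $\tau_X$ is anti-symplectic and induced by a natural (anti-)autoequivalence of $\Ku(X)$, so it acts on every natural invariant: on the target $\Ext^\bullet_M(\oh_Z,\oh_Z)\cong\rH^\bullet(Z,\CC)$ it acts trivially because $\tau_X|_Z=\mathrm{id}$, while $\tau_X^*\sigma=-\sigma$ dictates its signs on the summands of $\HH^2(M)$. Equivariance of the obstruction map then forces the naive restriction contributions to vanish, since any anti-invariant class $\lambda$ of $M$ satisfies $i^*\lambda=i^*\tau_X^*\lambda=-i^*\lambda=0$ on $Z$. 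Consequently the entire obstruction is concentrated in the symplectic part of $\HH^2(M)$, where I would show it is nonzero and exactly one-dimensional, which is precisely $1$-obstructedness; here the interplay between the sign twist coming from $N_{Z/M}\cong\Omega^1_Z$ (via $\sigma$) and the semiregularity map is what isolates the unique obstructed direction.

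For part (ii), only the weaker atomic conclusion is claimed, so it suffices to control the Mukai vector of $r_*\oh_{M^Y}$ rather than the deformations of the sheaf itself. I would first record, from Theorem \ref{thm-FGLZ-moduli}, that $r$ is induced by the Fourier--Mukai-type functor $\Phi\colon\Ku(Y)\to\Ku(X)$ attached to the embedding $Y\hookrightarrow X$, together with the universal families on $M^Y_{\sigma_Y}(a,b)\times Y$ and on $M^X_{\sigma_X}(a,b)\times X$; this expresses $r_*\oh_{M^Y}$ through $\Phi$ and lets me compute $\tilde v(r_*\oh_{M^Y})$ in the extended Mukai lattice of $M$. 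The goal is then to verify that $\tilde v(r_*\oh_{M^Y})$ is a rank one vector, equivalently (by \cite[Theorem 6.13]{markman:rank-1-obstruction}) that $v(r_*\oh_{M^Y})$ lies in the image of the extended lattice under the Looijenga--Lunts--Verbitsky embedding; functoriality of the LLV structure under $\Phi$ should reduce this to the K3-type atomicity of the structure object on the smaller moduli space $M^Y_{\sigma_Y}(a,b)$.

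The main obstacle, in both parts, is the exact computation of these maps and their ranks. In (i) the subtle point is to prove that the effective obstruction is one-dimensional rather than merely contained in the $\tau_X$-invariant part: the naive restriction maps vanish by equivariance, so the single obstruction is a genuine second-order (symplectic) effect, and showing it is exactly rank one is the crux. In (ii) the difficulty is twofold, namely computing $\tilde v(r_*\oh_{M^Y})$ through the functor $\Phi$ via Grothendieck--Riemann--Roch, and handling the fact that $r$ is only a finite unramified immersion, so that $r_*\oh_{M^Y}$ is supported on a possibly self-intersecting Lagrangian and one must argue atomicity for the immersed object by working on the smooth source $M^Y_{\sigma_Y}(a,b)$. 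I expect the extended Mukai vector computation through $\Phi$ to be the genuinely hard step, with the $1$-obstructedness in (i) following more formally once the anti-symplectic equivariance has been set up.
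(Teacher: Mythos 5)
There are genuine gaps in both parts.

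In part (i), your equivariance argument only shows that $\tau_X$-anti-invariant classes restrict to zero on $Z=\mathrm{Fix}(\tau_X)$; to get $1$-obstructedness you need the obstruction map to have rank \emph{exactly} one, and your sketch supplies neither of the two inputs that make this work. First, you need the restriction $\rH^{1,1}_{\QQ}(M)\to\rH^{1,1}_{\QQ}(Z)$ to have rank one, which does not follow from equivariance alone unless you also know the $\tau_X$-invariant part of $\rH^{1,1}_{\QQ}(M)$ is one-dimensional; the paper gets this from Proposition \ref{prop-H2-moduli-space} (for non-Hodge-special $X$ the algebraic part of $\rH^2(M)$ is the rank-one lattice $(a\Lambda_1+b\Lambda_2)^\perp\cap A_1^{\oplus 2}$, and the Lagrangian condition kills $\rH^{2,0}\oplus\rH^{0,2}$), followed by a deformation argument via Theorem \ref{thm-relative-stability-fourfold} to reach arbitrary $X$. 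Second, your claim that ``the entire obstruction is concentrated in the symplectic part of $\HH^2(M)$'' inverts the actual mechanism: $\rH^0(\wedge^2\cT_M)$ is spanned by (the inverse of) $\sigma$, which is $\tau_X$-anti-invariant, and its contribution is precisely what must be shown to \emph{vanish}. The surviving rank-one piece comes from $\rH^1(\cT_M)\cong\rH^{1,1}(M)$ via the restriction map. The vanishing on $\rH^0(\wedge^2\cT_M)$ is not formal; it uses that $Z$ is the fixed locus of a finite group action, so the Atiyah class splits equivariantly --- this is exactly Markman's criterion (Lemma \ref{lem-markman-fixed}), which is the engine of the paper's proof and is absent from your outline.

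In part (ii), computing $\tilde v(r_*\oh_{M^Y_{\sigma_Y}(a,b)})$ by Grothendieck--Riemann--Roch through the Fourier--Mukai kernel is not a viable route: the Chern character of this pushforward in $\rH^*(M^X_{\sigma_X}(a,b))$ is not accessible in any explicit form, and the whole point of the paper's method is to avoid such computations. Moreover, ``reducing to atomicity on $M^Y_{\sigma_Y}(a,b)$'' is not meaningful, since that moduli space is not hyper-K\"ahler ($\Ku(Y)$ is an Enriques-type category and $b_1(M^Y_{\sigma_Y}(a,b))=20$). The paper instead applies the purely cohomological criterion of Theorem \ref{atomic-criterion-thm}: an immersed Lagrangian $r\colon L\to M$ is atomic iff $r^*$ on $\rH^2$ has rank one and $c_1(L)\in\im(r^*)$. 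The rank-one part is Corollary \ref{cor-rk1-H2-moduli}; the crucial second condition is verified by showing $c_1(M^Y_{\sigma_Y}(a,b))$ is proportional to the Bayer--Macr\`i class $\ell_{\sigma_Y}=r^*\ell_{\sigma_X}$ (Proposition \ref{prop-restrict-BM-divisor} and Corollary \ref{cor-c1-moduli}), and Corollary \ref{cor-c1-moduli} is itself bootstrapped from part (i) via the \'etale double cover of $\mathrm{Fix}(\tau_X)$ by $M^Y_{\sigma_Y}(a,b)$ when $Y$ is the branch divisor of a special GM fourfold. This logical dependence of (ii) on (i), together with the Bayer--Macr\`i divisor comparison, is the missing idea in your proposal.
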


In the situation (2)(a), as the hyperplane section $Y\subset X$ can vary in the linear series $|\oh_X(H)|$, we obtain an $8$-dimensional family of immersed atomic Lagrangians on each $M^X_{\sigma_X}(a,b)$ when $\dim X=4$, and a $10$-dimensional family when $\dim X=6$. On the other hand, in the situation (2)(b), we also get a $10$-dimensional family\footnote{We expect that all these families are not isotrivial, as in known explicit examples.} of immersed atomic Lagrangians on each $M^X_{\sigma_X}(a,b)$ when $Y$ varies in the moduli of GM fivefolds containing $X$ (cf.~\cite[Lemma 5.4]{iliev2011fano}).

Currently, on a general projective hyper-K\"ahler manifold $X$, examples of $1$-obstructed sheaves are $\oh_X$ and skyscraper sheaves, up to derived equivalences. On the other hand, atomic Lagrangians only exist in specific examples, relying on the calculation of their Chern classes (cf. \cite[Section 3]{markman:rank-1-obstruction}, \cite[Section 8]{beckmann:atomic-object}). Our Theorem~\ref{thm-main} provides the first example of atomic sheaves supported on Lagrangians on an infinite series of $20$-dimensional locally complete families of projective hyper-K\"ahler manifolds of $\mathrm{K3^{[n]}}$-type, without utilizing the explicit geometry of the Lagrangians.

Subsequently, based on~\cite{bottini:towards-OG10,bottini23thesis,bottini2024grady}, starting from the atomic Lagrangians in Theorem \ref{thm-main}, we obtain projectively hyperholomorphic twisted bundles on a series of hyper-K\"ahler manifolds.

\begin{theorem}\label{thm-bundle}
Let $X$ be a hyper-K\"ahler manifold of $\mathrm{K3^{[n]}}$-type such that $n=a^2+b^2+1$ for a pair of coprime integers $a,b$. Then there exist non-rigid, stable, atomic, projectively hyperholomorphic twisted bundles on $X$.
\end{theorem}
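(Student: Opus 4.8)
The plan is to run the strategy of Bottini \cite{bottini:towards-OG10, bottini23thesis} on the atomic Lagrangians produced in Theorem~\ref{thm-main}, and then to spread the resulting bundle across the locally complete family of $\mathrm{K3^{[n]}}$-type manifolds. First I would observe that the hypothesis $n = a^2+b^2+1$ is exactly the condition under which the Bridgeland moduli space $\cM := M^X_{\sigma_X}(a,b)$ attached to a GM fourfold $X$ has dimension $2n$: a direct computation in the lattice $-A_1^{\oplus 2}$ gives $\dim \cM = \langle v,v\rangle + 2 = 2(a^2+b^2+1) = 2n$ for $v = a\Lambda_1 + b\Lambda_2$ (Theorem~\ref{thm-fourfold-moduli}), and $\cM$ is of $\mathrm{K3^{[n]}}$-type. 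By Theorem~\ref{thm-main}(ii), $\cM$ already carries an immersed atomic Lagrangian $r\colon M^Y_{\sigma_Y}(a,b) \to \cM$ which varies in a positive-dimensional family; in particular the associated atomic sheaf is non-rigid. This supplies, for each admissible $n$, one concrete $\mathrm{K3^{[n]}}$-type manifold equipped with a non-rigid atomic Lagrangian sheaf, which will be our seed.

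The second step converts this Lagrangian sheaf into a vector bundle. Following Bottini, I would apply a Fourier--Mukai autoequivalence $\Phi$ of $\Db(\cM)$ (composed, if necessary, with a line-bundle twist and a spherical twist) carrying the pushforward of the structure sheaf of the immersed Lagrangian to a coherent sheaf $F_0$ that is locally free, a priori only as an $\alpha$-twisted sheaf for a Brauer class $\alpha \in \mathrm{Br}(\cM)$ --- this is the source of the word \emph{twisted}. Atomicity is preserved under this operation: every such $\Phi$ acts on the extended Mukai lattice $\wt{H}(\cM)$ by a Hodge isometry, and being atomic is precisely the condition that the extended Mukai vector spans a rank-one sub-Hodge structure (\cite{beckmann:atomic-object, markman:rank-1-obstruction}), hence is $\Phi$-invariant. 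One then checks, as in \cite{bottini:towards-OG10}, that $F_0$ is slope-stable for a suitable polarization and inherits the non-rigidity of the Lagrangian, its deformation space being positive-dimensional and controlled by the $b_1$ of the Lagrangian.

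The projective hyperholomorphicity of $F_0$ then follows from Verbitsky's theory \cite{Ver96hyperholomorphic} together with Beckmann's characterization \cite{beckmann:atomic-object}: the projective Chern classes of an atomic (twisted) sheaf are of type $(p,p)$ on every member of the twistor family, which is exactly the $\mathrm{SU}(2)$-invariance needed for the stable twisted bundle $F_0$ to admit a projectively hyperholomorphic connection. Finally, to reach an arbitrary hyper-K\"ahler manifold $X$ of $\mathrm{K3^{[n]}}$-type, I would deform the pair $(\cM, F_0)$: since $\cM$ lies in a $20$-dimensional locally complete family and all $\mathrm{K3^{[n]}}$-type manifolds are deformation equivalent, and since projectively hyperholomorphic bundles deform along twistor lines and hence, by Verbitsky's deformation results, over the entire period domain, $F_0$ propagates to a stable, non-rigid, atomic, projectively hyperholomorphic twisted bundle on $X$, with stability and non-rigidity preserved as open, deformation-stable conditions.

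The main obstacle is twofold. In the second step, one must guarantee that the Fourier--Mukai transform of the Lagrangian sheaf is genuinely a (twisted) \emph{locally free} sheaf, rather than a mere complex or a non-locally-free coherent sheaf, and that it remains stable; in Bottini's fourfold case this already required delicate control of the transform, and the analysis must be carried out uniformly in $(a,b)$. In the final step, one must ensure that the Brauer twist $\alpha$ and the hyperholomorphic structure survive deformation over the \emph{whole} period domain of $\mathrm{K3^{[n]}}$-type manifolds, not merely along a single twistor line; this is where Verbitsky's deformation theorem for hyperholomorphic sheaves, applied to the atomic characteristic classes, is essential.
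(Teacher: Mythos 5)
Your overall strategy is the right one, but the step you yourself flag as ``the main obstacle'' --- turning the pushforward of the Lagrangian sheaf into a genuinely locally free, stable twisted sheaf --- is exactly the content of the proof, and you leave it unresolved. The paper does not use an arbitrary autoequivalence of $\Db(\cM)$ composed with spherical twists; it first uses the Torelli theorem to \emph{choose} the GM fourfold $X$ so that $\rH^{1,1}_{\ZZ}(M^X_{\sigma_X}(a,b))=\langle h,f\rangle$ with $f$ inducing a Lagrangian fibration $M^X_{\sigma_X}(a,b)\to\PP^n$, identifies this fibration (for $q(h,f)=d\gg 0$ coprime to $n-1$, via O'Grady and Markman) with a Tate--Shafarevich twist of a Beauville--Mukai system $M_S(0,\cL,1-n)\to|\cL|$, and then applies Bottini's twisted Poincar\'e equivalence $\Phi\colon\Db(M^X_{\sigma_X}(a,b))\xra{\simeq}\Db(M_S(0,\cL,1-n),\alpha)$ relative to that fibration. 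Local freeness of $\Phi(r_*\cF)$ is then governed by a concrete geometric condition: the Lagrangian $Z=r(L)$ must be \emph{finite over the base} $\PP^n$ of the fibration. The paper proves this by showing $f^n.r_*[L]\neq 0$, and this is precisely where atomicity enters in an essential, quantitative way: Corollary \ref{cor-rk1-H2-moduli} gives $h^{\perp}=\ker(r^*)$, so $r^*h$ spans the one-dimensional $\im(r^*)$ and $r^*f=t\cdot r^*h$ with $t\neq 0$. Your proposal never produces the Lagrangian fibration, never identifies the correct Fourier--Mukai kernel, and never establishes the finiteness over the base, so the words ``locally free'' and ``stable'' are not justified.

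Two further points you omit. First, before applying the transform one must select a connected component $L\subset M^Y_{\sigma_Y}(a,b)$ on which $r$ is \emph{birational} onto its image (this requires Lemma \ref{lem-birational-comp}); otherwise $r_*\cF$ need not be a rank-one Cohen--Macaulay sheaf on the integral variety $Z$, and the stability argument (which runs through Bottini's Theorem 6.5.6, using exactly integrality of $Z$ and rank one) breaks down. Second, your justification of atomicity-preservation via ``$\Phi$ acts on the extended Mukai lattice by a Hodge isometry'' is fine in spirit but is not what is needed here: the relevant statement is that the \emph{twisted} equivalence $\Phi$ sends atomic objects to twisted atomic objects, which is a result of Bottini's thesis and requires the twisted extension of the theory, not just Proposition \ref{prop-atomic-lagrangian-deform}. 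Your final propagation step (twistor deformations \`a la Verbitsky--Markman to reach an arbitrary $X$ of $\mathrm{K3^{[n]}}$-type, with non-rigidity preserved) does match the paper's conclusion.
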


\subsection{A non-rigid atomic Lagrangian submanifold}

In Section \ref{sec-new-example}, we apply Theorem \ref{thm-main} to an explicit family of hyper-K\"ahler fourfolds of $\mathrm{K3}^{[2]}$-type, called \emph{double dual EPW sextics}. It is first constructed in \cite{o2006irreducible} and systematically studied by \cite{o2006dual,o2010double,ogrady:period-double,ogrady:moduli-double-epw,iliev2011fano,debarre2020double,debarre2019gushel}. 

For a general GM fourfold $X$, there is an associated double dual EPW sextic $\wt{\mathsf{Y}}_{A(X)^{\perp}}$. Moreover, for a general hyperplane section $Y\subset X$ or a general GM fivefold $Y$ containing $X$, there is a smooth connected surface $\wt{\mathsf{Y}}_{A(Y)^{\perp}}^{\geq 2}$ with a finite unramified morphism 
$$i\colon\wt{\mathsf{Y}}_{A(Y)^{\perp}}^{\geq 2}\to \wt{\mathsf{Y}}_{A(X)^{\perp}},$$ which is shown to be Lagrangian (cf.~\cite[Proposition 5.2, 5.6]{iliev2011fano}). Moreover, these Lagrangians cover $\wt{\mathsf{Y}}_{A(X)^{\perp}}$ when $X$ is fixed and $Y$ varies. See Section \ref{subsec-double-epw} for a review of these constructions.

It has been demonstrated in \cite{JLLZ2021gushelmukai,GLZ2021conics} and Remark~\ref{rmk-5fold-epw} that both of these varieties can be described in terms of Bridgeland moduli spaces. Furthermore, it is calculated in \cite[Proposition 2.5]{debarre:GM-jacobian} and \cite[Proposition 0.5]{Log12} that $\rH^1(\wt{\mathsf{Y}}_{A(Y)^{\perp}}^{\geq 2}, \CC)=\CC^{20}$. By applying Theorem \ref{thm-main}, we establish:

\begin{theorem}[{Theorem \ref{thm-double-epw}, \ref{thm-double-epw-5fold}}]
Let $X$ be a general GM fourfold and either $Y\hookrightarrow X$ be a general hyperplane section or $Y\supset X$ be a general GM fivefold. Then $$i\colon\wt{\mathsf{Y}}_{A(Y)^{\perp}}^{\geq 2}\to \wt{\mathsf{Y}}_{A(X)^{\perp}}$$ is an immersed atomic Lagrangian submanifold, whose first Betti number $b_1$ equals to $20$.
\end{theorem}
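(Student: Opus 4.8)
The plan is to reduce both assertions to results already in hand: the atomic Lagrangian property to Theorem \ref{thm-main}(ii), and the Betti number statement to the cited cohomology computation. The only genuinely new input is matching the geometric EPW picture with the categorical Bridgeland picture at the level of morphisms, not merely of spaces.

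First I would fix the identifications of the underlying varieties. By \cite{JLLZ2021gushelmukai,GLZ2021conics}, for a general GM fourfold $X$ and a generic $\sigma_X\in\Stab^{\circ}(\Ku(X))$, the double dual EPW sextic $\wt{\mathsf{Y}}_{A(X)^{\perp}}$ is isomorphic to a Bridgeland moduli space $M^X_{\sigma_X}(a,b)$ for a suitable primitive class $(a,b)$, so in particular $\gcd(a,b)=1$. Applying the analogous identification over $Y$ --- through \cite{JLLZ2021gushelmukai,GLZ2021conics} in the hyperplane-section case and through Remark \ref{rmk-5fold-epw} in the GM fivefold case --- I would identify the smooth connected surface $\wt{\mathsf{Y}}_{A(Y)^{\perp}}^{\geq 2}$ with $M^Y_{\sigma_Y}(a,b)$ for a Serre-invariant stability condition $\sigma_Y$ on $\Ku(Y)$ and the same class $(a,b)$. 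Generality of $X$ (and of $Y$) places us precisely in situation (2)(a) or (2)(b) of Theorem \ref{thm-main}.

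The crux is then to check that, under these isomorphisms, the geometric finite unramified map $i\colon\wt{\mathsf{Y}}_{A(Y)^{\perp}}^{\geq 2}\to\wt{\mathsf{Y}}_{A(X)^{\perp}}$ of \cite[Proposition 5.2, 5.6]{iliev2011fano} is carried to the categorical map $r\colon M^Y_{\sigma_Y}(a,b)\to M^X_{\sigma_X}(a,b)$ of Theorem \ref{thm-FGLZ-moduli}. Both are finite, unramified, and Lagrangian, and both are induced by the inclusion $Y\hookrightarrow X$ (respectively $X\hookrightarrow Y$); the work is to verify that the pushforward functor $\Ku(Y)\to\Ku(X)$ defining $r$ sends the object parametrized by a point of $\wt{\mathsf{Y}}_{A(Y)^{\perp}}^{\geq 2}$ to the object that $i$ assigns to its image on the EPW side. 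I expect this compatibility to be the main obstacle, since it requires tracing the incidence correspondence underlying the EPW construction through the semiorthogonal decomposition and the moduli-space identifications. Once the two maps agree on a dense open set they agree globally, as the source is integral and both maps are finite with separated target. Granting the match, Theorem \ref{thm-main}(ii) shows immediately that $i$ realizes $\wt{\mathsf{Y}}_{A(Y)^{\perp}}^{\geq 2}$ as an immersed atomic Lagrangian submanifold of $\wt{\mathsf{Y}}_{A(X)^{\perp}}$.

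Finally, for the Betti number, the computation $\rH^1(\wt{\mathsf{Y}}_{A(Y)^{\perp}}^{\geq 2},\CC)=\CC^{20}$ of \cite[Proposition 2.5]{debarre:GM-jacobian} and \cite[Proposition 0.5]{Log12} gives $b_1=\dim_{\CC}\rH^1(\wt{\mathsf{Y}}_{A(Y)^{\perp}}^{\geq 2},\CC)=20$ directly. In particular $b_1\notin\{0,10\}$, so this construction produces a non-rigid atomic Lagrangian whose first Betti number lies outside the two already-understood values, which is the main point of singling out this explicit family.
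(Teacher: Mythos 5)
Your overall strategy---identify the EPW picture with the Bridgeland moduli picture and then quote Theorem \ref{thm-main}(ii), plus cite the known cohomology computation for $b_1$---is the right opening move, and it is essentially how the paper handles the \emph{very general} case (Lemma \ref{lem-EPW-atomic-very-general} and Lemma \ref{lem-5fold-epw}). But there is a genuine gap. The modular identifications you invoke, namely $\wt{\mathsf{Y}}_{A(X)^{\perp}}\cong M^X_{\sigma_X}(1,0)$ and the compatibility of the geometric map $i$ with the categorical map $r$ (this is \cite[Corollary B.9]{FGLZ}), are only available in the literature for \emph{very general} GM fourfolds, whereas the theorem is asserted for \emph{general} $X$; likewise Theorem \ref{thm-push-atomic-5fold}, i.e.\ situation (2)(b) of Theorem \ref{thm-main}, is itself only stated for very general $X$. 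The paper flags this explicitly (``the results of \cite{GLZ2021conics} are established only for very general GM fourfolds'') and devotes the bulk of the proof of Theorem \ref{thm-double-epw} to closing exactly this gap: it constructs a family of GM fourfolds over a curve $C$ joining the given general $X$ to a very general one, lifts this to families of Lagrangian data, of double dual EPW sextics and surfaces, and to a relative finite unramified morphism between them, and then applies Proposition \ref{prop-immersed-deform} to propagate atomicity from the very general fiber to the fiber over $X$. Without this deformation step (or an independent upgrade of the modular identifications to all general $X$), your argument only proves the statement for very general $X$.

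A secondary issue is that you leave the matching of $i$ with $r$ as an acknowledged ``main obstacle'' rather than resolving it. In the paper this is a citation (\cite[Corollary B.9]{FGLZ}) in the hyperplane-section case, and in the GM fivefold case the paper sidesteps the matching entirely: it verifies the criterion of Theorem \ref{atomic-criterion-thm} directly, using Proposition \ref{prop-H2-moduli-space} (which gives rank one of $i^*$ for \emph{any} finite Lagrangian morphism into $M^X_{\sigma_X}(1,0)\cong\wt{\mathsf{Y}}_{A(X)^{\perp}}$, with no need to identify $i$ with $r$) together with the explicit formula $c_1(\wt{\mathsf{Y}}_{A(Y)^{\perp}}^{\geq 2})=-3h$ from \cite[Theorem 5.2(2)]{debarre2020double}, which shows $c_1$ lies in $\im(i^*)$. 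Your treatment of the Betti number is fine as stated.
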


See also \cite{ferretti:special-subvarieties} for a related study. As a corollary, the same argument as in Theorem \ref{thm-bundle} implies the following.

\begin{corollary}\label{cor-epw-bundle}
Let $X$ be a hyper-K\"ahler manifold of $\mathrm{K3^{[2]}}$-type. Then there exist stable, atomic, projectively hyperholomorphic twisted bundles on $X$ with a $20$-dimensional deformation space.
\end{corollary}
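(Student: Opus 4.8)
The plan is to deduce Corollary~\ref{cor-epw-bundle} from the immediately preceding theorem (Theorem~\ref{thm-double-epw}, \ref{thm-double-epw-5fold}) by running the argument of Theorem~\ref{thm-bundle} in the special case $n=2$. The input is that, for a general GM fourfold $X$, the map $i\colon\wt{\mathsf{Y}}_{A(Y)^{\perp}}^{\geq 2}\to \wt{\mathsf{Y}}_{A(X)^{\perp}}$ realizes a smooth connected surface as an immersed atomic Lagrangian with $b_1=20$; here $\wt{\mathsf{Y}}_{A(X)^{\perp}}$ is a double dual EPW sextic, hence a hyper-K\"ahler fourfold of $\mathrm{K3}^{[2]}$-type. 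Thus I would first invoke the atomicity of the pushed-forward structure sheaf $i_*\oh$ on $\wt{\mathsf{Y}}_{A(X)^{\perp}}$, which is precisely the conclusion of the quoted theorem and which plays the role of the atomic Lagrangian that initiates the construction of Theorem~\ref{thm-bundle}.

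Next I would follow the mechanism of Theorem~\ref{thm-bundle} and \cite{bottini:towards-OG10,bottini23thesis}: the atomic sheaf supported on the Lagrangian is transformed, via the derived equivalences and deformation arguments of loc.\ cit., into a stable, atomic, projectively hyperholomorphic twisted bundle on a hyper-K\"ahler manifold of $\mathrm{K3}^{[2]}$-type. The key quantitative point is that the deformation space of the resulting bundle should match the first Betti number of the Lagrangian one started from, as indicated in the introductory discussion of Markman's framework (a connected atomic Lagrangian $L$ with $b_1(L)>0$ produces a symplectic moduli component of dimension $b_1(L)$). Since $b_1\bigl(\wt{\mathsf{Y}}_{A(Y)^{\perp}}^{\geq 2}\bigr)=20$, the deformation space is $20$-dimensional, giving the stated conclusion. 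I would also note that the statement is phrased for an arbitrary hyper-K\"ahler manifold $X$ of $\mathrm{K3}^{[2]}$-type, so I must invoke deformation-invariance: the existence of such a bundle is a deformation-stable property within the locally complete $\mathrm{K3}^{[2]}$-type family, exactly as in the deduction of Theorem~\ref{thm-bundle}, so it suffices to exhibit it on the double dual EPW sextics, which form such a family.

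The main obstacle, which is where the real content sits, is ensuring that the construction of Theorem~\ref{thm-bundle} applies verbatim to transport the \emph{specific} $20$-dimensional atomic Lagrangian of the double EPW example into a twisted bundle while \emph{preserving} the dimension count of the deformation space. In Theorem~\ref{thm-bundle} the atomic Lagrangians come from the generic Bridgeland-moduli construction of Theorem~\ref{thm-main}, and the compatibility of derived equivalences with atomicity (controlled by the extended Mukai lattice and the action on $\HH^2$) must be checked to hold for the EPW Lagrangian as well; the crucial invariant is the extended Mukai vector of $i_*\oh$ and its orthogonal complement in the Mukai lattice, whose rank governs the deformation dimension. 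I expect the verification to reduce to confirming that the derived autoequivalences used preserve the projectively hyperholomorphic property and act as isometries on the relevant lattice, so that the Lagrangian's $b_1=20$ is carried over to the $20$-dimensional deformation space of the bundle; once this lattice-theoretic compatibility is in place, stability and non-rigidity follow as in the proof of Theorem~\ref{thm-bundle}.
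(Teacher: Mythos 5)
Your proposal matches the paper's deduction: the paper proves this corollary by observing that the double dual EPW sextic is exactly the Bridgeland moduli space $M^X_{\sigma_X}(1,0)$ (the $n=2$ case of Theorem~\ref{thm-bundle}), and then running the proof of Theorem~\ref{thm-bundle} starting from the immersed atomic Lagrangian $\wt{\mathsf{Y}}^{\geq 2}_{A(Y)^{\perp}}\to\wt{\mathsf{Y}}_{A(X)^{\perp}}$ of Theorems~\ref{thm-double-epw} and~\ref{thm-double-epw-5fold}. You also correctly identify the one genuine refinement over Theorem~\ref{thm-bundle}: here the Lagrangian is known to be \emph{connected} with $b_1=20$, which is precisely what lets the deformation space be pinned down as $20$-dimensional rather than merely ``non-rigid.''
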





As explained above (see also Section \ref{subsec-jacobian}), a hyper-K\"ahler compactification of an open dense subscheme $\cM^0$ of the component of the moduli space of stable sheaves on $\wt{\mathsf{Y}}_{A(X)^{\perp}}$ containing 
$$i_*\oh_{\wt{\mathsf{Y}}^{\geq 2}_{A(Y)^{\perp}}}\in \Coh(\wt{\mathsf{Y}}_{A(X)^{\perp}})$$ may yield a $20$-dimensional projective hyper-K\"ahler manifold of a new deformation type. Moreover, the support morphism of $\cM^0$ to the Hilbert scheme of Lagrangians in $\wt{\mathsf{Y}}_{A(X)^{\perp}}$ is closely related to the relative Jacobian fibration of GM fivefolds containing $X$. We hope that the techniques and ideas in \cite{LSV,bottini:towards-OG10,bottini23thesis,bottini2024grady} can be applied to this case to construct a suitable compactification of $\cM^0$. See Section \ref{subsec-jacobian} for a more detailed discussion. 



\subsection{Other examples}
Besides double dual EPW sextics, we also discuss other locally complete families of hyper-K\"ahler manifolds arising from GM fourfolds and cubic fourfolds. 

For a GM fourfold, there is another associated family of hyper-K\"ahler sixfolds, called double EPW cubes, constructed in \cite{IKKR19}. In \cite{FGLZ}, the authors construct Lagrangian submanifolds in a very general double EPW cube via the Hilbert schemes of twisted cubics on GM threefolds. In Corollary \ref{cor-double-epw-cube}, we prove that these Lagrangians are atomic. 

On the other hand, for a general cubic fourfold $X$, there are two families of hyper-K\"ahler manifolds associated with $X$: the Fano variety of lines $F(X)$ \cite{beauville:fano-variety-cubic-4fold} and the LLSvS eightfold~$\mathsf{Z}_X$ \cite{LLSvS17}. The hyper-K\"ahler fourfold $F(X)$ admits two different families of Lagrangian submanifolds. One of them is the Fano surface of lines in a general hyperplane section of $X$, which is atomic (cf.~\cite[Section 3]{markman:rank-1-obstruction}, \cite[Section 8]{beckmann:atomic-object}). Another family of Lagrangians is constructed from the Fano surface of planes in a general cubic fivefold containing $X$ (cf.~\cite{IM08cubic}). In Proposition \ref{prop-fanoplane-not-atomic}, we demonstrate that these Lagrangians are not atomic, but there exist atomic sheaves supported on them. For the LLSvS eightfold $\mathsf{Z}_X$, the Lagrangian submanifolds are constructed in \cite{shinder2017geometry}. Analogously, in Proposition \ref{prop-llsvs-not-atomic}, we prove that while these Lagrangians are not atomic, there exist atomic sheaves supported on them. As in Theorem \ref{thm-bundle} and Corollary \ref{cor-epw-bundle}, we also use these examples to construct stable atomic twisted bundles (cf.~Corollary \ref{cor-llsvs-bundle} and \ref{cor-fano-bundle}).

In \cite[Conjecture A.1]{FGLZ}, the authors speculate a version of the construction of Lagrangians in the moduli spaces of stable objects in the Kuznetsov components of cubic fourfolds, analogous to the case of GM fourfolds. Recently, this conjecture has been proved for very general cubic fourfolds by \cite[Theorem 8.2]{LLPZ:higher-dim-moduli}. We expect that these Lagrangians are not atomic in general, but one can find atomic sheaves supported on them, as illustrated by the examples in Section \ref{sec-llsvs}.



\subsection*{Plan of the paper}

In Section \ref{sec-pre}, we provide an overview of Kuznetsov components of GM varieties and stability conditions on them. In Section \ref{sec-moduli}, we focus on the properties of moduli spaces of stable objects in Kuznetsov components. Notably, we establish Proposition \ref{prop-restrict-BM-divisor}, which relates the natural divisor classes (called Bayer--Macr\`i divisors) on moduli spaces of semistable objects in Kuznetsov components of GM varieties of different dimension. In Section \ref{sec-atomic-object}, we revisit the theory of atomic and $1$-obstructed objects. Specifically, we generalize the results in \cite[Section 7.1]{beckmann:atomic-object} from connected Lagrangian submanifolds to non-connected immersed Lagrangians (cf.~Theorem \ref{atomic-criterion-thm}). 

In Section \ref{sec-atomic-sheaves}, we first show that the fixed locus of the natural anti-symplectic involution on a moduli space of stable objects in the Kuznetsov component of a GM fourfold or sixfold is $1$-obstructed (cf.~Theorem \ref{thm-fixed-loci-1-obs}). Then, using this, we prove in Corollary \ref{cor-c1-moduli} that the first Chern class of a moduli space of stable objects in the Kuznetsov component of a GM threefold or fivefold is proportional to the Bayer--Macr\`i divisor on it. This enables us to construct families of immersed atomic Lagrangians in each moduli space of stable objects in the Kuznetsov component of GM fourfolds or sixfolds (cf.~Theorem \ref{thm-pushforward-atomic} and Theorem \ref{thm-push-atomic-5fold}). Finally, we prove Theorem \ref{thm-bundle} using these atomic Lagrangians.

In Section \ref{sec-new-example}, we focus on double dual EPW sextics associated with general GM fourfolds and show that the natural Lagrangian submanifolds in them are non-rigid and atomic (cf.~Theorem~\ref{thm-double-epw} and Theorem~\ref{thm-double-epw-5fold}). Additionally, in Section \ref{subsec-cube}, we explore atomic Lagrangians within a family of projective hyper-K\"ahler sixfolds, known as double EPW cubes. 

In Section \ref{sec-discussion}, we first explain the relationship between the compactification of deformation spaces of atomic Lagrangians in double dual EPW sextics and the relative Jacobians of GM fivefolds (Section \ref{subsec-jacobian}). Subsequently, in Section \ref{sec-llsvs}, we provide two examples of atomic sheaves supported on non-atomic Lagrangian submanifolds constructed from cubic hypersurfaces (cf.~Proposition \ref{prop-llsvs-not-atomic} and \ref{prop-fanoplane-not-atomic}).

\subsection*{Notation and conventions} \leavevmode
\begin{itemize}
\item All schemes in our paper are over $\CC$. For a complex number $z\in \CC$, its imaginary part is denoted by $\Im(z)$. A variety is a pure-dimensional reduced separated scheme of finite type over $\CC$.

\item If $X$ is a projective hyper-K\"ahler manifold, we say a closed subvariety $L\subset X$ is a Lagrangian subvariety if the holomorphic two-form is zero when restricted to the smooth locus of $L$. We say $i\colon L\to X$ is an \emph{immersed Lagrangian submanifold} if $L$ is a smooth projective variety, $i$ is finite and unramified, and the image of $i$ is a Lagrangian subvariety of $X$.

\item For a smooth projective variety $X$, we denote by $\rH^{p,q}_{F}(X):=\rH^{p+q}(X, F)\cap \rH^{p,q}(X)$ for $F=\QQ$ or $\mathbb{R}$. For a cohomology class $\gamma$, we denote by $[\gamma]_k$ the component of $\gamma$ of degree~$k$.

\item For a triangulated category $\cD$, its Grothendieck group and numerical Grothendieck group are denoted by $\mathrm{K}(\cD)$ and $\Knum(\cD):=\KK(\cD)/\ker(\chi)$, respectively.

\item For a scheme $X$, we denote by $\Dqc(X)$ the unbounded derived category of $\oh_X$-modules with quasi-coherent cohomology. The full triangulated subcategory $\Db(X)\subset \Dqc(X)$ consists of pseudo-coherent complexes with bounded cohomology. If $X$ is noetherian,~$\Db(X)$ coincides with the bounded derived category of coherent sheaves on $X$.

\item For any smooth projective variety $X$ and an object $E\in \Db(X)$, we denote by $v(E)$ the Mukai vector of $E$.

\item Let $f\colon X\to Y$ be a morphism between schemes. All operations $f^*, f_*,$ and $\otimes$ in this paper are derived. We denote by $X_y:=f^{-1}(y)$ the fiber over a point $y\in Y$. Given an object $E\in \Dqc(X)$, we denote by $E_y$ the (derived) pull-back $i^*_y E$, where $i_y\colon X_y\hookrightarrow X$ is the inclusion.

\end{itemize}

\subsection*{Acknowledgements}

It is our pleasure to thank Emanuele Macr\`i and Giovanni Mongardi for many insightful comments and suggesting Proposition \ref{prop-general-criterion}, Proposition~\ref{prop-llsvs-not-atomic}(2), and Proposition~\ref{prop-fanoplane-not-atomic}(2). We would also like to thank Alessio Bottini, Sasha Kuznetsov, Chunyi Li, Alex Perry, Laura Pertusi, and Xiaolei Zhao for many useful discussions. We are especially grateful to Qizheng Yin for many helpful conversations and numerous suggestions. HG would like to express sincere gratitude to Zhiyuan Li for his academic support and encouragement. ZL would like to thank the Institute for Advanced Study in Mathematics at Zhejiang University for financial support and wonderful research environment. We thank
the anonymous referee for a careful reading as well as a list of useful suggestions that improved the exposition of the paper. ZL was partially supported by NSFC Grant 123B2002.

\section{Preliminaries}\label{sec-pre}

In this section, we review some basic definitions and properties of Kuznetsov components of GM varieties, as well as stability conditions on Kuznetsov components.

\subsection{Kuznetsov components}\label{subsec-ku}


Recall that a Gushel--Mukai (GM) variety $X$ of dimension $n$ is a smooth intersection $$X=\mathrm{Cone}(\Gr(2,5))\cap Q,$$ 
where $\mathrm{Cone}(\Gr(2,5))\subset \PP^{10}$ is the projective cone over the \text{Plücker} embedded Grassmannian $\Gr(2,5)\subset \PP^9$, and $Q\subset \PP^{n+4}$ is a quadric hypersurface. Then $n\leq 6$ and we have a natural morphism $\gamma_X\colon X\to \Gr(2,5)$. We say $X$ is \emph{ordinary} if $\gamma_X$ is a closed embedding, and \emph{special} if~$\gamma_X$ is a double covering onto its image. 

\begin{definition}
Let $X$ be a GM variety of dimension $n=4$ or $6$. We say $X$ is \emph{Hodge-special} if 
\[\mathrm{H}^{\frac{n}{2}, \frac{n}{2}}(X)\cap \mathrm{H}_{\prim}^{n}(X, \QQ) \neq 0,\]
where $\mathrm{H}_{\prim}^n(X, R)$ is defined as the orthogonal complement of $$\gamma_X^*\mathrm{H}^n(\Gr(2,5), R)\subset \mathrm{H}^n(X, R)$$ with respect to the intersection form for any commutative ring $R$.
\end{definition}

By \cite[Corollary 4.6]{debarre2015special}, $X$ is non-Hodge-special when $X$ is very general among all ordinary GM varieties of dimension $n$ or very general among all special GM varieties of dimension $n$.

The semiorthogonal decomposition of $\Db(X)$ for a GM variety $X$ of dimension $n\geq 3$ is given by 
\[\Db(X)=\langle\Ku(X),\oh_X,\cU^{\vee}_X,\cdots,\oh_X((n-3)H),\cU_X^{\vee}((n-3)H)\rangle,\]
where $\cU_X$ is the pull-back of the tautological subbundle of $\Gr(2,5)$ via $\gamma_X$. We refer to $\Ku(X)$ as the Kuznetsov component of $X$. 

When $n$ is even, $\Ku(X)$ is a K3 category, i.e.~$S_{\Ku(X)}=[2]$. When $n$ is odd, $\Ku(X)$ is an Enriques category, i.e.~$S_{\Ku(X)}=T_X\circ [2]$, where $T_X$ is a non-trivial auto-equivalence of $\Ku(X)$ satisfying $T^2_X=\mathrm{id}_{\Ku(X)}$.

When $n\geq 4$, we define the projection functor $$\mathrm{pr}_X:=\bR_{\cU_X}\bR_{\oh_X(-H)}\bL_{\oh_X}\bL_{\cU^{\vee}_X}\cdots \bL_{\oh_X((n-4)H)}\bL_{\cU^{\vee}_X((n-4)H)}\colon \D^b(X)\to \Ku(X)$$
where $\bR$ and $\bL$ are mutation functors (cf.~\cite[Definition 3.3]{BLMNPS21}).


When $n=3$, according to the proof of \cite[Proposition 3.9]{kuznetsov2009derived}, $\Knum(\Ku(X))$ is a rank two lattice generated by $\lambda_1$ and $\lambda_2$, where
\begin{equation}\label{lambda}
    \ch(\lambda_1)=-1+\frac{1}{5}H^2,\quad \ch(\lambda_2)=2-H+\frac{1}{12}H^3,
\end{equation}
with the Euler pairing
\begin{equation}\label{eq-matrix-odd}
\left[               
\begin{array}{cc}   
-1 & 0 \\  
0 & -1\\
\end{array}
\right].
\end{equation} 

When $n=4$, there is a rank two sublattice in~$\Knum(\Ku(X))$ generated by $\Lambda_1$ and $\Lambda_2$, where
\begin{equation}\label{Lambda}
    \ch(\Lambda_1)=-2+(H^2-\gamma^*_X \sigma_2)-\frac{1}{20}H^4,
\quad \ch(\Lambda_2)=4-2H+\frac{1}{6}H^3,
\end{equation}
whose Euler pairing is 
\begin{equation}\label{eq-matrix-even}
\left[               
\begin{array}{cc}   
-2 & 0 \\  
0 & -2\\
\end{array}
\right],
\end{equation}
where $\gamma^*_X \sigma_2$ is the pull-back of the Schubert cycle $\sigma_2\in \mathrm{H}^4(\Gr(2, 5), \ZZ)$. When $X$ is non-Hodge-special, by \cite[Proposition 2.25]{kuznetsov2018derived}, we have $\Knum(\Ku(X))=\langle\Lambda_1, \Lambda_2\rangle$.

When $n=5$, by \cite[Corollary 6.5]{kuznetsov2019categorical}, we can find a smooth GM threefold $X'$ with an equivalence $\Ku(X')\simeq \Ku(X)$. Hence, $\Knum(\Ku(X))$ is also a rank two lattice with the Euler pairing \eqref{eq-matrix-odd}. 

When $n=6$, by \cite[Corollary 6.5]{kuznetsov2019categorical} again, we can find a smooth GM fourfold $X'$ with an equivalence $\Ku(X')\simeq \Ku(X)$. Hence, $\Knum(\Ku(X))$ contains a rank two lattice whose Euler pairing is the same as \eqref{eq-matrix-even}, which is the whole numerical Grothendieck group when $X$ is non-Hodge-special by \cite[Corollary 4.6]{debarre2015special} and \cite[Proposition 2.25]{kuznetsov2018derived}. 

We will use the following lemma. See e.g.~\cite[Lemma 4.11]{FGLZ}.

\begin{lemma}\label{lem-compute-class}
Let $X$ be a GM variety of dimension $n$ and $j\colon Y\hookrightarrow X$ be a smooth hyperplane section.
\begin{enumerate}
    \item If $n=4$, then $\pr_X(j_*\lambda_i)=\Lambda_i$ and $j^*\Lambda_i=2\lambda_i$ for each $i=1,2$.
    
    \item If $n=5$, we \emph{define} $\lambda_i$ to be the unique numerical class satisfying $j^*\lambda_i=\Lambda_i$ for each $i=1,2$. Then $\pr_X(j_*\Lambda_i)=2\lambda_i$ for each $i=1,2$ and $\Knum(\Ku(X))=\langle \lambda_1, \lambda_2\rangle$ with the Euler pairing \eqref{eq-matrix-odd}.
    
    \item If $n=6$, we \emph{define} $\Lambda_i:=\pr_X(j_*\lambda_i)$. Then the restriction of the Euler pairing to the sublattice $\langle \Lambda_1, \Lambda_2\rangle \subset \Knum(\Ku(X))$ is given by \eqref{eq-matrix-even}.
\end{enumerate}

\end{lemma}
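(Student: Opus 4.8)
The plan is to reduce every assertion to an identity in the numerical Grothendieck groups, where it can be checked on Chern characters, using that $\ch$ is injective on $\Knum$ of a smooth projective variety so that matching Chern characters suffices. The three basic inputs are: (i) the geometric compatibilities for a smooth hyperplane section, namely $H_Y=j^*H$, $\gamma_Y=\gamma_X\circ j$ (so that $j^*\gamma_X^*\sigma_2=\gamma_Y^*\sigma_2$), and $N_{Y/X}\cong\oh_Y(H)$; (ii) Grothendieck--Riemann--Roch for the closed embedding $j$, i.e.\ $\ch(j_*F)=j_*\!\bigl(\ch(F)\cdot\td(N_{Y/X})^{-1}\bigr)$; and (iii) the fact that the semiorthogonal decomposition induces a direct sum decomposition of $\Knum(\Db(-))$ on which $\pr_X$ acts as the projection onto the $\Knum(\Ku(X))$-summand. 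Thus $\pr_X[j_*\lambda_i]$ is computed by expressing $[j_*\lambda_i]$ in the basis given by $\Lambda_1,\Lambda_2$ together with the numerical classes of the exceptional objects $\oh_X(kH),\cU_X^\vee(kH)$ and reading off the $\Lambda$-coefficients.

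For (1), I would first prove $j^*\Lambda_i=2\lambda_i$ by restricting the Chern characters \eqref{Lambda}. The top-degree term $-\tfrac1{20}H^4$ of $\ch(\Lambda_1)$ dies because $\dim Y=3$, the identity for $\Lambda_2$ is immediate from \eqref{lambda}, and the only real point is the degree-four matching, which comes down to the identity $\gamma_Y^*\sigma_2=\tfrac35 H_Y^2$ in $\rH^4(Y,\QQ)\cong\QQ$. This is checked by intersecting with $H_Y$ and evaluating the resulting number on $\Gr(2,5)$ via Pieri's rule, giving $\gamma_Y^*\sigma_2\cdot H_Y=6=\tfrac35 H_Y^3$. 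I would then obtain $\pr_X(j_*\lambda_i)=\Lambda_i$ by computing $\ch(j_*\lambda_i)$ from Grothendieck--Riemann--Roch and verifying that $[j_*\lambda_i]-\Lambda_i$ lies in the span of the exceptional classes, so that the projection $\pr_X$ returns exactly $\Lambda_i$.

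For (2) and (3), where $X$ is odd- or six-dimensional, there is no explicit Chern-character formula for generators of $\Ku(X)$, so I would transport the lattice structure through the Kuznetsov equivalences $\Ku(X_5)\simeq\Ku(X_3')$ and $\Ku(X_6)\simeq\Ku(X_4')$ used to define the lattices, and prove the restriction/projection formulas by adjunction. The key tool is Grothendieck duality $j^{!}(-)=j^*(-)\otimes\oh_Y(H)[-1]$, which feeds into the identities
\[
\chi(j_*A, B)=\chi(A, j^!B), \qquad [j^!j_*A]=[A]\cdot\bigl(1-[\oh_Y(H)]\bigr),
\]
the latter coming from the Koszul resolution of $\oh_Y$ along the line bundle $N_{Y/X}$. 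In (2), combining $\chi(j_*A,B)=\chi(A,j^!B)$ with the established shape of $j^*$ and the non-degeneracy of the Euler pairing shows that $j^*\colon\Knum(\Ku(X))\to\langle\Lambda_1,\Lambda_2\rangle$ is an isomorphism (both free of rank two), which simultaneously yields the existence and uniqueness of $\lambda_i$ and the equality $\Knum(\Ku(X))=\langle\lambda_1,\lambda_2\rangle$; the projection formula $\pr_X(j_*\Lambda_i)=2\lambda_i$ and the pairing \eqref{eq-matrix-odd} then follow by the same bookkeeping, the factor $2$ being exactly the self-intersection factor recorded in (1). In (3) the classes $\Lambda_i$ are defined directly as $\pr_X(j_*\lambda_i)$, and the pairing \eqref{eq-matrix-even} is computed by expanding $\chi(\Lambda_i,\Lambda_j)$, dropping the outer projections against $\Ku(X)$-objects, and reducing via $[j^!j_*A]=[A]\bigl(1-[\oh_Y(H)]\bigr)$ to the (twisted) Euler pairings of the $\lambda$'s on the fivefold, which are known from (2).

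The main obstacle is precisely this odd/six-dimensional bootstrap: unlike in (1) one cannot simply restrict explicit Chern characters, so everything hinges on correctly propagating the canonical-bundle twist $\oh_Y(H)$ and the shift $[-1]$ in $j^!$ through the adjunctions, together with the compatibility of the Kuznetsov equivalences with the hyperplane-section functors. Keeping track of these twists is what produces the correct scaling (the factor $2$ in $j^*\Lambda_i=2\lambda_i$ and $\pr_X(j_*\Lambda_i)=2\lambda_i$), and verifying that $j^*$ is a genuine lattice isomorphism rather than merely injective is the delicate step; to close the argument the twisted Euler pairings $\chi(\lambda_i,\lambda_j\otimes\oh_Y(H))$ must be evaluated via Hirzebruch--Riemann--Roch.
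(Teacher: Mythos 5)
The first thing to say is that the paper contains no proof of this lemma to compare against: it is quoted with the pointer ``see e.g.\ \cite[Lemma 4.11]{FGLZ}''. Judged on its own, your overall strategy is the standard one and, as far as I can tell, the route the cited reference takes: part (1) by restricting and pushing forward the explicit Chern characters \eqref{lambda} and \eqref{Lambda} (your identity $\gamma_Y^*\sigma_2=\frac35H_Y^2$, checked via $\gamma_Y^*\sigma_2\cdot H_Y=6$ against $H_Y^3=10$, is correct), and parts (2)--(3) by transporting the lattice through the duality equivalences $\Ku(X_5)\simeq\Ku(X_3')$ and $\Ku(X_6)\simeq\Ku(X_4')$ and then using Grothendieck duality and the Koszul identity for $j^!j_*$.

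Two steps, however, would not survive as written. First, in (2) you infer that $j^*\colon\Knum(\Ku(X))\to\langle\Lambda_1,\Lambda_2\rangle$ is an isomorphism from injectivity ``(both free of rank two)''; an injective map of rank-two lattices can have arbitrary finite index, so this does not produce integral classes $\lambda_i$ with $j^*\lambda_i=\Lambda_i$, nor the equality $\Knum(\Ku(X))=\langle\lambda_1,\lambda_2\rangle$. You need either explicit preimages or a discriminant comparison (the Euler form scales by exactly $2$ under $j^*$, so the image has discriminant $4$, matching $\langle\Lambda_1,\Lambda_2\rangle$), and before that an argument that the image lies \emph{integrally} inside $\langle\Lambda_1,\Lambda_2\rangle$ at all, since $\Knum(\Ku(Y))$ can be strictly larger than $A_1^{\oplus2}$ when $Y$ is Hodge-special; here one uses that $j^*\ch$ lands in the span of powers of $H$ and $\gamma_Y^*\sigma_2$ together with Lemma \ref{lem-iso-Hprime}. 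Second, your description of $\pr_X$ on numerical K-theory as ``the projection onto the $\Knum(\Ku(X))$-summand'' of the displayed semiorthogonal decomposition is not what the paper's functor is: $\pr_X=\bR_{\cU_X}\bR_{\oh_X(-H)}\bL_{\oh_X}\bL_{\cU_X^\vee}\cdots$ mixes right and left mutations, so on $\Knum$ it projects along $\langle[\oh_X(-H)],[\cU_X],[\oh_X],[\cU_X^\vee],\dots\rangle$, which is a different complement of $\Knum(\Ku(X))$ from $\langle[\oh_X(kH)],[\cU_X^\vee(kH)]\rangle_{0\le k\le n-3}$. Consequently, ``reading off the $\Lambda$-coefficients'' in the latter basis in part (1), and ``dropping the outer projections'' when computing $\chi(\Lambda_i,\Lambda_j)$ in part (3), are not justified: the right-mutation classes $[\oh_X(-H)]$ and $[\cU_X]$ pair nontrivially against $\Ku(X)$ from the left. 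The safe computation applies $[\bL_EF]=[F]-\chi(E,F)[E]$ and $[\bR_EF]=[F]-\chi(F,E)[E]$ term by term, or better, uses the exact triangles $T_Y(E)\to j^*\pr_X(j_*E)\to E$ and $F\to\pr_X(j_*j^*F)\to T_X(F)$ invoked elsewhere in the paper, which give $[\pr_X(j_*j^*F)]=2[F]$ and $[j^*\pr_X(j_*E)]=2[E]$ on numerical classes and from which the factor $2$ and the Gram matrices \eqref{eq-matrix-odd} and \eqref{eq-matrix-even} follow cleanly.
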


We will also use the following relative version of Kuznetsov components. We say a smooth projective morphism $\pi\colon \cX\to S$ is \emph{a family of GM varieties of dimension $n$} if there exists a relative ample line bundle $\oh_{\cX}(1)$ such that $\cX_s$ is a smooth GM variety of dimension $n$ and~$\oh_{\cX_s}(1)$ is the ample generator of $\Pic(\cX_s)$ for each $s\in S$ (cf.~\cite[Definition 3.1]{debarre:GM-moduli}). By \cite[Lemma 5.9]{bayer2022kuznetsov}, when $S$ is noetherian and $n\geq 3$, there is a $S$-linear semiorthogonal decomposition 
\[\Db(\cX)=\langle \Ku(\cX), \pi^*(\Db(S))\otimes \oh_{\cX}, \pi^*(\Db(S))\otimes \cU^{\vee}_{\cX},\dots\]
\[\dots,\pi^*(\Db(S))\otimes \oh_{\cX}((n-3)H), \pi^*(\Db(S))\otimes \cU^{\vee}_{\cX}((n-3)H)\rangle\]
such that $\Ku(\cX)_s\simeq \Ku(\cX_s)$. We call $\Ku(\cX)$ \emph{the relative Kuznetsov component of $\cX$ over $S$}.

By \cite[Theorem 3.17, Lemma 3.25]{BLMNPS21} or \cite[Lemma 3.15]{perry:noncommutative-hpd}, there is also a relative projection functor
\[\pr_{\cX}\colon \Db(\cX)\to \Ku(\cX)\]
such that $(\pr_{\cX})_s=\pr_{\cX_s}$ for each $s\in S$.

\subsection{Hodge structures of Kuznetsov components}


Following \cite[Section 2]{thomas:cubic-4fold}, for any GM variety $X$ of dimension $n=4$ or $6$, there is a sublattice
\[\wt{\rH}(\Ku(X), \ZZ):=\Ktop(\Ku(X))\subset \Ktop(X)\]
of the topological K-theory of $X$, where $\Ktop(\Ku(X))$ is the zero-degree homotopy group of the topological K-theory spectra defined in \cite{blanc:topo-K-noncommutative}. We denote by $\wt{\rH}(\Ku(X), F):=\wt{\rH}(\Ku(X), \ZZ)\otimes F$, where $F=\QQ, \mathbb{R}$, or $\CC$.

By \cite[Theorem 2.1]{thomas:cubic-4fold} and \cite[Proposition 3.4]{debarre2019gushel}, the Mukai vector induces an embedding
\[v\colon \wt{\rH}(\Ku(X), \ZZ)\hookrightarrow \Ktop(X)\hookrightarrow \rH^*(X, \QQ).\]
We define
\[\wt{\rH}_{\Hdg}(\Ku(X), \ZZ):=v^{-1}\big(\bigoplus_{p\geq 0} \rH^{p,p}_{\QQ}(X) \big).\]
The lattice $\wt{\rH}(\Ku(X), \ZZ)$ is equipped with a non-degenerate symmetric pairing $(-,-):=-\chi(-,-)$ and there exists a canonical homomorphism
\[\KK(\Ku(X))\to \wt{\rH}(\Ku(X), \ZZ),\quad [E]\mapsto [E]_{\ttop}.\]
This induces an embedding
\[\Knum(\Ku(X))\hookrightarrow \wt{\rH}_{\Hdg}(\Ku(X), \ZZ),\]
which is compatible with the pairing on both sides, up to a sign. Moreover, it is an isomorphism by \cite[Proposition 8.2]{perry2020integral}.

We denote by $A_1^{\oplus 2}\subset \wt{\rH}_{\Hdg}(\Ku(X), \ZZ)$ the image of the sublattice $\langle \Lambda_1, \Lambda_2\rangle\subset \Knum(\Ku(X))$ under the isomorphism above.

In the following, we will not distinguish the lattices $\Knum(\Ku(X))$ and $\wt{\rH}_{\Hdg}(\Ku(X), \ZZ)$.

Following \cite{thomas:cubic-4fold}, we can define a weight-two Hodge structure on $\wt{\rH}(\Ku(X), \ZZ)$ as
\[\wt{\rH}^{2,0}(\Ku(X), \CC):=v^{-1}(\rH^{n-1,1}(X)),\quad \wt{\rH}^{0,2}(\Ku(X), \CC):=v^{-1}(\rH^{1,n-1}(X)),\]
and
\[\wt{\rH}^{1,1}(\Ku(X), \CC):=v^{-1}\big(\bigoplus_{p\geq 0} \rH^{p,p}(X) \big).\]

\begin{lemma}[{\cite[Proposition 3.1]{pertusi2019double}}]\label{lem-iso-Hprime}
Let $X$ be a GM variety of dimension $n=4$ or $6$. Then the Chern character map induces a Hodge isometry
\[\ch\colon (A_1^{\oplus 2})^{\perp}\xra{\cong} \rH^n_{\prim}(X, \ZZ),\]
where the orthogonal is taken inside $\wt{\rH}(\Ku(X), \ZZ)$.
\end{lemma}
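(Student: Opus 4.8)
The plan is to realize the asserted map as the degree-$n$ (middle cohomology) component of the Chern character and to verify separately that it lands in the primitive cohomology, respects the two pairings, and respects the Hodge structures, treating the integral bijectivity as the final and most delicate point. First I would pass to rational coefficients and work inside $\Ktop(X)$. Via the Mukai vector, $\wt{\rH}(\Ku(X),\QQ)$ is the $\chi$-orthogonal complement in $\rH^*(X,\QQ)$ of the classes of the exceptional objects $\oh_X(iH), \cU_X^\vee(iH)$ of the semiorthogonal decomposition, with the Euler pairing matching minus the Mukai pairing. Consequently $(A_1^{\oplus 2})^{\perp}$, regarded inside $\Ktop(X)$, is the orthogonal complement of the span $W$ of these exceptional classes together with $\Lambda_1$ and $\Lambda_2$. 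Using the explicit Chern characters in \eqref{lambda} and \eqref{Lambda} and those of the exceptional bundles, a direct computation shows that the degree-$n$ parts of the elements of $W$ span precisely $\gamma_X^*\rH^n(\Gr(2,5),\QQ)$, while $W$ surjects onto $\rH^{2k}(X,\QQ)$ for every $k\neq n/2$; one also checks the rank equality $\rk (A_1^{\oplus 2})^{\perp} = \rk \rH^n_{\prim}(X)$.

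Next I would set $\phi(x):=[\ch(x)]_n$ for $x\in (A_1^{\oplus 2})^{\perp}$ and identify its image. Because the Mukai pairing of two middle-degree classes reduces, up to sign, to the intersection form—the factor $\sqrt{\td_X}$ contributing only in degree zero once the cup product already lies in top degree—the condition $x\perp W$ forces $\phi(x)$ to be orthogonal to $\gamma_X^*\rH^n(\Gr(2,5))$, so that $\phi(x)\in\rH^n_{\prim}(X,\QQ)$. Injectivity holds since any $x$ with $[\ch(x)]_n=0$ has Mukai vector supported away from the middle degree yet orthogonal to $W$, which forces $x=0$; together with the rank equality this yields a rational isomorphism $\phi\colon (A_1^{\oplus 2})^{\perp}\otimes\QQ \xrightarrow{\sim}\rH^n_{\prim}(X,\QQ)$.

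The Hodge and isometry properties then follow almost formally. By the very definition of the weight-two structure, $\wt{\rH}^{2,0}(\Ku(X),\CC)=v^{-1}(\rH^{n-1,1}(X))$ and $\wt{\rH}^{0,2}(\Ku(X),\CC)=v^{-1}(\rH^{1,n-1}(X))$; on these transcendental classes $v(x)$ is already pure of degree $n$, so $\phi(x)=v(x)$ there, whereas the $(1,1)$-part of $(A_1^{\oplus 2})^{\perp}$ maps into $\rH^{n/2,n/2}_{\prim}(X,\QQ)$, so $\phi$ is a morphism of Hodge structures. That $\phi$ is an isometry up to sign is again the reduction of the Mukai pairing to the intersection form on middle-degree classes, after verifying that the lower-degree components of $v$ do not contribute to the pairing on $(A_1^{\oplus 2})^{\perp}$.

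The main obstacle is upgrading this rational isomorphism to an integral one: one must control the integral lattices on both sides, ensuring that $\Ktop(\Ku(X))$ carries its expected integral structure and that $\phi$ surjects onto the full lattice $\rH^n_{\prim}(X,\ZZ)$ with no loss of index or discriminant. For this I would invoke the integral comparison $\Knum(\Ku(X))\cong\wt{\rH}_{\Hdg}(\Ku(X),\ZZ)$ of \cite[Proposition 8.2]{perry2020integral} together with the integral descriptions of $\Ktop(\Ku(X))$ furnished by \cite[Theorem 2.1]{thomas:cubic-4fold} and \cite[Proposition 3.4]{debarre2019gushel}, which pin down the integral lattice and show that the degree-$n$ projection identifies it isometrically with $\rH^n_{\prim}(X,\ZZ)$.
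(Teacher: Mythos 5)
Your proposal is correct in substance, but it is worth noting that the paper does not actually prove this lemma: for $n=4$ it is a direct citation of \cite[Proposition 3.1]{pertusi2019double}, and for $n=6$ the paper only remarks that ``a similar argument'' works using \eqref{Lambda} and the relations in Lemma \ref{lem-compute-class}. What you have written is essentially a self-contained reconstruction of that cited argument, treating both dimensions uniformly, so the underlying mathematics agrees with the source the paper delegates to. One place where I would tighten the logical order: as written, you deduce $[\ch(x)]_n \in \rH^n_{\prim}(X,\QQ)$ from $x\perp W$ by pairing degree-$n$ components, but the Euler pairing of $x$ with an element of $W$ a priori receives contributions from all degree components of $v(x)$, so this step is only clean once you already know $v(x)$ is concentrated in degree $n$ --- which you only address afterwards (``after verifying that the lower-degree components of $v$ do not contribute''). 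The cleanest route, for which you have all the ingredients, is to observe first that $[v(W)]_k\subset \rH^{2k}(X,\QQ)$ for $k\neq n/2$ together with $\gamma_X^*\rH^n(\Gr(2,5),\QQ)$ in the middle degree, so that $\rH^n_{\prim}(X,\QQ)$ is Mukai-orthogonal to $v(W)$; the rank count then forces $v\big((A_1^{\oplus2})^{\perp}\otimes\QQ\big)=\rH^n_{\prim}(X,\QQ)$, after which concentration in degree $n$, primitivity, the isometry property, and the Hodge compatibility all follow at once. Your handling of the integral refinement via \cite[Proposition 8.2]{perry2020integral} and the integral lattice descriptions is the right way to finish, and is exactly where the cited proposition does the real work.
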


\begin{proof}
When $n=4$, this follows from \cite[Proposition 3.1]{pertusi2019double}. When $n=6$, using \eqref{Lambda} and the relations in Lemma \ref{lem-compute-class}, a similar argument as in \cite[Proposition 3.1]{pertusi2019double} yields the result.
\end{proof}

\subsection{Stability conditions on Kuznetsov components}

In this subsection, we review some basic notions and facts about stability conditions.

Let $\cD$ be a triangulated category and $\KK(\cD)$ be its K-group. Fix a surjective morphism to a finite rank lattice $v \colon \KK(\cD) \twoheadrightarrow \Lambda$. 

\begin{definition}\label{def-stability-condition}
A \emph{stability condition} on $\cD$ is a pair $\sigma = (\cA_{\sigma}, Z_{\sigma})$, where $\cA_{\sigma}$ is the heart of a bounded t-structure on $\cD$ and $Z_{\sigma} \colon \Lambda \ra \CC$ is a group homomorphism such that 
\begin{enumerate}
    \item for any $E \in \cA_{\sigma}$, we have $\Im Z_{\sigma}(v(E)) \geq 0$ and if $\Im Z_{\sigma}(v(E)) = 0$, $\Re Z_{\sigma}(v(E)) < 0$. From now on, we write $Z_{\sigma}(E)$ rather than $Z_{\sigma}(v(E))$.
\end{enumerate}
For any object $E \in \cA_{\sigma}$, we define the slope function $\mu_{\sigma}(-)$ as
\[
\mu_\sigma(E) := \begin{cases}  - \frac{\Re Z_{\sigma}(E)}{\Im Z_{\sigma}(E)}, & \Im Z_{\sigma}(E) > 0 \\
+ \infty , & \text{else}.
\end{cases}
\]
An object $0 \neq E \in \cA_{\sigma}$ is called $\sigma$-(semi)stable if for any proper subobject $F \sst E$, we have $\mu_\sigma(F) (\leq) \mu_\sigma(E/F)$. 
\begin{enumerate}[resume]
    \item Any object $E \in \cA_{\sigma}$ has a Harder--Narasimhan filtration in terms of $\sigma$-semistability defined above.
    \item There exists a quadratic form $Q$ on $\Lambda \otimes \mathbb{R}$ such that $Q|_{\ker Z_{\sigma}}$ is negative definite  and $Q(E) \geq 0$ for any $\sigma$-semistable object $E \in \cA_{\sigma}$.
\end{enumerate}
\end{definition}


Using techniques developed in \cite{bayer2017stability}, stability conditions are constructed on the Kuznetsov components for a series of Fano varieties. For GM varieties, the following result is proved in \cite{perry2019stability}.

\begin{theorem}[{\cite[Theorem 4.12]{perry2019stability}}]\label{blms-induce}
Let $X$ be a GM fourfold or sixfold. Then there exists a family of stability conditions $\Stab^{\circ}(\Ku(X))$ on $\Ku(X)$ with respect to the lattice $A_1^{\oplus 2}$.
\end{theorem}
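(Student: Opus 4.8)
The plan is to construct $\Stab^{\circ}(\Ku(X))$ by the inducing technique of Bayer--Lahoz--Macr\`i--Stellari, treating the fourfold case first and then reducing the sixfold case to it via a categorical equivalence. The starting point is the semiorthogonal decomposition $\Db(X)=\langle \Ku(X), \oh_X, \cU_X^{\vee}, \dots, \oh_X((n-3)H), \cU_X^{\vee}((n-3)H)\rangle$, in which $\Ku(X)$ is the right-orthogonal to an exceptional collection of line bundles and twists of $\cU_X^{\vee}$. The idea is to build a weak stability condition on the ambient category $\Db(X)$ whose heart, intersected with $\Ku(X)$, serves as the heart of a genuine Bridgeland stability condition on $\Ku(X)$. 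Because $\Ku(X)$ is a K3 category (its Serre functor is $[2]$), the effective numerical rank relevant for the restricted condition drops to that of the rank-two lattice $A_1^{\oplus 2}$, which is precisely what makes a full stability condition attainable even though $\dim X\geq 4$.

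First I would produce the weak stability condition on $\Db(X)$ by the standard double-tilting procedure. Fix the ample class $H$ and a parameter $\beta$; twisting the Chern character by $e^{-\beta H}$ and tilting $\Coh(X)$ at slope zero yields a heart $\Coh^{\beta}(X)$, on which one defines the tilt-slope weak stability function built from $\ch^{\beta}_0, \ch^{\beta}_1, \ch^{\beta}_2$. Tilting a second time at slope zero produces a heart $\cA^0_{\alpha,\beta}\subset \Db(X)$ together with a rotated weak central charge $Z^0_{\alpha,\beta}$ assembled from $\ch^{\beta}_{\leq 3}$. This is a weak stability condition on $\Db(X)$ with respect to the lattice generated by the truncated Chern character.

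Next I would verify the inducing criterion. Following the recipe, it suffices to check that each exceptional object $\oh_X(iH)$ and $\cU_X^{\vee}(iH)$ (for $0\leq i\leq n-3$), together with the relevant shifts, lies in the heart $\cA^0_{\alpha,\beta}$ and is $\sigma^0_{\alpha,\beta}$-stable of a controlled phase, and that no nonzero object of $\Ku(X)$ lies in the kernel of $Z^0_{\alpha,\beta}$. Granting this, $\cA^0_{\alpha,\beta}\cap \Ku(X)$ is the heart of a bounded t-structure on $\Ku(X)$, and $(\cA^0_{\alpha,\beta}\cap \Ku(X), Z^0_{\alpha,\beta}|_{A_1^{\oplus 2}})$ is a stability condition; the support property follows by restricting the quadratic form from the tilt-stability to $A_1^{\oplus 2}$ and using its negative-definiteness on the relevant kernel. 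Declaring $\Stab^{\circ}(\Ku(X))$ to be the $\GL$-orbit of the conditions obtained in this way gives the desired family. I expect the main obstacle to be the verification of tilt-stability for the non-line-bundle exceptional objects $\cU_X^{\vee}(iH)$ and the associated Bogomolov--Gieseker-type inequalities, which require controlling the slope stability and the walls of the tautological bundle $\cU_X$ on $X$; this is the step where the explicit geometry of GM varieties enters and where the higher dimension makes the estimates delicate.

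Finally, for a GM sixfold $X$ I would avoid redoing the construction in dimension six. By \cite[Corollary 6.5]{kuznetsov2019categorical} there is a smooth GM fourfold $X'$ with an equivalence $\Ku(X')\simeq \Ku(X)$, so I would transport $\Stab^{\circ}(\Ku(X'))$ across this equivalence, checking that the induced isometry of numerical lattices matches the sublattice $\langle\Lambda_1,\Lambda_2\rangle$ on each side (compatibly with the definition $\Lambda_i:=\pr_X(j_*\lambda_i)$ in Lemma \ref{lem-compute-class}). This yields $\Stab^{\circ}(\Ku(X))$ with respect to $A_1^{\oplus 2}$ for the sixfold as well.
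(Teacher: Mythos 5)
First, a caveat: the paper does not actually prove Theorem \ref{blms-induce}; it is quoted directly from \cite[Theorem 4.12]{perry2019stability}, so your sketch has to be measured against the proof given there. Your handling of the sixfold case --- transporting stability conditions along the Kuznetsov--Perry equivalence $\Ku(X')\simeq\Ku(X)$ with $X'$ a fourfold and matching the lattices as in Lemma \ref{lem-compute-class} --- is correct and is essentially what is done in \cite{perry2019stability}, where the same device also reduces an arbitrary fourfold to an ordinary one with smooth canonical quadric.

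The gap is in the fourfold case itself. You propose to double-tilt on $\Db(X)$ for the four-dimensional $X$ and to use a second central charge ``assembled from $\ch^{\beta}_{\leq 3}$''. For such a charge to be a weak stability function on the doubly tilted heart of a \emph{fourfold} one needs a generalized Bogomolov--Gieseker inequality bounding $\ch^{\beta}_{3}$ of tilt-semistable objects; this is precisely the open obstruction to producing stability conditions on fourfolds and is not available for GM fourfolds. If instead you keep only $\ch^{\beta}_{\leq 2}$ (which is all the rotation lemma gives for free), then the kernel of $Z^{0}_{\alpha,\beta}$ on the ambient heart contains every object supported in dimension $\leq 1$, and the inducing criterion ``$Z(F)\neq 0$ for every nonzero $F$ in the induced heart of $\Ku(X)$'' becomes unverifiable: on a fourfold, $Z(F)=0$ only forces $\dim \supp(F)\leq 1$, and such objects cannot be excluded from $\Ku(X)$. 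The problem is already visible numerically: by Lemma \ref{lem-iso-Hprime}, classes in $(A_1^{\oplus 2})^{\perp}\cap \Knum(\Ku(X))$ have Chern character concentrated in $\rH^{4}_{\prim}(X,\QQ)$, and since $H^2\cdot \rH^{4}_{\prim}(X,\QQ)=0$, every tilt-type central charge on $X$ annihilates them. Contrary to what you suggest, the fact that $\Ku(X)$ is a K3 category and that the target lattice has rank two does not shrink this kernel; the rank of the lattice is not where the difficulty lies.

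The proof in \cite{perry2019stability} avoids this by changing the ambient category rather than tilting on $X$: blowing up a surface of the form $X\cap \Gr(2,V_4)$ produces a conic fibration over $\PP^3$, whence an embedding of $\Ku(X)$ into the derived category of modules over the associated sheaf of even Clifford algebras on $\PP^3$ (the smoothness of the canonical quadric of an ordinary GM fourfold controls the degeneration of this fibration). The machinery of \cite{bayer2017stability} --- a Bogomolov inequality for Clifford modules, one tilt plus a rotation, and stability of the exceptional Clifford modules --- is then run on this three-dimensional noncommutative base, exactly as for cubic fourfolds. There, $Z(F)=0$ for $F$ in the tilted heart forces zero-dimensional support, and zero-dimensional Clifford modules are visibly not in $\Ku(X)$, which is what makes the inducing criterion checkable. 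Without this reduction to a base of dimension three (or a proof of a generalized Bogomolov--Gieseker inequality on GM fourfolds), your argument does not close.
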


In other words, for any object $F\in \Ku(X)$ and $\sigma=(\cA_{\sigma}, Z_{\sigma})\in \Stab^{\circ}(\Ku(X))$ such that $[F]_{\ttop}\in (A_1^{\oplus 2})^{\perp}$, we have $Z_{\sigma}(F)=0$.

\begin{remark}\label{rmk-stab-A1}
Let $X$ be a GM fourfold or sixfold and $j\colon Y\hookrightarrow X$ be a smooth hyperplane section. For $\sigma_X\in \Stab^{\circ}(\Ku(X))$ and a stability condition $\sigma_Y$ on $\Ku(Y)$ satisfying $$Z_{\sigma_X}(\Lambda_i)=Z_{\sigma_Y}(\lambda_i)$$ for each $i=1,2$, we claim that for $F\in \Ku(X)$, the equality holds:
\[2Z_{\sigma_Y}(j^*F)=Z_{\sigma_X}(F).\]
Indeed, in this case, we have $Z_{\sigma_Y}(j^*F)=Z_{\sigma_X}(\pr_X(j_*j^*F))$ by Lemma \ref{lem-compute-class}, then the claim follows from \cite[Lemma 4.10(1), Lemma 4.3(2)]{FGLZ}.

Similarly, if $X$ is a GM fivefold and $j\colon Y\hookrightarrow X$ is a smooth hyperplane section, then
\[Z_{\sigma_X}(\pr_X(j_*F))=2Z_{\sigma_Y}(F)\]
for any $F\in \Ku(Y)$.
\end{remark}

When $Y$ is a GM threefold, stability conditions on $\Ku(Y)$ are constructed in \cite{bayer2017stability}. It is proved in \cite{pertusiGM3fold} that they are \emph{Serre-invariant} (cf.~\cite[Definition 4.10]{JLLZ2021gushelmukai}). Furthermore, they all belong to the same $\GL$-orbit\footnote{The action of $\GL$ on stability conditions are defined in \cite[Lemma 8.2]{bridgeland:stability}}. Using the equivalence in \cite[Corollary 6.5]{kuznetsov2019categorical}, the same result holds for GM fivefolds.

\begin{theorem}[{\cite[Theorem A.10]{JLLZ2021gushelmukai}}]\label{thm-unique-fourfold}
Let $Y$ be a GM threefold or fivefold, then all Serre-invariant stability conditions on $\Ku(Y)$ are unique up to the $\GL$-action.
\end{theorem}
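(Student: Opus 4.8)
The plan is to reduce everything to the case of a GM threefold and then prove uniqueness by a $\GL$-normalization of the central charge followed by a rigidity argument for the heart. First, if $Y$ is a GM fivefold, the equivalence $\Ku(Y)\simeq \Ku(Y')$ with $Y'$ a GM threefold (\cite[Corollary 6.5]{kuznetsov2019categorical}) identifies Serre-invariant stability conditions and intertwines the $\GL$-actions on the two sides, so it suffices to treat $Y$ a GM threefold. Here $\Knum(\Ku(Y))=\langle\lambda_1,\lambda_2\rangle$ is a rank-two lattice whose Euler form is $-\mathrm{Id}$ by \eqref{eq-matrix-odd}, and the construction of \cite{bayer2017stability}, shown to be Serre-invariant in \cite{pertusiGM3fold}, provides a reference Serre-invariant stability condition $\sigma_0$. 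The goal is to show that any Serre-invariant $\sigma$ lies in the $\GL$-orbit of $\sigma_0$.

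Step 1 (normalizing the central charge). Since $\Knum(\Ku(Y))\otimes\mathbb{R}$ has rank two, the set of central charges, i.e.\ orientation-compatible real-linear isomorphisms $\Knum(\Ku(Y))\otimes\mathbb{R}\xra{\sim}\mathbb{C}$, forms a single orbit under the linear part of $\GL$. Acting by a suitable element, I may assume that $\sigma$ and $\sigma_0$ share the same central charge $Z$. The residual freedom fixing $Z$ is generated by the even shift $[2]$, so it remains to prove that the two hearts agree up to a shift by $[2n]$.

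Step 2 (rigidity from Serre-invariance). The key structural input is that $T_Y$ preserves $\sigma$ exactly. Indeed, symmetry of the Euler form forces $S_*=\mathrm{id}$ on $\Knum(\Ku(Y))$, hence $(T_Y)_*=\mathrm{id}$ as well; writing Serre-invariance as $S\cdot\sigma=\sigma\cdot g$ and using that $[2]$ shifts phases by $2$, one finds that $T_Y$ fixes $Z$ and therefore acts by a pure even shift, which $T_Y^2=\mathrm{id}$ forces to be trivial. Consequently $T_Y$ sends $\sigma$-semistable objects to $\sigma$-semistable objects of the same phase, and Serre duality $\Hom(E,F[k])\cong\Hom(F,T_YE[2-k])^{\vee}$ yields the K3-type bound: for $E,F\in\cA_\sigma$ one has $\Ext^{k}(E,F)=0$ for $k<0$ (t-structure) and for $k\ge 3$ (Serre duality), so $\Ext^{\bullet}$ concentrates in degrees $0,1,2$. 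The same holds for $\sigma_0$.

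Step 3 (matching hearts). Using the common central charge $Z$ together with this $\Ext$-concentration, I would show the two slicings coincide. Since the Euler form is $-\mathrm{Id}$, every nonzero class $v$ satisfies $\chi(v,v)=-\lVert v\rVert^2<0$, and combining this with the degree-$\{0,1,2\}$ concentration constrains, class by class, which objects can be $\sigma$-stable and with which $\mathrm{hom}$, $\mathrm{ext}^1$, $\mathrm{ext}^2$. One then checks that the $\sigma$-stable objects of the minimal classes $\pm\lambda_1,\pm\lambda_2$ and their small combinations coincide with the $\sigma_0$-stable ones, with matching phases forced by equality of $Z$, and bootstraps via Harder--Narasimhan filtrations and the homological bound to conclude $\cA_\sigma=\cA_{\sigma_0}[2n]$, i.e.\ $\sigma\in\GL\cdot\sigma_0$.

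The main obstacle is Step 3: sharing a central charge does not by itself force two stability conditions to coincide, so the genuine content is the rigidity of the heart. The delicate point is to rule out that a nontrivial tilt of $\cA_{\sigma_0}$ carries the same central charge while remaining Serre-invariant, and this is exactly where the degree-$\{0,1,2\}$ concentration and the negative-definiteness of the Euler form are used to pin down the stable objects and their phases. By contrast, Steps 1 and 2 are comparatively formal, relying only on the rank-two transitivity of $\GL$ and the numerical triviality of the Serre action.
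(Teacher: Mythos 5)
The paper does not prove this statement: it is quoted from \cite[Theorem A.10]{JLLZ2021gushelmukai}, with the fivefold case reduced to the threefold case exactly as in your opening paragraph via the equivalence of \cite[Corollary 6.5]{kuznetsov2019categorical}. Your Step 2 is essentially correct and matches the standard arguments: symmetry and non-degeneracy of the Euler pairing \eqref{eq-matrix-odd} give $(T_Y)_*=\mathrm{id}$ on $\Knum(\Ku(Y))$, Serre-invariance then forces $T_Y\cdot\sigma=\sigma$ (the residual even shift is killed by $T_Y^2=\mathrm{id}$), and Serre duality yields concentration of $\Ext^{\bullet}$ in degrees $0,1,2$ together with $\ext^2(E,E)\le 1$ for $\sigma$-stable $E$.

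The gap is that Step 3, which carries the entire content of the theorem, is only described, not proved. Sharing a central charge and having hearts of homological dimension $2$ does not pin down the slicing, and your text names the desired conclusion (``one then checks\dots and bootstraps\dots'') without supplying the mechanism. Concretely, what is missing is: (i) the existence, for an \emph{arbitrary} Serre-invariant $\sigma$, of stable objects of the small classes --- this requires the inequality $\ext^1(E,E)\ge 1+a^2+b^2$ for $\sigma$-semistable $E$ of class $a\lambda_1+b\lambda_2$ (which does follow from $\chi(E,E)=-(a^2+b^2)$ and your bounds) combined with a weak Mukai lemma to show that specific simple objects with small $\ext^1$ admit no nontrivial Harder--Narasimhan filtration; (ii) the verification that these stable objects and their phases agree for $\sigma$ and $\sigma_0$, which is also where the orientation compatibility silently assumed in Step 1 must be established (two non-degenerate central charges on a rank-two lattice lie in one $\mathrm{GL}_2^+(\mathbb{R})$-orbit only if they induce the same orientation, and the non-degeneracy of $Z_\sigma$ itself needs an argument); and (iii) a comparison lemma asserting that two stability conditions with equal central charge, for which a suitable generating family of objects is stable of equal phases, have equal hearts --- without this, nothing rules out a nontrivial tilt of $\cA_{\sigma_0}$ carrying the same central charge. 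These three points are precisely what occupies Appendix A of \cite{JLLZ2021gushelmukai}, so as written the proposal reproduces the frame of the known proof but omits its substance.
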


Roughly speaking, stability conditions in the same $\GL$-orbit share the same moduli stack of semistable objects. If we are only interested in moduli stacks or spaces of semistable objects, there is no harm in changing stability conditions in a single $\GL$-orbit. 

For non-Hodge-special GM varieties of even dimension, the following result is proved in \cite[Proposition 4.12]{FGLZ}.

\begin{theorem}\label{thm-unique-threefold}
Let $X$ be a GM fourfold or sixfold. If $X$ is non-Hodge-special, then stability conditions on $\Ku(X)$ are unique up to the $\GL$-action.
\end{theorem}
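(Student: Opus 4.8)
The plan is to reduce the statement to the case of a GM fourfold and then to exploit the fact that, when $X$ is non-Hodge-special, the numerical lattice $\Knum(\Ku(X)) = \langle \Lambda_1, \Lambda_2 \rangle = A_1^{\oplus 2}$ is a rank-two lattice on which the Mukai pairing $(-,-) = -\chi(-,-)$ is \emph{positive definite}: by \eqref{eq-matrix-even} its Gram matrix is $\mathrm{diag}(2,2)$. The sixfold case follows formally from the fourfold case: by \cite[Corollary 6.5]{kuznetsov2019categorical} there is a GM fourfold $X'$ and an exact equivalence $\Ku(X') \simeq \Ku(X)$, which carries the distinguished family $\Stab^{\circ}(\Ku(X))$ and the $\GL$-action to those of $\Ku(X')$ and induces a lattice isometry $\Knum(\Ku(X)) \cong \Knum(\Ku(X'))$ preserving the rank-two numerical lattice; hence $X'$ has the same numerical behaviour as a non-Hodge-special fourfold and it suffices to treat $X$ a GM fourfold.

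The first step is to record the key numerical consequence of positive-definiteness: every nonzero class $v \in \Knum(\Ku(X))$ satisfies $(v,v) > 0$. In particular there is no class $\delta$ with $(\delta,\delta) = -2$, so $\Ku(X)$ contains no spherical objects, and for any $\sigma \in \Stab^{\circ}(\Ku(X))$ the central charge $Z_\sigma$ never vanishes on a nonzero class. This is precisely what removes the ``walls'' that would otherwise obstruct uniqueness.

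Next I would invoke the deformation theory of Bridgeland stability conditions in the form valid for the K3-type category $\Ku(X)$ (the support property built into $\Stab^{\circ}(\Ku(X))$ by Theorem \ref{blms-induce} guarantees its applicability). Identifying $\Hom(\Knum(\Ku(X)), \CC)$ with $\Knum(\Ku(X)) \otimes \CC$ via the pairing, the assignment $\sigma \mapsto Z_\sigma$ is a local homeomorphism from $\Stab^{\circ}(\Ku(X))$ onto an open subset of the period domain $\cP^{+} = \{\,\Omega : \Re\Omega, \Im\Omega \text{ form a positively oriented } \mathbb{R}\text{-basis of } \Knum(\Ku(X)) \otimes \mathbb{R}\,\}$. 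Since the lattice is positive definite of rank two, every $\Omega$ with $\Re\Omega, \Im\Omega$ linearly independent spans a positive-definite plane, so $\cP^{+} \cong \mathrm{GL}^{+}_2(\mathbb{R})$ is connected with $\pi_1 \cong \ZZ$; and the absence of $(-2)$-classes shows that no hyperplanes $\delta^{\perp}$ are deleted, so the period map is a covering onto \emph{all} of $\cP^{+}$. As $\GL = \widetilde{\mathrm{GL}}^{+}_2(\mathbb{R})$ is the universal cover of $\cP^{+}$ and acts freely on $\Stab^{\circ}(\Ku(X))$ compatibly with the period map, the orbit map $\GL \to \Stab^{\circ}(\Ku(X))$ is an isomorphism of covering spaces, whence $\Stab^{\circ}(\Ku(X))$ is a single $\GL$-orbit. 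Concretely, given two stability conditions I would use transitivity of $\mathrm{GL}^{+}_2(\mathbb{R})$ on positively oriented bases of $\CC = \mathbb{R}^2$ to act by an element of $\GL$ making their central charges agree, and then the covering property forces the two hearts, hence the two stability conditions, to coincide.

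The main obstacle is the global control of the period map: one must verify that it is genuinely a covering onto the \emph{entire} component $\cP^{+}$ (surjectivity together with the covering property, not merely a local homeomorphism) and that $\Stab^{\circ}(\Ku(X))$ is connected, so that the orbit-map comparison applies. Establishing this in the noncommutative K3 setting requires the full strength of the deformation and support-property package for $\Ku(X)$, combined with the vanishing of spherical classes proved above. A secondary point, needed to phrase the conclusion for \emph{all} stability conditions rather than only those in the constructed family, is to rule out other connected components of $\Stab(\Ku(X))$; here again the absence of spherical classes is exactly what should force the distinguished component to exhaust the stability manifold.
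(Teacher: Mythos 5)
Your numerical observations are correct and relevant: for non-Hodge-special $X$ the lattice $\Knum(\Ku(X))=\langle\Lambda_1,\Lambda_2\rangle$ has Mukai pairing $\mathrm{diag}(2,2)$, so it carries no spherical classes, and the reduction of the sixfold case to the fourfold case via the duality equivalence is harmless since the statement only concerns the category and its rank-two lattice. Note, however, that the paper does not prove this theorem at all --- it is quoted from \cite[Proposition 4.12]{FGLZ} --- so the comparison here is with your argument on its own terms, and as written it is not a proof: the two steps you defer to the end are precisely the content of the theorem. First, to run the covering-space argument you need the period map to be a covering of the \emph{entire} connected domain $\cP^{+}\cong\mathrm{GL}^{+}_2(\mathbb{R})$. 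The standard mechanism for this is a support property with respect to a single fixed quadratic form on all of the relevant locus; the natural candidate $Q=(-,-)$ is positive definite, so $Q|_{\ker Z}$ is negative definite only when $Z$ is nondegenerate, and you have not excluded stability conditions with degenerate (real-line-valued, ``algebraic'') central charge, nor shown that every stability condition satisfies the support property with respect to this particular $Q$. Absence of spherical classes removes the walls $\delta^{\perp}$ from the period domain but does not by itself give path-lifting or surjectivity.

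Second, and more seriously, your closing remark that ``the absence of spherical classes is exactly what should force the distinguished component to exhaust the stability manifold'' is not backed by any general theorem: connectedness of the full stability manifold is the analogue of Bridgeland's connectedness conjecture and does not follow from the lattice having no $(-2)$-classes. Since the theorem asserts uniqueness for \emph{all} stability conditions on $\Ku(X)$, not merely those in $\Stab^{\circ}(\Ku(X))$ (for which Theorem \ref{thm-stabo-orbit} already gives a single orbit), this is the essential point and it is left unaddressed. For comparison, the uniqueness statements of this type that are actually proved for rank-two numerical Grothendieck groups (e.g.\ Theorem \ref{thm-unique-fourfold} for GM threefolds, and the cited \cite[Proposition 4.12]{FGLZ}) do not proceed by global covering-space theory over the period domain; they take an \emph{arbitrary} stability condition, show that distinguished objects (such as those of class $\pm\Lambda_i$) must be stable for it by exploiting the bound $\chi(v,v)\leq -2$ and weak Mukai-type inequalities on $\mathrm{ext}^1$ of stable objects, normalize the central charge by an element of $\GL$, and then compare hearts directly. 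If you want to complete your route, you must either supply the connectedness and the global covering property, or switch to this direct heart-comparison strategy.
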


In general, we have:

\begin{theorem}[{\cite[Corollary 8.9]{ppzEnriques2023}}]\label{thm-stabo-orbit}
Let $X$ be a GM fourfold or sixfold. Then any two stability conditions in $\Stab^{\circ}(\Ku(X))$ are in the same $\GL$-orbit.
\end{theorem}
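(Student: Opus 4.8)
The plan is to combine Bridgeland's deformation theorem with a connectedness argument, exploiting that the relevant lattice $A_1^{\oplus 2}$ has rank two. Write $\Lambda := A_1^{\oplus 2}$, so that $\Hom(\Lambda, \CC) \cong \CC^2$ is four-real-dimensional. By Bridgeland's theorem the central charge map
\[
\mathcal{Z}\colon \Stab(\Ku(X)) \to \Hom(\Lambda, \CC), \qquad (\cA_\sigma, Z_\sigma) \mapsto Z_\sigma,
\]
is a local homeomorphism onto an open subset, and it intertwines the $\GL$-action on the source with the $\mathrm{GL}_2^+(\mathbb{R})$-action on the factor $\CC \cong \mathbb{R}^2$ in the target. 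The $\GL$-action is free, so its orbits are four-real-dimensional.

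First I would show that the $\GL$-orbit of any $\sigma \in \Stab^\circ(\Ku(X))$ is open. The support property forces $Z_\sigma(\Lambda_1)$ and $Z_\sigma(\Lambda_2)$ to be linearly independent over $\mathbb{R}$, since otherwise $\im Z_\sigma$ would lie in a real line and no heart could satisfy the Harder--Narasimhan axiom. Identifying such an independent pair with an invertible real $2 \times 2$ matrix, the $\mathrm{GL}_2^+(\mathbb{R})$-action becomes left multiplication, whose orbits are exactly the two sign-of-determinant components of $\mathrm{GL}_2(\mathbb{R})$; each is open and acted upon simply transitively. As the canonical orientation of a stability condition is locally constant, $Z_\sigma$ lands in a single such component, so its $\mathrm{GL}_2^+(\mathbb{R})$-orbit is open. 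Pulling this back through the local homeomorphism $\mathcal{Z}$ shows that the $\GL$-orbit of $\sigma$ is open in $\Stab(\Ku(X))$.

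Since distinct $\GL$-orbits are disjoint and each is open, every orbit is also closed in the $\GL$-invariant set $\Stab^\circ(\Ku(X))$. Hence, once connectedness of $\Stab^\circ(\Ku(X))$ is known, it must consist of a single orbit, which is the assertion. To secure connectedness I would proceed in two steps: by Theorem \ref{thm-unique-threefold} the claim already holds whenever $X$ is non-Hodge-special, so $\Stab^\circ(\Ku(X))$ is connected in that case; for arbitrary $X$ I would place it in a locally complete family $\pi\colon \cX \to S$ of GM fourfolds (resp.\ sixfolds), use the relative projection functor $\pr_{\cX}$ together with relative stability conditions to form a space of stability conditions fibered over $S$, and then propagate connectedness from the dense non-Hodge-special locus.

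The main obstacle is precisely this propagation: I must control how $\Stab^\circ$ varies over $S$ and rule out spurious extra components appearing over the Hodge-special (more degenerate) fibers. Here abstract deformation theory alone does not suffice, and I expect to rely on the explicit construction of $\Stab^\circ(\Ku(X))$ in \cite{perry2019stability} — in particular on the fact that these stability conditions are induced, through a connected chain of tilts, from a fixed family on an ambient geometric category — to guarantee that the induced family remains within one connected component throughout the degeneration.
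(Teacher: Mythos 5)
The paper does not prove this statement at all: it is imported verbatim as \cite[Corollary 8.9]{ppzEnriques2023}, so there is no internal argument to compare yours against. Judged on its own terms, your sketch has the right overall shape for a rank-two lattice (a $\GL$-orbit is four-real-dimensional, $\Hom(A_1^{\oplus 2},\CC)$ is four-real-dimensional, so open orbits plus connectedness would finish), but it has two genuine gaps. First, your justification for non-degeneracy of the central charge is wrong as stated: if $Z_{\sigma}(\Lambda_1)$ and $Z_{\sigma}(\Lambda_2)$ were $\mathbb{R}$-collinear, every nonzero object of the heart would be semistable of a single phase, and Harder--Narasimhan filtrations then exist \emph{trivially}; nothing in the axioms of Definition \ref{def-stability-condition} rules such degenerate stability conditions out a priori. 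You would need either to invoke the explicit central charges of the construction in \cite{perry2019stability} (which are visibly non-collinear) together with $\GL$-invariance and openness of the non-degenerate locus, or a categorical argument specific to K3 categories; and you would further have to check that deleting a possible degenerate locus does not disconnect $\Stab^{\circ}(\Ku(X))$.

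Second, and more seriously, the entire content of the statement beyond Theorem \ref{thm-unique-threefold} is the Hodge-special case, and that is exactly the step you leave open. Your plan to "propagate connectedness from the dense non-Hodge-special locus" via relative stability conditions over a family is only a hope: you would need to (i) define a space fibering over $S$ whose fiber over $s$ is all of $\Stab^{\circ}(\Ku(\cX_s))$ (not merely the image of the explicit $(\alpha,\beta)$-construction), (ii) show this total space is connected or at least that fiberwise components do not proliferate over the Hodge-special locus, and (iii) show that lying in a single $\GL$-orbit is an open and closed condition in this relative setting. None of these is supplied, and (i)--(ii) are precisely where the difficulty lies -- openness of orbits is a statement about a \emph{fixed} fiber, and a priori a Hodge-special fiber could acquire extra components of $\Stab^{\circ}$ invisible from the generic fiber. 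You should also verify that $\Stab^{\circ}(\Ku(X))$ is $\GL$-invariant (needed for your "open orbits partition a connected invariant set" argument); this is automatic if $\Stab^{\circ}$ is defined as a connected component, but not if it is defined as the image of a construction. As it stands, the proposal reduces the theorem to a connectedness statement that is essentially equivalent in difficulty to the theorem itself.
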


In our paper, we only consider stability conditions introduced in Theorem \ref{blms-induce} and \ref{thm-unique-fourfold}. Therefore, by Theorem \ref{thm-unique-threefold} and \ref{thm-stabo-orbit}, we can freely vary the stability conditions without changing the stability of objects.

\section{Bridgeland moduli spaces and Bayer--Macr\`i divisors}\label{sec-moduli}

In this section, we first recall some basic facts on the moduli spaces of semistable objects in Kuznetsov components. Then we summarize the results in \cite{FGLZ} and \cite{ppzEnriques2023} on constructing Lagrangian subvarieties of moduli spaces of stable objects in Kuznetsov components of GM fourfolds or sixfolds. Finally, in Proposition \ref{prop-restrict-BM-divisor}, we establish a relation between natural divisor classes on moduli spaces of semistable objects in Kuznetsov components of GM fourfolds or sixfolds and their hyperplane sections.

Let $X$ be a GM fourfold or sixfold. For any stability condition $\sigma_X\in \Stab^{\circ}(\Ku(X))$ and a pair of integers $a,b$, we denote by $\cM^X_{\sigma_X}(a,b)$ the moduli stack parametrizing families of geometrically $\sigma_X$-semistable objects of class $a\Lambda_1+b\Lambda_2$ (cf.~\cite[Definition 21.11]{BLMNPS21}). Similarly, for a GM threefold or fivefold $Y$ and a Serre-invariant stability condition $\sigma_Y$ on $\Ku(Y)$, let $\cM^Y_{\sigma_Y}(a,b)$ be the moduli stack parametrizing families of geometrically $\sigma_Y$-semistable objects of class $a\lambda_1+b\lambda_2$.

According to \cite[Theorem 21.24(3)]{BLMNPS21}, $\cM^X_{\sigma_X}(a,b)$ and $\cM^Y_{\sigma_Y}(a,b)$ are algebraic stacks of finite type over $\CC$, and admit proper good moduli spaces\footnote{See \cite{alp} for the definition of good moduli spaces.} $M^X_{\sigma_X}(a,b)$ and $M^Y_{\sigma_Y}(a,b)$, respectively. 

\subsection{Relative stability conditions}

In our paper, we need to consider the deformation of Bridgeland moduli spaces. For this purpose, we introduce the following notions of relative stability conditions and relative moduli spaces.

Roughly speaking, given a flat projective morphism $\cX\to S$ between quasi-projective schemes over $\CC$ and a suitable full $S$-linear triangulated subcategory $\cD\subset \Db(\cX)$, a \emph{relative stability condition on $\cD$ over $S$} is a collection 
\[\underline{\sigma}=(\underline{\sigma}_s=(Z_{\underline{\sigma}_s}, \cA_{\underline{\sigma}_s}))_{s\in S}\]
of stability conditions on $\cD_s$ for each $s\in S$ satisfying relative versions of properties in Definition \ref{def-stability-condition}, such as the central charge $Z_{\underline{\sigma}_s}$ being locally constant over $S$.

We will not delve into this definition in detail; for precise definitions, we refer the reader to \cite[Definition 21.15]{BLMNPS21}. One of the main features of relative stability conditions is the existence of well-behaved moduli stacks and good moduli spaces. In our paper, we will rely on the following two results.

\begin{theorem}\label{thm-relative-stability-threefold}
Let $\cY\to S$ be a smooth projective family of GM threefolds,  where $S$ is a connected quasi-projective scheme over $\CC$. Then there exists a relative stability condition $\underline{\sigma}$ on $\Ku(\cY)$ over~$S$ such that $\underline{\sigma}_s$ is a Serre-invariant stability condition on $\Ku(\cY_s)$ for each $s\in S$. Moreover, there exists a relative moduli space 
\[\pi\colon M^{\cY}_{\underline{\sigma}}(a,b)\to S\]
for each pair of integers $a,b$ such that $\pi$ is proper and satisfies
\[\pi^{-1}(s)\cong M^{\cY_s}_{\underline{\sigma}_s}(a,b)\]
for each $s\in S$.
\end{theorem}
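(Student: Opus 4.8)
The plan is to first produce the relative stability condition by a relative version of the tilting construction of \cite{bayer2017stability}, and then to obtain the relative moduli space by invoking the general machinery of \cite[Part IV]{BLMNPS21}. Throughout I would work with the relative Kuznetsov component $\Ku(\cY)\subset \Db(\cY)$ and the relative projection functor $\pr_{\cY}$ recalled in Section \ref{subsec-ku}, which are available since $S$ is quasi-projective, hence noetherian, and $\cY\to S$ is a family of GM threefolds.

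First I would construct a relative weak stability condition on $\Db(\cY)$ over $S$. Starting from the standard heart $\Coh(\cY)$ together with relative slope stability defined via the relative ample class $\oh_{\cY}(1)$, I would carry out the two successive tilts used in the BLMS construction, now in the $S$-linear setting. The relative Harder--Narasimhan filtrations and the openness of the tilted hearts in families are precisely the content of the relative tilt-stability developed in \cite[Section 21]{BLMNPS21}; this yields a relative weak stability condition $\underline{\sigma}^{0}$ on $\Db(\cY)$ over $S$ whose fibers are the double-tilted weak stability conditions on $\Db(\cY_s)$. Applying the relative inducing criterion to $\pr_{\cY}$, after the standard rotation of the central charge, then produces a relative stability condition $\underline{\sigma}$ on $\Ku(\cY)$ over $S$. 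Since the numerical lattice $\Knum(\Ku(\cY_s))$ is the fixed rank-two lattice with Euler pairing \eqref{eq-matrix-odd} and the classes $\lambda_1,\lambda_2$ are locally constant over the connected base $S$, the central charge $Z_{\underline{\sigma}_s}$ may be taken constant, so that the local constancy demanded of a relative stability condition holds automatically.

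Next I would check that each fiber $\underline{\sigma}_s$ is Serre-invariant. By construction $\underline{\sigma}_s$ agrees, up to the $\GL$-action, with the stability condition on $\Ku(\cY_s)$ obtained from the absolute BLMS construction, and these are shown to be Serre-invariant in \cite{pertusiGM3fold}. Combined with Theorem \ref{thm-unique-fourfold}, this identifies $\underline{\sigma}_s$ as a Serre-invariant stability condition on each fiber, as required. For the relative moduli space I would then verify the hypotheses of \cite[Theorem 21.24]{BLMNPS21} for the class $a\lambda_1+b\lambda_2$: openness of semistability, which holds for relative stability conditions, together with boundedness of the family of semistable objects, which follows from the fiberwise boundedness and the properness of $\cY\to S$. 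The theorem provides a relative moduli stack of finite type over $\CC$ admitting a proper good moduli space $\pi\colon M^{\cY}_{\underline{\sigma}}(a,b)\to S$. Compatibility of the formation of good moduli spaces with base change to a point $s\in S$, using the identifications $\Ku(\cY)_s\simeq \Ku(\cY_s)$ and $(\pr_{\cY})_s=\pr_{\cY_s}$, finally gives $\pi^{-1}(s)\cong M^{\cY_s}_{\underline{\sigma}_s}(a,b)$.

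I expect the main obstacle to be the construction of the relative stability condition itself, and specifically the support property in families together with the openness of the tilted hearts, so that the fiberwise BLMS data genuinely glue into an $S$-linear relative stability condition rather than merely a pointwise family. Once this gluing is secured, the Serre-invariance of the fibers and the existence and fiberwise description of the relative moduli space follow formally from the cited results.
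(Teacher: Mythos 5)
Your proposal is correct and follows essentially the same route as the paper: the paper's proof simply cites \cite[Proposition 26.1, Theorem 23.1]{BLMNPS21} for the existence of the relative stability condition (which is exactly the relative BLMS/tilting machinery you unpack) and \cite[Theorem 21.24]{BLMNPS21} for the proper relative good moduli space with the stated fibers. Your additional remarks on Serre-invariance via \cite{pertusiGM3fold} and the local constancy of the central charge are consistent with how the paper uses these inputs elsewhere.
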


\begin{proof}
The existence of $\underline{\sigma}$ follows from \cite[Proposition 26.1, Theorem 23.1]{BLMNPS21}. Then the description of the relative moduli space is given in \cite[Theorem 21.24]{BLMNPS21}.
\end{proof}

Recall that each ordinary GM fourfold $X$ has a canonical quadric, which is smooth when $X$ is general (cf.~\cite[Section 2.1, Remark 2.2]{perry2019stability}). More generally, for a family of ordinary GM fourfolds, there is a family of canonical quadrics (cf.~\cite[Section 5.1]{perry2019stability}).

\begin{theorem}\label{thm-relative-stability-fourfold}
Let $X, X'$ be ordinary GM fourfolds with smooth canonical quadrics and $a,b$ be a pair of coprime integers. Let $\sigma_X\in \Stab^{\circ}(\Ku(X))$ be a generic stability condition. Then there exists a smooth connected curve $C$, a family $\cX\to C$ of ordinary GM fourfolds, points $0,1\in C$, and a relative stability condition $\underline{\sigma}$ on $\Ku(\cX)$ over $C$ such that

\begin{enumerate}
    \item $\cX_0=X$ and $\cX_1=X'$,

    \item $\underline{\sigma}_c\in \Stab^{\circ}(\Ku(\cX_c))$ is generic for any $c\in C$,

    \item $M_{\sigma_X}^X(a,b)=M^X_{\underline{\sigma}_0}(a,b)$, and

    \item there exists a relative moduli space 
\[\pi\colon M^{\cX}_{\underline{\sigma}}(a,b)\to C\]
smooth and projective over $C$.
\end{enumerate}
\end{theorem}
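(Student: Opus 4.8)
The plan is to run the proof of Theorem \ref{thm-relative-stability-threefold} in the even-dimensional setting, replacing the Serre-invariant stability conditions of the threefold case by the family $\Stab^{\circ}$ of \cite{perry2019stability}, and to control genericity by exploiting that the central charges of stability conditions in $\Stab^{\circ}$ vanish on $(A_1^{\oplus 2})^{\perp}$ (see Theorem \ref{blms-induce} and the following remark); much of this is already implicit in the deformation argument of \cite[Theorem 1.5]{perry2019stability}. \emph{First I would construct the family.} I work on a quasi-projective parameter space carrying a universal family of ordinary GM fourfolds, such as the Debarre--Kuznetsov space of \cite{debarre:GM-moduli}. Being ordinary with smooth canonical quadric is an open condition, and by irreducibility of the moduli of GM fourfolds the corresponding locus $U$ is open, nonempty and connected. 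Since $X$ and $X'$ define points of $U$ and $U$ is a connected quasi-projective variety, I can join them by a smooth connected curve $C$ mapping to $U$ (a resolution/normalization of an irreducible curve in $U$ through both points), with points $0,1\in C$ mapping to $X,X'$. Pulling back the universal family yields $\cX\to C$ with $\cX_0=X$, $\cX_1=X'$ and all fibers ordinary with smooth canonical quadric, which is $(1)$.

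\emph{Next I would produce the relative stability condition.} Over a family of GM fourfolds with smooth canonical quadrics, the construction of \cite{perry2019stability}, combined with the general theory of relative stability conditions in \cite{BLMNPS21}, produces a relative stability condition $\underline{\sigma}$ on $\Ku(\cX)$ over $C$ with $\underline{\sigma}_c\in\Stab^{\circ}(\Ku(\cX_c))$ for every $c$, whose central charge on the horizontal classes $\Lambda_1,\Lambda_2$ is locally constant. Normalizing so that $\underline{\sigma}_0$ lies in the same $\GL$-orbit as $\sigma_X$ and recalling that the $\GL$-action does not change the (semi)stable objects (Theorem \ref{thm-stabo-orbit}), I obtain $M^X_{\underline{\sigma}_0}(a,b)=M^X_{\sigma_X}(a,b)$, which is $(3)$.

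\emph{The main obstacle is genericity, namely $(2)$: that $\underline{\sigma}_c$ is generic for the primitive class $v=a\Lambda_1+b\Lambda_2$ at \emph{every} $c\in C$, not merely at $c=0$.} The key structural input is that, by Theorem \ref{blms-induce}, $Z_{\underline{\sigma}_c}$ vanishes on $(A_1^{\oplus 2})^{\perp}$; together with local constancy, the restriction of $Z_{\underline{\sigma}_c}$ to $\langle\Lambda_1,\Lambda_2\rangle$ is the same generic form $Z_0$ for all $c$, with $Z_0(\Lambda_1),Z_0(\Lambda_2)$ linearly independent over $\mathbb{R}$. A wall for $v$ would arise from a Jordan--H\"older factor of class $w\in\Knum(\Ku(\cX_c))$ with $Z(w)$ a proper positive multiple of $Z(v)$; projecting $w$ onto $\langle\Lambda_1,\Lambda_2\rangle\otimes\QQ$ and using $Z_0$-injectivity forces $w=\rho v+\delta$ with $\rho\in\QQ\cap(0,1)$ and $\delta\in(A_1^{\oplus 2})^{\perp}$. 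On non-Hodge-special fibers one has $\Knum(\Ku(\cX_c))=\langle\Lambda_1,\Lambda_2\rangle$, so $\delta=0$ and $\rho v$ integral contradicts primitivity of $v$; hence there are no walls. I expect the genuine difficulty to be at the (countably many) Hodge-special fibers met by $C$, where extra integral classes $w=\rho v+\delta$ can exist. I would rule these out by bounding the admissible $w$ via the support property of Definition \ref{def-stability-condition}(3) (which bounds $\langle\delta,\delta\rangle$, leaving only finitely many numerical types), so that the non-generic fibers form a closed proper subset of $C$; perturbing $Z_0$ within the generic locus and choosing $C$ in general position then lets me avoid this subset, while the endpoints $0,1$ are handled directly since $\sigma_X$ is generic and $\underline{\sigma}_1$ may be chosen generic on $X'$.

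\emph{Finally I would assemble the relative moduli space.} With fiberwise genericity and $\gcd(a,b)=1$, every $\underline{\sigma}_c$-semistable object of class $v$ is stable, so by \cite[Theorem 21.24]{BLMNPS21} the relative good moduli space $\pi\colon M^{\cX}_{\underline{\sigma}}(a,b)\to C$ exists, is proper, and restricts to $M^{\cX_c}_{\underline{\sigma}_c}(a,b)$ on each fiber; the relative Bayer--Macr\`i divisor provides relative projectivity. Each fiber is a moduli space of stable objects in the K3 category $\Ku(\cX_c)$, hence smooth and holomorphically symplectic (cf.~Theorem \ref{thm-fourfold-moduli}), so $\pi$ is proper with smooth fibers of constant dimension and is therefore smooth and projective over $C$, giving $(4)$.
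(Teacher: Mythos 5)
The paper's own proof of this theorem is a single line: it cites \cite[Proposition 5.6]{perry2019stability}, where exactly this deformation statement is established. Your proposal instead attempts to reprove that proposition from scratch. The construction of the family $\cX\to C$ and of the relative stability condition, and the reduction of (3) to the $\GL$-orbit statement (Theorem \ref{thm-stabo-orbit}), are fine in outline and agree with the cited source. The problem is your treatment of (2), which is precisely the hard content of \cite[Proposition 5.6]{perry2019stability}.

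Concretely, two steps in your genericity argument do not work as written. First, ``perturbing $Z_0$ within the generic locus'' is not an available move: by Theorem \ref{blms-induce}, \emph{every} stability condition in $\Stab^{\circ}(\Ku(\cX_c))$ has central charge vanishing on $(A_1^{\oplus 2})^{\perp}$, so if a Hodge-special fiber carries an integral Hodge class $w=\rho v+\delta$ with $0<\rho<1$ and $\delta\in(A_1^{\oplus 2})^{\perp}$ that actually supports semistable objects, then $Z(w)\in\mathbb{R}_{>0}\,Z(v)$ for \emph{all} of $\Stab^{\circ}$ on that fiber; such a wall can only be avoided by moving the fiber, not the stability condition. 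Second, ``choosing $C$ in general position'' cannot dispose of the endpoints: $0$ and $1$ are pinned to the given $X$ and $X'$, which are allowed to be Hodge-special, and the theorem asserts genericity of $\underline{\sigma}_1$ at $X'$ with no genericity hypothesis on $X'$ at all. What is actually needed (and what \cite[Proposition 5.6]{perry2019stability} supplies) is an analysis of which extra Hodge classes $w=\rho v+\delta$ can produce strictly semistable objects of class $v$ --- using the support property of Definition \ref{def-stability-condition}(3) to reduce to finitely many Noether--Lefschetz divisors, \emph{together with} an argument that the hypotheses on $X$ and $X'$ (or the genericity of $\sigma_X$) place the endpoints off those divisors --- before one can route $C$ through the complement. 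Your sketch gestures at the first half of this but omits the second, and the ``perturbation'' fallback it leans on is unavailable. A smaller point: in (4), ``proper with smooth equidimensional fibers, therefore smooth'' requires flatness (or smoothness of the total space), which should be extracted from \cite[Theorem 21.24/21.25]{BLMNPS21} rather than asserted. Given that the paper treats this entire statement as a black-box citation, the cleanest fix is to do the same.
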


\begin{proof}
This follows from \cite[Proposition 5.6]{perry2019stability}.
\end{proof}

\subsection{Basic properties of moduli spaces}


In the following, we recapitulate additional properties of moduli spaces of semistable objects in Kuznetsov components.

\begin{theorem}[{\cite[Theorem 1.5]{perry2019stability}}]\label{thm-fourfold-moduli}
Let $X$ be a GM fourfold or sixfold. Let $a,b$ be a pair of coprime integers and $\sigma_X\in \Stab^{\circ}(\Ku(X))$ be a generic stability condition. Then:

\begin{enumerate}
    \item the moduli space $M^X_{\sigma_X}(a,b)$ is a projective hyper-K\"ahler manifold of $\mathrm{K3^{[n]}}$-type, and

    \item there is a natural Hodge isometry
    \[\lambda_{a,b}\colon (a\Lambda_1+b\Lambda_2)^{\perp}\xra{\cong} \rH^2(M^X_{\sigma_X}(a,b), \ZZ),\]
    where the orthogonal is taken inside $\wt{\rH}(\Ku(X), \ZZ)$.
\end{enumerate}
\end{theorem}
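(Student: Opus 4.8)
The plan is to treat both the fourfold and sixfold cases by reducing to the moduli theory of Bridgeland-stable objects in a $K3$ category, and then to pin down the deformation type by specializing to a genuine $K3$ surface. Write $v:=a\Lambda_1+b\Lambda_2$. Since $\gcd(a,b)=1$, the class $v$ is primitive in $A_1^{\oplus 2}$, and the Mukai pairing $(-,-)=-\chi(-,-)$ together with the Euler form \eqref{eq-matrix-even} gives $(v,v)=2(a^2+b^2)>0$; the expected dimension $(v,v)+2=2(a^2+b^2+1)$ is then $2n$ for the parameter $n=a^2+b^2+1$ of the $\mathrm{K3^{[n]}}$-type manifold we are after.

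First I would recall that $\Ku(X)$ is a $K3$ category ($S_{\Ku(X)}=[2]$) carrying the weight-two Hodge structure on $\wt{\rH}(\Ku(X),\ZZ)$ described above, and that a generic $\sigma_X\in\Stab^{\circ}(\Ku(X))$ exists by Theorem \ref{blms-induce}. For such $\sigma_X$ and primitive $v$ with $(v,v)\geq -2$, the general moduli theory for Kuznetsov components developed in \cite{BLMNPS21} (following Bayer--Macr\`i) gives that every semistable object of class $v$ is stable, that the good moduli space $M^X_{\sigma_X}(a,b)$ is smooth and projective of dimension $(v,v)+2$, and that the $K3$-type Serre duality endows it with a holomorphic symplectic form, so it is an irreducible holomorphic symplectic manifold. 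The same machinery produces the Bayer--Macr\`i/Mukai homomorphism $\lambda_{a,b}\colon v^{\perp}\to\rH^2(M^X_{\sigma_X}(a,b),\ZZ)$, which I would check is a morphism of Hodge structures respecting the pairings by comparing the Hodge decomposition of $\wt{\rH}(\Ku(X),\ZZ)$ with that of second cohomology.

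To identify the deformation type as $\mathrm{K3^{[n]}}$-type and to upgrade $\lambda_{a,b}$ to an isometry, I would run a deformation argument. Since the statement is invariant under moving $\sigma_X$ in its $\GL$-orbit (Theorem \ref{thm-stabo-orbit}) and under deformation, it suffices to prove it for one convenient member of the connected family of GM fourfolds. Using Theorem \ref{thm-relative-stability-fourfold}, I would connect $X$ through a smooth projective relative moduli space $M^{\cX}_{\underline{\sigma}}(a,b)\to C$ to a Hodge-special GM fourfold $X'$ whose Kuznetsov component satisfies $\Ku(X')\simeq\Db(S,\alpha)$ for a (possibly twisted) $K3$ surface $S$. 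The fiber over $X'$ is then a moduli space of $\sigma$-stable (twisted) objects on $S$ with primitive Mukai vector, which is of $\mathrm{K3^{[n]}}$-type and for which the Mukai map is a Hodge isometry by Yoshioka and Bayer--Macr\`i. As $M^{\cX}_{\underline{\sigma}}(a,b)\to C$ is smooth and proper over a connected base, all fibers are deformation equivalent; hence $M^X_{\sigma_X}(a,b)$ is of $\mathrm{K3^{[n]}}$-type, and the relative Mukai map together with local constancy transports both the isometry property and the Hodge compatibility of $\lambda_{a,b}$ from the $K3$ fiber to $X$. Primitivity of $v$ is what makes $\lambda_{a,b}$ surjective, hence an isomorphism rather than merely an embedding.

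For the sixfold case, I would reduce to the fourfold case via \cite[Corollary 6.5]{kuznetsov2019categorical}: any GM sixfold $X$ admits a GM fourfold $X'$ with an equivalence $\Ku(X)\simeq\Ku(X')$ matching the sublattices $\langle\Lambda_1,\Lambda_2\rangle$, the Hodge structures, and the families $\Stab^{\circ}$; this identifies the moduli spaces, so the fourfold conclusion applies verbatim. I expect the main obstacle to be the deformation step: one must know that a suitable Hodge-special fourfold with $\Ku\simeq\Db(S,\alpha)$ can be reached inside a single smooth proper relative-moduli family, and that the relative Bayer--Macr\`i construction is genuinely flat and compatible with specialization. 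This is exactly where the positivity and projectivity inputs of Bayer--Macr\`i, in their categorical incarnation in \cite{BLMNPS21}, carry the real weight.
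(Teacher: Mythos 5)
This statement is not proved in the paper at all: it is quoted verbatim from \cite[Theorem 1.5]{perry2019stability} (together with its sixfold analogue), so there is no internal argument to compare your proposal against. That said, your sketch is a faithful reconstruction of the strategy actually used in the cited source and in \cite[Theorem 29.2]{BLMNPS21}: compute $(v,v)=2(a^2+b^2)$ from the Euler form \eqref{eq-matrix-even}, use genericity of $\sigma_X$ and primitivity of $v$ to get a smooth projective moduli space of stable objects of dimension $(v,v)+2$, connect $X$ through a smooth proper relative moduli space over a connected base to a GM fourfold whose Kuznetsov component is geometric, and transport the $\mathrm{K3^{[n]}}$ deformation type and the Mukai--Hodge isometry from the K3 case (Yoshioka, Bayer--Macr\`i) along the family; the sixfold case reduces to the fourfold case via \cite[Corollary 6.5]{kuznetsov2019categorical}. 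Two points where your sketch is looser than the reference: (i) the fourfolds one deforms to are the ordinary ones with smooth canonical quadric (this is what makes the relative moduli space of Theorem \ref{thm-relative-stability-fourfold} available), and the geometric endpoint is a fourfold containing a quintic del Pezzo surface, where $\Ku(X')\simeq\Db(S)$ for an honest, untwisted K3 surface, so one does not need the twisted Bayer--Macr\`i theory; and (ii) the local constancy of the relative Mukai homomorphism and the fact that it lands in $\rH^2$ integrally and surjectively for primitive $v$ is precisely the technical heart of \cite{BLMNPS21} and should not be waved through --- but since the theorem is imported here as a black box, none of this affects the present paper.
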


\begin{remark}\label{rmk-non-hodeg-special}
In particular, if $X$ is non-Hodge-special, then we have $\wt{\rH}_{\Hdg}(\Ku(X), \QQ)=A_1^{\oplus 2}$ and
$$\rH^{1,1}_{\ZZ}(M^X_{\sigma_X}(a,b))\cong (a\Lambda_1+b\Lambda_2)^{\perp}\cap A_1^{\oplus 2}\cong\ZZ\cdot (b\Lambda_1-a\Lambda_2).$$
\end{remark}

For moduli spaces of stable objects in $\Ku(Y)$, we have the following result.

\begin{lemma}\label{lem-structure-3fold-moduli}
Let $Y$ be a general\footnote{Here, the generality assumption means that $Y$ is in an open dense subset of the moduli space of ordinary GM threefolds or fivefolds.} GM threefold or fivefold and $\sigma_Y$ be a Serre-invariant stability condition on $\Ku(Y)$. Then $M_{\sigma_Y}^Y(a,b)$ is smooth and projective for any pair of coprime integers $a,b$.

\end{lemma}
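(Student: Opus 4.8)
The plan is to reduce the fivefold case to the threefold case at the outset, and then to treat projectivity and smoothness separately. For the reduction, recall that by \cite[Corollary 6.5]{kuznetsov2019categorical} every GM fivefold $Y$ admits a GM threefold $Y'$ with a Fourier--Mukai equivalence $\Ku(Y)\simeq\Ku(Y')$. Since all Serre-invariant stability conditions on $\Ku(Y)$ and $\Ku(Y')$ are unique up to the $\GL$-action by Theorem \ref{thm-unique-fourfold}, this equivalence matches $\sigma_Y$ with $\sigma_{Y'}$ up to the $\GL$-action, hence induces an isomorphism $M^Y_{\sigma_Y}(a,b)\cong M^{Y'}_{\sigma_{Y'}}(a,b)$. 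As a general fivefold corresponds to a general threefold, it suffices to prove the statement for $Y$ a general GM threefold.

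For projectivity, note that when $\gcd(a,b)=1$ the class $a\lambda_1+b\lambda_2$ is primitive, so for generic $\sigma_Y$ every semistable object is stable and there are no strictly semistable objects. Thus the good moduli space $M^Y_{\sigma_Y}(a,b)$ of \cite[Theorem 21.24(3)]{BLMNPS21} is a proper algebraic space parametrizing the stable objects themselves. I would then upgrade properness to projectivity by the positivity of the Bayer--Macr\`i divisor $\ell_{\sigma_Y}$: it is nef and strictly positive on curves of non-isomorphic stable objects by the positivity lemma, hence ample, exactly as in the construction of ample line bundles on Bridgeland moduli spaces.

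For smoothness, at a point $[E]$ the tangent space is $\Ext^1(E,E)$ and the obstructions lie in $\Ext^2(E,E)$, so it suffices to show $\Ext^2(E,E)=0$ for every $\sigma_Y$-stable $E$ of class $a\lambda_1+b\lambda_2$. Since $\Ku(Y)$ is an Enriques category with Serre functor $S_{\Ku(Y)}=T_Y\circ[2]$, Serre duality gives
\[\Ext^2(E,E)^{\vee}\cong \Hom(E,T_Y(E)),\]
and the category has homological amplitude $\{0,1,2\}$ on stable objects, so $\ext^i(E,E)=0$ for $i\notin\{0,1,2\}$. Granting the vanishing $\Hom(E,T_Y(E))=0$, stability gives $\hom(E,E)=1$, and the Euler pairing \eqref{eq-matrix-odd} gives $\chi(E,E)=-(a^2+b^2)$; hence $\ext^1(E,E)=1+a^2+b^2$ and the deformations are unobstructed, so $M^Y_{\sigma_Y}(a,b)$ is smooth of dimension $1+a^2+b^2$.

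The main obstacle is therefore the vanishing $\Hom(E,T_Y(E))=0$, equivalently $E\not\cong T_Y(E)$ for every stable $E$. I emphasize that this cannot be seen numerically: since \eqref{eq-matrix-odd} is symmetric, the Serre functor—and hence $T_Y$—acts as the identity on $\Knum(\Ku(Y))$, so $E$ and $T_Y(E)$ always have the same class. Instead I would exploit the Serre-invariance of $\sigma_Y$: writing the $\GL$-element realizing $S_{\Ku(Y)}\cdot\sigma_Y=\sigma_Y\cdot\tilde g$, the functor $T_Y=S_{\Ku(Y)}[-2]$ sends a $\sigma_Y$-stable object of phase $\phi$ to a $\sigma_Y$-stable object whose phase is controlled so that $\Hom(E,T_Y(E))=0$, which is precisely the type of Ext-vanishing established for Serre-invariant stability conditions in \cite{JLLZ2021gushelmukai}. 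Should the phase estimate be insufficient to force the vanishing for a particular class, I would fall back on a genericity argument using the relative moduli space $\pi\colon M^{\cY}_{\underline\sigma}(a,b)\to S$ of Theorem \ref{thm-relative-stability-threefold}: the locus in $S$ over which a fiber carries a $T$-fixed stable object is closed (the relative fixed locus is proper over $S$), so establishing nonemptiness of its complement for one member suffices to conclude $\Ext^2(E,E)=0$ for general $Y$, which completes the proof.
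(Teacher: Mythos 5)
The paper does not prove this lemma at all: it is a direct citation of \cite[Proposition 5.1]{FGLZ}, so any self-contained argument has to be measured against what that reference actually does. Your reduction of fivefolds to threefolds and your Serre-duality setup $\Ext^2(E,E)^{\vee}\cong\Hom(E,T_Y(E))$ are correct, and you correctly compute that smoothness would follow from $E\not\cong T_Y(E)$ for every stable $E$. But the step you flag as the "main obstacle" is a genuine gap, and your primary strategy for it cannot work: since $S_{\Ku(Y)}=T_Y\circ[2]$ and $T_Y^2=\identity$, we have $S^2_{\Ku(Y)}=[4]$, so the $\GL$-element $\tilde g$ realizing Serre-invariance shifts phases by exactly $2$; hence $T_Y(E)=S_{\Ku(Y)}(E)[-2]$ is $\sigma_Y$-stable of \emph{exactly the same phase} as $E$, and stability yields no vanishing of $\Hom(E,T_Y(E))$ whatsoever. (This is in sharp contrast with the cubic-threefold case, where $S^3=[5]$ produces a strict phase drop.) The content of the lemma is precisely that for general $Y$ the involution $\tau_Y$ on $M^Y_{\sigma_Y}(a,b)$ has no fixed points among stable objects, and your fallback specialization argument only shows the bad locus in the base is closed; it never produces the required base case, i.e., a single $Y$ with no $T_Y$-fixed stable object of class $a\lambda_1+b\lambda_2$, for every coprime pair $(a,b)$. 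That base case is the entire difficulty. The route actually taken in \cite{FGLZ} (visible in the present paper's proofs of Theorem \ref{thm-fixed-loci-1-obs} and Corollary \ref{cor-c1-moduli}) is different: for general $Y$ one realizes $Y$ as the branch divisor of a special GM fourfold $X$ and shows that $\pr_X\circ j_*$ induces an \'etale degree-two map $M^Y_{\sigma_Y}(a,b)\to\mathrm{Fix}(\tau_X)\subset M^X_{\sigma_X}(a,b)$; smoothness is then inherited from the fixed locus of an involution on the smooth hyper-K\"ahler moduli space of Theorem \ref{thm-fourfold-moduli}, rather than proved by an Ext-vanishing on $Y$ itself.

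Two smaller points. First, "for generic $\sigma_Y$ every semistable object is stable" is not available as stated: by Theorem \ref{thm-unique-fourfold} the Serre-invariant stability conditions form a single $\GL$-orbit, so there is no genericity to exploit, and the absence of strictly semistable objects for coprime $(a,b)$ must be checked against the specific central charge (it is true, but it is a computation, not a genericity statement). Second, your dimension count $\ext^1(E,E)=1+a^2+b^2$ is fine once $\Ext^2(E,E)=0$ is known, but it is a consequence of the missing step, not a substitute for it.
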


\begin{proof}
This is \cite[Proposition 5.1]{FGLZ}.
\end{proof}



\subsection{Lagrangian subvarieties in moduli spaces}\label{subsec-lag-moduli}

Next, we recall the results in \cite{FGLZ} and \cite{ppzEnriques2023}, which enable us to construct Lagrangian subvarieties of $M_{\sigma_X}^X(a,b)$ via two different methods.

For a GM threefold or fivefold $Y$, there is an involution $T_Y$ on $\Ku(Y)$ (cf.~\cite[Lemma 4.3]{FGLZ}), which acts trivially on $\Knum(\Ku(Y))$. Therefore, by Theorem \ref{thm-unique-threefold}, $T_Y$ induces an involution on the moduli space
\begin{equation}\label{eq-involution-moduli-3fold}
    \tau_Y\colon M_{\sigma_Y}^Y(a,b)\to M_{\sigma_Y}^Y(a,b)
\end{equation}
for any integers $a,b$, where $\sigma_Y$ is a Serre-invariant stability condition on $\Ku(Y)$.

For a GM fourfold or sixfold $X$, there is also an involution $T_X$ on $\Ku(X)$ as in \cite[Lemma 4.3]{FGLZ}, which acts trivially on the sublattice $A_1^{\oplus 2}\subset \wt{\rH}(\Ku(X), \ZZ)$. The following result is proved in \cite{ppzEnriques2023}:

\begin{theorem}[{\cite[Theorem 1.6]{ppzEnriques2023}}]\label{thm-ppz}
Let $X$ be a GM fourfold or sixfold. Given a pair of coprime integers $a,b$ and a generic $\sigma_X\in \Stab^{\circ}(\Ku(X))$, the involution $T_X$ on $\Ku(X)$ induces an anti-symplectic involution
\[\tau_X\colon M_{\sigma_X}^X(a,b)\to M_{\sigma_X}^X(a,b)\]
such that the fixed locus $\mathrm{Fix}(\tau_X)$ is a Lagrangian submanifold.
\end{theorem}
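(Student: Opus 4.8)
The plan is to read off everything from the Hodge-theoretic action of $T_X$ on $\wt{\rH}(\Ku(X),\ZZ)$, transported to $\rH^2$ of the moduli space via the isometry $\lambda_{a,b}$ of Theorem \ref{thm-fourfold-moduli}. First I would check that $T_X$ descends to an automorphism $\tau_X$ of $M:=M_{\sigma_X}^X(a,b)$. Since $T_X$ is of Fourier--Mukai type it acts on families of objects, hence on the moduli stack, so it suffices to verify that it preserves both the numerical class and $\sigma_X$-(semi)stability. The class $a\Lambda_1+b\Lambda_2$ is preserved because $T_X$ acts trivially on $A_1^{\oplus2}$. For stability, $T_X(\sigma_X)$ again lies in $\Stab^{\circ}(\Ku(X))$, so by Theorem \ref{thm-stabo-orbit} it lies in the same $\GL$-orbit as $\sigma_X$; as the $\GL$-action does not change the set of (semi)stable objects, $T_X$ sends $\sigma_X$-semistable objects of class $a\Lambda_1+b\Lambda_2$ to objects of the same kind. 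Thus $T_X$ induces an automorphism of the moduli stack, descending to its good moduli space $M$, and $T_X^2=\mathrm{id}$ makes $\tau_X$ an involution.

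Next I would identify $\tau_X^*$ on $\rH^2(M,\ZZ)$. By naturality of the Mukai map $\lambda_{a,b}$ with respect to autoequivalences — concretely, the (quasi-)universal family on $M\times X$ transforms under $T_X$ by the corresponding lattice isometry — one obtains the compatibility $\tau_X^*\circ\lambda_{a,b}=\lambda_{a,b}\circ T_X$ on $(a\Lambda_1+b\Lambda_2)^{\perp}$. Since $\lambda_{a,b}$ is a Hodge isometry, writing $\sigma_M=\lambda_{a,b}(\omega)$ for a generator $\omega$ of the line $\wt{\rH}^{2,0}(\Ku(X),\CC)$ gives $\tau_X^*\sigma_M=\lambda_{a,b}(T_X\omega)$. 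Hence $\tau_X$ is anti-symplectic if and only if $T_X$ acts by $-1$ on $\wt{\rH}^{2,0}(\Ku(X),\CC)$. As $T_X$ is an involution preserving the weight-two Hodge structure, this action is by $\pm1$, and the whole theorem reduces to pinning down this sign.

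This sign is the main obstacle. Using the Hodge isometry $\ch\colon (A_1^{\oplus2})^{\perp}\xra{\cong}\rH^n_{\prim}(X,\ZZ)$ of Lemma \ref{lem-iso-Hprime}, it is equivalent to computing the action of $T_X$ on $\rH^{n-1,1}(X)$, i.e.\ on the transcendental part of $\rH^n_{\prim}(X)$. I would pin it down by a deformation argument: the eigenvalue of the integral involution $T_X$ on the one-dimensional, holomorphically varying space $\wt{\rH}^{2,0}$ is locally constant over any connected family of GM fourfolds carrying a relative version of $T_X$, so by Theorem \ref{thm-relative-stability-fourfold} it suffices to evaluate it at one convenient point — for instance a Hodge-special $X$ with $\Ku(X)\simeq\Db(S,\alpha)$ for a (twisted) K3 surface $S$, where $T_X$ becomes an explicit autoequivalence whose Hodge action can be read off and equals $-1$ on $\rH^{2,0}$; equivalently, this is precisely the output of the geometric description of $T_X$. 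Ruling out the value $+1$ is essential and not a formality: if $T_X$ acted by $+1$ on $\wt{\rH}^{2,0}$, then (being also trivial on $A_1^{\oplus2}$) it would act trivially on $\rH^2(M)$ and $\tau_X$ would be symplectic, so genuine input about $T_X$ is needed here.

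Finally, granting $\tau_X^*\sigma_M=-\sigma_M$, I would conclude that $\mathrm{Fix}(\tau_X)$ is Lagrangian by the standard eigenspace argument. The fixed locus of the order-two automorphism $\tau_X$ is smooth, and at a fixed point $p$ the tangent space splits into $d\tau_X$-eigenspaces $T_pM=V_+\oplus V_-$ with $T_p\mathrm{Fix}(\tau_X)=V_+$. The relation $\tau_X^*\sigma_M=-\sigma_M$ forces $\sigma_M|_{V_+}=0$ and $\sigma_M|_{V_-}=0$, while $\sigma_M$ restricts to a pairing $V_+\times V_-\to\CC$ which is perfect by non-degeneracy; this gives $\dim V_+=\dim V_-=\tfrac12\dim M$. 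Therefore $\mathrm{Fix}(\tau_X)$ is a smooth isotropic submanifold of half the dimension of $M$, i.e.\ a Lagrangian submanifold, as claimed.
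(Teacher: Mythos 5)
First, a remark on the comparison: the paper does not prove this statement at all --- it is imported verbatim as \cite[Theorem 1.6]{ppzEnriques2023} --- so there is no internal proof to measure you against. Judged on its own terms, your outline follows the natural route (and, in outline, the one taken in the cited source): descend $T_X$ to the good moduli space using its triviality on $A_1^{\oplus 2}$ together with Theorem \ref{thm-stabo-orbit}, transport the action to $\rH^2(M^X_{\sigma_X}(a,b))$ via $\lambda_{a,b}$, reduce ``anti-symplectic'' to the sign of $T_X$ on $\wt{\rH}^{2,0}(\Ku(X),\CC)$, and finish with the $\pm1$-eigenspace argument at a fixed point. The first and last steps are essentially correct (modulo the small point that $T_X(\sigma_X)\in\Stab^{\circ}(\Ku(X))$ itself needs justification before Theorem \ref{thm-stabo-orbit} can be invoked).

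There are, however, two genuine gaps. The decisive one is precisely the step you yourself flag as the main obstacle: you never compute the sign. Asserting that at a Hodge-special point with $\Ku(X)\simeq\Db(S,\alpha)$ the involution ``becomes an explicit autoequivalence whose Hodge action can be read off and equals $-1$'' is not an argument --- you have not identified what $T_X$ corresponds to on the twisted K3 side, and a priori an involution acting trivially on $A_1^{\oplus 2}$ could perfectly well act by $+1$ on $\wt{\rH}^{2,0}$ (the identity does), in which case $\tau_X$ would be symplectic and the conclusion would fail. The input that actually closes this is categorical, coming from the relation between $\Ku(X)$ and the Enriques-type category $\Ku(Y)$ of an odd-dimensional GM variety: the $T_X$-invariant part of the Hochschild homology of $\Ku(X)$ computes that of the quotient category, whose degree-two piece vanishes because its Serre functor is a nontrivial twist of $[2]$; since $\mathrm{HH}_2(\Ku(X))\cong\wt{\rH}^{2,0}(\Ku(X),\CC)\neq 0$, the involution must act there by $-1$. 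Some version of this (or a direct computation of the induced isometry of $\Ktop(\Ku(X))$) is indispensable and is absent from your proposal. The second gap: ``Lagrangian submanifold'' presupposes $\mathrm{Fix}(\tau_X)\neq\varnothing$, and your eigenspace argument only shows that the fixed locus, \emph{if} nonempty, is Lagrangian. Nonemptiness requires separate input --- for instance a holomorphic Lefschetz fixed-point computation, or exhibiting fixed points of the form $\pr_X(j_*E)$ for $E\in\Ku(Y)$ with $j\colon Y\hookrightarrow X$ a hyperplane section, as in Theorem \ref{thm-FGLZ-moduli}.
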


Therefore, one can construct a Lagrangian submanifold $\mathrm{Fix}(\tau_X)$ for each moduli space $M_{\sigma_X}^X(a,b)$. However, as shown in \cite[Corollary 6.8]{FGLZ}, $\mathrm{Fix}(\tau_X)$ is a Lagrangian constant cycle subvariety, hence it is expected to be rigid. 

On the other hand, for a GM variety $X$ of dimension $n\geq 4$ and a smooth hyperplane section $j\colon Y\hookrightarrow X$, there are natural adjoint functors between $\Ku(Y)$ and $\Ku(X)$:
\[j^*\colon \Ku(X)\to \Ku(Y),~\text{and}~\pr_X\circ j_*\colon \Ku(Y)\to \Ku(X).\]
It is shown in \cite{FGLZ} that these functors relate semistable objects in $\Ku(Y)$ and $\Ku(X)$, enabling the construction of a family of Lagrangian subvarieties.

\begin{theorem}[{\cite[Theorem 5.4]{FGLZ}}] \label{thm-FGLZ-moduli}
Given a pair of integers $a,b$, let $X$ be a GM variety of dimension $n\geq 4$ and $j \colon Y \hookrightarrow X$ be a smooth hyperplane section. 

\begin{enumerate}
    \item Assume $X$ is general when $n=4$ and non-Hodge-special when $n=6$. Let $\sigma_Y$ be a Serre-invariant stability condition on $\Ku(Y)$ and $\sigma_X\in \Stab^{\circ}(\Ku(X))$ generic. Then the functor $\pr_X\circ j_*$ induces a finite morphism
\[r\colon M^Y_{\sigma_Y}(a,b) \to M^X_{\sigma_X}(a,b).\]

\item Assume $Y$ is non-Hodge-special when $n=5$. Let $\sigma_X$ be a Serre-invariant stability condition on $\Ku(X)$ and $\sigma_Y\in \Stab^{\circ}(\Ku(Y))$. Then the functor $j^*$ induces a finite morphism
\[r\colon M^X_{\sigma_X}(a,b) \to M^Y_{\sigma_Y}(a,b).\]
\end{enumerate}

Moreover, if $\gcd(a,b)=1$, then 
\begin{itemize}
    \item $r$ is generically unramified and its image is a Lagrangian subvariety in $M^X_{\sigma_X}(a,b)$. $r$ is unramified when $Y$ is a general hyperplane section, and

    \item the restriction $r|_M$ of $r$ to any irreducible component $M$ of $M^Y_{\sigma_Y}(a,b)$ is either a birational map onto its image or a finite morphism of degree $2$ onto its image and \'etale on the smooth locus of $M$.
 
\end{itemize}
\end{theorem}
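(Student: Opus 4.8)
The statement is quoted from \cite[Theorem 5.4]{FGLZ}; here is how I would reconstruct its proof. Throughout I focus on case (1) and write $\Phi := \pr_X\circ j_*\colon \Ku(Y)\to\Ku(X)$; case (2) is entirely parallel, with the even- and odd-dimensional varieties exchanged and $j^*$ in place of $\Phi$. The overall plan is: (i) check that $\Phi$ matches the two numerical problems and preserves slopes; (ii) prove that $\Phi$ carries $\sigma_Y$-semistable objects to $\sigma_X$-semistable objects; (iii) spread this out in families and descend to a finite morphism $r$ of good moduli spaces; and (iv) extract the Lagrangian property and the fiber dichotomy from the Enriques-category relation $j^*\Phi\cong \mathrm{id}\oplus T_Y$. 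For the numerics, Lemma \ref{lem-compute-class}(1) shows $\Phi$ sends a class $a\lambda_1+b\lambda_2$ to $a\Lambda_1+b\Lambda_2$, so the moduli problems correspond. Choosing $\sigma_X,\sigma_Y$ with $Z_{\sigma_X}(\Lambda_i)=Z_{\sigma_Y}(\lambda_i)$ — permissible after acting by $\GL$, using Theorem \ref{thm-unique-threefold} — Remark \ref{rmk-stab-A1} shows the central charges of $F$ and $\Phi(F)$ agree up to a positive scalar, so $\Phi$ is slope-preserving.

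The heart of the matter, and the step I expect to be the main obstacle, is semistability preservation: if $F\in\Ku(Y)$ is $\sigma_Y$-(semi)stable of class $a\lambda_1+b\lambda_2$, then $\Phi(F)$ is $\sigma_X$-(semi)stable. I would argue by contradiction: given a destabilizing subobject $G\hookrightarrow\Phi(F)$ in $\cA_{\sigma_X}$ with $\mu_{\sigma_X}(G)>\mu_{\sigma_X}(\Phi(F))$, apply $j^*$ and use the decomposition $j^*\Phi(F)\cong F\oplus T_YF$ together with the slope compatibility to produce a sub- or quotient object of $F$ or of $T_YF$ violating $\sigma_Y$-semistability. The genuine difficulty is that $j_*$, $\pr_X$, and $j^*$ are not t-exact, so one must track the cohomology of $j^*G$ with respect to the t-structure $\cA_{\sigma_Y}$; controlling the relevant $\Hom$ and $\Ext^1$ groups uses the adjunctions defining these functors, the explicit mutation description of $\pr_X$ from Section \ref{subsec-ku}, and Serre duality in the Enriques category $S_{\Ku(Y)}=T_Y\circ[2]$. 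Ruling out all destabilizing sequences in a way compatible with the central-charge bookkeeping is where most of the work lies.

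With semistability in hand, I would upgrade $\Phi$ to a morphism of moduli spaces. All functors exist in families via the relative Kuznetsov component $\Ku(\cX)$ and the relative projection functor $\pr_{\cX}$ recalled in Section \ref{subsec-ku}, so $\Phi$ induces a morphism of the moduli stacks which descends, by the universal property of good moduli spaces, to a morphism $r\colon M^Y_{\sigma_Y}(a,b)\to M^X_{\sigma_X}(a,b)$. Finiteness then follows by combining properness of both moduli spaces (Theorem \ref{thm-fourfold-moduli} and Theorem \ref{thm-relative-stability-threefold}) with quasi-finiteness: if $r(F)\cong r(F')$ then applying $j^*$ gives $F\oplus T_YF\cong F'\oplus T_YF'$, so by the Krull--Schmidt property of stable objects $F'\in\{F,T_YF\}$; hence the fibers of $r$ have at most two points, and a proper quasi-finite morphism is finite.

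Finally, assume $\gcd(a,b)=1$. A dimension count from the Euler pairings \eqref{eq-matrix-odd}, \eqref{eq-matrix-even} and the Serre functors gives $\dim M^Y_{\sigma_Y}(a,b)=a^2+b^2+1=\tfrac12\dim M^X_{\sigma_X}(a,b)$, since for a generic $\sigma_Y$-stable $F$ one has $\Ext^2_{\Ku(Y)}(F,F)\cong\Hom_{\Ku(Y)}(F,T_YF)^*=0$, whereas $\Ku(X)$ is a K3 category. For genericity and the tangent map, the identification $\Hom_{\Ku(X)}(\Phi F,\Phi F')\cong\Hom_{\Ku(Y)}(F,F')\oplus\Hom_{\Ku(Y)}(F,T_YF')$ (from $j^*\Phi\cong\mathrm{id}\oplus T_Y$) realizes the differential $dr$ as the inclusion of the first summand $\Ext^1_{\Ku(Y)}(F,F)\hookrightarrow\Ext^1_{\Ku(X)}(\Phi F,\Phi F)$, hence injective; this gives generic unramifiedness, and when $Y$ is a general hyperplane section the smoothness of $M^Y_{\sigma_Y}(a,b)$ (Lemma \ref{lem-structure-3fold-moduli}) promotes it to unramifiedness everywhere. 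The holomorphic symplectic form of $M^X_{\sigma_X}(a,b)$ restricts on $\mathrm{im}(dr)$ to the Yoneda pairing valued in $\Ext^2_{\Ku(Y)}(F,F)=0$, so the image is isotropic and, being of half dimension, Lagrangian. The fiber analysis above shows $r$ identifies exactly $F$ with $T_YF$, so $r$ factors through the involution $\tau_Y$ of \eqref{eq-involution-moduli-3fold}: a component $M$ swapped with another by $\tau_Y$ maps birationally onto its image, while a $\tau_Y$-invariant component maps $2:1$ and étale on its smooth locus, giving the stated dichotomy.
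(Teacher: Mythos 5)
This statement is imported verbatim from \cite[Theorem 5.4]{FGLZ}: the paper you are reading gives no proof of it, so there is no internal argument to compare yours against. Judged on its own, your outline is consistent with how the surrounding text uses the ingredients of the cited proof (the adjunction between $j^*$ and $\pr_X\circ j_*$, the comparison of central charges in Remark \ref{rmk-stab-A1}, the relative Kuznetsov component for the family version, and the $\mathrm{Ext}^2$-vanishing/dimension count for the Lagrangian property), and you correctly identify semistability preservation as the step carrying most of the weight.

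One substantive inaccuracy: you repeatedly invoke a decomposition $j^*\circ\pr_X\circ j_*\cong\mathrm{id}\oplus T_Y$, but this does not hold in general. What holds is an exact triangle $T_Y F\to j^*\pr_X(j_*F)\to F$ (this is \cite[Equation (35)]{FGLZ}, used in the proof of Proposition \ref{prop-restrict-BM-divisor}), and it splits precisely when $T_X(\pr_X(j_*F))\cong\pr_X(j_*F)$ --- the proof of Lemma \ref{lem-birational-comp} turns exactly on the failure of this splitting for suitable $F$. Your downstream conclusions survive the correction: quasi-finiteness follows because a stable object of class $a\lambda_1+b\lambda_2$ mapping to $j^*\Phi(F)$ (of class $2(a\lambda_1+b\lambda_2)$, with stable factors among $F$ and $T_YF$) must be isomorphic to $F$ or $T_YF$; and injectivity of the differential follows from the long exact sequence of the triangle together with $\Hom(T_YF,F)=0$ for stable $F$ with $T_YF\not\cong F$, rather than from a Krull--Schmidt decomposition of a direct sum. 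But as written, the splitting claim is false and should be replaced by the triangle throughout.
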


The following lemma is an easy consequence of the results in \cite{LLPZ:higher-dim-moduli}. The proof is similar to that of \cite[Theorem 8.2]{LLPZ:higher-dim-moduli}.

\begin{lemma}\label{lem-birational-comp}
Let $X$ be a general GM fourfold and $j\colon Y\hookrightarrow X$ be a general hyperplane section. Then, for any pair of coprime integers $a,b$ and Serre-invariant stability condition $\sigma_Y$ on $\Ku(Y)$, there exists a $\sigma_Y$-stable object $E\in \Ku(Y)$ of class $a\lambda_1+b\lambda_2$ such that $T_X(\pr_X(j_*E))\neq \pr_X(j_*E)$.
\end{lemma}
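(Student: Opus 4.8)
The plan is to reformulate the statement geometrically and then reduce it to a single convenient GM fourfold by a deformation argument. First recall that, by Theorem \ref{thm-ppz}, the fixed locus is $\mathrm{Fix}(\tau_X)=\{[F]\in M^X_{\sigma_X}(a,b): T_X(F)\cong F\}$, while by Theorem \ref{thm-FGLZ-moduli} the morphism $r$ sends $[E]$ to $[\pr_X(j_*E)]$, so its image is exactly $\{[\pr_X(j_*E)]\}$. Thus the existence of a $\sigma_Y$-stable $E$ with $T_X(\pr_X(j_*E))\not\cong \pr_X(j_*E)$ is equivalent to the assertion that the image of $r$ is \emph{not} contained in $\mathrm{Fix}(\tau_X)$, and I would prove the lemma in this form. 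Since all Serre-invariant stability conditions on $\Ku(Y)$ lie in one $\GL$-orbit (Theorem \ref{thm-unique-fourfold}) and hence define the same moduli space and the same map $r$, the choice of $\sigma_Y$ is immaterial; likewise I may fix the coprime pair $(a,b)$ once and for all.

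Next I would argue that it suffices to exhibit one non-fixed object for a single, conveniently chosen pair $(X_0,Y_0)$. By Theorem \ref{thm-relative-stability-fourfold}, a general GM fourfold $X$ is connected, through a family $\cX\to C$ carrying a relative stability condition and a smooth projective relative moduli space, to an ordinary GM fourfold with smooth canonical quadric; combining this with the relative moduli spaces of the threefold hyperplane sections (Theorem \ref{thm-relative-stability-threefold}) yields a family of morphisms $r$ together with a family of fixed loci $\mathrm{Fix}(\tau)$ over the base. The condition ``$\mathrm{im}(r)\subseteq \mathrm{Fix}(\tau)$'' is closed on the base by properness, so its negation is open; hence, once I verify non-containment at one fiber, it propagates to the general fiber, and in particular to a general $X$ with a general hyperplane section $Y$. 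This is the point at which the lemma becomes an ``easy consequence'' of a single computation.

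Finally, for the convenient pair I would produce the required object directly, following the strategy of the proof of \cite[Theorem 8.2]{LLPZ:higher-dim-moduli}. A useful auxiliary observation is that the two involutions should be intertwined by the functor, $T_X\circ \pr_X\circ j_*\cong \pr_X\circ j_*\circ T_Y$, which can be extracted from the construction of $T_X,T_Y$ in \cite{FGLZ} together with Lemma \ref{lem-compute-class}; granting this, non-containment amounts to $\pr_X(j_*E)\not\cong \pr_X(j_*(T_Y E))$ for some stable $E$, and since a general $E$ satisfies $T_Y E\not\cong E$ (the involution $\tau_Y$ of \eqref{eq-involution-moduli-3fold} being non-trivial), the task reduces to checking that $\pr_X\circ j_*$ does not identify $E$ with $T_Y E$. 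I would verify this on a specialization where $\Ku(X_0)$ is equivalent to $\Db(S)$ for a K3 surface $S$, so that $T_{X_0}$ and the restriction functor admit explicit descriptions and a direct computation separates the two objects. I expect this last step — exhibiting a single stable object of class $a\lambda_1+b\lambda_2$ whose image escapes the fixed locus — to be the main obstacle, since it relies either on an explicit geometric model for the moduli space or on the input of \cite{LLPZ:higher-dim-moduli}; by contrast, the reformulation and the openness/deformation steps are comparatively formal.
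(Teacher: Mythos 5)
Your reformulation (``the image of $r$ is not contained in $\mathrm{Fix}(\tau_X)$'') and the idea of spreading a single verified example over the moduli of pairs $(X,Y)$ are reasonable, but the proposal has a genuine gap exactly where you flag ``the main obstacle'': the base case is never actually established, and the route you sketch for it is unlikely to work. First, specializing to an $X_0$ with $\Ku(X_0)\simeq\Db(S)$ means specializing to a Hodge-special fourfold, where the hypotheses under which $r$ is even defined (Theorem \ref{thm-FGLZ-moduli} requires $X$ \emph{general} when $n=4$) and under which the relative moduli space is well behaved may fail; and even granting existence, the functors $\pr_{X_0}\circ j_*$ and $T_{X_0}$ have no explicit description on the K3 side, so ``a direct computation separates the two objects'' is an unverified assertion, not a proof. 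Second, the statement must be proved for \emph{every} coprime pair $(a,b)$, so a single computation does not suffice: you would need to produce, for each primitive class, a stable object escaping the fixed locus, which is precisely the hard content of the lemma. (A smaller issue: the containment locus $\{c: \mathrm{im}(r_c)\subseteq\mathrm{Fix}(\tau_c)\}$ is constructible rather than visibly closed, so its complement is only guaranteed to contain a dense open set; this is still enough for a ``general'' statement, but the claim that it is open needs justification.)

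The paper's argument is quite different and avoids any deformation: it proceeds by induction on $a^2+b^2$, with base case $a^2+b^2\le 2$ supplied by the explicit moduli descriptions of \cite{JLLZ2021gushelmukai} and \cite[Appendix B]{FGLZ}. For the inductive step one decomposes $v=v_-+v_+$ as in \cite[Proposition 2.13]{LLPZ:higher-dim-moduli}, picks $\sigma_Y$-stable $E_\pm$ of classes $v_\pm$ satisfying the conclusion, and uses \cite[Lemma 2.12]{LLPZ:higher-dim-moduli} to find a $\sigma_Y$-stable extension $0\to E_-\to E\to E_+\to 0$ in $\cA_{\sigma_Y}$ of class $v$. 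If $T_X(\pr_X(j_*E))\cong\pr_X(j_*E)$, then by \cite[Lemma 5.2]{FGLZ} the triangle $T_Y(E)\to j^*\pr_X(j_*E)\to E$ splits, and comparing with the induced exact sequence $0\to j^*\pr_X(j_*E_-)\to j^*\pr_X(j_*E)\to j^*\pr_X(j_*E_+)\to 0$ forces the analogous triangles for $E_\pm$ to split, contradicting the inductive hypothesis. If you want to salvage your approach, you would still need an argument of this inductive type (or an explicit construction for all $(a,b)$) to supply the base case; at that point the deformation step becomes unnecessary.
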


\begin{proof}
We do induction on $a^2+b^2$. If $a^2+b^2\leq 2$, the statement follows from \cite{JLLZ2021gushelmukai} and \cite[Appendix B]{FGLZ}. Assume that the statement holds whenever $a^2+b^2<n$ and $n\geq 3$. Now, we will prove the statement for $v:=a\lambda_1+b\lambda_2$ with $a^2+b^2=n$.

Let $v_-$ and $v_+$ be the decomposition of $v$ as in \cite[Proposition 2.13]{LLPZ:higher-dim-moduli}. By the induction hypothesis, we can find $\sigma_Y$-stable objects $E_-,E_+\in \cA_{\sigma_Y}$ such that $T_X(\pr_X(j_*E_{\pm}))\neq \pr_X(j_*E_{\pm})$. We can assume that $\phi_{\sigma_Y}(E_-)<\phi_{\sigma_Y}(E_+)$. Then, using \cite[Lemma 2.12]{LLPZ:higher-dim-moduli}, we can find a $\sigma_Y$-stable object $E\in \cA_{\sigma_Y}$ with $[E]=v$ such that there is an exact sequence $0\to E_-\to E\to E_+\to 0$ in $\cA_{\sigma_Y}$. 

We claim that $T_X(\pr_X(j_*E))\neq \pr_X(j_*E)$. Indeed, as $Y$ is general, we have $T_Y(E)\neq E$ and $T_Y(E_{\pm})\neq E_{\pm}$. If $T_X(\pr_X(j_*E))= \pr_X(j_*E)$, then by \cite[Lemma 5.2]{FGLZ} we get $j^*\pr_X(j_*E)\cong E\oplus T_Y(E)$. On the other hand, we have the following exact sequence in $\cA_{\sigma_Y}$
\[0\to j^*\pr_X(j_*E_-)\to j^*\pr_X(j_*E)\to j^*\pr_X(j_*E_+)\to 0.\]
This implies that the exact sequence in 
\cite[Lemma 4.10(1)]{FGLZ} associated with $j^*\pr_X(j_*E_{\pm})$ splits, which contradicts $T_X(\pr_X(j_*E_{\pm}))\neq \pr_X(j_*E_{\pm})$. Thus, the result follows.
\end{proof}

\subsection{Bayer--Macr{\`\i} divisors}

In this subsection, we review the construction of Bayer--Macr\`i divisors on moduli spaces introduced in \cite{bayer:projectivity} and prove Proposition \ref{prop-restrict-BM-divisor}, which is crucial for the subsequent sections.


Let $Y$ be a GM threefold and $\sigma_Y$ be a Serre-invariant stability condition on $\Ku(Y)$. We only state the construction for the moduli space $M^Y_{\sigma_Y}(a,b)$ for simplicity. However, the construction for other moduli spaces is exactly the same.

Let $\cE$ be a universal family on $\cM_{\sigma_Y}^Y(a,b)\times X$. For a scheme $T$ and a morphism $T\to \cM_{\sigma_Y}^Y(a,b)$, we denote by $\cE_T$ the associated family of geometrically $\sigma_Y$-stable objects in $\Ku(Y)$. Then we define a real numerical Cartier class $\wt{\ell}_{\sigma_Y}\in \NS(\cM^Y_{\sigma_Y}(a,b))_{\mathbb{R}}$ by
\[\wt{\ell}_{\sigma_Y}.C:=\Im\big(\frac{Z_{\sigma_Y}(p_{Y*}\cE_C)}{-Z_{\sigma_Y}(a\lambda_1+b\lambda_2)}\big)\]
for any integral curve $C$ and morphism $C\to \cM_{\sigma_Y}^Y(a,b)$, where $p_{Y*}\colon \Ku(Y)_C\to \Ku(Y)$ is the push-forward functor and $\Ku(Y)_C\subset \Db(Y_C)$ is the base change of $\Ku(Y)$ (cf.~\cite[Theorem 3.17]{BLMNPS21}). 

According to \cite[Theorem 21.25]{BLMNPS21}, such $\wt{\ell}_{\sigma_Y}\in \NS(\cM^Y_{\sigma_Y}(a,b))_{\mathbb{R}}$ descends to a real numerical Cartier class $\ell_{\sigma_Y}\in \NS(M^Y_{\sigma_Y}(a,b))_{\mathbb{R}}$, which we call the \emph{Bayer--Macr\`i divisor} on $M^Y_{\sigma_Y}(a,b)$. When $\sigma_Y=(\cA_{\sigma_Y}, Z_{\sigma_Y})$ is \emph{rational}, i.e.~the image of $Z_{\sigma_Y}$ is contained in $\QQ[\mathfrak{i}]$, $\ell_{\sigma_Y}$ is a rational numerical Cartier class.

\begin{proposition}\label{prop-restrict-BM-divisor}
Let $X$ be a GM fourfold or sixfold and $j\colon Y\hookrightarrow X$ be a smooth hyperplane section. Let $\sigma_X\in \Stab^{\circ}(\Ku(X))$ and $\sigma_Y$ be a Serre-invariant stability condition on $\Ku(Y)$ such that $Z_{\sigma_X}(\Lambda_i)=Z_{\sigma_Y}(\lambda_i)$ for each $i=1,2$. Assume $a,b$ is a pair of integers such that $\pr_X\circ j_*$ induces a morphism
\[r\colon M^Y_{\sigma_Y}(a,b)\to M^X_{\sigma_X}(a,b).\]
Then we have
\begin{equation}\label{eq-bm}
    r^*\ell_{\sigma_X}=\ell_{\sigma_Y}\in \NS(M^Y_{\sigma_Y}(a,b))_{\mathbb{R}}.
\end{equation}
In particular, $\ell_{\sigma_Y}$ is an ample class if such a morphism $r$ exists.
\end{proposition}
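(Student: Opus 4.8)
The plan is to check \eqref{eq-bm} numerically: a real numerical Cartier class on $M^Y_{\sigma_Y}(a,b)$ is determined by its intersection numbers against integral curves, so it suffices to prove $r^*\ell_{\sigma_X}\cdot C=\ell_{\sigma_Y}\cdot C$ for every integral curve $C$. Fix $f\colon C\to \cM^Y_{\sigma_Y}(a,b)$ inducing $C$ together with a family $\cE^Y_C$ in $\Ku(Y)_C$, and let $\cE^X_C$ be the family in $\Ku(X)_C$ attached to $r\circ f$. By the construction of $r$ in Theorem \ref{thm-FGLZ-moduli} (following \cite{FGLZ}), $\cE^X_C$ agrees with $\Phi(\cE^Y_C)$ up to a twist by a line bundle pulled back from $C$, where $\Phi:=\pr_{X\times C}\circ (j\times\mathrm{id})_*$ is the relative version over $C$ of $\pr_X\circ j_*$. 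By the projection formula and the definition of the Bayer--Macr\`i divisor,
\[
r^*\ell_{\sigma_X}\cdot C=\ell_{\sigma_X}\cdot r_*C=\Im\!\left(\frac{Z_{\sigma_X}(p_{X*}\cE^X_C)}{-Z_{\sigma_X}(a\Lambda_1+b\Lambda_2)}\right),
\]
so the problem reduces to matching numerator and denominator with the corresponding expression for $\ell_{\sigma_Y}\cdot C$.

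Two ingredients are straightforward. The denominators agree because the central charge factors through the numerical class and $Z_{\sigma_X}(\Lambda_i)=Z_{\sigma_Y}(\lambda_i)$; more generally, since $\pr_X(j_*\lambda_i)=\Lambda_i$ by Lemma \ref{lem-compute-class}, the composite $Z_{\sigma_X}\circ\pr_X\circ j_*$ equals $Z_{\sigma_Y}$ on all of $\Knum(\Ku(Y))$. The line-bundle twist relating $\cE^X_C$ and $\Phi(\cE^Y_C)$ can be ignored: twisting the family by $p_C^*L$ changes $p_{X*}\cE^X_C$ by $(\deg L)(a\Lambda_1+b\Lambda_2)$ by Riemann--Roch on $C$, and hence changes the intersection number by $(\deg L)\,\Im\!\big(Z_{\sigma_X}(a\Lambda_1+b\Lambda_2)/(-Z_{\sigma_X}(a\Lambda_1+b\Lambda_2))\big)=(\deg L)\,\Im(-1)=0$. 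This is precisely the twist-invariance that makes $\ell_{\sigma_X}$ and $\ell_{\sigma_Y}$ well defined on the good moduli spaces, and it also shows that the quasi-universal similitude factors are irrelevant.

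The crux is the identity of numerical classes
\[
[p_{X*}\Phi(\cE^Y_C)]=\pr_X\!\big(j_*[p_{Y*}\cE^Y_C]\big)\in\Knum(\Ku(X)).
\]
Here I would use that $Y\subset X$ is a \emph{fixed} hyperplane section, so the families $X\times C\to C$ and $Y\times C\to C$ are constant and the relative functor splits as $\Phi=(\pr_X\circ j_*)\boxtimes\mathrm{id}_{\Db(C)}$, i.e.\ it is the Fourier--Mukai functor with kernel $K\boxtimes\oh_{\Delta_C}$, where $K\in\Db(Y\times X)$ is the kernel of the Fourier--Mukai functor $\pr_X\circ j_*$. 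I would then establish the functor isomorphism
\[
p_{X*}\circ\big((\pr_X\circ j_*)\boxtimes\mathrm{id}\big)\cong (\pr_X\circ j_*)\circ p_{Y*}
\]
by a direct computation on $Y\times X\times C$: using flat base change for the Cartesian square of projections and the projection formula, both sides are identified with $p_{X*}\big(p_{YC}^*(-)\otimes p_{YX}^*K\big)$. Passing to $\mathrm{K}$-classes and applying the central-charge identity $Z_{\sigma_X}\circ\pr_X\circ j_*=Z_{\sigma_Y}$ then gives $r^*\ell_{\sigma_X}\cdot C=\ell_{\sigma_Y}\cdot C$ for all $C$, which is \eqref{eq-bm}.

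Finally, the last assertion is formal: for generic $\sigma_X$---which is exactly the hypothesis under which $r$ exists in Theorem \ref{thm-FGLZ-moduli}---the Bayer--Macr\`i divisor $\ell_{\sigma_X}$ is ample by the positivity lemma of \cite{bayer:projectivity}, and $r$ is finite by the same theorem; since the pullback of an ample class along a finite morphism is ample, $\ell_{\sigma_Y}=r^*\ell_{\sigma_X}$ is ample. The step I expect to require the most care is the main identity of the previous paragraph: making the splitting $\Phi=(\pr_X\circ j_*)\boxtimes\mathrm{id}$ and the base-change computation precise within the relative and good-moduli formalism of \cite{BLMNPS21}, and correctly tracking the quasi-universal similitudes. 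The remaining computations with central charges are routine.
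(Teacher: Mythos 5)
Your proof is correct, and it takes a genuinely more direct route than the paper's. The paper computes $\wt{r}^*\wt{\ell}_{\sigma_X}.C$ by restricting the family $\cF|_{C\times X}=\pr_{X_C}(\mathrm{id}_C\times j)_*(\cE_C)$ back to $Y$, which introduces a factor of $2$ via $j^*\Lambda_i=2\lambda_i$; it then removes that factor by identifying $\cF|_{C\times Y}$ as an extension of $\cE_C$ by $T_{Y_C}(\cE_C)$ (the base change of the triangle $T_Y(E)\to j^*\pr_X(j_*E)\to E$ from \cite{FGLZ}) and using that $T_Y$ acts trivially on $\Knum(\Ku(Y))$. You instead commute the relative functor past the pushforward to the fiber, $p_{X*}\circ\pr_{X_C}\circ(\mathrm{id}_C\times j)_*\cong \pr_X\circ j_*\circ p_{Y*}$, and conclude directly from $\pr_X(j_*\lambda_i)=\Lambda_i$ and the matching of central charges; your base-change-plus-projection-formula verification of this isomorphism is sound (the paper itself invokes the analogous commutation $p_*\circ\Phi_C\cong\Phi\circ p_*$ from \cite[Lemma 2.41]{ku:hyperplane-section} for $T_{Y_C}$, and uses exactly your route in the fivefold variant, Proposition \ref{prop-restrict-BM-divisor-5fold}). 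What your route loses is the byproduct $\tau_Y^*\ell_{\sigma_Y}=\ell_{\sigma_Y}$ recorded in Remark \ref{rmk-bm-divisor-fix}, which the paper extracts from the same computation and reuses later. Two minor remarks: the worry about line-bundle twists and similitudes is unnecessary here, since the computation is done on the moduli stack where a genuine universal family exists and \eqref{eq-restrict-family-1} holds on the nose (though your twist-invariance check is correct and harmless); and for the ampleness assertion you should quote the ampleness of $\ell_{\sigma_X}$ for $\sigma_X\in\Stab^{\circ}(\Ku(X))$ (the paper cites \cite[Theorem 3.1]{sacca:moduli-ku}) rather than the positivity lemma of \cite{bayer:projectivity}, whose statement gives nefness in general.
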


\begin{proof}
We denote by $\wt{r}\colon \cM^Y_{\sigma_Y}(a,b)\to \cM^X_{\sigma_X}(a,b)$ the corresponding morphism between moduli stacks. Recall that $\wt{r}$ is induced as follows. For any morphism $T\to \cM^Y_{\sigma_Y}(a,b)$ corresponding to the family $\cE_T\in \cM^Y_{\sigma_Y}(T)$ of geometrically $\sigma_Y$-stable objects on $Y_T$, we can construct an object $\pr_{X_T}\circ (\mathrm{id}_T\times j)_* (\cE_T)$, where $\pr_{X_T}:=\pr_X\otimes \mathrm{id}_{\Dqc(T)}$ is the base change of $\pr_X$\footnote{As the composition $\Db(X)\xra{\pr_X} \Ku(X)\hookrightarrow \Db(X)$ is a Fourier--Mukai functor, we can pull-back its kernel to $\Dqc(X_T\times_T X_T)$, which defines $\pr_{X_T}\colon \Dqc(X_T)\to \Dqc(X_T)$.}. By assumption, this is a family of geometrically $\sigma_X$-semistable objects on $X_T$, thereby yielding an object in~$\cM^X_{\sigma_X}(T)$. From the construction, we see
\begin{equation}\label{eq-restrict-family-1}
    \cF|_{C\times X}=\pr_{X_C}\circ (\mathrm{id}_C\times j)_* (\cE_C)
\end{equation}
for any integral curve $C$ and morphism $C\to \cM_{\sigma_Y}^Y(a,b)$, where $\cE$ and $\cF$ are universal families on $\cM^Y_{\sigma_Y}(a,b)\times Y$ and $\cM^X_{\sigma_X}(a,b)\times X$, respectively.

By definition, we have
\[\wt{r}^*\wt{\ell}_{\sigma_X}.C=\Im\big(\frac{Z_{\sigma_X}(p_{X*}\cF|_{C\times X})}{-Z_{\sigma_X}(a\Lambda_1+b\Lambda_2)}\big),\]
where $p_{X*}\colon \Ku(X)_C\to \Ku(X)$ is the push-forward functor. Given that $Z_{\sigma_X}(\Lambda_i)=Z_{\sigma_Y}(\lambda_i)$ and $j^*\Lambda_i=2\lambda_i$ for each $i=1,2$, we see $$2Z_{\sigma_X}(p_{X*}\cF|_{C\times X})=Z_{\sigma_Y}(j^*p_{X*}\cF|_{C\times X})$$ by Remark \ref{rmk-stab-A1}. Then, since $p_{Y*}\cF|_{C\times Y}=j^*p_{X*}\cF|_{C\times X}$ by the theorem of base change, we obtain 
\[2\Im\big(\frac{Z_{\sigma_X}(p_{X*}\cF|_{C\times X})}{-Z_{\sigma_X}(a\Lambda_1+b\Lambda_2)}\big)=\Im\big(\frac{Z_{\sigma_Y}(p_{Y*}\cF|_{C\times Y})}{-Z_{\sigma_Y}(a\lambda_1+b\lambda_2)}\big).\] Therefore, we get 
\begin{equation}\label{eq-bm-divisor-1}
\wt{r}^*\wt{\ell}_{\sigma_X}.C=\frac{1}{2}\Im\big(\frac{Z_{\sigma_Y}(p_{Y*}\cF|_{C\times Y})}{-Z_{\sigma_Y}(a\lambda_1+b\lambda_2)}\big).
\end{equation}

As in Section~\ref{subsec-lag-moduli}, there is an involution $\tau_Y\colon M^Y_{\sigma_Y}(a,b)\to M^Y_{\sigma_Y}(a,b)$ induced by the involution~$T_Y$ on $\Ku(Y)$, corresponding to an involution of moduli stacks $\wt{\tau}_Y\colon \cM^Y_{\sigma_Y}(a,b)\to \cM^Y_{\sigma_Y}(a,b)$ (see \eqref{eq-involution-moduli-3fold}). Therefore,
\begin{equation}\label{eq-bm-divisor-2}
\wt{\tau}_Y^*\wt{\ell}_{\sigma_Y}.C=\Im\big(\frac{Z_{\sigma_Y}(p_{Y*}T_{Y_C}(\cE_C))}{-Z_{\sigma_Y}(a\lambda_1+b\lambda_2)}\big),
\end{equation}
for any morphism $C\to \cM^Y_{\sigma_Y}(a,b)$ from an integral curve $C$, where $\cE_C$ is the associated family and $T_{Y_C}:=T_Y\otimes \mathrm{id}_{\Db(C)}$ is the base change of $T_Y$\footnote{The functor $T_{Y_C}:=T_Y\otimes \mathrm{id}_{\Db(C)}\colon \Db(Y_C)\to \Db(Y_C)$ is defined by the pull-back of the kernel of the composition $\Db(Y)\xra{\pr_Y} \Ku(Y)\hookrightarrow \Db(Y)$ to $\Db(Y_C\times_C Y_C)$.}. Note that $\cE_C\in \Db(X_C)$ by \cite[Lemma 8.3(1)]{BLMNPS21}. From \cite[Lemma 2.41]{ku:hyperplane-section}, we obtain an isomorphism $p_{Y*}(T_{Y_C}(\cE_C))\cong T_Y(p_{Y*}(\cE_C))$. This implies 
\begin{equation}\label{eq-fix-bm-divisor}
\wt{\tau}_Y^*\wt{\ell}_{\sigma_Y}.C=\wt{\ell}_{\sigma_Y}.C
\end{equation}
as $T_Y$ acts trivially on $\Knum(\Ku(Y))$ by \cite[Lemma 4.10]{FGLZ}.

Consequently, to prove the proposition, it is enough to show
\[2\wt{r}^*\wt{\ell}_{\sigma_X}. C=\wt{\ell}_{\sigma_Y}.C+\wt{\tau}_Y^*\wt{\ell}_{\sigma_Y}.C.\]
We claim that there exists an exact triangle
\begin{equation}\label{eq-bm-divisor-triangle}
    T_{Y_C}(\cE_C)\to \cF|_{C\times Y}\to \cE_C,
\end{equation}
and the result follows from \eqref{eq-bm-divisor-1} and \eqref{eq-bm-divisor-2}. Indeed, from \eqref{eq-restrict-family-1}, we see
\[\cF|_{C\times Y}=(\mathrm{id}_C\times j)^*\circ \pr_{X_C}\circ (\mathrm{id}_C\times j)_* (\cE_C)=\big(\mathrm{id}_{\Db(C)}\otimes (j^*\circ \pr_X\circ j_*)\big)(\cE_C),\] then \eqref{eq-bm-divisor-triangle} follows from the base change of \cite[Equation (35)]{FGLZ}. This completes the proof of \eqref{eq-bm}. Finally, the ampleness of $\ell_{\sigma_Y}$ follows from the facts that $\ell_{\sigma_X}$ is ample (cf.~\cite[Theorem 3.1]{sacca:moduli-ku}) and the morphism $r$ is finite according to Theorem~\ref{thm-FGLZ-moduli}.
\end{proof}

\begin{remark}\label{rmk-bm-divisor-fix}
In the proof of Proposition~\ref{prop-restrict-BM-divisor}, we  also obtain $\tau_Y^*\ell_{\sigma_Y}=\ell_{\sigma_Y}\in \NS(M^Y_{\sigma_Y}(a,b))_{\mathbb{R}}$ in \eqref{eq-fix-bm-divisor} for any GM threefold or fivefold $Y$, Serre-invariant stability condition $\sigma_Y$ on $\Ku(Y)$, and integers $a,b$.
\end{remark}

\begin{remark}\label{rmk-bm-divisor-4fold}
Under the isometry 
\[\lambda_{a,b}\colon (a\Lambda_1+b\Lambda_2)^{\perp}\xra{\cong} \rH^2(M^X_{\sigma_X}(a,b), \mathbb{R}),\]
the divisor class $\ell_{\sigma_X}$ is proportional to $b\Lambda_1-a\Lambda_2$ for any $\sigma_X\in \Stab^{\circ}(\Ku(X))$. This can be deduced either by employing the calculations as in \cite[Lemma 9.2]{bayer:projectivity} or by using a deformation argument, given that  $(a\Lambda_1+b\Lambda_2)^{\perp}\cap (A_1^{\oplus 2})$ 
is generated by $b\Lambda_1-a\Lambda_2$ when $X$ is non-Hodge-special (cf.~Remark \ref{rmk-non-hodeg-special}).
\end{remark}

Similarly, using \cite[Equation (40)]{FGLZ} instead of \cite[Equation (35)]{FGLZ}, we have the following result for GM fivefolds.

\begin{proposition}\label{prop-restrict-BM-divisor-5fold}
Let $X$ be a GM fivefold and $j\colon Y\hookrightarrow X$ be a smooth hyperplane section. Let $\sigma_Y\in \Stab^{\circ}(\Ku(Y))$ and $\sigma_X$ be a Serre-invariant stability condition on $\Ku(X)$ such that $Z_{\sigma_Y}(\Lambda_i)=Z_{\sigma_X}(\lambda_i)$ for each $i=1,2$. Assume $a,b$ is a pair of integers such that $j^*$ induces a morphism
\[r\colon M^X_{\sigma_X}(a,b)\to M^Y_{\sigma_Y}(a,b).\]
Then we have
\begin{equation}\label{eq-bm-5fold}
    r^*\ell_{\sigma_Y}=\ell_{\sigma_X}\in \NS(M^X_{\sigma_X}(a,b))_{\mathbb{R}}.
\end{equation}
In particular, $\ell_{\sigma_X}$ is an ample class if such a morphism  $r$ exists.
\end{proposition}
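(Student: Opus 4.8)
The plan is to mirror the proof of Proposition~\ref{prop-restrict-BM-divisor} verbatim, exchanging the roles of $X$ and $Y$ and replacing the functor $\pr_X\circ j_*$ with its adjoint $j^*$. First I would pass to the moduli stacks, writing $\wt{r}\colon \cM^X_{\sigma_X}(a,b)\to \cM^Y_{\sigma_Y}(a,b)$ for the morphism induced by $j^*$: for any test scheme $T\to \cM^X_{\sigma_X}(a,b)$ corresponding to a family $\cF_T$ of geometrically $\sigma_X$-semistable objects on $X_T$, the object $(\mathrm{id}_T\times j)^*(\cF_T)$ gives a family of geometrically $\sigma_Y$-semistable objects on $Y_T$, so that $\cE|_{C\times Y}=(\mathrm{id}_C\times j)^*(\cF_C)$ for universal families $\cE$, $\cF$ on the respective products with $Y$ and $X$. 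Unwinding the definition of the Bayer--Macr\`i class gives
\[
\wt{r}^*\wt{\ell}_{\sigma_Y}.C=\Im\Big(\frac{Z_{\sigma_Y}(p_{Y*}\cE|_{C\times Y})}{-Z_{\sigma_Y}(a\lambda_1+b\lambda_2)}\Big),
\]
where now $\cE|_{C\times Y}=(\mathrm{id}_C\times j)^*(\cF_C)$, i.e.~$p_{Y*}\cE|_{C\times Y}=j^*p_{X*}\cF|_{C\times X}$ by base change.

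Next I would apply the second half of Remark~\ref{rmk-stab-A1}, the one adapted to GM fivefolds, which for $F\in \Ku(Y)$ with $Y$ the hyperplane section reads $Z_{\sigma_X}(\pr_X(j_*F))=2Z_{\sigma_Y}(F)$; here, since $X$ is the fivefold and $Y$ its hyperplane section, the relevant relation is $2Z_{\sigma_X}(\pr_Y(j_*G))$-type identity together with $j^*\Lambda_i=2\lambda_i$ and the compatibility $Z_{\sigma_Y}(\Lambda_i)=Z_{\sigma_X}(\lambda_i)$. The upshot is the analogue of \eqref{eq-bm-divisor-1}, an identity of the form
\[
\wt{r}^*\wt{\ell}_{\sigma_Y}.C=\tfrac{1}{2}\Im\Big(\frac{Z_{\sigma_X}(p_{X*}\cF_C\oplus T_{X_C}(\cF_C))}{-Z_{\sigma_X}(a\lambda_1+b\lambda_2)}\Big),
\]
where I have anticipated the appearance of both $\cF_C$ and its image under the base-changed involution $T_{X_C}$. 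In parallel, I would record, exactly as in \eqref{eq-bm-divisor-2} and \eqref{eq-fix-bm-divisor}, that the involution $\tau_X$ on $M^X_{\sigma_X}(a,b)$ fixes $\ell_{\sigma_X}$, using that $T_X$ acts trivially on $\Knum(\Ku(X))$ (this is the fivefold case of \cite[Lemma 4.10]{FGLZ}) and the base-change compatibility $p_{X*}(T_{X_C}(\cF_C))\cong T_X(p_{X*}(\cF_C))$ from \cite[Lemma 2.41]{ku:hyperplane-section}.

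The crux is the exact triangle replacing \eqref{eq-bm-divisor-triangle}. The composite $j^*\circ \pr_Y\circ j_*$ is not what appears; rather, restricting the universal family along $j^*$ and then reconstituting it involves the functor $\pr_Y\circ j_*\circ j^*$ applied to $\cF$, so the relevant categorical input is \cite[Equation (40)]{FGLZ} in place of \cite[Equation (35)]{FGLZ}, yielding a triangle of the form
\begin{equation*}
T_{X_C}(\cF_C)\to \pr_{Y_C}\circ(\mathrm{id}_C\times j)_*(\mathrm{id}_C\times j)^*(\cF_C)\to \cF_C
\end{equation*}
after applying base change. Combining this triangle with the additivity of $Z_{\sigma_X}$ on triangles gives $2\,\wt{r}^*\wt{\ell}_{\sigma_Y}.C=\wt{\ell}_{\sigma_X}.C+\wt{\tau}_X^*\wt{\ell}_{\sigma_X}.C$, and since the two terms on the right coincide by the fixed-class computation, we get $\wt{r}^*\wt{\ell}_{\sigma_Y}.C=\wt{\ell}_{\sigma_X}.C$ for every integral curve $C$, whence \eqref{eq-bm-5fold} after descending to good moduli spaces via \cite[Theorem 21.25]{BLMNPS21}. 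Finally, ampleness of $\ell_{\sigma_X}$ follows because $\ell_{\sigma_Y}$ is ample by \cite[Theorem 3.1]{sacca:moduli-ku} and $r$ is finite by Theorem~\ref{thm-FGLZ-moduli}(2). I expect the only real subtlety to be bookkeeping with the normalization factors of $2$ and confirming that \cite[Equation (40)]{FGLZ} produces the triangle in exactly the orientation needed so that the $T_X$-term and the $\ell_{\sigma_X}$-term land on the correct sides; the rest is a direct transcription of the fourfold/sixfold argument.
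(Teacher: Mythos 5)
Your proposal is correct and follows essentially the same route as the paper's argument: pass to moduli stacks, convert $Z_{\sigma_Y}$ of the restricted family into $Z_{\sigma_X}$ of $\pr_X(j_*j^*\cE_C)$ via the fivefold half of Remark~\ref{rmk-stab-A1}, invoke the exact triangle for $\pr_X\circ j_*\circ j^*$ from \cite[Equation (40)]{FGLZ}, and use that $T_X$ acts trivially on $\Knum(\Ku(X))$ so the two outer terms contribute equally. The only blemishes are notational: you twice write $\pr_Y$ where $\pr_X$ is meant (since $j_*$ lands in $\Db(X)$, the projection must be onto $\Ku(X)$), and the orientation of the triangle you worry about is immaterial because only numerical classes enter the central charge.
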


\section{Atomic and 1-obstructed objects}\label{sec-atomic-object}

In this section, we provide a brief overview of atomic and 1-obstructed objects, along with the results relevant to our study. We recommend \cite{beckmann:atomic-object} and \cite{markman:rank-1-obstruction} for more details. Subsequently, we offer a criterion (Theorem~\ref{atomic-criterion-thm}) to verify the atomicity for sheaves supported on Lagrangians, which generalizes \cite[Theorem 1.8]{beckmann:atomic-object} to immersed Lagrangian subvarieties.

\subsection{Recollections}
We fix a projective hyper-K\"ahler manifold $X$ of dimension $2n>2$. 
\subsubsection{Extended Mukai lattice}

The second cohomology $\rH^2(X,\ZZ)$ is equipped with an integral primitive quadratic form $q_X$, called the \emph{Beauville--Bogomolov--Fujiki (BBF) form}, whose signature is $(3,b_2(X)-3)$. Motivated by the theory of K3 surfaces, one can associate $X$ with the \emph{extended Mukai lattice} (or \emph{rational LLV lattice})
$$\tilde{\rH}(X,\QQ):=\QQ\alpha\oplus \rH^2(X,\QQ)\oplus\QQ\beta,$$
which is first introduced by \cite{Taelman23}. This is a rational vector space with a quadratic form~$\tilde{q}$, whose restriction on $\rH^2(X,\QQ)$ is $q_X$, and $\alpha,\beta$ are isotropic vectors orthogonal to $\rH^2(X,\QQ)$ satisfying $\tilde{q}(\alpha,\beta)=-1$. The elements $\alpha$ and $\beta$ are of degree $-2$ and $2$, respectively, and vectors in $\rH^2(X,\QQ)$ are of degree 0.


We denote by $h$ the \emph{grading operator} defined by 
$$h|_{\rH^k(X,\QQ)}\colon \rH^k(X,\QQ)\rightarrow \rH^k(X,\QQ), \quad x\mapsto (k-2n)x.$$
For an element $\omega\in \rH^2(X,\QQ)$, we define an operator $e_{\omega}\in\mathrm{End(H^*(X,\QQ))}$ by cupping with $\omega$. We say $\omega$ satisfies the \emph{Hard Lefschetz property} if there is an associated $\mathfrak{sl}_2$-triple $(e_\omega, h, \Lambda_\omega)$ acting on $\rH^*(X,\QQ)$, i.e.~there is a Lie homomorphism
$$\rho_\omega\colon \mathfrak{sl}_2\rightarrow\mathrm{End}(\rH^*(X,\QQ)).$$

The \emph{Looijenga--Lunts--Verbitsky (LLV) algebra} $\mathfrak{g}(X)$ is introduced in \cite{LL97} and \cite{Ver96}, which is the subalgebra of $\mathrm{End}(\rH^*(X,\QQ))$ generated by $(e_\omega, h, \Lambda_\omega)$ for all $\omega\in \rH^2(X,\QQ)$ with the Hard Lefschetz property. On the other hand, there is an action of $\mathfrak{g}(X)$ on $\tilde{\rH}(X,\QQ)$ defined by
$$e_{\omega}(\alpha)=\omega,\quad e_{\omega}(\lambda)=q_X(\omega, \lambda)\beta,\quad e_{\omega}(\beta)=0,\quad\forall\lambda\in \rH^2(X,\QQ).$$

\subsubsection{Atomic objects}
Now we define the notion of atomic objects introduced in \cite{beckmann:atomic-object}, which were independently studied by \cite{markman:rank-1-obstruction}, where the author referred to them as cohomologically 1-obstructed objects.

\begin{definition}\label{def-atomic}
An object $E\in\Db(X)$ is called \emph{atomic} if there is a non-zero  $\tilde{v}(E)\in\tilde{\rH}(X,\QQ)$ such that $\mathrm{Ann}(v(E))=\mathrm{Ann}(\tilde{v}(E))\subset\mathfrak{g}(X)$, where $\mathrm{Ann}(-)$ denotes the annihilator Lie subalgebra.
\end{definition}

Such a vector $\tilde{v}(E)\in\tilde{\rH}(X,\QQ)$ is called an \emph{extended Mukai vector of $E$} (cf.~\cite[Definition 4.16]{Beckmann-derived} and \cite[Proposition 3.3]{beckmann:atomic-object}). In \cite[Section 6]{markman:rank-1-obstruction}, the $1$-dimensional subspace of $\tilde{\rH}(X,\QQ)$ spanned by $\tilde{v}(E)$ is called the \emph{LLV line of $E$}. Note that by definition, the atomicity of an object~$E$ is detected by the linear span of $v(E)$.

By definition, the property of $E$ being atomic is a condition on its Lie subalgebra $\mathrm{Ann}(v(E))$. Indeed, this is equivalent to saying that $\mathrm{Ann}(v(E))$ has the smallest possible positive codimension. 

\begin{proposition}[{\cite[Proposition 3.1]{beckmann:atomic-object}}]\label{ann-max-dim}
An object $E\in\Db(X)$ is atomic if and only if $\mathrm{Ann}(v(E))$ is of dimension $b_2(X)+1$.
\end{proposition}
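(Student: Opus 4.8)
The plan is to translate the statement entirely into the representation theory of the LLV algebra. Write $\mathfrak{g}=\mathfrak{g}(X)$. I would track the codimension of $\mathrm{Ann}(v(E))$ in $\mathfrak{g}$, equivalently $\dim\big(\mathfrak{g}\cdot v(E)\big)$; this is the invariant equal to $b_2(X)+1$ in the statement (cf.\ the codimension reformulation stated just before it). The starting point is the Looijenga--Lunts--Verbitsky structure theorem: $\mathfrak{g}$ is the simple Lie algebra $\mathfrak{so}(\tilde{\rH}(X,\QQ),\tilde{q})$, and under this identification the action on the extended Mukai lattice $\tilde{\rH}(X,\QQ)$ (of dimension $b_2(X)+2$) is the standard vector representation. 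Since atomicity is \emph{defined} by the condition $\mathrm{Ann}(v(E))=\mathrm{Ann}(\tilde{v})$ for some nonzero $\tilde{v}\in\tilde{\rH}(X,\QQ)$, and $\mathrm{Ann}(w)$ is the Lie algebra stabiliser of a vector $w$, the entire proof can be phrased in terms of orbit dimensions, with $b_2(X)+1$ the value attached to nonzero standard vectors.

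For the forward direction I would simply compute the orbit of a nonzero $\tilde{v}\in\tilde{\rH}(X,\QQ)$ under $\mathrm{SO}(\tilde{\rH}(X,\QQ))$. Whether $\tilde{q}(\tilde{v})\neq 0$ (so the orbit is the affine quadric $\{\tilde{q}=\tilde{q}(\tilde{v})\}$) or $\tilde{v}\neq 0$ is isotropic (so the orbit is the punctured isotropic cone), the orbit has dimension $\dim\tilde{\rH}(X,\QQ)-1=b_2(X)+1$; here one uses that $\tilde{q}$ has Witt index $\geq 1$, guaranteed by the hyperbolic plane $\QQ\alpha\oplus\QQ\beta$, so that $\mathrm{SO}$ acts transitively on nonzero isotropic vectors. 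By orbit--stabiliser, $\mathrm{Ann}(\tilde{v})$ then has codimension $b_2(X)+1$, and the same holds for $\mathrm{Ann}(v(E))$ when $E$ is atomic.

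The substance is the converse, where the plan is to show that $b_2(X)+1$ is the \emph{smallest} positive orbit dimension occurring in \emph{any} finite-dimensional $\mathfrak{g}$-representation, attained only along standard-vector orbits. Decomposing the ambient representation into irreducibles $V_\lambda$, the orbit of any nonzero vector has in its closure the cone over the closed (highest-weight) orbit $G/P_\lambda\subset\PP(V_\lambda)$, so its dimension is at least $1+\dim(G/P_\lambda)$. For $\mathfrak{so}(\tilde{\rH}(X,\QQ))$ the minimum of $\dim(G/P_\lambda)$ over nontrivial fundamental weights is realised by the standard weight $\omega_1$, whose closed orbit is the projective quadric of dimension $b_2(X)$, giving cone dimension $b_2(X)+1$; the exterior-power and spin weights yield strictly larger closed orbits, and here I would invoke $b_2(X)\geq 3$ (and treat any low-rank coincidence, e.g.\ the triality case $\mathfrak{so}_8$, separately) to make the comparison uniform. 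Thus an orbit of dimension exactly $b_2(X)+1$ forces $v(E)$ to lie in a copy of the standard representation.

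Granting this, the argument concludes as follows: if $\mathrm{Ann}(v(E))$ has codimension $b_2(X)+1$, the equality case of the orbit bound identifies $\mathrm{Ann}(v(E))$ with the stabiliser of a vector inside a standard subrepresentation, hence with a subalgebra conjugate under $\mathrm{SO}(\tilde{\rH}(X,\QQ))$ to some standard stabiliser $\mathrm{Ann}(\tilde{v}_0)$; transporting $\tilde{v}_0$ by the conjugating element produces a nonzero $\tilde{v}\in\tilde{\rH}(X,\QQ)$ with $\mathrm{Ann}(v(E))=\mathrm{Ann}(\tilde{v})$, so $E$ is atomic. The main obstacle is exactly the equality-case analysis: proving not merely that the minimal orbit dimension is $b_2(X)+1$, but that it is attained \emph{only} on standard-vector orbits and that the resulting stabiliser is genuinely a vector stabiliser, so that one can upgrade equality of codimensions to an actual equality of subalgebras. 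Ruling out accidental minimal orbits coming from decomposable vectors or other representations is the representation-theoretic heart of the proof; everything else is orbit--stabiliser bookkeeping.
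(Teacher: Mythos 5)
The paper gives no proof of this proposition --- it is imported verbatim from \cite[Proposition 3.1]{beckmann:atomic-object} --- so your argument can only be judged on its own terms. Your reading of ``dimension $b_2(X)+1$'' as the \emph{codimension} of $\mathrm{Ann}(v(E))$ in $\mathfrak{g}(X)$, i.e.\ $\dim(\mathfrak{g}(X)\cdot v(E))=b_2(X)+1$, is the correct one (the literal dimension of the annihilator would be $\binom{b_2+2}{2}-(b_2+1)$), and your forward direction is sound: for $0\neq\tilde v\in\tilde{\rH}(X,\QQ)$ one has $\mathfrak{g}(X)\cdot\tilde v=\tilde v^{\perp}$, of dimension $b_2(X)+1$ whether or not $\tilde v$ is isotropic; this Lie-algebra computation is even cleaner than your group-orbit argument and sidesteps rationality questions about transitivity over $\QQ$.

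The converse, however, rests on a claim that is false, and it is precisely the claim carrying the weight of the proof. You assert that orbit dimension $b_2(X)+1$ is ``attained only on standard-vector orbits.'' Since $P_{k\omega_1}=P_{\omega_1}$, every irreducible $V_{k\omega_1}$ --- in particular the Verbitsky component $SH(X,\QQ)\cong V_{n\omega_1}$, which is always a summand of $\rH^*(X,\QQ)$ --- contains vectors (the images of $k$-th powers $\tilde\lambda^{\,k}$ of isotropic $\tilde\lambda\in\tilde{\rH}(X,\QQ)$) whose orbit has dimension exactly $b_2(X)+1$ yet which lie in no standard subrepresentation. These do not contradict the proposition, because $\mathrm{Ann}(\tilde\lambda^{\,k})=\mathrm{Ann}(\tilde\lambda)$ is still a vector stabiliser, but your equality-case analysis has to \emph{prove} this rather than exclude such vectors; as written it would wrongly force the $SH$-component of $v(E)$ into a standard summand. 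Worse, your comparison of closed-orbit dimensions across fundamental weights genuinely fails in the low-rank cases you set aside: for $b_2=4$ the half-spin representations of $\mathfrak{so}_6\cong\mathfrak{sl}_4$ have all orbits of dimension $4<b_2+1$, and for $b_2=6$ the half-spins of $\mathfrak{so}_8$ have minimal orbits of dimension exactly $b_2+1$ whose stabilisers are triality twists of $\mathfrak{so}_7$, not conjugate to any $\mathrm{Ann}(\tilde v)$ --- a would-be counterexample unless one shows such summands cannot occur in $\rH^*(X,\QQ)$ or cannot carry a Mukai vector, and your argument uses no information about which irreducibles actually appear or about the degrees in which $v(E)$ lives. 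Finally, when $v(E)$ has nonzero components in several summands, equality of $\dim(\mathfrak{g}\cdot v(E))$ with $b_2(X)+1$ only forces the component-wise annihilators to coincide; identifying the common subalgebra as $\mathrm{Ann}(\tilde v)$ for an honest $\tilde v\in\tilde{\rH}(X,\QQ)$ is an additional step. None of this is orbit--stabiliser bookkeeping; it is the missing content of the proof.
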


As the action of the LLV algebra on the extended Mukai lattice behaves well under derived equivalences (cf.~\cite{Taelman23}) and deformations, it is demonstrated in \cite{beckmann:atomic-object} that being atomic is invariant under these two operations.

\begin{proposition}[{\cite[Proposition 3.10]{beckmann:atomic-object}}]\label{prop-atomic-lagrangian-deform}
Let $E\in\Db(X)$, then the property of being atomic for $E$ is invariant under derived equivalences. Moreover, given a smooth proper family $\mathcal{X}\rightarrow B$ of hyper-K\"ahler manifolds such that $B$ is connected and $\mathcal{E}\in \Db(\cX)$, then for two points $b, b'\in B$, $\mathcal{E}_b$ is atomic if and only if $\mathcal{E}_{b'}$ is. 
\end{proposition}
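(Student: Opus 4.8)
The plan is to reduce both assertions to the numerical criterion of Proposition \ref{ann-max-dim}: $E$ is atomic if and only if $\dim \mathrm{Ann}(v(E)) = b_2(X)+1$. Since this is a statement purely about the dimension of the annihilator of the Mukai vector inside the LLV algebra, it suffices, in each of the two situations, to produce a Lie algebra isomorphism between the relevant LLV algebras that intertwines the cohomological actions and matches the two Mukai vectors; such an isomorphism carries annihilator to annihilator and hence preserves its dimension.

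For the derived equivalence statement, let $\Phi \colon \Db(X) \xra{\simeq} \Db(X')$ be an equivalence and $\Phi^{\rH}\colon \rH^*(X,\QQ) \xra{\cong} \rH^*(X',\QQ)$ its cohomological realization, which satisfies $\Phi^{\rH}(v(E)) = v(\Phi(E))$. By \cite{Taelman23}, conjugation by $\Phi^{\rH}$ carries $\mathfrak{g}(X)$ isomorphically onto $\mathfrak{g}(X')$ and is equivariant for the two actions on $\rH^*$; write $\phi(g) := \Phi^{\rH}\circ g \circ (\Phi^{\rH})^{-1}$. Then for $g \in \mathfrak{g}(X)$ one has $\phi(g)(v(\Phi(E))) = \Phi^{\rH}(g(v(E)))$, so $\phi$ restricts to a linear isomorphism $\mathrm{Ann}(v(E)) \xra{\cong} \mathrm{Ann}(v(\Phi(E)))$. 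Consequently $\dim \mathrm{Ann}(v(E)) = \dim\mathrm{Ann}(v(\Phi(E)))$, and Proposition \ref{ann-max-dim} gives the claim. Equivalently, the extended lattice isometry of \cite{Taelman23} transports $\tilde{v}(E)$ to an extended Mukai vector of $\Phi(E)$.

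For the deformation statement, I would show that $b \mapsto \dim\mathrm{Ann}(v(\cE_b))$ is locally constant on $B$, whence constant since $B$ is connected. Over a simply connected open $U \subset B$, Ehresmann's theorem and the Gauss--Manin connection furnish parallel-transport ring isomorphisms $\mathrm{pt}_{b,b'}\colon \rH^*(\cX_b,\QQ) \xra{\cong} \rH^*(\cX_{b'},\QQ)$ for $b,b'\in U$. These are isometries of $\rH^2$ commuting with cup product, so they send the Lefschetz operator $e_\omega$ to $e_{\mathrm{pt}_{b,b'}(\omega)}$ and fix the canonical grading operator $h$; hence they map $\mathfrak{g}(\cX_b)$ isomorphically onto $\mathfrak{g}(\cX_{b'})$. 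Moreover, since $\cE$ is a global object, the fibrewise Chern characters $\ch(\cE_b)$, together with $\sqrt{\td(\cX_b)}$, assemble into flat sections of $R\pi_*\QQ$, so $\mathrm{pt}_{b,b'}(v(\cE_b)) = v(\cE_{b'})$. Thus $\mathrm{pt}_{b,b'}$ carries the pair $(\mathfrak{g}(\cX_b), v(\cE_b))$ to $(\mathfrak{g}(\cX_{b'}), v(\cE_{b'}))$ and therefore identifies the two annihilators, yielding the desired local constancy and, by Proposition \ref{ann-max-dim}, the invariance of atomicity.

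The two genuinely substantive inputs, and the places where I expect the real work to sit, are: (i) Taelman's equivariance of the cohomological action under derived equivalence \cite{Taelman23}, which is precisely what allows annihilators living in different LLV algebras to be compared; and (ii) the assertion that parallel transport preserves the LLV algebra and that the Mukai vector of a family object is flat. Point (ii) is what drives the deformation argument, and the main obstacle is to justify carefully that the subalgebra $\mathfrak{g}(\cX_b) \subset \mathrm{End}(\rH^*(\cX_b,\QQ))$ is stable under monodromy; this follows from its generation by the $\mathfrak{sl}_2$-triples attached to $\rH^2$-classes, together with the fact that parallel transport is a ring isomorphism carrying these generators to the corresponding generators on the nearby fibre.
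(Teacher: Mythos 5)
Your argument is correct and is essentially the standard proof: the paper itself gives no proof of this statement, citing it directly as \cite[Proposition 3.10]{beckmann:atomic-object}, and your reconstruction matches Beckmann's argument there (reduce to the dimension criterion of Proposition \ref{ann-max-dim}, transport the pair $(\mathfrak{g}(X), v(E))$ via Taelman's cohomological realization of the equivalence in the first case, and via Gauss--Manin parallel transport --- under which both the LLV algebra and the fibrewise Mukai vectors of a global object are flat --- in the second). The only point you gloss over is that $b_2$ itself is matched on both sides, but this is immediate since the two LLV algebras are identified as Lie algebras, so no gap results.
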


In particular, we have the following.

\begin{proposition}\label{prop-immersed-deform}
Given a smooth proper family $\mathcal{X}\rightarrow B$ of projective hyper-K\"ahler manifolds such that $B$ is connected. Let $\cY$ be an equidimensional scheme and $\cY\to B$ be a morphism such that fibers are equidimensional of the same dimension. Then for any proper morphism $f\colon \cY \to \cX$ and $\cF\in \Db(\cY)$, if there exists a point $0\in B$ such that $f_{0_*}\cF_0\in \Db(\cX_0)$ is an atomic object, then $f_{b_*}\cF_{b}\in \Db(\cX_{b})$ is atomic for any $b\in B$.
\end{proposition}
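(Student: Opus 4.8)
The plan is to produce a single object on the total space $\cX$ whose fiberwise restrictions recover $f_{b*}\cF_b$, and then to feed it into the deformation invariance of atomicity, Proposition~\ref{prop-atomic-lagrangian-deform}. Concretely, I would set $\cE:=f_*\cF\in\Db(\cX)$ (all functors derived, following our conventions). Since $f$ is proper and $\cX$ is noetherian, $f_*$ preserves coherence by Grothendieck's theorem, and a proper morphism has cohomological amplitude bounded by its relative dimension, so $f_*$ preserves boundedness; hence $\cE\in\Db(\cX)$ is a genuine object, i.e.\ a family over $B$ via $\cX\to B$.

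The heart of the argument is base change. Write $i_b\colon\cX_b\hookrightarrow\cX$ and $j_b\colon\cY_b\hookrightarrow\cY$ for the fiber inclusions and $f_b\colon\cY_b\to\cX_b$ for the induced map, so that these four maps form a cartesian square. I would show that the natural base-change morphism $i_b^*f_*\cF\to f_{b*}j_b^*\cF=f_{b*}\cF_b$ is an isomorphism, i.e.\ that $\cE_b\cong f_{b*}\cF_b$. This holds as soon as the square is Tor-independent, i.e.\ $\Tor^{\oh_\cX}_{>0}(\oh_{\cX_b},\oh_\cY)=0$. To verify this, use that $\cX\to B$ is smooth, hence flat, and that $B$ is regular, so that locally near $b$ the point is cut out by a regular sequence $t_1,\dots,t_d$ with $d=\dim_b B$; pulling back along the projection $p\colon\cX\to B$ gives $\oh_{\cX_b}\simeq\mathrm{Kos}_\cX(p^*t_\bullet)$, and tensoring with $\oh_\cY$ yields $\oh_{\cX_b}\otimes^{\bL}_{\oh_\cX}\oh_\cY\simeq\mathrm{Kos}_\cY(q^*t_\bullet)$, where $q\colon\cY\to B$ is the structure map. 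The required vanishing is therefore equivalent to $q^*t_1,\dots,q^*t_d$ being a regular sequence on $\cY$, i.e.\ to the flatness of $q$ along $\cY_b$.

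This is exactly where the equidimensionality hypothesis enters, and it is the main obstacle, since nothing so far used it. The fibers $\cY_b$ are equidimensional of the constant value $\dim\cY-\dim B$, which is the expected codimension $d$ of $\cY_b$ in $\cY$; combined with the regularity of $B$ and the Cohen--Macaulay property of $\cY$ (in particular when $\cY$ is smooth, as in our applications), miracle flatness shows that $q\colon\cY\to B$ is flat, which yields the sought Tor-independence. When $B$ is a smooth curve this step is even more elementary, as flatness over a smooth curve only requires that no associated point of $\cY$ lie over a closed point of $B$, which is immediate from the constancy of the fiber dimension.

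Granting the base-change isomorphism $\cE_b\cong f_{b*}\cF_b$ for all $b\in B$, the proposition follows at once. By hypothesis $\cE_0=f_{0*}\cF_0$ is atomic, so applying Proposition~\ref{prop-atomic-lagrangian-deform} to the connected smooth proper family $\cX\to B$ and to the object $\cE\in\Db(\cX)$ shows that $\cE_b=f_{b*}\cF_b$ is atomic for every $b\in B$.
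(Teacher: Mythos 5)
Your proposal follows the same route as the paper's proof: set $\cE:=f_*\cF\in\Db(\cX)$, identify $\cE|_{\cX_b}$ with $f_{b*}\cF_b$ by base change, and conclude with Proposition~\ref{prop-atomic-lagrangian-deform}. The only point of divergence is the justification of the base-change isomorphism. The paper notes that $\cX_b\hookrightarrow\cX$ is a locally complete intersection and that the equidimensionality hypotheses give $\mathrm{codim}_{\cY}(\cY_b)=\mathrm{codim}_{\cX}(\cX_b)=\dim B$, and then cites Kuznetsov's exact-cartesian-square criterion; you instead reduce to Tor-independence and derive it from flatness of $\cY\to B$ via miracle flatness. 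Two comments on that step. First, flatness of $\cY\to B$ is stronger than what you need: it suffices that the regular sequence cutting out $b\in B$ pull back to a regular sequence on $\cY$ along $\cY_b$, which is a condition at the single fiber. Second, and more substantively, miracle flatness (and even your curve case, where one must also rule out \emph{embedded} associated points of $\cY$ lying over $b$, which constancy of fiber dimension does not do) requires $\cY$ to be Cohen--Macaulay, an assumption absent from the statement; you flag this yourself. This is not merely an artifact of your method: for equidimensional non-CM $\cY$ the needed Tor-independence can genuinely fail even when all fibers are equidimensional of constant dimension --- for instance two planes in $\mathbb{A}^4$ meeting at a point, mapped to $\mathbb{A}^2$ by $(x+z,y+w)$, have zero-dimensional fibers everywhere, yet $x\cdot(y+w)=0$ while $x\neq 0$ in $\oh_{\cY}/(x+z)$, so $\Tor_1\neq 0$ over the origin. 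So your argument proves the proposition whenever $\cY$ is Cohen--Macaulay --- in particular in every application made of it in this paper, where $\cY$ is smooth --- but not in the literal generality of the statement; some codimension-plus-regularity input of exactly this kind is what the paper's citation is packaging.
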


\begin{proof}
We may assume that $B$ is equidimensional, otherwise, we can apply the result to each irreducible component of $B$. For any $b\in B$, we consider the Cartesian diagram
\[\begin{tikzcd}
	{\cY_b} & {\cX_b} \\
	\cY & \cX
	\arrow["{f_b}", from=1-1, to=1-2]
	\arrow[hook, from=1-1, to=2-1]
	\arrow[hook, from=1-2, to=2-2]
	\arrow["f", from=2-1, to=2-2]
\end{tikzcd}\]
By our assumption, $\cX_b$ is smooth, hence $\cX_b\hookrightarrow \cX$ is a locally complete intersection. Moreover, $\mathrm{codim}_{\cY} (\cY_b)=\mathrm{codim}_{\cX}(\cX_b)=\dim B$. Then using \cite[Corollary 2.27]{ku:hyperplane-section}, we obtain
\begin{equation}\label{eq-prop-sec4}
    (f_*\cF)|_{\cX_b}\cong f_{b_*}\cF_{b}.
\end{equation}
As $f$ is proper, we see $f_*\cF\in \Db(\cX)$. Then applying Proposition \ref{prop-atomic-lagrangian-deform} to $f_*\cF$ and using \eqref{eq-prop-sec4}, we see $f_{b_*}\cF_{b}$ is atomic for every $b\in B$.
\end{proof}

\subsubsection{1-obstructed objects}\label{subsubsec-1-obs}
For $E\in\Db(X)$, one of the significant results in \cite{beckmann:atomic-object} and \cite{markman:rank-1-obstruction} is to establish a relation between $E$ being atomic and obstructions of the Mukai vector $v(E)$ to stay Hodge-type. We first review two different obstruction maps. 

Recall that the group of degree 2 polyvector fields of $X$ is $$\HT^2(X):=\rH^2(X,\oh_X)\oplus \rH^1(X,\mathcal{T}_X)\oplus \rH^0(X,\wedge^2\mathcal{T}_X).$$
Then, for $x\in \rH^*(X, \QQ)$, we have a natural morphism 
$$\chi^{\mathrm{coh}}_{x}\colon \HT^2(X)\rightarrow \rH^*(X,\CC),\quad\mu\mapsto\mu\lrcorner x,$$
defined by contraction on vector fields. We call $\chi^{\mathrm{coh}}_{v(E)}$ the \emph{cohomological obstruction map} of an object $E\in \Db(X)$. The following theorem enables us to study atomic sheaves through their cohomological obstruction maps.

\begin{theorem}[{\cite[Theorem 1.2, Remark 4.1]{beckmann:atomic-object}}]\label{Beckmann-equiv-atomic}
For an element $x\in \bigoplus_{p\geq 0} \rH^{p,p}_{\QQ}(X)$, the map $\chi^{\mathrm{coh}}_{x}$ is of rank one if and only if $\mathrm{Ann}(x)=\mathrm{Ann}(\tilde{x})$ for a non-zero element $\tilde{x}\in \tilde{\rH}(X, \QQ)$.

In particular, an object $E\in\Db(X)$ is atomic if and only if $\chi^{\mathrm{coh}}_{v(E)}$ is of rank one.
\end{theorem}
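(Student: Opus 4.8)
The plan is to deduce the statement from the interplay between the contraction action of $\HT^2(X)$ and the Looijenga--Lunts--Verbitsky action on $\rH^*(X,\CC)$. The ``in particular'' clause is immediate once the first assertion is proved: taking $x=v(E)$, which lies in $\bigoplus_{p\geq 0}\rH^{p,p}_{\QQ}(X)$, the first statement says that $\chi^{\mathrm{coh}}_{v(E)}$ has rank one precisely when $\mathrm{Ann}(v(E))=\mathrm{Ann}(\tilde{x})$ for some non-zero $\tilde{x}\in\tilde{\rH}(X,\QQ)$, which is the definition of $E$ being atomic (Definition \ref{def-atomic}). So I would focus on the first biconditional, for an arbitrary rational Hodge class $x$, and throughout write $\chi^{\mathrm{coh}}_x(\mu)=\mu\lrcorner x$.

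First I would reinterpret the obstruction map through the LLV algebra. Using the HKR isomorphism together with the holomorphic symplectic form $\sigma$ (which identifies $\cT_X\cong\Omega_X$), the three summands of $\HT^2(X)=\rH^2(X,\oh_X)\oplus\rH^1(X,\cT_X)\oplus\rH^0(X,\wedge^2\cT_X)$ act on $\rH^*(X,\CC)$ by operators of degree $+2$, $0$ and $-2$ for the grading operator $h$, respectively: cupping by $(0,2)$-classes, the Kodaira--Spencer operators attached to $\rH^{1,1}$, and contraction against the dual symplectic bivector. By Taelman's analysis of the LLV action \cite{Taelman23} (see also \cite{LL97,Ver96} and \cite{beckmann:atomic-object}) these operators all lie in the complexified LLV algebra $\mathfrak{g}(X)_{\CC}$, so contraction defines a linear map $c\colon\HT^2(X)\to\mathfrak{g}(X)_{\CC}$ with image in $\mathfrak{g}_{-2}\oplus\mathfrak{g}_0\oplus\mathfrak{g}_2$, and $\chi^{\mathrm{coh}}_x(\mu)=c(\mu)\cdot x$ where $c(\mu)$ acts through the LLV representation. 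Writing $\mathfrak{a}:=\mathrm{im}(c)$, this yields the key reformulation $\rank\,\chi^{\mathrm{coh}}_x=\dim_{\CC}(\mathfrak{a}\cdot x)$. The care required here is Hodge bookkeeping: the Hodge shifts of a fixed element of $\mathfrak{g}(X)$ on $\rH^*(X)$ and on the standard representation $\tilde{\rH}(X,\QQ)$ differ by a weight twist, so the two actions cannot be compared naively.

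Next I would control the full orbit from the partial one. The subspace $\mathfrak{a}$ contains raising and lowering operators built from $\sigma$, $\bar{\sigma}$ and all of $\rH^{1,1}$; one checks that together with its complex conjugate $\bar{\mathfrak{a}}$ it generates $\mathfrak{g}(X)_{\CC}$ as a Lie algebra. Since $x$ is a rational, hence real, class, $\dim(\bar{\mathfrak{a}}\cdot x)=\dim(\mathfrak{a}\cdot x)$, and the $\mathfrak{sl}_2$-triple determined by $\sigma$ lets one propagate one-dimensionality of $\mathfrak{a}\cdot x$ to the whole orbit. Concretely, I expect to show that $\dim(\mathfrak{a}\cdot x)=1$ holds if and only if the orbit $\mathfrak{g}(X)\cdot x$ attains the smallest possible positive dimension, equivalently $\mathrm{Ann}(x)$ satisfies the extremal condition of Proposition \ref{ann-max-dim}.

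Finally I would match the two annihilators. Under the isomorphism $\mathfrak{g}(X)\cong\mathfrak{so}(\tilde{\rH}(X,\QQ))$, one decomposes $\rH^*(X,\CC)$ into its LLV-irreducible (Verbitsky) summands; the orbits of minimal positive dimension occurring in these summands are exactly those of vectors in the standard representation $\tilde{\rH}(X,\QQ)$. Thus minimality of $\mathfrak{g}(X)\cdot x$ produces a non-zero $\tilde{x}\in\tilde{\rH}(X,\CC)$ with $\mathrm{Ann}(x)\subseteq\mathrm{Ann}(\tilde{x})$, and the dimension count forces equality; since $\mathrm{Ann}(x)$ is defined over $\QQ$ and determines its minimal orbit direction, a rationality argument places $\tilde{x}$ in $\tilde{\rH}(X,\QQ)$. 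The converse reverses this: if $\mathrm{Ann}(x)=\mathrm{Ann}(\tilde{x})$ then $\dim(\mathfrak{a}\cdot x)=\dim(\mathfrak{a}-\mathfrak{a}\cap\mathrm{Ann}(x))=\dim(\mathfrak{a}\cdot\tilde{x})$, and the latter is seen to be one because $\mathfrak{a}\cdot\tilde{x}$ is confined to a single isotropic direction in the standard representation. The hard part is this last step: identifying exactly which LLV-orbits are minimal, producing a rational $\tilde{x}$, and doing so while tracking the weight twist relating the cohomological and standard actions. This is where the representation theory of $\mathfrak{so}(\tilde{\rH})$ and the explicit extended Mukai vector of \cite{beckmann:atomic-object,markman:rank-1-obstruction} carry the real weight of the argument.
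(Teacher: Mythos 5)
The paper does not prove this statement: it is quoted verbatim from \cite[Theorem 1.2, Remark 4.1]{beckmann:atomic-object} and used as a black box, so there is no in-paper argument to compare yours against. I can therefore only assess your outline as a reconstruction of the cited proof. Your opening move is the correct one and is indeed the engine of Beckmann's argument: the contraction operators $\mu\lrcorner(-)$ for $\mu\in\HT^2(X)$ all lie in $\mathfrak{g}(X)_{\CC}$ (Taelman/Verbitsky), so $\chi^{\mathrm{coh}}_{x}$ factors through a subspace $\mathfrak{a}\subset\mathfrak{g}(X)_{\CC}$ of dimension $b_2(X)$ and $\rank\chi^{\mathrm{coh}}_{x}=\dim(\mathfrak{a}\cdot x)$. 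The ``in particular'' reduction is also fine.

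Two of your subsequent steps, however, are not proofs as written. First, the propagation step: knowing that $\mathfrak{a}$ and $\bar{\mathfrak{a}}$ generate $\mathfrak{g}(X)_{\CC}$ as a Lie algebra does not let you control $\dim(\mathfrak{g}(X)_{\CC}\cdot x)$ by $\dim(\mathfrak{a}\cdot x)$ and $\dim(\bar{\mathfrak{a}}\cdot x)$; a bracket $[A,B]$ acts on $x$ through $ABx-BAx$, i.e.\ through second-order terms whose span is not bounded by the first-order images. So $\dim(\mathfrak{a}\cdot x)=1$ does not formally imply that the orbit $\mathfrak{g}(X)\cdot x$ has the minimal positive dimension of Proposition \ref{ann-max-dim}; establishing this requires the explicit shape of $\mathfrak{a}$ (it contains $\Lambda_{\sigma}$, $e_{\bar{\sigma}}$, and a full copy of $\rH^{1,1}(X)$ inside $\mathfrak{g}_0$) together with the decomposition of $\rH^*(X,\QQ)$ into LLV-isotypic components, which is where Beckmann actually works. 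Second, you explicitly defer the identification of the minimal orbits with those of vectors in the standard representation $\tilde{\rH}(X,\QQ)$, the rationality of $\tilde{x}$, and the weight-twist bookkeeping between the cohomological and standard actions; but these are precisely the content of the theorem rather than routine verifications. As it stands the proposal is a plausible roadmap, not a proof; completing it would amount to reproducing the relevant sections of \cite{beckmann:atomic-object}.
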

According to \cite{Toda09}, $\chi^{\mathrm{coh}}_{v(E)}$ measures the obstruction to lifting the Mukai vector $v(E)$ along the first-order deformation corresponding to the elements of $\HT^2(X)$. Furthermore, the authors of \cite{beckmann:atomic-object} and \cite{markman:rank-1-obstruction} noted another class of geometric objects with stronger deformation properties. 

For a projective hyper-K\"ahler manifold $X$, we consider its second Hochschild cohomology group $\HH^2(X)$ as the space of natural transformations of $\Db(X)$ from the identity functor $\mathrm{id}$ to its shift $\mathrm{id}[2]$. Then, for any $E\in\Db(X)$, we have the natural morphism 
$$\chi_E\colon \HH^2(X)\rightarrow\Ext_X^2(E,E),\quad \mu\mapsto\mu_E$$ defined by evaluating $E$ at the natural transformation $\mu$, called the \emph{obstruction map} of $E$. We say $E\in\Db(X)$ is \emph{1-obstructed} if $\chi_E$ is of rank one, which is the main theme of \cite{markman:rank-1-obstruction}. In addition, the property of being $1$-obstructed also remains invariant under derived equivalences. 


By the Hochschild--Kostant--Rosenberg (HKR) isomorphism $I^{HKR}\colon \HH^2(X)\cong\HT^2(X)$, the obstruction $\chi_E$ parametrizes the obstruction to lifting $E$ along the first-order deformation given by  
the elements of $\HT^2(X)$. Moreover, we have the following commutative diagram according to \cite{Huang21}
$$\begin{tikzcd}
\HH^2(X) \arrow[r, "\chi_E"] \arrow[d, "I^{HKR}"'] & \Ext_X^2(E,E) \\
\HT^2(X) \arrow[ru, "\lrcorner\exp{\mathrm{At}_{E}}"']                  &  
\end{tikzcd}$$
where $\mathrm{At}_{E}\in\Ext^1_X(E,E\otimes\Omega_X^1)$ is the Atiyah class of $E$.

As in \cite{beckmann:atomic-object} and \cite[Lemma 6.10]{markman:rank-1-obstruction}, for a sheaf $E\in\Coh(X)$, we have
$$I^{HKR}(\ker(\chi_E))\subset\ker(\chi^{\mathrm{coh}}_{v(E)})$$ 
and the equality holds if $E$ is 1-obstructed. In particular, combined with Theorem~\ref{Beckmann-equiv-atomic}, this implies that 
\begin{theorem}[{\cite[Theorem 1.3]{beckmann:atomic-object}}]\label{1-obstructed-atomic}
Let $X$ be a projective hyper-K\"ahler manifold. For a sheaf $E\in\Coh(X)$, if $E$ is 1-obstructed, then $E$ is atomic.
\end{theorem}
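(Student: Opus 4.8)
The plan is to convert both the hypothesis and the conclusion into statements about the ranks of two comparable obstruction maps, and then to connect them through the HKR isomorphism. By Theorem~\ref{Beckmann-equiv-atomic}, an object $E\in\Db(X)$ is atomic precisely when the cohomological obstruction map $\chi^{\mathrm{coh}}_{v(E)}$ has rank one; this applies to our situation because, for a coherent sheaf $E$, the Mukai vector $v(E)=\ch(E)\sqrt{\td_X}$ is algebraic and hence lies in $\bigoplus_{p\geq 0}\rH^{p,p}_{\QQ}(X)$. So the whole problem reduces to showing: if $\chi_E$ has rank one (i.e.\ $E$ is $1$-obstructed), then $\chi^{\mathrm{coh}}_{v(E)}$ has rank one as well. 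The natural route is to compare $\ker(\chi_E)$ with $\ker(\chi^{\mathrm{coh}}_{v(E)})$ under the isomorphism $I^{HKR}\colon\HH^2(X)\cong\HT^2(X)$.

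First I would recall the commutative triangle displayed above, which factors $\chi_E$ as $\lrcorner\exp(\At_E)$ precomposed with $I^{HKR}$, together with the compatibility (in the spirit of \cite{Toda09}) that the cohomological obstruction map is recovered from $\lrcorner\exp(\At_E)$ by applying the Chern character map $\Ext^\bullet_X(E,E)\to\rH^\bullet(X,\CC)$. This reflects the functoriality of the Atiyah class under $\ch$: contracting $v(E)$ with a polyvector field $\mu\in\HT^2(X)$ agrees with taking the Chern character of the corresponding class $\mu\lrcorner\exp(\At_E)\in\Ext^2_X(E,E)$. From this factorization one reads off, for every sheaf, the inclusion $I^{HKR}(\ker(\chi_E))\subseteq\ker(\chi^{\mathrm{coh}}_{v(E)})$: if the $\Ext^2$-valued obstruction vanishes, then so does its image in cohomology.

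The decisive extra input is that this inclusion becomes an \emph{equality} exactly when $E$ is $1$-obstructed, which is the content of \cite[Lemma~6.10]{markman:rank-1-obstruction} together with the discussion in \cite{beckmann:atomic-object}. Granting this, the conclusion is immediate. Since $E$ is $1$-obstructed, $\ker(\chi_E)$ has codimension one in $\HH^2(X)$; as $I^{HKR}$ is an isomorphism, its image has codimension one in $\HT^2(X)$; and by the equality this image equals $\ker(\chi^{\mathrm{coh}}_{v(E)})$. Hence $\ker(\chi^{\mathrm{coh}}_{v(E)})$ has codimension one, so $\chi^{\mathrm{coh}}_{v(E)}$ has rank one, and Theorem~\ref{Beckmann-equiv-atomic} shows that $E$ is atomic. (Note that the equality of kernels makes the rank exactly one, so no separate nonvanishing argument is needed.)

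I expect the genuine obstacle to be the equality, rather than the inclusion, of the two kernels. The inclusion is a formal consequence of the compatibility of the Atiyah class with the trace/Chern character, but the reverse inclusion is where the $1$-obstructedness hypothesis is truly used: it controls the possible drop in rank caused by passing from $\Ext^2_X(E,E)$ down to cohomology, and it is precisely the step where Markman's refined analysis of the HKR-twisted comparison enters. In a fully self-contained treatment this is the delicate point that would have to be proved rather than cited; everything else in the argument is a formal bookkeeping of ranks and kernels under the isomorphism $I^{HKR}$.
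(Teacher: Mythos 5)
Your proposal is correct and follows essentially the same route the paper takes: the paper also deduces the statement from the inclusion $I^{HKR}(\ker(\chi_E))\subset\ker(\chi^{\mathrm{coh}}_{v(E)})$, which becomes an equality for $1$-obstructed sheaves by \cite[Lemma 6.10]{markman:rank-1-obstruction}, combined with the rank-one criterion of Theorem~\ref{Beckmann-equiv-atomic}. You also correctly identify the equality of kernels as the nontrivial input that the paper cites rather than proves.
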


\begin{remark}
In fact, as in \cite{beckmann:atomic-object} and \cite{markman:rank-1-obstruction}, all results mentioned above also apply to objects in $\Db(X)$ under additional assumptions. Since the primary objects for study in this paper are atomic sheaves, we omit the related discussion for brevity.
\end{remark}

\subsection{Atomic sheaves supported on Lagrangian subvarieties}

By \cite[Theorem 6.24]{markman:rank-1-obstruction} and \cite[Proposition 3.11]{beckmann:atomic-object}, each torsion atomic sheaf is supported on points or Lagrangian subvarieties. We make the following definition. 

Recall that $i\colon L\to X$ is an immersed Lagrangian submanifold if $L$ is a smooth projective variety, $i$ is finite and unramified, and the image of $i$ is a Lagrangian subvariety of $X$.

\begin{definition}\label{def-atomic-lag}
Let $X$ be a projective hyper-K\"ahler manifold and $i\colon L\to X$ be an immersed Lagrangian submanifold. We say $L$ is an \emph{immersed atomic (resp.~$1$-obstructed) Lagrangian} if~$i_*\oh_L$ is an atomic (resp.~$1$-obstructed) sheaf on $X$.
\end{definition}

Note that compared to \cite[Definition 7.1]{beckmann:atomic-object}, we do not assume $L$ to be embedded into $X$ and connected. Moreover, by Proposition \ref{prop-immersed-deform}, the property of being atomic is invariant under deformations of immersed Lagrangian submanifolds. Furthermore, we have the following.

\begin{lemma}\label{lem-union-component-1obs}
Let $X$ be a projective hyper-K\"ahler manifold and $i\colon L\to X$ be an immersed Lagrangian submanifold. If $L\to X$ is atomic, then the restriction to each union of its connected components is also atomic. The same result holds for $1$-obstructedness provided $i$ is an embedding.
\end{lemma}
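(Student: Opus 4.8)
The plan is to reduce the statement, in both cases, to the elementary fact that a linear map into a direct sum indexed by the connected components of $L$ cannot increase its rank when passed to a sub-collection of summands, while it remains nonzero on every individual summand. Write $L=\bigsqcup_j L_j$ for the decomposition into connected components and set $E_j:=(i|_{L_j})_*\oh_{L_j}$, so that $i_*\oh_L=\bigoplus_j E_j$; for a union $L'=\bigsqcup_{j\in S}L_j$ of components we then have $(i|_{L'})_*\oh_{L'}=\bigoplus_{j\in S}E_j$. Since each $L_j$ is a positive-dimensional projective manifold (as $\dim X>2$), an ample class on $X$ restricts, under the finite map $i_j:=i|_{L_j}$, to a nonzero class on each $L_j$; this positivity is the input that will keep the relevant rank $\geq 1$ on every nonempty $S$.

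For the atomic statement I would use the criterion of Theorem \ref{atomic-criterion-thm} together with Theorem \ref{Beckmann-equiv-atomic}. Atomicity of an immersed Lagrangian is detected by the cohomological obstruction map $\chi^{\mathrm{coh}}_{v(i_*\oh_L)}$, and the Atiyah-class computation of \cite{markman:rank-1-obstruction,beckmann:atomic-object} factors the decisive part of this map through the componentwise restriction
\[\rho_L:=\bigoplus_j i_j^*\colon \rH^{1,1}(X)\longrightarrow \bigoplus_j \rH^{1,1}(L_j),\]
in such a way that $i_*\oh_L$ is atomic if and only if $\rho_L$ has rank one. Granting this, the restriction to $S$ is simply $\rho_{L'}=\pi_S\circ\rho_L$, where $\pi_S$ is the projection onto the summands indexed by $S$; hence $\rank\rho_{L'}\le \rank\rho_L=1$. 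On the other hand $\rho_{L'}\neq 0$, since for any $j\in S$ an ample class has nonzero image under $i_j^*$. Thus $\rho_{L'}$ has rank exactly one and $(i|_{L'})_*\oh_{L'}$ is atomic. The feature that makes this clean is that the criterion lands in the \emph{component-indexed} direct sum $\bigoplus_j\rH^{1,1}(L_j)$: if one instead argued only with the additivity $v(i_*\oh_L)=\sum_j v(E_j)$ inside the single space $\rH^*(X,\CC)$, the contributions of the individual pieces could a priori cancel and the rank could drop on a sub-collection.

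For the $1$-obstructed statement, where $i$ is assumed to be an embedding, the components $L_j$ are pairwise disjoint closed subvarieties, so $\Ext^*_X(E_j,E_{j'})=0$ for $j\neq j'$ and consequently
\[\Ext^2_X(i_*\oh_L,i_*\oh_L)=\bigoplus_j \Ext^2_X(E_j,E_j).\]
Because evaluating a natural transformation on a direct sum is the direct sum of the evaluations, the obstruction map $\chi_{i_*\oh_L}\colon \HH^2(X)\to \Ext^2_X(i_*\oh_L,i_*\oh_L)$ is block-diagonal, $\chi_{i_*\oh_L}=\bigoplus_j\chi_{E_j}$. Exactly as before, $\chi_{(i|_{L'})_*\oh_{L'}}=\pi_S\circ\chi_{i_*\oh_L}$ has rank $\le 1$, while each $\chi_{E_j}$ is nonzero since its $\rH^1(X,\mathcal{T}_X)$-component already surjects onto the nonzero restriction $\rH^{1,1}(X)\to \rH^1(L_j,\cN_{L_j/X})\cong \rH^{1,1}(L_j)$. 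Hence $\chi_{(i|_{L'})_*\oh_{L'}}$ has rank one and $L'$ is $1$-obstructed.

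The main obstacle, and the reason the two cases are separated, is precisely the possibility of cancellation among components. Atomicity depends only on the Mukai vector, which is additive over the direct sum regardless of how the images $i(L_j)$ meet, and Theorem \ref{atomic-criterion-thm} records this in a component-indexed target, so no embedding hypothesis is required. By contrast, $1$-obstructedness is a condition on $\Ext^2_X(i_*\oh_L,i_*\oh_L)$, and the block-diagonalization of $\chi_{i_*\oh_L}$ on which the argument rests needs the off-diagonal groups $\Ext^2_X(E_j,E_{j'})$ to vanish; this is exactly where disjointness of supports, i.e.\ the embedding hypothesis, enters. The one step I would have to extract carefully is the precise form of the atomicity criterion—that it factors through $\rho_L$ and detects rank one—after which everything reduces to the projection-and-nonvanishing bookkeeping described above.
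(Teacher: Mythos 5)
Your treatment of the $1$-obstructed half is correct and is essentially the paper's own argument: disjointness of supports kills the off-diagonal $\Ext$-groups, the obstruction map becomes block-diagonal, composing with the projection can only drop the rank, and non-triviality of each block comes from the Atiyah-class computation factoring through $\rH^1(L_j,\cN_{L_j/X})$.

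The atomic half, however, rests on a false criterion, and this is a genuine gap. You assert that $i_*\oh_L$ is atomic if and only if the restriction $\rho_L=\bigoplus_j i_j^*$ on $\rH^{1,1}$ has rank one, and you flag this as the step you would "have to extract carefully." Extracting it shows it is not true: Theorem \ref{atomic-criterion-thm} requires \emph{two} conditions, namely that $i^*\colon\rH^2(X,\QQ)\to\rH^2(L,\QQ)$ have rank one \emph{and} that $c_1(L)\in\im(i^*)$. The second condition is not a formality — the paper's Propositions \ref{prop-llsvs-not-atomic} and \ref{prop-fanoplane-not-atomic} exhibit Lagrangians ($\mathsf{Z}_Y\subset\mathsf{Z}_X$ and $F_2(Z)\to F(X)$) for which the restriction on $\rH^2$ has rank one yet which are \emph{not} atomic, precisely because $c_1\notin\im(i^*)$. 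So your rank bookkeeping establishes only half of what is needed for the union of components $M_1$: you must also check $c_1(M_1)\in\im(i^*|_{M_1})$. The missing step is short (and is exactly what the paper supplies): since $M_1$ is a union of connected components, $\omega_L|_{M_1}=\omega_{M_1}$, hence $c_1(M_1)=c_1(L)|_{M_1}$, and restricting the relation $c_1(L)\in\im(i^*)$ to $M_1$ gives $c_1(M_1)\in\im(i^*|_{M_1})$. With that inserted, your argument closes; without it, the proof as written does not establish atomicity of the restriction.
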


\begin{proof}
Let $M_1\subset L$ be a union of connected components. First, we assume that $L\to X$ is atomic. By Theorem \ref{atomic-criterion-thm}, we know that $i^*\colon \rH^2(X, \QQ)\to \rH^2(L, \QQ)$ is of rank one and $c_1(L)\in \im(i^*)$. As $i^*|_{M_1}\colon \rH^2(X, \QQ)\to \rH^2(M_1, \QQ)$ is non-trivial, we see it is of rank one as well. And from $\omega_{L}|_{M_1}=\omega_{M_1}$, we get $c_1(L)|_{M_1}=c_1(M_1)\in \im(i^*|_{M_1})$. Then the atomicity of $i|_{M_1}\colon M_1\to X$ follows from Theorem \ref{atomic-criterion-thm}.

Now assume that $L\to X$ is an embedding and $1$-obstructed. Let $M_2$ be the union of other components so that $M_1\cup M_2=L$ and $M_1\cap M_2=\varnothing$. For each $k$, we have $$\Ext^k_X(i_*\oh_{M_1}, i_*\oh_{M_2})=\Ext_X^k(i_*\oh_{M_2}, i_*\oh_{M_1})=0.$$
In particular, there is a decomposition
\[\Ext_X^2(i_*\oh_L, i_*\oh_L)=\Ext^2_X(i_*\oh_{M_1}, i_*\oh_{M_1})\oplus \Ext^2_X(i_*\oh_{M_2}, i_*\oh_{M_2})\]
satisfying $p_t\circ \chi_{i_*\oh_L}=\chi_{i_*\oh_{M_t}}$ for each $t=1,2$, where
\[p_t\colon \Ext_X^2(i_*\oh_L, i_*\oh_L)\to \Ext^2_X(i_*\oh_{M_t}, i_*\oh_{M_t})\]
is the projection. Since $\chi_{i_*\oh_L}$ is of rank one, it follows that $\chi_{i_*\oh_{M_t}}$ has rank at most one for each $t=1,2$. Then the result follows from the non-triviality of $\chi_{i_*\oh_{M_t}}$ as in the arguments in \cite[Lemma 3.9, Remark 3.10]{markman:rank-1-obstruction} and \cite[Section 7.2]{beckmann:atomic-object}.
\end{proof}

\subsection{A criterion}

Let $X$ be a projective hyper-K\"ahler manifold of dimension $2n$ and $L$ be a smooth projective variety. From now on, we assume that there is an immersed Lagrangian submanifold $i\colon L\to X$.

In the rest of this section, we aim to investigate when $i_{*}\mathcal{O}_{L}$ is an atomic sheaf. Recall that by \cite[Theorem 1.8]{beckmann:atomic-object}, the author demonstrates a criterion for the structure sheaf of a connected embedded Lagrangian submanifold to be atomic. Utilizing the same argument, we establish the following criterion in our scenario, which allows the atomic sheaf to be supported on a (possibly) non-connected singular Lagrangian. 


\begin{theorem}\label{atomic-criterion-thm}
Let $X$ be a projective hyper-K\"ahler manifold. Then an immersed Lagrangian submanifold $i\colon L\to X$ is atomic if and only if the map $i^*\colon \rH^2(X, \QQ)\to \rH^2(L, \QQ)$
has rank one and $c_1(L)\in \im(i^*)\subset \rH^2(L, \QQ)$.
\end{theorem}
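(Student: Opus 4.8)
The plan is to reduce the statement, via Theorem~\ref{Beckmann-equiv-atomic}, to a rank computation for the cohomological obstruction map, and then to import the computation of \cite[Theorem~1.8]{beckmann:atomic-object} after checking that it uses only data which survives when $i$ is merely a finite unramified morphism with possibly disconnected source. Since $i$ is finite, $i_*\oh_L$ is a coherent sheaf whose Mukai vector lies in $\bigoplus_{p\ge 0}\rH^{p,p}_{\QQ}(X)$, so Theorem~\ref{Beckmann-equiv-atomic} says that $i_*\oh_L$ is atomic if and only if the contraction map
\[\chi^{\mathrm{coh}}_{v(i_*\oh_L)}\colon \HT^2(X)\longrightarrow \rH^*(X,\CC),\qquad \mu\mapsto \mu\lrcorner v(i_*\oh_L),\]
has rank one. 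The key structural remark is that this map depends on $i$ only through the class $v(i_*\oh_L)\in\rH^*(X,\QQ)$, so that neither embeddedness nor connectedness of $L$ enters beyond the determination of this class.

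First I would compute $v(i_*\oh_L)$. As $i$ is finite and unramified it carries a locally free normal bundle $\cN_{L/X}$ of rank $n$, and Grothendieck--Riemann--Roch yields $v(i_*\oh_L)=i_*\big(\td(\cN_{L/X})^{-1}\cdot i^*\sqrt{\td_X}\big)$. Using $c_1(X)=0$ together with the Lagrangian identification $\cN_{L/X}\cong\Omega_L$, so that $c_1(\cN_{L/X})=-c_1(L)$, the two lowest-degree components are
\[v(i_*\oh_L)=i_*[L]+\tfrac12\, i_*c_1(L)+(\text{terms of higher degree}),\]
with leading term $i_*[L]\in\rH^{n,n}(X)$. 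For a disconnected $L=\bigsqcup_j L_j$ one has $v(i_*\oh_L)=\sum_j v((i_j)_*\oh_{L_j})$, and both $i^*$ and $c_1(L)$ decompose correspondingly over the components.

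The heart of the argument is Beckmann's computation of the rank of this contraction map, which I would reproduce. Using the symplectic form to identify $\cT_X\cong\Omega_X$, and hence $\HT^2(X)\cong\rH^2(X,\CC)=\rH^{2,0}\oplus\rH^{1,1}\oplus\rH^{0,2}$, one evaluates $\chi^{\mathrm{coh}}_{v(i_*\oh_L)}$ on each Hodge summand. The compatibility of contraction with $i_*$ and with the restriction $i^*$, together with the Lagrangian conormal sequence $0\to\cN_{L/X}^{\vee}\to i^*\Omega_X\to\Omega_L\to 0$ and $\cN_{L/X}\cong\Omega_L$, shows that the rank is controlled by exactly two invariants: the rank of $i^*\colon\rH^2(X,\QQ)\to\rH^2(L,\QQ)$ (through the $\rH^{1,1}$-direction acting on the leading term $i_*[L]$) and the position of $c_1(L)$ relative to $\im(i^*)$ (through the way the $\rH^{2,0}$- and $\rH^{0,2}$-directions interact with the subleading term $\tfrac12 i_*c_1(L)$). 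Since $i$ is finite, $i^*$ of an ample class is already nonzero, so the map always has rank at least one; one then verifies that the images of the three families span a one-dimensional subspace precisely when $i^*$ has rank one and $c_1(L)\in\im(i^*)$, giving both implications.

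The only point that genuinely needs attention---and the main obstacle---is verifying that each step of Beckmann's computation, written originally for a connected \emph{embedded} Lagrangian, survives here. This is where reducing to the cohomology class $v(i_*\oh_L)$ pays off: the obstruction map sees $i$ only through this class, and every manipulation is carried out through $i_*$ and $i^*$ via the projection formula (valid for the finite, hence proper, morphism $i$) together with the honest normal bundle of the unramified immersion, so that no self-intersection or connectedness hypothesis is ever used. For disconnected $L$ the two conditions ``$\rk i^*=1$'' and ``$c_1(L)\in\im(i^*)$'' are to be read globally across all components, which is exactly what the direct-sum decomposition of $v(i_*\oh_L)$ delivers; carefully tracking this bookkeeping gives the criterion in full generality.
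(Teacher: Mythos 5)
Your overall strategy coincides with the paper's: reduce via Theorem~\ref{Beckmann-equiv-atomic} to a rank computation for $\chi^{\mathrm{coh}}_{v(i_*\oh_L)}$, compute the Mukai vector through Grothendieck--Riemann--Roch and the symplectic identification $\cN_{L/X}\cong\Omega_L$, and then rerun Beckmann's computation observing that it only sees the class $v(i_*\oh_L)$ and the honest normal bundle of the unramified immersion. However, the central part of your argument is asserted rather than carried out, and two specific inputs are missing. First, you only record the two lowest-degree terms of $v(i_*\oh_L)$; the paper's Lemma~\ref{mukai-vector-class} establishes the exact identity $v(i_*\oh_L)=i_*\exp(c_1(L)/2)$ (the Todd factors collapse completely because $\cN_{L/X}\cong\Omega_L$), and this exact identity is what allows one to use the hypothesis $c_1(L)\in\im(i^*)$: choosing $\lambda$ with $i^*\lambda=-c_1(L)/2$ and multiplying by $\exp(\lambda)$ (an operator in $\mathfrak{g}(X)$, hence annihilator-preserving up to conjugation) reduces the whole problem to the rank of $\chi^{\mathrm{coh}}_{i_*[L]}$. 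Your description of how ``the $\rH^{2,0}$- and $\rH^{0,2}$-directions interact with the subleading term $\tfrac12 i_*c_1(L)$'' does not substitute for this step, leaves the higher-degree terms of the Mukai vector unaccounted for, and does not explain how $c_1(L)\in\im(i^*)$ actually enters the rank count.

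Second, once reduced to $i_*[L]$, the computation that $\ker(\chi^{\mathrm{coh}}_{i_*[L]})=\ker(i^*)$ is not a formal consequence of the projection formula. It requires (a) that $\rH^0(X,\wedge^2\cT_X)$ lies in the kernel, which uses the $\mathfrak{sl}_2$-triple attached to the holomorphic symplectic form, and (b) the identification of $\ker\bigl(\mu\mapsto\mu\wedge i_*[L]\bigr)\cap\rH^{1,1}$ with $\ker(i^*)$, which is Voisin's lemma (Lemma~\ref{lem-voisin} in the paper) and rests on the Hodge index theorem applied to a K\"ahler class whose pullback to $L$ is K\"ahler. Point (b) is precisely the place where one must check that the argument survives for a finite unramified $i$ with possibly disconnected source, i.e.\ it is the main obstacle you flag, yet your proposal never engages with it. Finally, the converse implication (atomicity forces $c_1(L)\in\im(i^*)$) is dispatched with ``giving both implications''; in the paper this requires adapting the converse of Beckmann's Theorem~1.8 with the exact Mukai vector formula in hand. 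As written, the proposal is a correct outline of the right approach but does not contain a proof.
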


\begin{proof}
The proof employs the same strategy as the argument in \cite[Section 7]{beckmann:atomic-object}. For completeness, we sketch the main ideas here.

First, we assume that $i^*\colon \rH^2(X, \QQ)\to \rH^2(L, \QQ)$
has rank one and $c_1(L)\in \im(i^*)\subset \rH^2(L, \QQ)$. Since $c_1(L)\in \im(i^*)$, there exists $\lambda\in \rH^{1,1}_{\QQ}(X)$ such that $i^{*}\lambda=-c_1(L)/2$. Then the following Lemma~\ref{mukai-vector-class} implies that $$v(i_{*}\mathcal{O}_{L})\mathrm{exp}(\lambda)=i_*[L].$$ At the same time, such $\lambda$ yields an operator in $\mathfrak{g}(X)$ so that  $\mathrm{Ann}(v(i_{*}\mathcal{O}_{L})\mathrm{exp}(\lambda))$ and $\mathrm{Ann}(v(i_{*}\mathcal{O}_{L}))$ are adjoint to each other and have the same dimension. Then by Proposition~\ref{ann-max-dim} and Theorem~\ref{Beckmann-equiv-atomic}, $i_{*}\mathcal{O}_{L}$ is atomic if and only if the cohomological obstruction map $\chi^{\mathrm{coh}}_{i_*[L]}$ is of rank one, where $$\chi^{\mathrm{coh}}_{i_*[L]}\colon \HT^2(X)\rightarrow \rH^*(X,\Omega_X), \quad \mu\mapsto\mu\lrcorner i_*[L].$$ 
As $\dim \HT^2(X)=\dim \rH^2(X, \QQ)$, it suffices to show $\mathrm{ker}(\chi_{i_*[L]})\cong\mathrm{ker}(i^*)$.

To this end, we only need to generalize \cite[Proposition 7.2]{beckmann:atomic-object} to our setting. Indeed. the argument there applies analogously without changes. We denote $\mathrm{ker}(\chi^{\mathrm{coh}}_{i_*[L]})$ by $K$. Because the image of $i$ is Lagrangian, we see  $\rH^2(X,\oh_X)\subset K$. Moreover, by the existence of the $\mathfrak{sl}_2$-triple $(e_\omega, h_\omega, \Lambda_\omega)\subset\mathfrak{g}(X)_{\CC}$ (cf.~\cite[Section 2]{Taelman23}) where $\omega\in \rH^0(X,\Omega_X^2)$ is the holomorphic symplectic 2-form, the same calculation as in \cite[Proposition 7.2]{beckmann:atomic-object} gives $\rH^0(X,\wedge^2\mathcal{T}_X)\subset K$. 

Then, according to \cite[Lemma 7.3]{beckmann:atomic-object}, $K\cap \rH^1(X,\mathcal{T}_X)$ can be identified with the kernel of the following 
$$\rH^1(X,\Omega_X)\rightarrow \rH^*(X,\Omega_X^*),\quad \mu\mapsto\mu\wedge i_*[L].$$ Hence, applying Lemma \ref{lem-voisin}, we obtain $$K\cap \rH^1(X,\mathcal{T}_X)\cong\ker(i^*\colon \rH^{1,1}(X,\QQ)\rightarrow \rH^{1,1}(L,\QQ))$$
and $\mathrm{ker}(\chi_{i_*[L]})\cong\mathrm{ker}(i^*)$ follows. This finishes the proof of the atomicity of $i_*\oh_L$.

Conversely, if $i_*\oh_L$ is atomic, we can adapt the argument in the proof of \cite[Theorem 1.8]{beckmann:atomic-object} by substituting \cite[Lemma 7.4]{beckmann:atomic-object} with Lemma \ref{mukai-vector-class}, thereby obtaining the desired result.
\end{proof}


From the proof of Theorem~\ref{atomic-criterion-thm}, we can directly deduce the following criterion.

\begin{proposition}\label{prop-general-criterion}
Let $X$ be a projective hyper-K\"ahler manifold and $i\colon L\to X$ be an immersed Lagrangian submanifold such that the map $i^*\colon \rH^2(X, \QQ)\to \rH^2(L, \QQ)$ has rank one. Then $i_*E$ is atomic for any $E\in \Db(L)$ satisfying $v(i_*E)\exp(\lambda)\in \QQ \cdot i_*[L]$ for some $\lambda\in\rH^{1,1}_{\QQ}(X)$.
\end{proposition}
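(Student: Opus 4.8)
The plan is to reuse the second half of the proof of Theorem~\ref{atomic-criterion-thm} almost verbatim, observing that the object $E$ enters only through the numerical relation $v(i_*E)\exp(\lambda)=i_*[L]$. Recall that cup product with the class $\exp(\lambda)=1+\lambda+\tfrac{\lambda^2}{2}+\cdots$ coincides with applying the operator $\exp(e_\lambda)$, where $e_\lambda$ is the operator of cupping with $\lambda\in\rH^{1,1}_\QQ(X)$. As explained in the proof of Theorem~\ref{atomic-criterion-thm}, such a $\lambda$ yields an operator $e_\lambda\in\mathfrak{g}(X)$, so that $\exp(e_\lambda)$ is an automorphism of $\tilde{\rH}(X,\QQ)$ compatible with the LLV action; conjugation by it identifies $\mathrm{Ann}(v(i_*E))$ with $\mathrm{Ann}\bigl(\exp(e_\lambda)(v(i_*E))\bigr)$. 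Since $\exp(e_\lambda)(v(i_*E))=v(i_*E)\exp(\lambda)=i_*[L]$ by hypothesis, the two subalgebras $\mathrm{Ann}(v(i_*E))$ and $\mathrm{Ann}(i_*[L])$ are adjoint to each other and hence have the same dimension.

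First I would combine this dimension equality with Proposition~\ref{ann-max-dim}: the object $i_*E$ is atomic if and only if $\dim\mathrm{Ann}(i_*[L])=b_2(X)+1$, which by Theorem~\ref{Beckmann-equiv-atomic} is in turn equivalent to the cohomological obstruction map
\[\chi^{\mathrm{coh}}_{i_*[L]}\colon \HT^2(X)\to \rH^*(X,\Omega_X),\quad \mu\mapsto \mu\lrcorner i_*[L]\]
being of rank one. In this way the atomicity question for $i_*E$ is reduced to a statement about the fixed geometric class $i_*[L]$, in which $E$ no longer appears.

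Next I would invoke the computation already carried out in the proof of Theorem~\ref{atomic-criterion-thm}, which uses only that $i$ is an immersed Lagrangian: the subspaces $\rH^2(X,\oh_X)$ and $\rH^0(X,\wedge^2\cT_X)$ lie in $\ker(\chi^{\mathrm{coh}}_{i_*[L]})$ (the latter via the $\mathfrak{sl}_2$-triple $(e_\omega,h_\omega,\Lambda_\omega)$ attached to the symplectic form $\omega$), and by Lemma~\ref{lem-voisin} the intersection $\ker(\chi^{\mathrm{coh}}_{i_*[L]})\cap\rH^1(X,\cT_X)$ is identified with $\ker\bigl(i^*\colon\rH^{1,1}_\QQ(X)\to\rH^{1,1}_\QQ(L)\bigr)$. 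Together these give $\ker(\chi^{\mathrm{coh}}_{i_*[L]})\cong\ker(i^*\colon\rH^2(X,\QQ)\to\rH^2(L,\QQ))$. Since $i^*$ has rank one by hypothesis, $\dim\ker(i^*)=b_2(X)-1$, and because $\dim\HT^2(X)=\dim\rH^2(X,\QQ)=b_2(X)$ it follows that $\chi^{\mathrm{coh}}_{i_*[L]}$ has rank exactly one. Combined with the previous paragraph, this establishes that $i_*E$ is atomic.

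The only point deserving care—though it is not a genuine obstacle, since it is precisely what is settled inside the proof of Theorem~\ref{atomic-criterion-thm}—is the verification that $e_\lambda$ really lies in $\mathfrak{g}(X)$, so that $\exp(e_\lambda)$ is a bona fide LLV automorphism under which conjugation preserves annihilator dimensions. Everything else is insensitive to the particular object $E$: the hypothesis $v(i_*E)\exp(\lambda)=i_*[L]$ is exactly the ingredient that transports the atomicity criterion from $v(i_*E)$ to the geometric class $i_*[L]$, after which no further property of $E$ is used.
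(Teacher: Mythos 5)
Your proposal is correct and follows the same route the paper intends: the paper derives Proposition \ref{prop-general-criterion} directly from the proof of Theorem \ref{atomic-criterion-thm}, and your argument is exactly that proof with $v(i_*\oh_L)$ replaced by $v(i_*E)$, using the hypothesis $v(i_*E)\exp(\lambda)=i_*[L]$ in place of Lemma \ref{mukai-vector-class} to transport the annihilator computation to the class $i_*[L]$. No gaps.
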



\begin{remark}\label{rmk-prop}
In particular, if $\frac{c_1(L)}{2}=-\gamma-i^*\lambda$ for $\gamma\in \rH^2(L, \QQ)$ and $\lambda\in\rH^{1,1}_{\QQ}(X)$, then the condition $v(i_*\cL)\exp(\lambda)=i_*[L]$ holds for any line bundle $\cL$ on $L$ with $c_1(\cL)=\gamma \in \rH^2(L, \QQ)$.
\end{remark}

Now we present lemmas that we used above.

\begin{lemma}[{\cite[Lemma 1.5]{voisin1992stabilite}}]\label{lem-voisin}
Let $X$ be a projective hyper-K\"ahler manifold of dimension $2n>2$ and $i\colon L\to X$ be an immersed Lagrangian submanifold. Then $\ker(i^*)\subset \rH^2(X, \QQ)$ is equal to the kernel of the composition map
 \[i_*[L]\cup -\colon \rH^2(X, \QQ)\xra{i^*}  \rH^2(L, \QQ) \xra{i_*} \rH^{2n+2}(X, \QQ).\]
\end{lemma}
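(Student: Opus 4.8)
The plan is to deduce the equality of kernels from the nondegeneracy of a Lefschetz-type intersection form on the image of $i^*$. The inclusion $\ker(i^*)\subseteq\ker(i_*[L]\cup-)$ is clear, since the composition factors through $i^*$. For the reverse inclusion, I would first note that by the projection formula the composition $i_*[L]\cup-$ is exactly $\alpha\mapsto i_*(i^*\alpha)$ (using $i_*[L]=i_*(1)$), so it suffices to show that $i_*\colon\rH^2(L,\QQ)\to\rH^{2n+2}(X,\QQ)$ is injective on $\operatorname{im}(i^*)$. Fix an ample class $\ell\in\rH^2(X,\QQ)$; since $i$ is finite, $h:=i^*\ell$ is ample, hence Kähler, on $L$. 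As the image of $i$ is Lagrangian, $i^*$ annihilates the holomorphic symplectic form and its conjugate, so $\operatorname{im}(i^*)\otimes\CC\subseteq\rH^{1,1}(L)$; in particular $h\in\operatorname{im}(i^*)$ and every class in $\operatorname{im}(i^*)$ is of type $(1,1)$.

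Given $\alpha$ with $i_*(i^*\alpha)=0$, I would pair against $\mu\cup\ell^{n-2}$ for an arbitrary $\mu\in\rH^2(X,\QQ)$ and apply the projection formula to obtain
\[
0=\int_X i_*(i^*\alpha)\cup\mu\cup\ell^{n-2}=\int_L i^*\alpha\cup i^*\mu\cup h^{n-2}=:G(i^*\alpha,i^*\mu),
\]
so that $i^*\alpha$ lies in the radical of the symmetric form $G(\beta,\beta'):=\int_L\beta\cup\beta'\cup h^{n-2}$ restricted to $\operatorname{im}(i^*)$. The whole statement therefore reduces to proving that $G$ is nondegenerate on $\operatorname{im}(i^*)$, for then $i^*\alpha=0$ as desired.

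For the nondegeneracy I would invoke the Hodge–Riemann bilinear relations on $L$. Since $G(h,h)=\int_L h^n>0$, I split $\operatorname{im}(i^*)\otimes\mathbb{R}=\mathbb{R}h\oplus P$ with $P=\operatorname{im}(i^*)_{\mathbb{R}}\cap h^{\perp_G}$. When $L$ is connected, $\beta\in P$ means $\int_L\beta\cup h^{n-1}=0$, i.e.\ $\beta$ is primitive with respect to $h$; being also of type $(1,1)$, the Hodge–Riemann relations give $G(\beta,\beta)<0$ for $\beta\neq0$. Thus $G$ is positive on $\mathbb{R}h$ and negative definite on $P$, hence nondegenerate on $\operatorname{im}(i^*)$, forcing $i^*\alpha=0$ and completing the argument in the connected case.

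The hard part will be the immersed, possibly disconnected, case, where $G$ splits over the connected components and the relation $\beta\in h^{\perp_G}$ becomes a single condition summed over components rather than componentwise primitivity; a priori $G|_{\operatorname{im}(i^*)}$ could then acquire several positive directions, and one must rule out cancellation between components. To handle this I would exploit the full strength of $i_*(i^*\alpha)=0$, namely $\int_L i^*\alpha\cup i^*\mu_1\cup\cdots\cup i^*\mu_{n-1}=0$ for all $\mu_1,\dots,\mu_{n-1}$ — so that $i^*\alpha$ is Poincaré-orthogonal on $L$ to every degree-$(2n-2)$ monomial in $\operatorname{im}(i^*)$ — together with the Hard Lefschetz theorem and the Hodge–Riemann relations applied on each component, as in Voisin's original argument in \cite{voisin1992stabilite}. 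I note that in every application in this paper $i^*$ has rank at most one, in which case $\operatorname{im}(i^*)=\QQ h$ and the nondegeneracy of $G$ is immediate from $\int_L h^n>0$; the disconnected analysis is thus the only substantive point beyond the connected case.
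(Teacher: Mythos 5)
Your proposal takes essentially the same route as the paper: reduce the reverse inclusion, via the projection formula, to the nondegeneracy of the Lefschetz-type form $q_\lambda(a,b)=\int_L (i^*\lambda)^{n-2}\cup a\cup b$ on $\im(i^*)\subset \rH^{1,1}_{\mathbb{R}}(L)$, and deduce that from the Hodge index theorem using that $i^*\lambda$ is K\"ahler. Your treatment of the connected case (signature $(1,\dim\im(i^*)-1)$ via positivity on $\mathbb{R}h$ and Hodge--Riemann negativity on the $q_\lambda$-orthogonal complement of $h$) is complete and is exactly Voisin's argument, spelled out in more detail than the paper gives.

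The one substantive issue is the disconnected case, which you correctly flag but do not close. Your worry is legitimate: for $L=\bigsqcup_j L_j$ the condition $q_\lambda(h,\beta)=0$ only says the component-wise Lefschetz degrees of $\beta$ sum to zero, not that each $\beta_j$ is primitive, so $q_\lambda$ restricted to $h^{\perp}\cap\im(i^*)$ need not be negative definite and nondegeneracy of $q_\lambda|_{\im(i^*)}$ is not a formal consequence of ``Hodge index plus $h\in\im(i^*)$.'' Moreover, the repair you sketch --- exploiting $\int_L i^*\alpha\cup i^*\mu_1\cup\cdots\cup i^*\mu_{n-1}=0$ for all $\mu_i$ --- gives nothing new precisely in the most relevant case $n=2$: there $i_*(i^*\alpha)=0$ is \emph{equivalent} to $i^*\alpha$ lying in the radical of $q_\lambda|_{\im(i^*)}$, so the ``full strength'' of the hypothesis is what you already used. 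To be fair, the paper's own proof is no more detailed at this exact point (it simply asserts that nondegeneracy ``follows from the Hodge index theorem as in Voisin''), so you have located the one genuinely delicate spot rather than introduced a new one; and your closing observation --- that in every application in this paper $i^*$ has rank one, so $\im(i^*)=\QQ h$ and $q_\lambda(h,h)=\int_L h^n>0$ settles nondegeneracy trivially --- does suffice for everything the lemma is used for downstream. But as a proof of the lemma as stated (arbitrary immersed, possibly disconnected $L$ with $\dim\im(i^*)\geq 2$), the argument is not finished, and you should either supply the missing componentwise analysis or restrict the statement to the rank-one/connected situations actually needed.
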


\begin{proof}
The proof of \cite[Lemma 1.5]{voisin1992stabilite} adapts to our situation without changes. It suffices to prove this for cohomology with real coefficients. By the adjunction formula, we have an inclusion $\ker(i^*)\subset \ker(i_*[L]\cup -)$. To show the reverse inclusion, as in the proof of \cite[Lemma 1.5]{voisin1992stabilite}, we choose a K\"ahler class $\lambda\in \rH^2(X, \mathbb{R})$ such that $i^*\lambda$ is also a K\"ahler class. Now it suffices to prove that the intersection form $$q_{\lambda}(a,b):=\int_L (i^*\lambda)^{n-2}.a.b$$ is non-degenerate after restricting it to $\mathrm{im}(i^*)\subset \rH^{1,1}_{\mathbb{R}}(L)$. Then the result follows from the Hodge index theorem as in \cite[Lemma 1.5]{voisin1992stabilite} since $i^*\lambda$ is a K\"ahler class.
\end{proof}


\begin{lemma}\label{mukai-vector-class}
Let $X$ be a projective hyper-K\"ahler manifold and $i\colon L\to X$ be an immersed Lagrangian submanifold. Then $v(i_{*}\mathcal{O}_{L})=i_*\mathrm{exp}(c_1(L)/2)$. 
\end{lemma}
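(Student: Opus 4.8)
The plan is to prove this by a direct Grothendieck--Riemann--Roch (GRR) computation, following the argument for embedded Lagrangians (this is essentially \cite[Lemma 7.4]{beckmann:atomic-object}); the only point requiring attention is that $i$ is merely finite and unramified rather than a closed embedding, but all the geometric input is local on $L$ and so survives unchanged.

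First I would apply GRR to the proper morphism $i\colon L\to X$ of smooth projective varieties with $F=\oh_L$. Since $\mathrm{ch}(\oh_L)=1$, this reads $\mathrm{ch}(i_*\oh_L)\cdot\mathrm{td}(T_X)=i_*\,\mathrm{td}(T_L)$. Recalling that the Mukai vector is $v(-)=\mathrm{ch}(-)\sqrt{\mathrm{td}(T_X)}$, I would divide by $\sqrt{\mathrm{td}(T_X)}$ and use the projection formula to pull the class $i^*\big(1/\sqrt{\mathrm{td}(T_X)}\big)=1/\sqrt{i^*\mathrm{td}(T_X)}$ inside the pushforward, obtaining
\[
v(i_*\oh_L)=i_*\Big(\frac{\mathrm{td}(T_L)}{\sqrt{i^*\mathrm{td}(T_X)}}\Big).
\]
Here the square root is well-defined because every Todd class has leading term $1$ and hence a unique square root in $\rH^*(L,\QQ)$.

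Next I would use the Lagrangian hypothesis. Because $i$ is unramified and $L$ is smooth, $T_L\hookrightarrow i^*T_X$ is a subbundle with normal bundle $N_{L/X}=i^*T_X/T_L$, and the nondegenerate symplectic form $\omega$ induces a morphism $N_{L/X}\to\Omega_L^1$ which is an isomorphism precisely because $i^*\omega=0$. This is the standard Lagrangian normal-bundle identification, and since it is a fibrewise construction it holds verbatim for immersions. Hence there is a short exact sequence $0\to T_L\to i^*T_X\to\Omega_L^1\to 0$, and multiplicativity of the Todd class gives $i^*\mathrm{td}(T_X)=\mathrm{td}(T_L)\,\mathrm{td}(\Omega_L^1)$. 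Substituting, the bracketed class becomes $\sqrt{\mathrm{td}(T_L)/\mathrm{td}(\Omega_L^1)}$.

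Finally I would invoke the elementary splitting-principle identity $\mathrm{td}(E)/\mathrm{td}(E^\vee)=e^{c_1(E)}$, which follows from $\frac{x}{1-e^{-x}}\cdot\frac{1-e^{x}}{-x}=e^{x}$ on Chern roots, applied to $E=T_L$. This gives $\mathrm{td}(T_L)/\mathrm{td}(\Omega_L^1)=e^{c_1(L)}$, so the bracketed class is $\exp(c_1(L)/2)$, and therefore $v(i_*\oh_L)=i_*\exp(c_1(L)/2)$ as claimed. The only genuinely non-formal ingredient is the normal-bundle isomorphism in the immersed setting, and even this presents no difficulty since both the inclusion $T_L\hookrightarrow i^*T_X$ and the contraction with $\omega$ are pointwise; the remaining manipulations are routine properties of the Todd class and the projection formula.
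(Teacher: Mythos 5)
Your proof is correct and follows essentially the same route as the paper: the paper reduces the statement to the isomorphism $\cN_{L/X}\cong\Omega_L$ induced by the symplectic form (checking, as you do, that the map is well defined because $i^*\omega=0$, surjective, and between bundles of equal rank) and then cites \cite[Lemma 7.4]{beckmann:atomic-object} for exactly the GRR and Todd-class manipulation you carry out explicitly. The only cosmetic difference is that the paper justifies the local-freeness of the normal sheaf via the cotangent complex and the lci property of $i$, whereas you use that an unramified morphism of smooth varieties is injective on tangent spaces; both are valid.
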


\begin{proof}
Let $\omega$ be the holomorphic symplectic $2$-form of $X$. We denote by $\cN^{\vee}_{L/X}$ the $-1$ degree cohomology sheaf of the cotangent complex $\mathbb{L}_{L/X}$ of $i$. When $i$ is a closed embedding, $\cN^{\vee}_{L/X}$ is exactly the conormal bundle of $L$ in $X$. Since $i$ is a local complete intersection morphism by \cite[\href{https://stacks.math.columbia.edu/tag/0E9K}{Tag 0E9K}]{stacks-project}, $\cN^{\vee}_{L/X}$ is a bundle (cf.~\cite[\href{https://stacks.math.columbia.edu/tag/0FK2}{Tag 0FK2}]{stacks-project} and \cite[\href{https://stacks.math.columbia.edu/tag/068B}{Tag 068B}]{stacks-project}). Using the standard exact triangle of cotangent complexes by  \cite[\href{https://stacks.math.columbia.edu/tag/0E44}{Tag 0E44}]{stacks-project}, we obtain exact sequences
\[0\to \cT_L\to \cT_X|_L\to \cN_{L/X}\to 0\]
and
\[0\to \cN^{\vee}_{L/X}\to \Omega_X|_L\to \Omega_L\to 0.\]
As $\omega$ is a holomorphic symplectic $2$-form, it induces an isomorphism $\omega\colon \cT_X\xra{\cong} \Omega_X$. To adapt the argument in \cite[Lemma 7.4]{beckmann:atomic-object}, it suffices to prove that there exists an isomorphism $\cN_{L/X}\cong \Omega_L$. According to \cite[Remark 4.4]{huybrechts:book-cubic-hypersurface}, there is a natural map $$s\colon \cN_{L/X}\to \Omega_L, \quad$$
locally given by $v\mapsto \omega(v,-)$. This yields a commutative diagram
\[\begin{tikzcd}
	0 & {\cT_L} & {\cT_X|_L} & {\cN_{L/X}} & 0 \\
	0 & {\cN^{\vee}_{L/X}} & {\Omega_X|_L} & {\Omega_L} & 0
	\arrow["\omega", from=1-3, to=2-3]
	\arrow[from=1-1, to=1-2]
	\arrow[from=1-2, to=1-3]
	\arrow[from=1-3, to=1-4]
	\arrow[from=1-4, to=1-5]
	\arrow["s", from=1-4, to=2-4]
	\arrow[from=2-4, to=2-5]
	\arrow[from=2-3, to=2-4]
	\arrow[from=2-2, to=2-3]
	\arrow[from=2-1, to=2-2]
\end{tikzcd}\]
because the composition $\cT_L\to \cT_X|_L\cong \Omega_X|_L\to \Omega_L$ is zero. In particular, $s$ is surjective. Since $\cN_{L/X}$ and $\Omega_L$ are locally free sheaves of the same rank on $L$, we deduce that $s$ is an isomorphism and the result follows.
\end{proof}



\section{Atomic sheaves on Bridgeland moduli spaces}\label{sec-atomic-sheaves}

In this section, we aim to construct atomic sheaves on $M^X_{\sigma_X}(a,b)$ for a GM fourfold or sixfold $X$ via two different methods. The main results in this section are Theorem \ref{thm-fixed-loci-1-obs} and Theorem \ref{thm-pushforward-atomic}. At the end of this section, we prove Theorem \ref{thm-bundle}.

\subsection{Restriction of degree $2$ cohomology}

We begin with a result on the restriction of degree $2$ cohomology.

\begin{proposition}\label{prop-H2-moduli-space}
Let $X$ be a non-Hodge-special GM fourfold or sixfold and $\sigma_X$ be a stability condition on $\Ku(X)$. Given a pair of coprime integers $a,b$ and a finite morphism $i\colon L\to M^X_{\sigma_X}(a,b)$ such that $L$ is a projective variety and the image of $i$ is Lagrangian, then the map
\[i^*\colon \rH^2(M^X_{\sigma_X}(a,b), \QQ) \to \rH^2(L, \QQ)\]
is of rank one.
\end{proposition}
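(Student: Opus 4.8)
The plan is to reduce everything to the fact that, for non-Hodge-special $X$, the moduli space has Picard number one. Write $M:=M^X_{\sigma_X}(a,b)$. By Theorem \ref{thm-fourfold-moduli}(2) there is a Hodge isometry $\lambda_{a,b}\colon (a\Lambda_1+b\Lambda_2)^\perp \xra{\cong} \rH^2(M,\ZZ)$, and by Remark \ref{rmk-non-hodeg-special} the non-Hodge-special hypothesis forces $\rH^{1,1}_\ZZ(M)\cong \ZZ\cdot(b\Lambda_1-a\Lambda_2)$, so $M$ has Picard number one. Since the ample generator $\ell$ has positive BBF-square, $\NS(M)_\QQ=\QQ\cdot\ell$ is a non-degenerate one-dimensional rational Hodge substructure of $\rH^2(M,\QQ)$, and I obtain an orthogonal decomposition of rational Hodge structures $\rH^2(M,\QQ)=\QQ\cdot\ell\oplus T(M)_\QQ$, where $T(M)_\QQ=(\QQ\ell)^\perp$ is the transcendental part. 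By Picard number one it contains no nonzero rational $(1,1)$-class, and hence coincides with the minimal rational Hodge substructure whose complexification contains $\rH^{2,0}(M)$.

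Next I would treat $i^*$ as a morphism of Hodge structures. To make this rigorous for a possibly singular $L$, I would first pass to a resolution $\nu\colon \wt L\to L$: since $\nu^*\colon \rH^2(L,\QQ)\hookrightarrow \rH^2(\wt L,\QQ)$ is injective, one has $\ker(i^*)=\ker((i\nu)^*)$ and therefore $\rk(i^*)=\rk((i\nu)^*)$, so I may assume $L$ is smooth projective and $i^*$ a genuine morphism of pure weight-two rational Hodge structures. The Lagrangian hypothesis gives $i^*\omega=0$ for $\omega$ spanning $\rH^{2,0}(M)$: on the dense open locus of $\wt L$ mapping into the smooth Lagrangian locus of $i(L)$ the holomorphic form $(i\nu)^*\omega$ vanishes, so by analyticity it vanishes identically, hence in cohomology. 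Thus $\rH^{2,0}(M)\subset\ker(i^*_\CC)$, and since $\ker(i^*)$ is a rational Hodge substructure, minimality forces $T(M)_\QQ\subseteq\ker(i^*)$. Consequently $i^*$ factors through $\rH^2(M,\QQ)/T(M)_\QQ\cong\QQ\cdot\ell$, giving $\rk(i^*)\le 1$.

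Finally, to see that the rank is exactly one, I would use that $i$ is finite: the pullback $i^*\ell$ of the ample generator along a finite morphism to a projective variety is again ample, hence nonzero in $\rH^2(L,\QQ)$ (its top self-intersection on the positive-dimensional $L$ is strictly positive). This yields $\rk(i^*)=1$.

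The main obstacle I anticipate is the bookkeeping around the singularities of $L$ together with the precise meaning of the Lagrangian condition at the level of cohomology — that is, upgrading ``the holomorphic two-form vanishes on the smooth locus of the image'' to the clean statement $i^*\omega=0$ in $\rH^2(L,\QQ)$ — which is exactly why I build in the resolution step. Once $M$ is known to have Picard number one, the Hodge-theoretic heart of the argument (the transcendental lattice lands in $\ker(i^*)$) is immediate, so the genuine input is the non-Hodge-special hypothesis via Remark \ref{rmk-non-hodeg-special} together with the standard fact that the pullback of an ample class under a finite morphism remains ample.
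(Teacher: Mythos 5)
Your argument is correct and is essentially the paper's own proof, written out in more detail: the paper likewise observes that the Lagrangian condition forces the $(2,0)$ and $(0,2)$ parts of $\rH^2(M^X_{\sigma_X}(a,b),\CC)$ into $\ker(i^*)$, invokes Remark \ref{rmk-non-hodeg-special} to see that the complementary rational $(1,1)$ part has rank one, and uses finiteness of $i$ to rule out $i^*=0$. One caveat on your resolution step: the claim that $\nu^*\colon\rH^2(L,\QQ)\to\rH^2(\wt L,\QQ)$ is injective is false for a general singular projective $L$ (its kernel is $W_1\rH^2(L,\QQ)$); what rescues the step is that $\im(i^*)$ is a quotient of the pure weight-two structure $\rH^2(M^X_{\sigma_X}(a,b),\QQ)$, hence itself pure of weight two, so it meets $W_1\rH^2(L,\QQ)=\ker(\nu^*)$ trivially and $\nu^*$ is injective exactly where you need it, giving $\ker(i^*)=\ker((i\nu)^*)$ as desired.
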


\begin{proof}
Since the image of $i$ is Lagrangian, $\rH^{p,q}_{\QQ}(M^X_{\sigma_X}(a,b))\subset \ker(i^*)$ for $(p,q)=(2,0)$ and $(0,2)$. As $i^*$ is a non-trivial map by the finiteness, it is of rank one by Remark \ref{rmk-non-hodeg-special}.
\end{proof}

Applying Proposition \ref{prop-H2-moduli-space} to the morphisms in Theorem \ref{thm-FGLZ-moduli}, we obtain the following two corollaries.

\begin{corollary}\label{cor-rk1-H2-moduli}
Let $X$ be a non-Hodge-special GM fourfold or sixfold and $j\colon Y\hookrightarrow X$ be a smooth hyperplane section. Let $\sigma_X$ be a stability condition on $\Ku(X)$ and $\sigma_Y$ be a Serre-invariant stability condition on $\Ku(Y)$. Then, for any pair of coprime integers $a,b$, the map
\[r^*\colon \rH^2(M^X_{\sigma_X}(a,b), \QQ) \to \rH^2(M^Y_{\sigma_Y}(a,b), \QQ)\]
is of rank one, where
\[r\colon M^Y_{\sigma_Y}(a,b)\to M^X_{\sigma_X}(a,b)\] 
is induced by the functor $\pr_X\circ j_*$ as in Theorem \ref{thm-FGLZ-moduli}. Furthermore, if $\sigma_X$ is rational, then $(\ell_{\sigma_X})^{\perp}= \ker(r^*)$.
\end{corollary}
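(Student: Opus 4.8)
The first assertion is a direct application of the preceding results. The morphism $r$ is finite and its image is a Lagrangian subvariety by Theorem \ref{thm-FGLZ-moduli} (recall $\gcd(a,b)=1$), while $M^Y_{\sigma_Y}(a,b)$ is a projective variety. Thus I would simply invoke Proposition \ref{prop-H2-moduli-space} with $i=r$ and $L=M^Y_{\sigma_Y}(a,b)$ to conclude that $r^*$ has rank one.

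For the second assertion, the plan is to identify \emph{both} $(\ell_{\sigma_X})^\perp$ and $\ker(r^*)$ with the transcendental lattice $T:=(\NS(M^X_{\sigma_X}(a,b))_\QQ)^\perp$. Since $X$ is non-Hodge-special, Remark \ref{rmk-non-hodeg-special} together with the Lefschetz $(1,1)$-theorem shows that $\rH^{1,1}_\QQ(M^X_{\sigma_X}(a,b))=\NS(M^X_{\sigma_X}(a,b))_\QQ$ is one-dimensional. When $\sigma_X$ is rational, $\ell_{\sigma_X}$ is a nonzero rational class, and it is ample by \cite[Theorem 3.1]{sacca:moduli-ku}; in particular $q_X(\ell_{\sigma_X})\neq 0$, so $\ell_{\sigma_X}$ spans $\NS_\QQ$ and $(\ell_{\sigma_X})^\perp=T$, a codimension-one rational subspace.

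It then remains to prove $\ker(r^*)=T$, and I want to stress that this cannot be obtained from a dimension count alone (two codimension-one subspaces avoiding a common vector need not coincide); the genuine input is the inclusion $\ker(r^*)\supseteq T$. The key point is that $r^*$ is a morphism of (polarizable) Hodge structures which annihilates $\rH^{2,0}$ and $\rH^{0,2}$, because the image of $r$ is Lagrangian; hence $\im(r^*)$ is of pure type $(1,1)$. On the other hand $T$ contains no nonzero rational $(1,1)$-class (Picard rank one forces $\rH^{1,1}_\QQ\cap T=\NS_\QQ\cap\NS_\QQ^{\perp}=0$), so by semisimplicity of the category of polarizable Hodge structures $T$ admits no nonzero type-$(1,1)$ quotient; consequently $r^*|_T=0$, i.e. $\ker(r^*)\supseteq T$. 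As both $\ker(r^*)$ and $T$ have codimension one, the former by the rank-one assertion just proved, they coincide, giving $(\ell_{\sigma_X})^\perp=T=\ker(r^*)$. As a consistency check, Proposition \ref{prop-restrict-BM-divisor} gives $r^*\ell_{\sigma_X}=\ell_{\sigma_Y}\neq 0$, matching $\ell_{\sigma_X}\notin\ker(r^*)$.

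I expect the only subtle point to be that $M^Y_{\sigma_Y}(a,b)$ may be singular, so that $\rH^2(M^Y_{\sigma_Y}(a,b),\QQ)$ carries only a mixed Hodge structure. One resolves this via the strictness of morphisms of mixed Hodge structures, which forces $\im(r^*)$ to remain pure of weight two (hence of type $(1,1)$ by the Lagrangian condition); alternatively one may restrict to general $Y$, where $M^Y_{\sigma_Y}(a,b)$ is smooth by Lemma \ref{lem-structure-3fold-moduli}, and argue on the resulting pure Hodge structure. The remaining steps are formal consequences of the earlier results, and the Hodge-theoretic identification $\ker(r^*)=T$ is the heart of the matter.
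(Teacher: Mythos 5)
Your proof is correct and follows essentially the same route as the paper: the rank-one statement is exactly Proposition \ref{prop-H2-moduli-space} applied to the finite morphism $r$ of Theorem \ref{thm-FGLZ-moduli}, and the second assertion amounts, as in the paper, to identifying $\ker(r^*)$ with the codimension-one transcendental complement of the rank-one Picard group and observing that $\ell_{\sigma_X}$ spans that Picard group. The only cosmetic difference is that you justify the latter via rationality and ampleness of $\ell_{\sigma_X}$ together with Remark \ref{rmk-non-hodeg-special}, whereas the paper invokes the proportionality of $\ell_{\sigma_X}$ to $\lambda_{a,b}(b\Lambda_1-a\Lambda_2)$ from Remark \ref{rmk-bm-divisor-4fold} and the non-vanishing $r^*\ell_{\sigma_X}=\ell_{\sigma_Y}\neq 0$ from Proposition \ref{prop-restrict-BM-divisor}.
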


\begin{proof}
Using Theorem \ref{thm-FGLZ-moduli} and Proposition \ref{prop-H2-moduli-space}, we know that $r^*$ is of rank one. By Proposition~\ref{prop-restrict-BM-divisor}, we have $r^*\ell_{\sigma_X}\neq 0$. Then the statement $(\ell_{\sigma_X})^{\perp}= \ker(r^*)$, which is also equivalent to $\ell_{\sigma_X}\in \ker(r^*)^{\perp}$, follows from Remark \ref{rmk-bm-divisor-4fold} and the fact that $\ker(r^*)^{\perp}$ is the image of $(A_1^{\oplus 2})\cap (a\Lambda_1+b\Lambda_2)^{\perp}$.
\end{proof}


\begin{corollary}\label{cor-rk1-H2-moduli-5fold}
Let $X$ be a non-Hodge-special GM fourfold and $j\colon X\hookrightarrow Y$ be an embedding such that $Y$ is a GM fivefold. Let $\sigma_X$ be a stability condition on $\Ku(X)$ and $\sigma_Y$ be a Serre-invariant stability condition on $\Ku(Y)$. Then, for any pair of coprime integers $a,b$, the map
\[r^*\colon \rH^2(M^X_{\sigma_X}(a,b), \QQ) \to \rH^2(M^Y_{\sigma_Y}(a,b), \QQ)\]
is of rank one, where
\[r\colon M^Y_{\sigma_Y}(a,b)\to M^X_{\sigma_X}(a,b)\] 
is induced by the functor $j^*$ as in Theorem \ref{thm-FGLZ-moduli}. Furthermore, if $\sigma_X$ is rational, then we have $(\ell_{\sigma_X})^{\perp}= \ker(r^*)$.
\end{corollary}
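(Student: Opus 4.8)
The plan is to mirror the proof of Corollary~\ref{cor-rk1-H2-moduli} almost verbatim, exchanging the roles of the two GM varieties and replacing the restriction formula for a hyperplane section of $X$ (Proposition~\ref{prop-restrict-BM-divisor}) by its counterpart for a GM fivefold \emph{containing} $X$ (Proposition~\ref{prop-restrict-BM-divisor-5fold}). Here $X$ is the fourfold hyperplane section of the fivefold $Y$, so in the notation of Theorem~\ref{thm-FGLZ-moduli}(2) and of Proposition~\ref{prop-restrict-BM-divisor-5fold} the variety $X$ plays the role of ``$Y$'' and $Y$ the role of ``$X$''. The first, unconditional assertion will come from the general restriction statement Proposition~\ref{prop-H2-moduli-space}, and the second, rational assertion from the compatibility of Bayer--Macr\`i divisors together with the identification of the N\'eron--Severi line of $M^X_{\sigma_X}(a,b)$.

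For the rank-one claim I would first invoke Theorem~\ref{thm-FGLZ-moduli}(2): since $\gcd(a,b)=1$ and $X$ is non-Hodge-special, the morphism $r\colon M^Y_{\sigma_Y}(a,b)\to M^X_{\sigma_X}(a,b)$ induced by $j^*$ is finite with Lagrangian image, and $M^Y_{\sigma_Y}(a,b)$ is a projective variety. Because $X$ is a non-Hodge-special GM fourfold, Proposition~\ref{prop-H2-moduli-space}, applied with $L=M^Y_{\sigma_Y}(a,b)$ and $i=r$, then yields directly that $r^*$ has rank one. This part needs neither rationality of $\sigma_X$ nor the Bayer--Macr\`i machinery.

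For the second assertion, assume $\sigma_X$ is rational. After acting by $\GL$ (which changes neither the moduli space nor $\ker(r^*)$, by Theorems~\ref{thm-unique-fourfold} and \ref{thm-stabo-orbit}) I may normalize the central charges so that $Z_{\sigma_X}(\Lambda_i)=Z_{\sigma_Y}(\lambda_i)$, the hypothesis of Proposition~\ref{prop-restrict-BM-divisor-5fold}. That proposition then gives $r^*\ell_{\sigma_X}=\ell_{\sigma_Y}$, which is ample and in particular nonzero, so $\ell_{\sigma_X}\notin\ker(r^*)$. Next I would pin down $\ker(r^*)^{\perp}$: as $r^*$ is a morphism of rational Hodge structures killing $\rH^{2,0}$ and $\rH^{0,2}$ (the image of $r$ being Lagrangian), $\ker(r^*)$ contains the full transcendental part; being of codimension one, its orthogonal complement is the one-dimensional space $\rH^{1,1}_\QQ(M^X_{\sigma_X}(a,b))$, which by Remark~\ref{rmk-non-hodeg-special} is the image under $\lambda_{a,b}$ of $A_1^{\oplus 2}\cap(a\Lambda_1+b\Lambda_2)^{\perp}=\ZZ\cdot(b\Lambda_1-a\Lambda_2)$. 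Finally Remark~\ref{rmk-bm-divisor-4fold} identifies $\ell_{\sigma_X}$ as proportional to $\lambda_{a,b}(b\Lambda_1-a\Lambda_2)$, so $\ell_{\sigma_X}\in\ker(r^*)^{\perp}$; since $\ell_{\sigma_X}\neq 0$, the two hyperplanes $(\ell_{\sigma_X})^{\perp}$ and $\ker(r^*)$ have the same dimension, whence $(\ell_{\sigma_X})^{\perp}=\ker(r^*)$.

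The step requiring the most care is not a genuine obstacle but the bookkeeping of which variety plays which role, together with the verification that the hypotheses of Proposition~\ref{prop-restrict-BM-divisor-5fold} are met here: namely that the central-charge normalization can be arranged by a $\GL$-action and that the relevant morphism $r$ is precisely the one induced by $j^*$, which is exactly the content of Theorem~\ref{thm-FGLZ-moduli}(2). The only substantive input beyond Corollary~\ref{cor-rk1-H2-moduli} is that the fivefold restriction formula was already established in Proposition~\ref{prop-restrict-BM-divisor-5fold} (via \cite[Equation (40)]{FGLZ} in place of \cite[Equation (35)]{FGLZ}), so no new geometric computation is needed.
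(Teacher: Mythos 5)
Your proposal is correct and follows exactly the route the paper intends: the paper's proof of this corollary is literally "analogous to Corollary \ref{cor-rk1-H2-moduli}," i.e.\ Theorem \ref{thm-FGLZ-moduli} plus Proposition \ref{prop-H2-moduli-space} for the rank-one claim, and Proposition \ref{prop-restrict-BM-divisor-5fold} together with Remarks \ref{rmk-non-hodeg-special} and \ref{rmk-bm-divisor-4fold} for the identification $(\ell_{\sigma_X})^{\perp}=\ker(r^*)$. Your careful bookkeeping of which variety plays which role in Proposition \ref{prop-restrict-BM-divisor-5fold} is exactly the only point that needed attention, and you handled it correctly.
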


\begin{proof}
This follows from the proof analogous to Corollary \ref{cor-rk1-H2-moduli}.
\end{proof}

\subsection{Fixed loci of involutions are $1$-obstructed}

Using Proposition \ref{prop-H2-moduli-space} and Corollary \ref{cor-rk1-H2-moduli}, we are ready to prove that the fixed locus of the natural involution on $M^X_{\sigma_X}(a,b)$ is an $1$-obstructed Lagrangian submanifold. It relies on the following result of Markman:

\begin{lemma}[{\cite[Remark 3.10(3)]{markman:rank-1-obstruction}}]\label{lem-markman-fixed}
Let $X$ be a projective hyper-K\"ahler manifold with a finite group $G$ acting on $X$. If $L\subset X$ is a union of Lagrangian connected components of the fixed locus of $G$ such that the restriction map $\rH^2(X,\mathbb{Q})\rightarrow \rH^2(L,\mathbb{Q})$ is of rank one, then $L$ is $1$-obstructed.
\end{lemma}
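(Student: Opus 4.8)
The plan is to show directly that the obstruction map $\chi_{\oh_L}\colon\HH^2(X)\to\Ext^2_X(\oh_L,\oh_L)$ has rank one, which is by definition (Definition \ref{def-atomic-lag}) the assertion that $L$ is $1$-obstructed. As the fixed locus of a finite group acting on a smooth variety, $L$ is itself smooth, and the inclusion $i\colon L\hookrightarrow X$ is an embedding, so $\oh_L$ carries a canonical $G$-linearization (trivial, since $G$ fixes $L$ pointwise). Consequently $G$ acts on both $\HH^2(X)$ and $\Ext^2_X(\oh_L,\oh_L)$, and $\chi_{\oh_L}$ is $G$-equivariant. I would analyze it through the commutative diagram relating $\chi_{\oh_L}$ to $\lrcorner\exp(\At_{\oh_L})$, the HKR decomposition $\HT^2(X)=\rH^2(X,\oh_X)\oplus\rH^1(X,\cT_X)\oplus\rH^0(X,\wedge^2\cT_X)$, and the smooth-Lagrangian decomposition $\Ext^2_X(\oh_L,\oh_L)\cong\bigoplus_{p+q=2}\rH^q(L,\wedge^p\cN_{L/X})$.

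The crucial structural input comes from the $G$-action on the symplectic form. Since $\rH^0(X,\Omega^2_X)=\CC\omega$ is one-dimensional, $G$ scales $\omega$ by a character $\chi\colon G\to\CC^*$; because each component of $L$ is Lagrangian rather than symplectic, $\chi$ must be non-trivial, for if $\chi$ were trivial the $G$-fixed subspace of $T_xX$ would inherit a non-degenerate form and could not be isotropic of half dimension. Using the symplectic isomorphism $\cN_{L/X}\cong\Omega_L$ constructed in the proof of Lemma \ref{mukai-vector-class}, one checks that $G$ acts on $\cN_{L/X}$ by the single scalar $\chi^{-1}$ while acting trivially on $\Omega_L$. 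Hence, in the conormal sequence
\[0\to\cN^{\vee}_{L/X}\to i^*\Omega_X\to\Omega_L\to 0,\]
the sub and the quotient lie in distinct $G$-isotypic components (characters $\chi$ and $\mathbf 1$), so the sequence splits $G$-equivariantly and $i^*\Omega_X\cong\Omega_L\oplus\cN^{\vee}_{L/X}$.

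This $G$-equivariant splitting is the engine of the proof: it forces the off-diagonal (second-fundamental-form) contributions of $\At_{\oh_L}$ to vanish, so that $\chi_{\oh_L}$ respects the two decompositions above and reduces to its three diagonal blocks. These are then computed by the same local arguments as in \cite[Proposition 7.2, Lemma 7.3]{beckmann:atomic-object}: the block $\rH^2(X,\oh_X)\to\rH^2(L,\oh_L)$ is restriction of $(0,2)$-classes, hence zero because $i^*\omega=0$; the block $\rH^0(X,\wedge^2\cT_X)\to\rH^0(L,\wedge^2\cN_{L/X})$ vanishes since the Poisson bivector has no normal–normal component along a Lagrangian (a local Darboux computation, equivalently the $\mathfrak{sl}_2$-triple argument of loc.\ cit.); and the middle block $\rH^1(X,\cT_X)\to\rH^1(L,\cN_{L/X})$ is identified, via $\cT_X\cong\Omega_X$ and $\cN_{L/X}\cong\Omega_L$, with the restriction $i^*\colon\rH^{1,1}(X)\to\rH^{1,1}(L)$. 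By hypothesis $i^*\colon\rH^2(X,\QQ)\to\rH^2(L,\QQ)$ has rank one, and since $i^*$ kills the $(2,0)$- and $(0,2)$-parts of $X$ its image is of type $(1,1)$; thus the middle block has rank exactly one. Summing the three blocks shows $\chi_{\oh_L}$ has rank one, so $L$ is $1$-obstructed.

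The main obstacle is precisely the justification that the $G$-equivariant splitting annihilates \emph{all} off-diagonal terms of $\chi_{\oh_L}$, leaving only the three diagonal blocks. I would handle this by tracking $G$-isotypic components through the construction of $\exp(\At_{\oh_L})$: every off-diagonal block changes the $\cN_{L/X}$-grading and is governed by the extension class of the conormal sequence, which the splitting kills. Concretely, the potentially dangerous maps are those sending $\rH^1(X,\cT_X)$ into the outer summands $\rH^0(L,\wedge^2\cN_{L/X})$ and $\rH^2(L,\oh_L)$; combining the $G$-character bookkeeping (source and target isotypic types $\chi^{\pm}$ and $\mathbf 1$) with the vanishing of the extension class rules these out, and this is the step I expect to require the most care.
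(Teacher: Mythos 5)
Your proposal follows essentially the same route as the paper's source for this lemma (Markman's Remark 3.10(3), which the paper simply cites): the non-triviality of the character by which $G$ scales $\omega$, the resulting $G$-equivariant splitting of the conormal sequence, and the computation of the three HKR blocks of $\chi_{i_*\oh_L}$, with the rank-one hypothesis on $i^*$ feeding into the middle block $\rH^1(X,\cT_X)\to\rH^1(L,\cN_{L/X})$. One correction of emphasis: the off-diagonal terms out of $\rH^1(X,\cT_X)$ that you single out as the delicate step in fact vanish for any smooth Lagrangian without any group action (the Atiyah class of $i_*\oh_L$ sends $\rH^1(X,\cT_X)$ into $\rH^1(L,\cN_{L/X})$ automatically); the place where the $G$-equivariant splitting is genuinely indispensable is the $\rH^0(X,\wedge^2\cT_X)$ contribution, whose component in $\rH^1(L,\cN_{L/X})$ — not just the normal–normal part, which dies along any Lagrangian — must be shown to vanish, and this is exactly what the splitting (equivalently, the isotropy of the $G$-complement of $T_L$ inside $i^*\cT_X$) accomplishes by forcing $\At_{i_*\oh_L}$, hence $\At^2_{i_*\oh_L}$, to factor through the $\Omega_L$-summand.
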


Recall that we have an anti-symplectic involution $\tau_X$ on $M^X_{\sigma_X}(a,b)$ (see Section \ref{subsec-lag-moduli}).

\begin{theorem}\label{thm-fixed-loci-1-obs}
Let $X$ be a GM fourfold or sixfold and $a,b$ be a pair of coprime integers. Then for a generic stability condition $\sigma_X\in \Stab^{\circ}(\Ku(X))$, each union of connected components of the fixed locus of the natural involution
\[\mathrm{Fix}(\tau_X)\subset M^X_{\sigma_X}(a,b)\]
is an $1$-obstructed Lagrangian submanifold.
\end{theorem}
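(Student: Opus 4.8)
The plan is to reduce the statement to Markman's cohomological criterion (Lemma~\ref{lem-markman-fixed}), applied to the finite group $G=\langle \tau_X\rangle\cong\ZZ/2$ acting on $M:=M^X_{\sigma_X}(a,b)$. By Theorem~\ref{thm-ppz} the fixed locus $F:=\mathrm{Fix}(\tau_X)$ is a Lagrangian submanifold, so every union of its connected components is a union of Lagrangian connected components of $\mathrm{Fix}(G)$. Thus it suffices to show that the restriction $i_F^*\colon \rH^2(M,\QQ)\to \rH^2(F,\QQ)$ has rank one: indeed, if this holds, then for any union of connected components $L\subseteq F$ we have $\ker(i_L^*)\supseteq \ker(i_F^*)$, so $\mathrm{rank}(i_L^*)\le 1$, while $i_L^*$ is nonzero because it sends an ample class to an ample, hence nonzero, class on the positive-dimensional Lagrangian $L$. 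Granting $\mathrm{rank}(i_L^*)=1$, Lemma~\ref{lem-markman-fixed} yields that $L$ is $1$-obstructed.

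When $X$ is non-Hodge-special the rank-one property of $i_F^*$ is immediate: $i_F$ is a closed embedding with Lagrangian image, so Proposition~\ref{prop-H2-moduli-space} applies directly and gives $\mathrm{rank}(i_F^*)=1$. Combined with the reduction of the first paragraph and Lemma~\ref{lem-markman-fixed}, this settles the non-Hodge-special case.

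For general $X$ — in particular for Hodge-special $X$, where $\rH^{1,1}_{\QQ}(M)$ can have rank larger than one and the rank-one property is no longer visible from the lattice description of $\rH^2(M,\QQ)$ — I would argue by deformation. After reducing to an ordinary GM fourfold with smooth canonical quadric via the Fourier--Mukai equivalences of \cite{kuznetsov2019categorical} and \cite{perry2019stability} (which identify the moduli spaces, intertwine the involutions $\tau$, and reduce the sixfold case to the fourfold case), I would invoke Theorem~\ref{thm-relative-stability-fourfold} to obtain a smooth connected curve $C$ with points $0,1\in C$, a family $\cX\to C$ with $\cX_0=X$ and $\cX_1$ non-Hodge-special, and a smooth projective relative moduli space $\pi\colon \cM=M^{\cX}_{\underline{\sigma}}(a,b)\to C$. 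The relative auto-equivalence $T_{\cX}$ induces a relative involution $\tilde\tau$ on $\cM$ over $C$, whose fixed locus $\rho\colon \cL=\mathrm{Fix}(\tilde\tau)\to C$ is smooth and proper over $C$ with $\cL_c=\mathrm{Fix}(\tau_{\cX_c})$. By Ehresmann's theorem $R^2\pi_*\QQ$ and $R^2\rho_*\QQ$ are local systems on $C$, and fiberwise restriction assembles into a morphism of local systems $R^2\pi_*\QQ\to R^2\rho_*\QQ$, whose rank is locally constant and hence constant on the connected curve $C$. At $c=1$ the fiber is non-Hodge-special, so the rank equals one by Proposition~\ref{prop-H2-moduli-space}; therefore it equals one at $c=0$, giving $\mathrm{rank}(i_F^*)=1$ for $X$. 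The first paragraph then concludes $1$-obstructedness for every union of connected components.

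The main obstacle is precisely the Hodge-special case: there the rank-one property of $i_F^*$ cannot be read off from $\rH^2(M,\QQ)$ and must be propagated from the non-Hodge-special locus. The two technical points to verify are (i) the existence of the relative involution $\tilde\tau$ and the smoothness and properness of its fixed locus $\cL\to C$, which are what guarantee that $R^2\rho_*\QQ$ is a local system and that the rank of a morphism of local systems is constant along $C$; and (ii) that the derived-equivalence reductions (sixfold to fourfold, and arbitrary fourfold to an ordinary one with smooth canonical quadric) are compatible with the stability conditions $\sigma$, the moduli spaces, and the involutions $\tau$, so that the rank-one statement transports faithfully. Once these are in place, Lemma~\ref{lem-markman-fixed} finishes the proof.
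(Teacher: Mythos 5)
Your proposal is correct and follows essentially the same route as the paper: establish rank one of the restriction on $\rH^2$ in the non-Hodge-special case via Proposition~\ref{prop-H2-moduli-space}, propagate it to arbitrary $X$ by deforming along the curve from Theorem~\ref{thm-relative-stability-fourfold} using the relative involution and the local-system constancy of the rank, and conclude with Lemma~\ref{lem-markman-fixed}; the two technical points you flag are exactly the ones the paper addresses (smoothness/properness of $\mathrm{Fix}(\tilde\tau)\to C$, and the reductions via \cite{kuznetsov2019categorical} and \cite[Theorem 4.18]{perry2019stability}). The only cosmetic difference is that you apply Markman's criterion directly to each union of components via $\ker(i_L^*)\supseteq\ker(i_F^*)$, whereas the paper first reduces to the full fixed locus through Lemma~\ref{lem-union-component-1obs}; both are valid.
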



\begin{proof}
By Lemma \ref{lem-union-component-1obs}, we only need to show that $\mathrm{Fix}(\tau_X)\subset M^X_{\sigma_X}(a,b)$ is an $1$-obstructed Lagrangian submanifold. 

First, we assume that $X$ is non-Hodge-special. As in \cite[Section 8.5]{ppzEnriques2023}, up to an equivalence of Kuznetsov components, it is enough to prove the statement when $X$ is a non-Hodge-special special GM fourfold. Let $j\colon Y\hookrightarrow X$ be the branch divisor of $X$, which is an ordinary GM threefold. By \cite[Proposition 5.3]{FGLZ}, for any Serre-invariant stability condition $\sigma_Y$ on~$\Ku(Y)$, the functor $\pr_X\circ j_*$ induces a morphism
\[r\colon M^{Y}_{\sigma_Y}(a,b)\to M^{X}_{\sigma_X}(a,b),\]
where $r$ is \'etale finite onto $\mathrm{Fix}(\tau_X)$ of degree $2$. Consequently, the pull-back map $$\rH^2(\mathrm{Fix}(\tau_X), \QQ)\to \rH^2(M^{Y}_{\sigma_Y}(a,b),\QQ)$$ is injective. Since the composition
\[r^*\colon \rH^2(M^X_{\sigma_X}(a,b), \QQ)\to \rH^2(\mathrm{Fix}(\tau_X), \QQ)\to \rH^2(M^{Y}_{\sigma_Y}(a,b),\QQ)\]
is of rank one by Corollary \ref{cor-rk1-H2-moduli}, the restriction map
\[\rH^2(M^X_{\sigma_X}(a,b), \QQ)\to \rH^2(\mathrm{Fix}(\tau_X), \QQ)\]
also has rank one. Therefore, $\mathrm{Fix}(\tau_X)$ is an $1$-obstructed Lagrangian submanifold by Lemma \ref{lem-markman-fixed}.


Next, we assume that $X$ is an arbitrary smooth GM fourfold. By \cite[Theorem 4.18]{perry2019stability}, we may assume that $X$ is general in the sense that the canonical quadric surface in $X$ is smooth (cf.~\cite[Remark 2.2]{perry2019stability}). Therefore, using Theorem \ref{thm-relative-stability-fourfold}, we can find a smooth connected quasi-projective curve $C$ and a family $\cX\to C$ of GM fourfolds with a full triangulated subcategory $\Ku(\cX)\subset \Db(\cX)$ such that $\Ku(\cX)_c\simeq \Ku(\cX_c)$ for each $c\in C$. Additionally, 

\begin{enumerate}
    \item there is a point $0\in C$ such that $\cX_0\cong X$,

    \item there is a point $1\in C$ such that $\cX_1$ is non-Hodge-special,

    \item there exists a relative stability condition $\underline{\sigma}$ on $\Ku(\cX)$ over $C$ such that $\underline{\sigma}_c\in \Stab^{\circ}(\Ku(\cX_c))$ is generic with respect to $a\Lambda_1+b\Lambda_2$ for every $c\in X$ and $M^X_{\sigma_X}(a,b)=M^X_{\underline{\sigma}_0}(a,b)$, and

    \item the relative moduli space $M^{\cX}_{\underline{\sigma}}(a,b)$ is smooth and projective over $C$.
\end{enumerate}
From \cite[Lemma 5.9]{bayer2022kuznetsov}, there is an auto-equivalence $T_{\cX}$ on $\Ku(\cX)$ such that $(T_{\cX})|_c\simeq T_{\cX_c}$ for each $c\in C$. By \cite[Proposition 5.7]{bayer2022kuznetsov} and the proof of \cite[Theorem 8.2]{ppzEnriques2023}, such equivalence induces an automorphism $\tau_{\cX}\colon M^{\cX}_{\underline{\sigma}}(a,b)\to M^{\cX}_{\underline{\sigma}}(a,b)$, which is an involution since it is an involution at each fiber over $C$ and $M^{\cX}_{\underline{\sigma}}(a,b)$ is smooth. Therefore, the fixed locus $\mathrm{Fix}(\tau_{\cX})$ is smooth projective over $C$ and satisfies $(\mathrm{Fix}(\tau_{\cX}))|_c\cong \mathrm{Fix}(\tau_{\cX_c})$ for each $c$. By Proposition \ref{prop-H2-moduli-space}, the restriction map
$$\rH^2(M^{\cX_1}_{\underline{\sigma}_1},\QQ)\to \mathrm{H}^2(\mathrm{Fix}(\tau_{\cX_1}),\QQ)$$
is of rank one, then the restriction map 
\[\rH^2(M^{X}_{\sigma_X},\QQ)\to \mathrm{H}^2(\mathrm{Fix}(\tau_{X}),\QQ)\]
is also of rank one. Thus, $\mathrm{Fix}(\tau_{X})$ is $1$-obstructed by Lemma \ref{lem-markman-fixed} and the result follows.
\end{proof}

Now we deduce an application of Theorem \ref{thm-fixed-loci-1-obs}, which describes the first Chern class of a Bridgeland moduli space on a GM threefold or fivefold in terms of its Bayer--Macr\`i divisor. Later on, we will see that this implies one of the assumptions in Theorem \ref{atomic-criterion-thm}.

Recall that a stability condition $\sigma=(\cA_{\sigma}, Z_{\sigma})$ is rational if the image of $Z_{\sigma}$ is contained in $\QQ[\mathfrak{i}]$.

\begin{corollary}\label{cor-c1-moduli}
Let $Y$ be an ordinary GM threefold or fivefold. For any pair of coprime integers $a,b$ and Serre-invariant stability condition $\sigma_Y$ on $\Ku(Y)$, if $M^Y_{\sigma_Y}(a,b)$ is smooth (e.g.~$Y$ is general), we have
\[c_1(M^Y_{\sigma_Y}(a,b))\in \mathbb{R}\cdot \ell_{\sigma_Y} \subset \NS(M^Y_{\sigma_Y}(a,b))_{\mathbb{R}}.\]
Moreover, if $\sigma_Y$ is rational, we have 
\[c_1(M^Y_{\sigma_Y}(a,b))\in \QQ\cdot \ell_{\sigma_Y} \subset \NS(M^Y_{\sigma_Y}(a,b))_{\mathbb{Q}}.\]
\end{corollary}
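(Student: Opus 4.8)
The plan is to deduce the statement from Theorem \ref{thm-fixed-loci-1-obs} by exhibiting $M^Y_{\sigma_Y}(a,b)$ as an \'etale cover of a $1$-obstructed fixed locus, and then to propagate the resulting proportionality to all $Y$ by a deformation argument. Since the variety $M^Y_{\sigma_Y}(a,b)$, its first Chern class, and the ray spanned by the Bayer--Macr\`i divisor $\ell_{\sigma_Y}$ are all invariant under Fourier--Mukai equivalences of Kuznetsov components (note that $\ell_{\sigma_Y}$ depends on the Serre-invariant $\sigma_Y$ only up to positive scalar, so by Theorem \ref{thm-unique-fourfold} the statement is independent of the chosen representative of the $\GL$-orbit), the equivalence $\Ku(Y)\simeq \Ku(Y')$ of \cite[Corollary 6.5]{kuznetsov2019categorical} between a GM fivefold $Y$ and a GM threefold $Y'$ reduces the fivefold case to the threefold case, so I will concentrate on ordinary GM threefolds.

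\textbf{Base case.} First I would treat the special situation where $Y$ is the branch divisor of a non-Hodge-special special GM fourfold $X$, exactly the configuration appearing in the proof of Theorem \ref{thm-fixed-loci-1-obs}. By \cite[Proposition 5.3]{FGLZ} the functor $\pr_X\circ j_*$ induces a morphism $r\colon M^Y_{\sigma_Y}(a,b)\to M^X_{\sigma_X}(a,b)$ that is \'etale of degree $2$ onto $\mathrm{Fix}(\tau_X)$; write $r=i\circ r'$ with $r'$ \'etale and $i\colon \mathrm{Fix}(\tau_X)\hookrightarrow M^X_{\sigma_X}(a,b)$ the inclusion, choosing $\sigma_Y$ compatible with $\sigma_X$ as in Proposition \ref{prop-restrict-BM-divisor}. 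By Theorem \ref{thm-fixed-loci-1-obs} the Lagrangian $\mathrm{Fix}(\tau_X)$ is $1$-obstructed, hence atomic by Theorem \ref{1-obstructed-atomic}, so Theorem \ref{atomic-criterion-thm} yields $c_1(\mathrm{Fix}(\tau_X))\in \im(i^*)$. The image $\im(i^*)$ is one-dimensional by Proposition \ref{prop-H2-moduli-space}, and since $\ell_{\sigma_X}$ is ample and $i$ is finite, $i^*\ell_{\sigma_X}$ is a nonzero element of it; thus $c_1(\mathrm{Fix}(\tau_X))=\mu\, i^*\ell_{\sigma_X}$ for some $\mu\in\mathbb{R}$. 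Pulling back along the \'etale map $r'$ and using $r'^*i^*\ell_{\sigma_X}=r^*\ell_{\sigma_X}=\ell_{\sigma_Y}$ from Proposition \ref{prop-restrict-BM-divisor}, I obtain
\[
c_1(M^Y_{\sigma_Y}(a,b))=r'^*c_1(\mathrm{Fix}(\tau_X))=\mu\cdot\ell_{\sigma_Y}.
\]
(Here $M^Y_{\sigma_Y}(a,b)$ is automatically smooth, being an \'etale cover of the smooth fixed locus $\mathrm{Fix}(\tau_X)$.)

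\textbf{Deformation to general $Y$.} For an arbitrary ordinary GM threefold $Y$ with $M^Y_{\sigma_Y}(a,b)$ smooth, I would connect it to such a branch divisor inside the irreducible parameter space of ordinary GM threefolds. Restricting to the open, hence connected, locus $S$ where the moduli space is smooth---nonempty by Lemma \ref{lem-structure-3fold-moduli} and containing every branch divisor by the base case---I invoke Theorem \ref{thm-relative-stability-threefold} to produce a smooth proper relative moduli space $\pi\colon M^{\cY}_{\underline{\sigma}}(a,b)\to S$. The global class $c_1(T_\pi)$ restricts on each fibre to $c_1(M^{\cY_s}_{\underline{\sigma}_s}(a,b))$ and the relative Bayer--Macr\`i class $\ell_{\underline{\sigma}}\in\NS(M^{\cY}_{\underline{\sigma}}(a,b))$ restricts to $\ell_{\sigma_{\cY_s}}$, so both define flat (monodromy-invariant) sections of $R^2\pi_*\mathbb{R}$ as restrictions of global cohomology classes. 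At the branch-divisor point these two sections are proportional with factor $\mu$ by the base case, hence the flat section $c_1(T_\pi)-\mu\,\ell_{\underline{\sigma}}$ vanishes there and therefore vanishes over all of the connected base $S$. Evaluating at the point corresponding to $Y$ gives $c_1(M^Y_{\sigma_Y}(a,b))=\mu\,\ell_{\sigma_Y}\in\mathbb{R}\cdot\ell_{\sigma_Y}$, proving the real statement; the rational refinement is then immediate, since when $\sigma_Y$ is rational both $c_1(M^Y_{\sigma_Y}(a,b))$ and the nonzero class $\ell_{\sigma_Y}$ lie in $\NS(M^Y_{\sigma_Y}(a,b))_{\mathbb{Q}}$, forcing $\mu\in\mathbb{Q}$.

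The step I expect to be the main obstacle is the deformation argument: one must justify that $c_1(T_\pi)$ and $\ell_{\underline{\sigma}}$ are genuine flat sections over a \emph{connected} base simultaneously containing a general $Y$ and a branch divisor with smooth moduli space. Flatness of $c_1(T_\pi)$ is formal, and flatness of $\ell_{\underline{\sigma}}$ follows once it is identified with a global numerical class on the total relative moduli space restricting fibrewise to $\ell_{\sigma_{\cY_s}}$; the connectedness hinges on knowing that the smooth locus of the irreducible family of GM threefolds still meets the locus of branch divisors. Both points are plausible from the theory recalled above but would require careful verification.
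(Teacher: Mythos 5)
Your base case is essentially the paper's entire proof, and your extra deformation step is a detour you do not need. The paper observes that \emph{every} ordinary GM threefold $Y$ is the branch divisor of a special GM fourfold $X$ (with no Hodge-specialness restriction), and then applies Theorem \ref{thm-fixed-loci-1-obs} --- which is already stated for arbitrary GM fourfolds, the deformation to the non-Hodge-special case having been carried out once and for all inside \emph{its} proof --- together with Theorem \ref{1-obstructed-atomic} and Theorem \ref{atomic-criterion-thm} to get both $c_1(\mathrm{Fix}(\tau_X))\in\im(i^*)$ and the rank-one property of $i^*$; the latter comes from atomicity via Theorem \ref{atomic-criterion-thm}, not from Proposition \ref{prop-H2-moduli-space}, which is the only reason you were forced to assume $X$ non-Hodge-special. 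Dropping that assumption makes your base case cover all ordinary $Y$ at once and lets you delete the second half of your argument entirely. That matters because the deformation step is where all of your unresolved technical debt sits: you would have to show that the relative Bayer--Macr\`i class is a genuine global class on the total space restricting fibrewise to $\ell_{\sigma_{\cY_s}}$ (it is only constructed as a numerical Cartier class on each good moduli space), that the locus of $s$ with smooth fibre is open (this needs flatness of the relative moduli space, which Theorem \ref{thm-relative-stability-threefold} does not assert), and that this locus meets the countable-codimension set of branch divisors of non-Hodge-special fourfolds --- all plausible, but none of it free, and the paper's proofs that do run such deformation arguments (e.g.\ Theorem \ref{thm-fixed-loci-1-obs}, Theorem \ref{thm-double-epw}) take visible care over exactly these points. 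Your reduction from fivefolds to threefolds via \cite[Corollary 6.5]{kuznetsov2019categorical} and your handling of the $\GL$-ambiguity and of the rational refinement all match the paper and are fine.
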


\begin{proof}
If $Y$ is an ordinary GM fivefold, by \cite[Corollary 6.5]{kuznetsov2019categorical}, we can find an ordinary GM threefold $Y'$ with an equivalence $\Ku(Y)\simeq \Ku(Y')$. Thus, we only need to prove the statement when $Y$ is an ordinary GM threefold. As $Y$ is ordinary, it is the branch divisor of a special GM fourfold $X$. 

Let $\sigma_X\in \Stab^{\circ}(\Ku(X))$ be a generic stability condition on $\Ku(X)$ with $Z_{\sigma_X}(\Lambda_i)=Z_{\sigma_Y}(\lambda_i)$ for each $i=1,2$. By \cite[Proposition 5.3]{FGLZ}, the functor $\pr_X\circ j_*$ induces a morphism
\[r\colon M^{Y}_{\sigma_Y}(a,b)\to M^{X}_{\sigma_X}(a,b)\]
such that $r$ is an \'etale finite morphism onto its image $\mathrm{Fix}(\tau_X)$. We denote this covering map by $\pi\colon M^Y_{\sigma_Y}(a,b)\to \mathrm{Fix}(\tau_X)$ and denote the embedding by $i\colon \mathrm{Fix}(\tau_X)\hookrightarrow M^{X}_{\sigma_X}(a,b)$. Since $\mathrm{Fix}(\tau_X)$ is $1$-obstructed by Theorem~\ref{thm-fixed-loci-1-obs}, according to Theorem~\ref{1-obstructed-atomic} and Theorem \ref{atomic-criterion-thm}, we see $$c_1(\mathrm{Fix}(\tau_X))\in \im(i^*)\subset \rH^2(\mathrm{Fix}(\tau_X), \QQ),$$ and the restriction map 
\[i^*\colon \rH^2(M^{X}_{\sigma_X}(a,b), \QQ)\to \rH^2(\mathrm{Fix}(\tau_X), \QQ)\]
is of rank one. Therefore, $\pi^*c_1(\mathrm{Fix}(\tau_X))=c_1(M^Y_{\sigma_Y}(a,b))\in \im((i\circ \pi)^*)$. By Proposition \ref{prop-restrict-BM-divisor}, $(i\circ \pi)^*\ell_{\sigma_X}=\ell_{\sigma_Y}$ is ample, then $\ell_{\sigma_Y}\neq 0\in \im((i\circ \pi)^*)\otimes \mathbb{R}$. Thus, the first result follows from the fact that $\im((i\circ \pi)^*)\otimes \mathbb{R}=\im(r^*)\otimes \mathbb{R}$ is one-dimensional. When $\sigma_Y$ is rational, $\sigma_X$ is also rational, and we observe that 
$$(i\circ \pi)^*\ell_{\sigma_X}=\ell_{\sigma_Y}\neq 0\in \im((i\circ \pi)^*) \subset \rH^2(M, \QQ).$$
The result now follows from $c_1(M^Y_{\sigma_Y}(a,b))\in \im((i\circ \pi)^*)$ and $\dim_{\QQ} (\im((i\circ \pi)^*))=1$.
\end{proof}

\begin{remark}
In the preceding proof, we established that $c_1(\mathrm{Fix}(\tau_X))$ is proportional to $\ell_{\sigma_X}|_{\mathrm{Fix}(\tau_X)}$ for any GM fourfold or sixfold $X$. It is intriguing to determine the coefficient $t$ such that
\[c_1(\mathrm{Fix}(\tau_X))=t\cdot\ell_{\sigma_X}|_{\mathrm{Fix}(\tau_X)}\]
and to compare this result with the results in \cite{macri:involution-I,macri:involution-II}. We expect that $t$ depends solely on the integers $a$ and $b$.
\end{remark}

\subsection{Families of immersed atomic Lagrangians}
We have demonstrated that the Lagrangian submanifolds constructed in Theorem~\ref{thm-ppz} are 1-obstructed. Next, using Theorem \ref{thm-FGLZ-moduli}, we can construct a family of immersed atomic Lagrangians, which can deform over an open subset of the linear series of hyperplane sections.

\begin{theorem}\label{thm-pushforward-atomic}
Given a pair of coprime integers $a,b$, and let $X$ be a GM fourfold or sixfold and $j\colon Y\hookrightarrow X$ be a smooth hyperplane section. We assume that

\begin{itemize}
    \item $X$ is general when $\dim X=4$, or

    \item $X$ is very general and $Y\subset X$ is a general hyperplane section when $\dim X=6$. 
\end{itemize}

Let $\sigma_X\in \Stab^{\circ}(\Ku(X))$ be a generic stability condition and $\sigma_Y$ be a Serre-invariant stability condition on $\Ku(Y)$. Then, there is a finite morphism
\[r\colon M^Y_{\sigma_Y}(a,b)\to M^X_{\sigma_X}(a,b),\]
such that $r_*\oh_{M^Y_{\sigma_Y}(a,b)}$ is an atomic sheaf on $M^X_{\sigma_X}(a,b)$. In particular, $r$ is an immersed atomic Lagrangian submanifold when $Y$ is general. 
\end{theorem}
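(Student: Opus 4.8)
The plan is to check the two hypotheses of the atomicity criterion Theorem~\ref{atomic-criterion-thm} for $r$ in the case that $Y$ is general, and then to spread atomicity to all smooth hyperplane sections by a deformation argument. First I would produce $r$: a general GM fourfold is in particular non-Hodge-special, and $X$ is non-Hodge-special by hypothesis when $\dim X = 6$, so Theorem~\ref{thm-FGLZ-moduli} applies and $\pr_X\circ j_*$ yields a finite morphism $r\colon M^Y_{\sigma_Y}(a,b)\to M^X_{\sigma_X}(a,b)$ with Lagrangian image; Corollary~\ref{cor-rk1-H2-moduli} then gives that $r^*\colon \rH^2(M^X_{\sigma_X}(a,b),\QQ)\to \rH^2(M^Y_{\sigma_Y}(a,b),\QQ)$ has rank one. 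By Theorem~\ref{thm-stabo-orbit} and Theorem~\ref{thm-unique-fourfold} I may move $\sigma_X$ and $\sigma_Y$ within their $\GL$-orbits without altering the moduli spaces or $r$, so I normalize both to be rational with $Z_{\sigma_X}(\Lambda_i) = Z_{\sigma_Y}(\lambda_i)$ for $i=1,2$.

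Now suppose $Y$ is general. Then $M^Y_{\sigma_Y}(a,b)$ is smooth projective by Lemma~\ref{lem-structure-3fold-moduli} and $r$ is unramified by Theorem~\ref{thm-FGLZ-moduli}, so $r$ is an immersed Lagrangian submanifold. The only remaining input for Theorem~\ref{atomic-criterion-thm} is the containment $c_1(M^Y_{\sigma_Y}(a,b))\in\im(r^*)$. I would obtain this by combining Corollary~\ref{cor-c1-moduli}, which (using the rational normalization and that the underlying threefold or fivefold is ordinary) gives $c_1(M^Y_{\sigma_Y}(a,b))\in\QQ\cdot\ell_{\sigma_Y}$, with Proposition~\ref{prop-restrict-BM-divisor}, which gives $r^*\ell_{\sigma_X}=\ell_{\sigma_Y}$. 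Since $\ell_{\sigma_Y}$ is ample, hence a nonzero rational class, it spans the one-dimensional rational line $\im(r^*)$, so $c_1(M^Y_{\sigma_Y}(a,b))\in\im(r^*)$. Theorem~\ref{atomic-criterion-thm} then shows $r_*\oh_{M^Y_{\sigma_Y}(a,b)}$ is atomic and that $r$ is an immersed atomic Lagrangian; this already proves the $\dim X = 6$ case and the ``in particular'' assertion.

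To treat an arbitrary smooth hyperplane section $Y$ when $\dim X = 4$, I would deform over the linear system. Let $S\subset|\oh_X(H)|$ be the (connected) open locus of smooth sections, with universal family $\cY\to S$ of GM threefolds. By Theorem~\ref{thm-relative-stability-threefold} there is a relative Serre-invariant stability condition and a proper relative moduli space $M^{\cY}_{\underline\sigma}(a,b)\to S$, and a relative version of the functor $\pr_X\circ j_*$ furnishes a relative morphism $f\colon M^{\cY}_{\underline\sigma}(a,b)\to M^X_{\sigma_X}(a,b)$ with $f_s = r_s$ on each fiber. Taking $B = S$, the constant family $\cX = M^X_{\sigma_X}(a,b)$, and $\cF = \oh_{M^{\cY}_{\underline\sigma}(a,b)}$, Proposition~\ref{prop-immersed-deform} applies: at a general $s_0\in S$ the sheaf $r_{s_0*}\oh$ is atomic by the previous paragraph, hence $r_{s*}\oh$ is atomic for every $s\in S$, in particular for the given $Y$.

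The main obstacle I expect is the rational bookkeeping in the second paragraph. Corollary~\ref{cor-c1-moduli} only asserts real proportionality of $c_1$ to $\ell_{\sigma_Y}$ in general, so one genuinely needs its rational refinement — and therefore the rationality normalization of the stability conditions — to place $c_1$ inside the rank-one rational subspace $\im(r^*)$ rather than merely inside its real span. A secondary point is the construction of the relative morphism $f$ and the verification of the equidimensionality hypothesis of Proposition~\ref{prop-immersed-deform}; the latter holds because the class $a\lambda_1+b\lambda_2$, and hence the expected fiber dimension, is locally constant over $S$.
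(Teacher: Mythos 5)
Your overall strategy (verify the two hypotheses of Theorem~\ref{atomic-criterion-thm} via Corollary~\ref{cor-rk1-H2-moduli}, Proposition~\ref{prop-restrict-BM-divisor} and the rational form of Corollary~\ref{cor-c1-moduli}, then spread by deformation invariance of atomicity) matches the paper's first half, and your attention to the rational versus real proportionality of $c_1$ to $\ell_{\sigma_Y}$ is exactly the right bookkeeping. But there is a genuine gap in the $\dim X=4$ case, rooted in the assertion that ``a general GM fourfold is in particular non-Hodge-special.'' This is false in the paper's conventions: the Hodge-special locus is a \emph{countable} union of divisors, so its complement is not open, and only \emph{very general} GM fourfolds are non-Hodge-special. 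Theorem~\ref{thm-FGLZ-moduli}(1) does produce $r$ for a general fourfold, but Corollary~\ref{cor-rk1-H2-moduli} — which you need for the rank-one statement on $r^*$ and for $(\ell_{\sigma_X})^{\perp}=\ker(r^*)$ — explicitly requires $X$ non-Hodge-special, since it rests on Proposition~\ref{prop-H2-moduli-space} and the identification $\rH^{1,1}_{\ZZ}(M^X_{\sigma_X}(a,b))\cong\ZZ\cdot(b\Lambda_1-a\Lambda_2)$ of Remark~\ref{rmk-non-hodeg-special}. So your second paragraph only establishes the base case when $X$ is very general.

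This defect is not repaired by your third paragraph, because your deformation only moves $Y$ inside $|\oh_X(H)|$ while $X$ stays fixed: if $X$ is general but Hodge-special, no fiber of your family ever satisfies the hypotheses of the direct argument, so Proposition~\ref{prop-immersed-deform} has no atomic fiber to propagate from. The paper closes this by deforming the \emph{pair} $(X,Y)$: it chooses a smooth connected curve $C$ in the moduli of GM fourfolds joining $(X,Y)$ to $(X',Y')$ with $X'$ very general (with smooth canonical quadric) and $Y'$ a general hyperplane section, invokes Theorem~\ref{thm-relative-stability-fourfold} and Theorem~\ref{thm-relative-stability-threefold} to get relative stability conditions and relative moduli spaces over $C$ together with a finite relative morphism $r'$ restricting to $r_c$ on each fiber, and then applies Proposition~\ref{prop-immersed-deform} to conclude that $r_{0*}\oh$ is atomic because $r_{1*}\oh$ is. You need this extra deformation of $X$ itself (or some other argument handling Hodge-special general fourfolds) to obtain the theorem as stated.
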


\begin{proof}
The existence of a finite morphism $r\colon M^Y_{\sigma_Y}(a,b)\to M^X_{\sigma_X}(a,b)$ follows from Theorem \ref{thm-FGLZ-moduli}.

First, we assume that $X$ is very general and $Y\subset X$ is a general hyperplane section. Then $M^Y_{\sigma_Y}(a,b)$ is smooth by Lemma \ref{lem-structure-3fold-moduli}, and $r$ is unramified by Theorem \ref{thm-FGLZ-moduli}. According to Corollary~\ref{cor-rk1-H2-moduli}, the restriction map
\[r^*\colon \rH^2(M^X_{\sigma_X}(a,b), \QQ)\to \rH^2(M^Y_{\sigma_Y}(a,b), \QQ)\]
is of rank one. Then, by Theorem \ref{atomic-criterion-thm}, to show that $r_*\oh_{M^Y_{\sigma_Y}(a,b)}$ is an atomic sheaf, it suffices to verify $c_1(M^Y_{\sigma_Y}(a,b))\in \im(r^*)$. To this end, up to $\GL$-action, we can assume that $\sigma_Y$ and $\sigma_X$ are rational and $Z_{\sigma_X}(\Lambda_i)=Z_{\sigma_Y}(\lambda_i)$ for each $i=1,2$ without changing the isomorphism classes of $M^Y_{\sigma_Y}(a,b)$ and $M^X_{\sigma_X}(a,b)$. By Proposition \ref{prop-restrict-BM-divisor}, we have
\[r^*\ell_{\sigma_X}=\ell_{\sigma_Y}\neq 0\in \rH^2(M^Y_{\sigma_Y}(a,b), \QQ)\]
and the atomicity of $r_*\oh_{M^Y_{\sigma_Y}(a,b)}$ follows from Corollary \ref{cor-c1-moduli}.

Next, we assume that $X$ is a general GM fourfold and $Y\subset X$ is a smooth hyperplane section. Let $X'$ be a very general GM fourfold with the smooth canonical quadric and $Y'\subset X'$ is a general hyperplane section. Following this, we can find a smooth connected curve $C$ and a smooth projective family of GM fourfolds $\cX\to C$ with a closed subscheme $\cY\subset \cX$, which is smooth over $C$ such that $$\cY_0\cong Y, \quad\cY_1\cong Y', \quad\cX_0\cong X, \quad\text{and}\quad\cX_1\cong X'$$ for two points $0,1\in C$. After replacing $C$ with an open dense subscheme containing $0,1\in C$, we can assume that each $\cX_c$ satisfies the statement in Theorem \ref{thm-FGLZ-moduli} and the family of the canonical quadrics is smooth. Now, using Theorem \ref{thm-relative-stability-fourfold} as in the proof of Theorem \ref{thm-fixed-loci-1-obs}, there exists a relative stability condition $\underline{\sigma}$ on~$\Ku(\cX)$ over $C$ such that $\underline{\sigma}_c\in \Stab^{\circ}(\Ku(\cX_c))$ and $M^X_{\underline{\sigma}_0}(a,b)=M^X_{\sigma_X}(a,b)$. Similarly, by Theorem \ref{thm-relative-stability-threefold}, there is a relative stability condition $\underline{\sigma'}$ on $\Ku(\cY)$ over $C$ such that $\underline{\sigma'}_c$ is Serre-invariant for each $c\in C$.

Let $p\colon M^{\cY}_{\underline{\sigma'}}(a,b)\to C$ and $q\colon M^{\cX}_{\underline{\sigma}}(a,b)\to C$ be the relative moduli spaces, which are proper. We denote by $r_c\colon M^{\cY_c}_{\underline{\sigma'}_c}(a,b)\to M^{\cX_c}_{\underline{\sigma}_c}(a,b)$ the morphism in Theorem \ref{thm-FGLZ-moduli} for each $c\in C$. By assumption, the push-forward of $\cY\to \cX$ and the relative projection functor $\Db(\cX)\to \Ku(\cX)$ induce a morphism $r'\colon M^{\cY}_{\underline{\sigma'}}(a,b)\to M^{\cX}_{\underline{\sigma}}(a,b)$ such that $r'_c=r_c$. Therefore, $r'$ is proper and quasi-finite, hence finite. Moreover, by \cite[Proposition 5.1(1)]{FGLZ}, the fibers of $p$ are equidimensional of the same dimension and $M^{\cY}_{\underline{\sigma'}}(a,b)$ is also equidimensional. By Proposition~\ref{prop-immersed-deform}, this implies that $r_{0*}\oh_{M^Y_{\sigma_Y}(a,b)}$ is atomic because  $r_{1*}\oh_{M_{\underline{\sigma}_1}^{Y'}(a,b)}$ is atomic. 
\end{proof}

\begin{remark}
By Lemma \ref{lem-union-component-1obs}, we obtain that $r_*\oh_{M}$ is an atomic sheaf on $M^X_{\sigma_X}(a,b)$ for each union of connected components of $M^Y_{\sigma_Y}(a,b)$ when $X$ is general ($\dim X=4$) or very general ($\dim X=6$), and $M^Y_{\sigma_Y}(a,b)$ is smooth (e.g.~$Y\subset X$ is a general hyperplane section). We expect that $M^Y_{\sigma_Y}(a,b)$ is always connected, which is true in all known explicit examples.
\end{remark}

Similarly, using Proposition \ref{prop-restrict-BM-divisor-5fold} and Corollary \ref{cor-rk1-H2-moduli-5fold}, the same argument as in the proof of Theorem \ref{thm-pushforward-atomic} yields:

\begin{theorem}\label{thm-push-atomic-5fold}
Let $X$ be a very general GM fourfold and $j\colon X\hookrightarrow Y$ be an embedding such that $Y$ is a general GM fivefold. Let $\sigma_X\in \Stab^{\circ}(\Ku(X))$ and $\sigma_Y$ be a Serre-invariant stability condition on $\Ku(Y)$. 

Then, there is a morphism
\[r\colon M^Y_{\sigma_Y}(a,b)\to M^X_{\sigma_X}(a,b)\]
such that $r$ is an immersed atomic Lagrangian submanifold.
\end{theorem}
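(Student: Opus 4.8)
The plan is to mirror the ``very general'' case in the proof of Theorem~\ref{thm-pushforward-atomic}, now invoking the fivefold analogues Proposition~\ref{prop-restrict-BM-divisor-5fold} and Corollary~\ref{cor-rk1-H2-moduli-5fold} in place of their fourfold counterparts; since both genericity hypotheses are imposed from the outset, no deformation argument is needed here. First I would produce the morphism: by Theorem~\ref{thm-FGLZ-moduli}(2), applied with the fivefold $Y$ in the role of the ambient variety and the fourfold $X$ in the role of the hyperplane section, the functor $j^*$ induces a finite morphism $r\colon M^Y_{\sigma_Y}(a,b)\to M^X_{\sigma_X}(a,b)$ whose image is Lagrangian, and $r$ is unramified by the same theorem. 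Because $Y$ is a general GM fivefold, Lemma~\ref{lem-structure-3fold-moduli} guarantees that $M^Y_{\sigma_Y}(a,b)$ is smooth and projective, so $r$ indeed realizes $M^Y_{\sigma_Y}(a,b)$ as an immersed Lagrangian submanifold of $M^X_{\sigma_X}(a,b)$.

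It then remains to verify atomicity of $r_*\oh_{M^Y_{\sigma_Y}(a,b)}$, for which I would invoke the criterion of Theorem~\ref{atomic-criterion-thm}. This reduces the problem to two statements: that $r^*\colon \rH^2(M^X_{\sigma_X}(a,b),\QQ)\to \rH^2(M^Y_{\sigma_Y}(a,b),\QQ)$ has rank one, and that $c_1(M^Y_{\sigma_Y}(a,b))\in \im(r^*)$. The rank-one statement is precisely Corollary~\ref{cor-rk1-H2-moduli-5fold}, which applies since $X$ is very general, hence non-Hodge-special. For the Chern-class condition I would first use the $\GL$-action (Theorems~\ref{thm-unique-fourfold}, \ref{thm-unique-threefold}, \ref{thm-stabo-orbit}) to normalize $\sigma_X,\sigma_Y$ to be rational with matching central charges $Z_{\sigma_X}(\Lambda_i)=Z_{\sigma_Y}(\lambda_i)$, which changes neither moduli space. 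Then Proposition~\ref{prop-restrict-BM-divisor-5fold} yields $r^*\ell_{\sigma_X}=\ell_{\sigma_Y}$, and this class is nonzero since it is ample. On the other hand, as $Y$ is a general, hence ordinary, GM fivefold, Corollary~\ref{cor-c1-moduli} gives $c_1(M^Y_{\sigma_Y}(a,b))\in \QQ\cdot \ell_{\sigma_Y}$. Combining these, $c_1(M^Y_{\sigma_Y}(a,b))\in \QQ\cdot \ell_{\sigma_Y}=\QQ\cdot r^*\ell_{\sigma_X}\subset \im(r^*)$, which completes the verification and shows that $r_*\oh_{M^Y_{\sigma_Y}(a,b)}$ is atomic.

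The only genuinely delicate points here are bookkeeping rather than outright obstacles. One must keep track of the fact that, in passing from the fourfold-sitting-over-threefold setup to the fourfold-sitting-inside-fivefold setup, the roles of ``ambient variety'' and ``hyperplane section'' are exchanged, so that the central-charge normalization and the direction of the Bayer--Macr\`i pullback in Proposition~\ref{prop-restrict-BM-divisor-5fold} are applied consistently. One should also confirm the genericity chain, namely that a very general fourfold $X$ is non-Hodge-special and that a general fivefold $Y$ containing it is ordinary with $X\subset Y$ a general hyperplane section, so that Lemma~\ref{lem-structure-3fold-moduli}, Corollary~\ref{cor-rk1-H2-moduli-5fold}, and Corollary~\ref{cor-c1-moduli} are all simultaneously available. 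Granting these, the argument goes through verbatim as in the very general case of Theorem~\ref{thm-pushforward-atomic}.
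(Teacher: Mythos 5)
Your proposal is correct and follows essentially the same route as the paper, which proves this theorem by running the "very general" case of the argument for Theorem~\ref{thm-pushforward-atomic} with Proposition~\ref{prop-restrict-BM-divisor-5fold} and Corollary~\ref{cor-rk1-H2-moduli-5fold} substituted for their fourfold counterparts. You also correctly note that the deformation step of Theorem~\ref{thm-pushforward-atomic} is unnecessary here because $X$ is assumed very general from the start, and you handle the exchange of the roles of ambient variety and hyperplane section consistently.
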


\begin{remark}
Once a construction of relative stability conditions of families of GM fivefolds is known, we can easily extend Theorem \ref{thm-push-atomic-5fold} to the case when $X$ is a general GM fourfold and $Y$ is a smooth GM fivefold containing $X$.
\end{remark}

\subsection{Twisted atomic bundles}

In this subsection, we are going to prove Theorem \ref{thm-bundle}. 

Given a hyper-K\"ahler manifold $X$ and a Brauer class $\alpha\in \rH^2(X, \oh_X^*)$, we consider the twisted derived category $\Db(X, \alpha)$. In \cite[Section 4.4]{bottini23thesis} and \cite[Section 2]{bottini2024grady}, the author extends the theory of atomicity to the case of twisted objects in $\Db(X, \alpha)$ and proves some properties of twisted atomic sheaves in a specific setting. 

On the other hand, for a twisted vector bundle on $X$, the notion of projectively hyperholomorphic bundles is introduced in \cite{Ver96hyperholomorphic}, which has well-behaved deformation properties. We refer to \cite[Section 6]{markman:char-class} for more details.

\begin{proof}[{Proof of Theorem \ref{thm-bundle}}]
We fix a pair of coprime integers $a,b$ and set $n:=a^2+b^2+1$. Let $X$ be a general GM fourfold and $j\colon Y\hookrightarrow X$ be a general hyperplane section. By Theorem \ref{thm-pushforward-atomic}, we have an immersed atomic Lagrangian submanifold $r\colon M^Y_{\sigma_Y}(a,b)\to M^X_{\sigma_X}(a,b)$. Up to the $\GL$-action, we may assume that $\sigma_X$ and $\sigma_Y$ are rational with the same central charge. From Theorem \ref{thm-FGLZ-moduli} and Lemma \ref{lem-birational-comp}, we can choose a connected component $L\subset M^Y_{\sigma_Y}(a,b)$ such that $r$ maps $L$ birationally to its image $Z$ and hence $r(L)=Z$ is not contained in the fixed locus of the natural involution on $M^X_{\sigma_X}(a,b)$. The restriction of $r$ on $L$, denoted also by $r$, is a connected immersed atomic Lagrangian submanifold by Lemma \ref{lem-union-component-1obs}.

We claim that $\rH^1(L,\oh_L)\neq 0$, i.e.~$L$ is non-rigid. Assume for the contradiction that $\rH^1(L,\oh_L)$ vanishes. Up to equivalences of Kuznetsov components, we may assume that $X$ is special and $Y$ is not the branch divisor of $X$ by \cite[Theorem 1.6]{kuznetsov2019categorical} and \cite[Lemma 3.8]{kuznetsov2018derived}. Since $\rH^1(L,\oh_L)=0$, the Albanese variety of $L$ is trivial. In particular, the Abel--Jacobi map
\[L\to \mathrm{J}(Y)\]
defined by using the second Chern class is trivial, where $\mathrm{J}(Y)$ is the intermediate Jacobian of $Y$. Then \cite[Theorem 4.1(ii)]{liefu:cycle-gushel-mukai} implies that $c_2(E)=c_2(E')\in \CH^2(Y)$ for any two points $[E],[E']\in L$. Using \cite[Theorem 4.1(i)]{liefu:cycle-gushel-mukai}, this further implies $$\ch(E)=\ch(E')\in \CH^*(Y)_{\QQ}\quad\text{and}\quad\ch(\pr_X(j_*E))=\ch(\pr_X(j_*E'))\in \CH^*(X)_{\QQ}.$$ Therefore, from \cite[Proposition 6.6]{FGLZ}, we know that $r(L)\subset M^X_{\sigma_X}(a,b)$ is a Lagrangian constant cycle in the sense of \cite{voisin:remark-coisotropic}, i.e.~the set of rational equivalence classes of points of $r(L)$ in $\CH_0(M^X_{\sigma_X}(a,b))$ is a singleton. Now, we consider the diagram
\[\begin{tikzcd}
	{L(a,b)} & {M^X_{\sigma_X}(a,b)} \\
	V\subset |\oh_X(H)|
	\arrow["q", from=1-1, to=1-2]
	\arrow["p"', from=1-1, to=2-1]
\end{tikzcd}\]
as in the proof of \cite[Theorem 5.8]{FGLZ}, where $V\subset |\oh_X(H)|$ is the open subset parameterizing hyperplane sections of $X$ such that the moduli space of class $a\lambda_1+b\lambda_2$ is smooth and $p$ is the relative moduli space which is smooth projective. Moreover, $q|_{p^{-1}(s)}$ is the morphism constructed in Theorem \ref{thm-FGLZ-moduli}(1) for each $s\in V$. We denote by $0\in V$ the point corresponding to $Y$. Since $X$ is general, there is also a point $1\in V$ corresponding to the branch divisor of $X$. Up to taking the Stein factorization, we can assume that $p$ has connected fibers. Moreover, by replacing $L(a,b)$ with the component containing $L\subset p^{-1}(0)$, we can assume that $L(a,b)$ is irreducible. Since $p$ is a smooth proper family, we have $\rH^1(p^{-1}(s), \oh_{p^{-1}(s)})=\rH^1(L,\oh_L)=0$ for each $s\in V$. In particular, by the same argument as above, we see that $q(p^{-1}(s))$ is a Lagrangian constant cycle in $M^X_{\sigma_X}(a,b)$ for each $s\in V$. Therefore, applying \cite[Theorem 1.3(i)]{voisin:remark-coisotropic}, we conclude that $q(L(a,b))$ has dimension $n=\frac{1}{2}\dim M^X_{\sigma_X}(a,b)$. Since $L(a,b)$ is irreducible, $q(L(a,b))$ is also irreducible. Therefore, we have $q(L(a,b))=q(p^{-1}(s))$ for each $s\in V$ as $q(p^{-1}(s))\subset q(L(a,b))$ and they have the same dimension. However, this leads to a contradiction, since $q(p^{-1}(1))$ lies in the fixed locus of the natural involution on $M^X_{\sigma_X}(a,b)$, while $q(p^{-1}(0))=q(L)$ does not, by our choice of $L$. This proves the claim.

Now, back to the proof of our theorem. Recall that $M^X_{\sigma_X}(a,b)$ forms an open dense subset of the moduli space of projective hyper-K\"ahler manifolds of degree $2n-2$ and divisibility $n-1$ when $X$ varies in an open dense subset of the moduli space of GM fourfolds (cf.~\cite[Theorem 1.7]{perry2019stability}). By the Torelli Theorem \cite[Theorem 8.1]{Huy99}, we can take $X$ such that $\rH^{1,1}_{\ZZ}(M^X_{\sigma_X}(a,b))=\langle h, f \rangle$, where $h$ is the natural polarization and $f$ induces a Lagrangian fibration $M^X_{\sigma_X}(a,b)\to \PP^n$ such that $q(h,f)=d$ for a positive integer $d$. When $d\gg 0$, the movable cone of $M^X_{\sigma_X}(a,b)$ equals to the nef cone of $M^X_{\sigma_X}(a,b)$ by \cite[Corollary 2.7]{mongardi:mori-cone} and \cite[Lemma 4.3]{ogrady:modular}. 

Assume that $d$ is sufficiently large and coprime with $n-1=a^2+b^2$. Then, according to \cite[Proposition 5.12]{ogrady:rigid-modular} or \cite[Theorem 7.13]{Mar14lag}, we can take $X$ such that the fibration $M^X_{\sigma_X}(a,b)\to \PP^n$ is isomorphic to a Tate--Shafarevich twist of $M_S(0,\cL,1-n)\to |\cL|$, where $S$ is a K3 surface with  $\Pic(S)=\ZZ\cL$. Then, using \cite[Theorem 3.3]{bottini2024grady}, there is a Brauer class $\alpha\in \rH^2(M^X_{\sigma_X}(a,b),\oh^*_{M^X_{\sigma_X}(a,b)})$ and a twisted Poincar\'e sheaf on $M^X_{\sigma_X}(a,b)\times_{\PP^n} M_S(0,\cL,1-n)$ such that the corresponding Fourier--Mukai functor $\Phi$ is an equivalence
\[\Phi\colon \Db(M^X_{\sigma_X}(a,b))\xra{\simeq} \Db(M_S(0,\cL,1-n), \alpha).\]
In particular, for any $\cF\in \Pic^0(L)$, its image $\Phi(r_*\cF)\in \Db(M_S(0,\cL,1-n), \alpha)$ is a twisted atomic object by \cite[Corollary 3.10]{bottini2024grady}.

Now, we claim $f^n.r_*[L]\neq 0$. Indeed, by Corollary~\ref{cor-rk1-H2-moduli}, we have $h^{\perp}=\ker(r^*)$. As $q(h,f)=d\neq 0$, we see $f\notin \ker(r^*)$. By the atomicity, $r^*h$ spans $\im(r^*)$ and  $r^*f=t\cdot r^*h\in \rH^2(L,\QQ)$ for a non-zero $t\in \QQ$. Then, we have $f^n.r_*[L]=t^n\cdot h^n.r_*[L]\neq 0$. Since $r^*\colon \rH^2(M^X_{\sigma_X}(a,b),\QQ) \to \rH^2(L,\QQ)$ is of rank one, we see that the pullback $\cL'$ of $\oh_{\PP^n}(1)$ to $L$ is a non-zero multiple of an ample class, hence is also ample since $\cL'$ has global sections. This implies the finiteness of $L\xra{r} M^X_{\sigma_X}(a,b)\to \PP^n$ and $Z\to \PP^n$.

As $r$ is finite, $r_*\cF$ is a rank one Cohen--Macaulay sheaf on $Z$ for any $\cF\in \Pic^0(L)$ by \cite[Corollary 2.73]{Kollar13}. Then, from the fact that $Z\to \PP^n$ is finite, $\Phi(r_*\cF)$ is a twisted atomic bundle by \cite[Proposition 6.2]{bottini2024grady}. So \cite[Theorem 6.6]{bottini2024grady} implies that $\Phi(r_*\cF)$ is stable with respect to any suitable polarization. 

Up to now, we get a stable, atomic, twisted bundle $\Phi(r_*\cF)$ which is non-rigid by $\rH^1(L,\oh_L)\neq 0$ and Lemma \ref{lem-immerse-ext1} below. The rest of the argument is similar to the proof of \cite[Theorem 1.1]{bottini:towards-OG10}. Note that \cite[Lemma 5.4]{beckmann:atomic-object} also holds for the twisted case, then \cite[Corollary 6.12, Lemma 7.2]{markman:char-class} imply that~$\Phi(r_*\cF)$ is projectively hyperholomorphic with respect to a suitable polarization. Thus, according to \cite[Section 6]{markman:char-class}, for any hyper-K\"ahler manifold $M'$ of $\mathrm{K3^{[n]}}$-type, there is a stable, atomic, projectively hyperholomorphic twisted bundle $F'$ on $M'$. Finally, $F'$ remains non-rigid by \cite[Proposition 6.3]{verbitsky:generic-k3-tori}.

\end{proof}

We expect that the bundle constructed in Theorem \ref{thm-bundle} has a $20$-dimensional deformation space. This can be probably deduced from the connectedness of $M^Y_{\sigma_Y}(a,b)$ and the fact that $b_1(M^Y_{\sigma_Y}(a,b))=b_3(Y)=20$, together with Lemma \ref{lem-immerse-ext1}. We anticipate that the arguments used in \cite{LLPZ:higher-dim-moduli} for cubic threefolds can be applied in this case as well.

\begin{lemma}\label{lem-immerse-ext1}
Let $M$ be a projective hyper-K\"ahler manifold and $r\colon L\to M$ be an immersed connected Lagrangian submanifold. Then for any line bundle $\cL$ on $L$, we have
\[\rH^1(L,\oh_L)\subset \Ext_M^1(r_*\cL, r_*\cL).\]
Moreover, if $r$ is a closed embedding away from a closed subset of codimension $\geq 2$ in $L$, then we have
\begin{equation}\label{eq-ineq}
    \dim \Ext_M^1(r_*\cL, r_*\cL)\leq \dim \rH^1(L,\CC)
\end{equation}
and the equality holds if there exists a smooth projective family $L_V\to V$ over an irreducible variety $V$ with $\dim V=\dim \rH^1(L,\oh_L)$ and a morphism $r_V\colon L_V\to M$ such that

\begin{enumerate}
    \item there exists a point $0\in V$ such that $r_0=r\colon L=L_0\to M$,

    \item $r_s\colon L_s\to M$ is birational onto its image and is an immersed Lagrangian for each $s\in V$, and

    \item for any general point $s_0\in V$, the set of points $s\in V$ such that $r_s(L_s)=r_{s_0}(L_{s_0})\subset M$ is finite.
\end{enumerate}

\end{lemma}

\begin{proof}
By the adjunction of functors, we have
\[\Ext_M^1(r_*\cL, r_*\cL)=\Ext_L^1(r^*r_*\cL,\cL).\]
Since $\cH^{i}(r^*r_*\cL)=0$ for any $i>0$, we have an exact sequence
\[0\to \Ext^1_L(\cL, \cL)\to \Ext_L^1(r^*r_*\cL,\cL)\to \Hom_L(\cH^{-1}(r^*r_*\cL),\cL)\to \Ext^2_L(\cL, \cL).\]
This proves the first statement and gives an inclusion
\begin{equation}\label{eq-inclusion}
    \Ext_L^1(r^*r_*\cL,\cL)\subset \Ext^1_L(\cL,\cL)\oplus \Hom_L(\cH^{-1}(r^*r_*\cL),\cL).
\end{equation}

Since $\cL$ is locally free and $L$ is normal, we get
\begin{equation}\label{eq-double-dual}
 \Hom_L(\cH^{-1}(r^*r_*\cL),\cL)=\Hom_L((\cH^{-1}(r^*r_*\cL))^{\vee \vee},\cL),
\end{equation}
where $(\cH^{-1}(r^*r_*\cL))^{\vee \vee}$ denotes the double dual (reflexive hull) of the sheaf $\cH^{-1}(r^*r_*\cL)$.

Now, let $W\subset L$ be an open subset with $\mathrm{codim}_L(L\setminus W)\geq 2$ such that $r|_{W}$ is a closed embedding. Then by \cite[Proposition 11.8]{huybrechts2006fourier}, we have
\[(\cH^{-1}(r^*r_*\cL))|_{W}\cong \cH^{-1}((r|_W)^*(r|_W)_*\cL|_W)\cong(\cN^{\vee}_{L/M}\otimes \cL)|_{W}.\]
Therefore, using the fact that  $\mathrm{codim}_L(L\setminus W)\geq 2$, the locally freeness of $\cN^{\vee}_{L/M}\otimes \cL$, and \cite[\href{https://stacks.math.columbia.edu/tag/0E9I}{Tag 0E9I}]{stacks-project}, we have
\[(\cH^{-1}(r^*r_*\cL))^{\vee \vee}\cong \cN^{\vee}_{L/M}\otimes \cL,\]
which together with $\cN_{L/M}\cong \Omega_L$, \eqref{eq-double-dual}, and \eqref{eq-inclusion} implies
\[\Ext_M^1(r_*\cL,r_*\cL)=\Ext_L^1(r^*r_*\cL,\cL)\subset \rH^1(L,\CC)\]
and we get \eqref{eq-ineq}.

Finally, let $P\to V$ be the component of the relative Picard scheme of $L_V\to V$ containing the point corresponding to $\cL$. Then we have $$\dim P=\dim V+\dim \rH^1(L,\oh_L)=\dim \rH^1(L,\CC)$$ by our assumption. Since $r_s$ is birational onto its image, we know that $r_{s_*}\cL_s$ is a stable sheaf on $M$ for each $s\in V$ and line bundle $\cL_s$ on $L_s$. Hence, up to shrinking $V$ if necessary, $r_V$ induces a morphism $P\to N$, where $N$ is an irreducible component of the moduli space of stable sheaves on $M$ such that $[r_*\cL]\in N$. By the assumption (3), we know that $P\to N$ has finite general fibers, which gives $\dim N\geq \dim P= \dim \rH^1(L,\CC)$. Therefore, we obtain
\[\dim \Ext_M^1(r_*\cL, r_*\cL)=\dim T_{[r_*\cL]} N\geq \dim N\geq \dim \rH^1(L,\CC),\]
which implies the equality in \eqref{eq-ineq}.
\end{proof}

\section{A new example of non-rigid atomic Lagrangian submanifolds}\label{sec-new-example}

In this section, we focus on an explicit locally complete family of projective hyper-K\"ahler fourfolds associated with GM varieties, called \emph{double dual EPW sextics}. The goal is to construct non-rigid immersed atomic Lagrangian submanifolds in these hyper-K\"ahler fourfolds (cf.~Theorem~\ref{thm-double-epw} and Theorem~\ref{thm-double-epw-5fold}).

\subsection{Double dual EPW sextics}\label{subsec-double-epw}

Recall that by \cite[Theorem 3.10]{debarre2015gushel}, each smooth GM variety $X$ of dimension $\dim X\geq 3$ is uniquely determined by its Lagrangian data $$(V_6(X), V_5(X), A(X))$$ up to isomorphism, where $V_6(X)$ is a $6$-dimensional vector space, $V_5(X)\subset V_6(X)$ is a hyperplane, and $A(X)\subset \bigwedge^3 V_6(X)$ is a Lagrangian subspace with respect to the symplectic form given by the wedge product. For simplicity, we set $V_6:=V_6(X)$. For each integer $k$, we define subschemes 
\[\mathsf{Y}^{\geq k}_{A(X)}:=\{[v]\in\mathbb{P}(V_6) \mid \mathrm{dim}(A(X)\cap(v\wedge\bigwedge^2V_6))\geq k\}\subset \mathbb{P}(V_6)\]
and
\[\mathsf{Y}^{\geq k}_{A(X)^{\perp}}=\{[V_5]\in\mathrm{Gr}(5,V_6) \mid \mathrm{dim}(A(X)\cap\bigwedge^3V_5)\geq k\}\subset \mathbb{P}(V_6^{\vee})\]
as in \cite[Section 2]{o2006irreducible}. See also \cite[Appendix B]{debarre2015gushel} for an overview. We set $\mathsf{Y}_{A(X)}:=\mathsf{Y}_{A(X)}^{\geq 1}$ and $\mathsf{Y}_{A(X)^{\perp}}:=\mathsf{Y}_{A(X)^{\perp}}^{\geq 1}$, which are called the \emph{Eisenbud--Popescu--Walter (EPW) sextic} and the \emph{dual EPW sextic} associated with $X$, respectively.

Moreover, by \cite[Section 1.2]{o2010double} and \cite[Theorem 5.2]{debarre2020double}, as $A(X)$ does not contain decomposable vectors\footnote{This means $\PP(A(X))\cap \Gr(3,V_6)=\varnothing$, which holds for $A(X)$ when $X$ is smooth and $\dim X\geq 3$ by \cite[Theorem 3.16]{debarre2015gushel}.}, we have natural double coverings
\[\wt{\mathsf{Y}}_{A(X)}\to \mathsf{Y}_{A(X)}, \quad \wt{\mathsf{Y}}_{A(X)^{\perp}}\to \mathsf{Y}_{A(X)^{\perp}}.\]
We call $\wt{\mathsf{Y}}_{A(X)}$ and $\wt{\mathsf{Y}}_{A(X)^{\perp}}$ the  \emph{double EPW sextic} and the \emph{double dual EPW sextic} associated with $X$, respectively. According to \cite{o2006irreducible}, when $X$ is general, $\wt{\mathsf{Y}}_{A(X)}$ and $\wt{\mathsf{Y}}_{A(X)^{\perp}}$ are projective hyper-K\"ahler manifolds of $\mathrm{K3}^{[\mathrm{2}]}$-type.

Similarly, we have natural double coverings
\[\wt{\mathsf{Y}}^{\geq 2}_{A(X)}\to \mathsf{Y}^{\geq 2}_{A(X)}, \quad \wt{\mathsf{Y}}^{\geq 2}_{A(X)^{\perp}}\to \mathsf{Y}^{\geq 2}_{A(X)^{\perp}}\]
and we call $\wt{\mathsf{Y}}^{\geq 2}_{A(X)}$ and $\wt{\mathsf{Y}}^{\geq 2}_{A(X)^{\perp}}$ the  \emph{double EPW surface} and the \emph{double dual EPW surface} associated with $X$, respectively. They are \'etale coverings between smooth connected surfaces when $X$ is general.

In the rest of this section, we will mainly focus on (double) dual EPW varieties, as they exhibit a clearer connection to GM varieties.

We start with some classical results, which are known to experts. While we state it for EPW varieties or dual EPW varieties, analogous results hold for both cases.

\begin{lemma}\label{lem-isolated-sing}
Let $(V_6, V_5, A)$ and $(V_6, V_5, A')$ be two Lagrangian data such that $\dim(A\cap A')=9$ and both $A$ and $A'$ contain no decomposable vectors. If $A'$ is general, then $\mathsf{Y}_{A}^{\geq 2}\cap \mathsf{Y}_{A'}^{\geq 2}\subset \mathsf{Y}_{A}$ is zero-dimensional and
\[\overline{\mathsf{Y}}^{\geq 2}_{A'}:=\wt{\mathsf{Y}}_{A}\times_{\mathsf{Y}_{A}} \mathsf{Y}_{A'}^{\geq 2}\]
has isolated singularities.
\end{lemma}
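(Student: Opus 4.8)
The plan is to deduce the statement about isolated singularities from the zero-dimensionality of $\mathsf{Y}_A^{\geq 2}\cap\mathsf{Y}_{A'}^{\geq 2}$, and to prove the latter by symplectic linear algebra together with a dimension count. Writing $T_v:=v\wedge\bigwedge^2 V_6$ (a Lagrangian subspace of $\bigwedge^3 V_6$ of dimension $10$) and $W:=A\cap A'$, I would first record the containment $\mathsf{Y}_{A'}^{\geq 2}\subseteq\mathsf{Y}_A$: if $\dim(A'\cap T_v)\geq 2$, then the codimension-one subspace $W\subset A'$ meets $A'\cap T_v$ in dimension $\geq 1$, and $W\cap T_v\subseteq A\cap T_v$, so $[v]\in\mathsf{Y}_A$. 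Thus the fibre product $\overline{\mathsf{Y}}^{\geq 2}_{A'}=\wt{\mathsf{Y}}_A\times_{\mathsf{Y}_A}\mathsf{Y}_{A'}^{\geq 2}$ is the preimage of $\mathsf{Y}_{A'}^{\geq 2}$ under the double cover $f\colon\wt{\mathsf{Y}}_A\to\mathsf{Y}_A$, hence a finite degree-two cover of $\mathsf{Y}_{A'}^{\geq 2}$. Since $f$ is étale away from its branch locus $\mathsf{Y}_A^{\geq 2}$, this cover is étale over $\mathsf{Y}_{A'}^{\geq 2}\setminus(\mathsf{Y}_A^{\geq 2}\cap\mathsf{Y}_{A'}^{\geq 2})$; as $\mathsf{Y}_{A'}^{\geq 2}$ is a smooth surface for general $A'$ (cf.~\cite{o2006irreducible,debarre2020double}), $\overline{\mathsf{Y}}^{\geq 2}_{A'}$ is then smooth outside the finite preimage of $\mathsf{Y}_A^{\geq 2}\cap\mathsf{Y}_{A'}^{\geq 2}$, and so has isolated singularities once this intersection is shown to be finite.

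To prove finiteness of $B:=\mathsf{Y}_A^{\geq 2}\cap\mathsf{Y}_{A'}^{\geq 2}$, I would stratify $\mathsf{Y}_A^{\geq 2}$ by $\mathsf{Y}_A^{\geq 3}$. Because $A$ contains no decomposable vectors, $\mathsf{Y}_A^{\geq 3}$ is finite (O'Grady's analysis of the EPW stratification, cf.~\cite{o2006irreducible}), so it contributes only finitely many points. On the complement $\mathsf{Y}_A^{\geq 2}\setminus\mathsf{Y}_A^{\geq 3}$ one has $\dim(A\cap T_v)=2$, and the crucial observation is that the mixed configuration is impossible: if $\dim(A\cap T_v)=2$ and $A\cap T_v\not\subseteq W$ (equivalently $\dim(W\cap T_v)=1$), then $\dim(A'\cap T_v)\leq 1$, i.e.~$[v]\notin\mathsf{Y}_{A'}^{\geq 2}$. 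Indeed, $A$ and $A'$ are Lagrangians containing the $9$-dimensional isotropic subspace $W$, so $A+A'=W^{\perp}$; and $\dim(W\cap T_v)=1$ gives $\dim(W+T_v)=18$, whence $\dim(W^{\perp}\cap T_v)=\dim\big((W+T_v)^{\perp}\big)=2$. Both $A\cap T_v$ and $A'\cap T_v$ lie in $W^{\perp}\cap T_v$ and intersect in $A\cap A'\cap T_v=W\cap T_v$ of dimension $1$; were $\dim(A'\cap T_v)\geq 2$, their sum would have dimension $\geq 2+2-1=3$ inside the $2$-dimensional space $W^{\perp}\cap T_v$, a contradiction.

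It follows that $B\setminus\mathsf{Y}_A^{\geq 3}\subseteq\{[v]\in\mathsf{Y}_A^{\geq 2}\setminus\mathsf{Y}_A^{\geq 3}:A\cap T_v\subseteq W\}$, and it remains to show this set is finite for general $A'$. I would run a dimension count on the incidence variety $I:=\{([v],[H])\in(\mathsf{Y}_A^{\geq 2}\setminus\mathsf{Y}_A^{\geq 3})\times\mathbb{P}(A^{\vee}):A\cap T_v\subseteq H\}$, whose fibre over $[v]$ is the $\mathbb{P}^{7}$ of hyperplanes of $A$ containing the $2$-plane $A\cap T_v$; since $\dim(\mathsf{Y}_A^{\geq 2}\setminus\mathsf{Y}_A^{\geq 3})\leq 2$, we get $\dim I\leq 9=\dim\mathbb{P}(A^{\vee})$, so the fibre of $I\to\mathbb{P}(A^{\vee})$ over a general hyperplane is finite. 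Finally, a general $A'$ with $\dim(A\cap A')=9$ produces a general hyperplane $W=A\cap A'\subset A$: the assignment $A'\mapsto A\cap A'$ is dominant onto $\mathbb{P}(A^{\vee})$, its fibre over a hyperplane being the family of Lagrangians through that isotropic hyperplane, namely a $\mathbb{P}^1$ with the point $A$ removed. Combining these inputs shows $B$ is finite, which yields both assertions.

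The main obstacle I expect is the symplectic linear algebra of the second paragraph: the identity $A+A'=W^{\perp}$ and the resulting bound $\dim(W^{\perp}\cap T_v)=2$ are what force the mixed stratum to be empty and thereby collapse the intersection to dimension zero, and getting this dimension bookkeeping exactly right — including the degenerate locus $\mathsf{Y}_A^{\geq 3}$ where $\dim(A\cap T_v)\geq 3$ — is the delicate part. A secondary point requiring care is the transfer of genericity: one must verify that a general $A'$ constrained by $\dim(A\cap A')=9$ remains general enough both for $\mathsf{Y}_{A'}^{\geq 2}$ to be a smooth surface and for $W$ to lie in the dense open locus produced by the incidence count. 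I would handle this by noting that these are open conditions, satisfied on a dense subset of the irreducible parameter space of such $A'$.
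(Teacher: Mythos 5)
Your proof is correct, and its overall skeleton matches the paper's: reduce to showing $\dim(\mathsf{Y}_{A}^{\geq 2}\cap \mathsf{Y}_{A'}^{\geq 2})=0$ using that the double cover $\wt{\mathsf{Y}}_A\to\mathsf{Y}_A$ is branched over $\mathsf{Y}_A^{\geq 2}$, isolate the locus where $A\cap(v\wedge\bigwedge^2V_6)\subseteq W:=A\cap A'$, bound that locus by an incidence-variety count over $\PP(A^{\vee})$, and transfer genericity via the pencil of Lagrangians through a fixed isotropic hyperplane. The one step you do genuinely differently is the reduction to that locus. The paper invokes the explicit description of the embedded tangent spaces of $\mathsf{Y}_A^{2}$ and $\mathsf{Y}_{A'}^{2}$ (from Debarre--Kuznetsov) to argue that, at a point where the two tangent spaces differ, the intersection is locally zero-dimensional, so only the coincidence locus $A\cap(v\wedge\bigwedge^2V_6)=A'\cap(v\wedge\bigwedge^2V_6)$ needs the incidence count. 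You instead run pure symplectic linear algebra: $A+A'=W^{\perp}$ forces $\dim\bigl(W^{\perp}\cap(v\wedge\bigwedge^2V_6)\bigr)\leq 2$ off the locus $W\cap(v\wedge\bigwedge^2V_6)=0$, whence the ``mixed'' stratum is actually \emph{empty} and the intersection is set-theoretically contained in $\{A\cap(v\wedge\bigwedge^2V_6)\subseteq W\}\cup\mathsf{Y}_A^{\geq 3}$. This is a slightly stronger conclusion obtained by more elementary means (no tangent-space computation or transversality is needed), and your direct incidence count over $\mathsf{Y}_A^{\geq 2}$ also avoids the paper's case division on the dimension of the image of $[v]\mapsto[A\cap(v\wedge\bigwedge^2V_6)]$ in $\Gr(2,A)$. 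One small imprecision: where you write ``equivalently $\dim(W\cap(v\wedge\bigwedge^2V_6))=1$'' the correct statement is $\leq 1$; but the case of dimension $0$ gives $\dim(W^{\perp}\cap(v\wedge\bigwedge^2V_6))=1$ and the contradiction is only easier, so nothing breaks.
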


\begin{lemma}\label{lem-compatible-double-cover}
Let $X$ be an ordinary GM fourfold. Let $Y$ be an ordinary GM variety such that either $Y\hookrightarrow X$ or $X\hookrightarrow Y$ is a smooth hyperplane section.

\begin{enumerate}
    \item For each integer $k$, there exists a natural embedding $\mathsf{Y}^{\geq k+1}_{A(Y)^{\perp}}\hookrightarrow \mathsf{Y}^{\geq k}_{A(X)^{\perp}}$.


    \item Let $\overline{\mathsf{Y}}^{\geq 2}_{A(Y)^{\perp}}:=\wt{\mathsf{Y}}_{A(X)^{\perp}}\times_{\mathsf{Y}_{A(X)^{\perp}}} \mathsf{Y}_{A(Y)^{\perp}}^{\geq 2}$. If $Y$ is general, then $\overline{\mathsf{Y}}^{\geq 2}_{A(Y)^{\perp}}$ has isolated singularities and the natural double covering $\wt{\mathsf{Y}}^{\geq 2}_{A(Y)^{\perp}}\to \mathsf{Y}^{\geq 2}_{A(Y)^{\perp}}$ can be decomposed as the composition $\wt{\mathsf{Y}}^{\geq 2}_{A(Y)^{\perp}}\xra{\iota} \overline{\mathsf{Y}}^{\geq 2}_{A(Y)^{\perp}} \to \mathsf{Y}^{\geq 2}_{A(Y)^{\perp}}$ such that $\iota$ is the normalization map. Moreover, we have a commutative diagram
\[\begin{tikzcd}
	{\wt{\mathsf{Y}}^{\geq 2}_{A(Y)^{\perp}}} & {\overline{\mathsf{Y}}^{\geq 2}_{A(Y)^{\perp}}} & {\mathsf{Y}^{\geq 2}_{A(Y)^{\perp}}} \\
	& {\wt{\mathsf{Y}}_{A(X)^{\perp}}} & {\mathsf{Y}_{A(X)^{\perp}}}
	\arrow["\iota", from=1-1, to=1-2]
	\arrow["i"', from=1-1, to=2-2]
	\arrow[from=1-2, to=1-3]
	\arrow[hook, from=1-2, to=2-2]
	\arrow[hook, from=1-3, to=2-3]
	\arrow[from=2-2, to=2-3]
\end{tikzcd}\]
such that $i$ is finite and unramified.
\end{enumerate}
\end{lemma}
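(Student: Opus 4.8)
The plan is to reduce everything to a comparison of the Lagrangian data of $X$ and $Y$, and then to feed the dual analogue of Lemma \ref{lem-isolated-sing} together with O'Grady's construction of the EPW double covers. First I would record the relation between the data, established in \cite[Theorem 3.10]{debarre2015gushel} (see also \cite{iliev2011fano}): in either hyperplane-section configuration $X,Y$ share the vector space $V_6$, and their Lagrangian subspaces satisfy $\dim(A(X)\cap A(Y))=9$, i.e. they meet in a hyperplane of each. Part (1) is then pure linear algebra inside the common $\Gr(5,V_6)$: if $[V_5]$ satisfies $\dim(A(Y)\cap\bigwedge^3 V_5)\geq k+1$, then since $A(X)\cap A(Y)$ has codimension one in $A(Y)$, intersecting with $\bigwedge^3 V_5$ drops the dimension by at most one, so
\[
\dim(A(X)\cap\textstyle\bigwedge^3 V_5)\ \geq\ \dim(A(X)\cap A(Y)\cap\textstyle\bigwedge^3 V_5)\ \geq\ k .
\]
This yields the set-theoretic containment $\mathsf{Y}^{\geq k+1}_{A(Y)^{\perp}}\subset \mathsf{Y}^{\geq k}_{A(X)^{\perp}}$; as both strata are the natural corank loci in $\Gr(5,V_6)$, this is the asserted closed embedding.

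For part (2) I would apply the dual version of Lemma \ref{lem-isolated-sing} with $A=A(X)$ and $A'=A(Y)$. Its hypotheses hold: $\dim(A(X)\cap A(Y))=9$; neither Lagrangian contains decomposable vectors, since $X,Y$ are smooth of dimension $\geq 3$ (\cite[Theorem 3.16]{debarre2015gushel}); and $A(Y)$ is general because $Y$ ranges over a locally complete family (general hyperplane sections, resp.\ general GM fivefolds through $X$). The lemma then gives that $\mathsf{Y}^{\geq 2}_{A(X)^{\perp}}\cap \mathsf{Y}^{\geq 2}_{A(Y)^{\perp}}$ is finite and that $\overline{\mathsf{Y}}^{\geq 2}_{A(Y)^{\perp}}$ has isolated singularities, which lie over this finite set (where the restricted cover meets the branch locus $\mathsf{Y}^{\geq 2}_{A(X)^{\perp}}$ of the $A(X)$-cover) together with the finite set $\mathsf{Y}^{\geq 3}_{A(Y)^{\perp}}=\mathrm{Sing}(\mathsf{Y}^{\geq 2}_{A(Y)^{\perp}})$.

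Next I would construct $\iota$. On the open complement $U$ of these finitely many points the surface $\mathsf{Y}^{\geq 2}_{A(Y)^{\perp}}$ is smooth and both the restricted cover $\overline{\mathsf{Y}}^{\geq 2}_{A(Y)^{\perp}}$ and the double EPW surface $\wt{\mathsf{Y}}^{\geq 2}_{A(Y)^{\perp}}$ are étale double covers of $U$. The crucial point is that these two covers coincide over $U$: both are governed by O'Grady's canonical covering datum attached to the Lagrangian, and over $U$, where $A(X)\cap\bigwedge^3 V_5\subset A(Y)\cap\bigwedge^3 V_5$ with the former one-dimensional, the two covering involutions agree; this is the content of \cite{o2006irreducible,o2010double} and \cite[Proposition 5.2, 5.6]{iliev2011fano}. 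The resulting isomorphism over $U$ extends, by normality of the smooth surface $\wt{\mathsf{Y}}^{\geq 2}_{A(Y)^{\perp}}$ and properness, to a morphism $\iota$ compatible with the two projections of the fibre product. Since $\iota$ is an isomorphism over $U$ it is birational, and since the double cover $\wt{\mathsf{Y}}^{\geq 2}_{A(Y)^{\perp}}\to\mathsf{Y}^{\geq 2}_{A(Y)^{\perp}}$ is finite and factors through $\iota$, the map $\iota$ is quasi-finite and proper, hence finite; a finite birational morphism out of a smooth (normal) connected surface is the normalization.

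Finally, the first projection $\overline{\mathsf{Y}}^{\geq 2}_{A(Y)^{\perp}}\to \wt{\mathsf{Y}}_{A(X)^{\perp}}$ is the base change of the closed immersion $\mathsf{Y}^{\geq 2}_{A(Y)^{\perp}}\hookrightarrow\mathsf{Y}_{A(X)^{\perp}}$ from part (1), hence itself a closed immersion, so $i$ is its composite with $\iota$ and is finite, with the square commuting. For unramifiedness of $i$: away from the finite special set it is a composite of an isomorphism and a closed immersion, hence an immersion; at the finitely many special points one checks, using the embedded-tangent-space description of $\mathsf{Y}^{\geq 2}$ appearing in the proof of Lemma \ref{lem-isolated-sing}, that the two local branches of the image meet transversally, so that $\iota$ is unramified there and $i$ is an immersion. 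The two steps I expect to be genuinely delicate are precisely (a) the identification of the two étale double covers over $U$, which seems to require unwinding O'Grady's explicit covering sheaf rather than any formal argument, and (b) the local unramifiedness of $i$ at the self-intersection points, which rests on the transversality computation; these constitute the main obstacle, while the isolated-singularity statement and the formal normalization argument are comparatively routine given Lemma \ref{lem-isolated-sing}.
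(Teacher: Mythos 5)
Your part (1) and the reduction of part (2) to Lemma \ref{lem-isolated-sing} match the paper. But the two steps you yourself flag as delicate are exactly where your argument has genuine gaps, and the paper's proof is organized precisely to avoid both of them. First, your construction of $\iota$ rests on the claim that the restricted cover $\overline{\mathsf{Y}}^{\geq 2}_{A(Y)^{\perp}}$ and the double EPW surface $\wt{\mathsf{Y}}^{\geq 2}_{A(Y)^{\perp}}$ agree as \'etale double covers over the open set $U$ because "the two covering involutions agree". This is the whole content of the statement and is not a formal consequence of the cited references; without it you have no map at all, so the rest of your argument (birationality, hence normalization) does not get off the ground. Second, your treatment of unramifiedness of $i$ at the finitely many points of $\overline{\mathsf{Y}}^{\geq 2}_{A(Y)^{\perp}}\cap \mathsf{Y}^{\geq 2}_{A(X)^{\perp}}$ defers to a transversality computation of the two local branches that you do not carry out; note also that transversality of branches of the image is not literally the same as unramifiedness of $\iota$ at the preimage points, so even the reduction needs justification.

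The paper runs the argument in the opposite direction and thereby needs neither ingredient. It \emph{defines} $W$ as the normalization of the fibre product, observes via Lemma \ref{lem-isolated-sing} that $W\to \mathsf{Y}^{\geq 2}_{A(Y)^{\perp}}$ is finite of degree two and unramified in codimension one (the singularities being isolated), and then invokes purity of the branch locus over the smooth surface $\mathsf{Y}^{\geq 2}_{A(Y)^{\perp}}$ (the cited result of Griffith) to conclude that $W\to \mathsf{Y}^{\geq 2}_{A(Y)^{\perp}}$ is unramified \emph{everywhere}; being finite, unramified of degree two it is flat, hence an \'etale double cover, so $W$ is smooth. The identification $W\cong \wt{\mathsf{Y}}^{\geq 2}_{A(Y)^{\perp}}$ then comes for free from the uniqueness part of \cite[Theorem 5.2]{debarre2020double}, rather than from matching covering involutions by hand, and unramifiedness of $i=(\text{closed immersion})\circ\iota$ follows because $\iota$ has already been shown unramified, with no local analysis at the special points. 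If you want to salvage your route, you would have to either prove the coincidence of the two covers over $U$ directly from O'Grady's covering sheaf, or simply adopt the purity-plus-uniqueness argument, which subsumes both of your delicate steps.
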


\begin{proof}
Since $X$ and $Y$ are both ordinary, by \cite[Propoition 3.14]{debarre2015gushel}, the Lagrangian data of $X$ and $Y$ are $(V_6, V_5, A(X))$ and $(V_6, V_5, A(Y))$ respectively, with $\dim (A(X)\cap A(Y))= 9$. Therefore, part (1) follows directly from the above definition of EPW varieties. 

Let $\iota\colon W \to \overline{\mathsf{Y}}^{\geq 2}_{A(Y)^{\perp}}$ be the normalization map. Since $Y$ is general, we know that $\wt{\mathsf{Y}}^{\geq 2}_{A(Y)^{\perp}}$ and $\mathsf{Y}_{A(Y)^{\perp}}^{\geq 2}$ are smooth. Therefore, by Lemma \ref{lem-isolated-sing}, $\overline{\mathsf{Y}}^{\geq 2}_{A(Y)^{\perp}}$ is only singular along the zero-dimensional locus 
\[\overline{\mathsf{Y}}^{\geq 2}_{A(Y)^{\perp}}\cap \mathsf{Y}_{A(X)^{\perp}}^{\geq 2}\subset \wt{\mathsf{Y}}_{A(X)^{\perp}}.\]
Consequently, $\iota$ is finite and unramified in codimension one. Thus, the natural map $W\to \mathsf{Y}_{A(Y)^{\perp}}^{\geq 2}$ is also unramified in codimension one. Then, by the smoothness of $\mathsf{Y}_{A(Y)^{\perp}}^{\geq 2}$ and \cite[Theorem 1.6]{griffith:normal-extension}, it follows that $W\to \mathsf{Y}_{A(Y)^{\perp}}^{\geq 2}$ is unramified, and so is $\iota$.

Moreover, since $W\to \mathsf{Y}_{A(Y)^{\perp}}^{\geq 2}$ is unramified and finite of degree two, by \cite[\href{https://stacks.math.columbia.edu/tag/0B3D}{Tag 0B3D}]{stacks-project} and \cite[\href{https://stacks.math.columbia.edu/tag/00R4}{Tag 00R4}]{stacks-project}, it is flat and hence an \'etale double covering. In particular, $W$ is smooth.

Finally, to show $W\cong \wt{\mathsf{Y}}_{A(Y)^{\perp}}^{\geq 2}$ and the diagram in (2) commutes, we use the uniqueness part of \cite[Theorem 5.2(2)]{debarre2020double}. We first recall some notions in \cite[Section 5.1]{debarre2020double}. Let $(V_6, V_5, A)$ and $(V_6, V_5, A')$ be two Lagrangian data such that $\dim(A\cap A')=9$ and both $A$ and $A'$ are general. We define three bundles by $\mathscr{A}_1:=A\otimes \oh_{\PP(V_6)}$, $\mathscr{A}_1':=A'\otimes \oh_{\PP(V_6)}$, and $\mathscr{A}_2:=\wedge^2 T_{\PP(V_6)}(-3)$. These bundles are Lagrangian subbundles of the trivial symplectic bundle $\mathscr{V}:=\wedge^3V_6\otimes \oh_{\PP(V_6)}$ on $\PP(V_6)$. As in \cite[Section 4]{debarre2020double}, the symplectic structure induces injective maps
\[\omega_{\mathscr{A}_1, \mathscr{A}_2}\colon \mathscr{A}_1\to \mathscr{A}_2^{\vee}\]
and
\[\omega_{\mathscr{A}_1', \mathscr{A}_2}\colon \mathscr{A}_1'\to \mathscr{A}_2^{\vee}\]
We set $\mathscr{C}:=\mathrm{cok}(\omega_{\mathscr{A}_1, \mathscr{A}_2})$ and $\mathscr{C}':=\mathrm{cok}(\omega_{\mathscr{A}_1', \mathscr{A}_2})$. Then as in \cite[Section 5]{debarre2020double}, $\mathsf{Y}^{\geq k}_{A}$ and $\mathsf{Y}^{\geq k}_{A'}$ are exactly the rank $\geq k$ loci of sheaves $\mathscr{C}$ and $\mathscr{C}'$, respectively. 

Let $U:=\mathsf{Y}^{ \geq 2}_{A'}\setminus \mathsf{Y}^{\geq 2}_{A}$. We claim that $\mathscr{R}_1|_{U}\cong \mathscr{R}'_2|_U$, where $\mathscr{R}_1:=(\mathscr{C}|_{\mathsf{Y}_{A}})^{\vee \vee}$ and $\mathscr{R}'_2:=(\wedge^2 (\mathscr{C}'|_{\mathsf{Y}^{\geq 2}_{A'}}))^{\vee \vee}$. Note that $\mathscr{C}|_U\cong\mathscr{R}_1|_U$ is a line bundle on $U$ by $U\subset \mathsf{Y}^1_{A}$ and $\mathscr{R}'_2$ is also a line bundle since $\mathsf{Y}^{ \geq 3}_{A'}=\varnothing$. To prove the claim, we consider $$\mathscr{F}:=\mathrm{cok}(A\cap A'\otimes \oh_{\PP(V_6)}\to \mathscr{A}_2^{\vee}),$$ which fits into two exact sequences
\begin{equation}\label{eq-X}
    0\to \oh_{\PP(V_6)}\to \mathscr{F}\to \mathscr{C}\to 0
\end{equation}
and
\begin{equation}\label{eq-Y}
    0\to \oh_{\PP(V_6)}\to \mathscr{F}\to \mathscr{C}'\to 0.
\end{equation}
From the definition, we know that the rank $\geq 3$ locus of $\mathscr{F}$ can be described as 
\[\{[v]\in\mathbb{P}(V_6) \mid \mathrm{dim}(A\cap A'\cap(v\wedge\bigwedge^2V_6))\geq 2\},\]
which is contained in $\mathsf{Y}^{\geq 2}_{A'}\cap \mathsf{Y}^{\geq 2}_{A}$. Therefore, the fiber of $\mathscr{F}$ has rank $\leq 2$ at each point of $U$ by the definition of $U$. As $\mathscr{C}'|_{\mathsf{Y}^{\geq 2}_{A'}}$ is locally free of rank $2$ by definition, then by restricting the exact sequence \eqref{eq-Y} to $U$, we get an isomorphism $\mathscr{F}|_{U}\cong \mathscr{C}'|_U$. In particular, $\mathscr{F}|_{U}$ is a bundle of rank $2$ on $U$. On the other hand, as $\mathsf{Y}^{\geq 2}_{A'}\neq \mathsf{Y}^{\geq 2}_{A}$, we know that $\mathscr{C}|_{\mathsf{Y}^{\geq 2}_{A'}}$ has generic rank $1$ on $\mathsf{Y}^{\geq 2}_{A'}$ as $\mathsf{Y}^{\geq 2}_{A'}$ is integral by \cite[Theorem 5.1]{debarre2020double}. Therefore, the kernel $\mathscr{F}|_U\twoheadrightarrow \mathscr{C}|_U$ has generic rank $1$ on $U$ and the pull-back of \eqref{eq-X} gives an exact sequence
\[ 0\to \oh_{U}\to \mathscr{F}|_U\to \mathscr{C}|_U\to 0.\]
Combing this with $\mathscr{F}|_{U}\cong \mathscr{C}'|_U$ implies $$\mathscr{R}'_2|_{U}=\wedge^2(\mathscr{C}'|_U)\cong \mathscr{C}|_U=\mathscr{R}_1|_U.$$ This proves the claim.

Now, back to the proof of (2). We set $A:=A(X)^{\perp}$ and $A':=A(Y)^{\perp}$ as above. By the uniqueness part of \cite[Theorem 5.2(2)]{debarre2020double}, we need to show the \'etale covering $\gamma\colon W\to \mathsf{Y}_{A'}^{\geq 2}$ satisfies
\[\mathrm{cok}(\oh_{\mathsf{Y}_{A'}^{\geq 2}}\to \gamma_*\oh_W)\cong \mathscr{R}'_2(-3),\]
where $\mathscr{R}'_2:=(\wedge^2 (\mathscr{C}'|_{\mathsf{Y}^{\geq 2}_{A'}}))^{\vee \vee}$. Since $\gamma$ coincides with the base change of
\[\widetilde{\mathsf{Y}}_{A}\to \mathsf{Y}_{A}\]
along $\mathsf{Y}^{\geq 2}_{A'}\hookrightarrow \mathsf{Y}_{A}$ outside $\mathsf{Y}^{\geq 2}_{A}$, by \cite[Theorem 5.2(1)]{debarre2020double} and the base change theorem, we have
\[(\mathscr{R}_1(-3))|_U=((\mathscr{C}|_{\mathsf{Y}_{A}}(-3))^{\vee \vee})|_U\cong \mathrm{cok}(\oh_{\mathsf{Y}_{A'}^{\geq 2}}\to \gamma_*\oh_W)|_U\]
for $U:=\mathsf{Y}^{ \geq 2}_{A'}\setminus \mathsf{Y}^{\geq 2}_{A}$. Then, from the claim we just proved, this implies
\[\mathscr{R}'_2(-3)|_U\cong\mathrm{cok}(\oh_{\mathsf{Y}_{A'}^{\geq 2}}\to \gamma_*\oh_W)|_U.\]
From $\dim(\mathsf{Y}_{A'}^{\geq 2}\setminus U)=0$ by Lemma \ref{lem-isolated-sing}, the normality of $\mathsf{Y}_{A'}^{\geq 2}$, and \cite[\href{https://stacks.math.columbia.edu/tag/0AY6}{Tag 0AY6}]{stacks-project}, we obtain 
\[\mathscr{R}'_2(-3)\cong\mathrm{cok}(\oh_{\mathsf{Y}_{A'}^{\geq 2}}\to \gamma_*\oh_W).\]
This finishes the proof of (2).
\end{proof}

\begin{remark}
It is suggested in \cite[Section 5.1]{iliev2011fano} that $i$ is injective, implying that $i$ is an embedding. Nevertheless, this is equivalent to $\varnothing=\mathsf{Y}^{\geq 2}_{A(Y)^{\perp}}\cap \mathsf{Y}_{A(X)^{\perp}}^{\geq 2}\subset \mathsf{Y}_{A(X)^{\perp}}$, which is impossible as explained in \cite[Lemma 3.2]{Ferretti12}. See Remark \ref{rmk-non-inj} for an alternative proof of the non-injectivity of $i$. 
\end{remark}

\subsubsection{GM threefolds in a fixed GM fourfold}

The above constructions are related to Fano varieties of conics in GM threefolds and fourfolds as follows. For a GM fourfold $X$ and its smooth hyperplane section $Y$, we denote by $F(X)$ and $F(Y)$ the Fano varieties of conics in $X$ and $Y$, respectively. For a general GM fourfold $X$, $F(X)$ is a smooth fivefold and there is a contraction
\[\pi_X\colon F(X)\twoheadrightarrow \wt{\mathsf{Y}}_{A(X)^{\perp}}\]
constructed in \cite{iliev2011fano}. Moreover, if $Y\subset X$ is a general hyperplane section, then $F(Y)$ is a smooth surface and there is a contraction
\[\pi_Y\colon F(Y)\twoheadrightarrow \wt{\mathsf{Y}}^{\geq 2}_{A(Y)^{\perp}}\]
constructed in \cite{Log12} and \cite{debarre2012period}. As explained in \cite[Section 5.1]{iliev2011fano} (see also \cite{Debarre2021quadrics}), we have a commutative diagram
\[\begin{tikzcd}
	{F(Y)} & {\wt{\mathsf{Y}}^{\geq 2}_{A(Y)^{\perp}}} \\
	{F(X)} & {\wt{\mathsf{Y}}_{A(X)^{\perp}}}
	\arrow[hook, from=1-1, to=2-1]
	\arrow["{\pi_X}", from=2-1, to=2-2]
	\arrow["{i}", from=1-2, to=2-2]
	\arrow["{\pi_Y}", from=1-1, to=1-2]
\end{tikzcd}\]
where the left column is the natural inclusion and the right column is the  finite and unramified morphism in Lemma \ref{lem-compatible-double-cover}(2). Using this relation, the following result is obtained in \cite[Proposition 5.2]{iliev2011fano}.

\begin{proposition}\label{prop-lag-4fold}
Let $X$ be a general GM fourfold and $Y\hookrightarrow X$ be a general hyperplane section. Then $i\colon \wt{\mathsf{Y}}_{A(Y)^{\perp}}^{\geq 2}\to \wt{\mathsf{Y}}_{A(X)^{\perp}}$ is a connected immersed Lagrangian submanifold.
\end{proposition}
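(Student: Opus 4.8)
The statement unpacks into three assertions: that $\wt{\mathsf{Y}}^{\geq 2}_{A(Y)^{\perp}}$ is a smooth connected projective variety, that $i$ is finite and unramified, and that the image of $i$ is a Lagrangian subvariety of the hyper-K\"ahler fourfold $\wt{\mathsf{Y}}_{A(X)^{\perp}}$. The first two are already in hand: smoothness and connectedness of the double dual EPW surface for general $X$ were recorded in Section \ref{subsec-double-epw}, and the finiteness and unramifiedness of $i$ are exactly Lemma \ref{lem-compatible-double-cover}(2). Since $\dim \wt{\mathsf{Y}}^{\geq 2}_{A(Y)^{\perp}} = 2 = \tfrac{1}{2}\dim \wt{\mathsf{Y}}_{A(X)^{\perp}}$, the image already has middle dimension, so the only genuine content is isotropy: writing $\omega$ for the holomorphic symplectic $2$-form on $\wt{\mathsf{Y}}_{A(X)^{\perp}}$, I must show $i^*\omega = 0$.

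My primary plan is to exploit the commutative square relating the Fano varieties of conics. Since $i\circ \pi_Y = \pi_X\circ \iota$, where $\iota\colon F(Y)\hookrightarrow F(X)$ is the inclusion and $\pi_Y\colon F(Y)\to \wt{\mathsf{Y}}^{\geq 2}_{A(Y)^{\perp}}$ is a dominant (indeed birational) morphism of smooth surfaces, the pullback $\pi_Y^*$ is injective on $\rH^0(\Omega^2)$; hence $i^*\omega = 0$ is equivalent to $\iota^*\pi_X^*\omega = 0$, i.e.\ the restriction of the $2$-form $\pi_X^*\omega$ to the surface $F(Y)\subset F(X)$ vanishes. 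The plan is then to describe $\pi_X^*\omega$ through the conic correspondence: since $\pi_X$ contracts the fivefold $F(X)$ onto the fourfold $\wt{\mathsf{Y}}_{A(X)^{\perp}}$ with positive-dimensional generic fibers, $\pi_X^*\omega$ is a degenerate $2$-form whose kernel contains the fiber directions, and under an Abel--Jacobi type identification it is governed by the single Hodge class generating $\rH^{3,1}(X)$. Restricting to conics lying in the hyperplane section $Y$ should annihilate this class, because the deformations of a conic constrained inside $Y$ pair trivially against the $(3,1)$-part; this is exactly the computation carried out in \cite[Proposition 5.2]{iliev2011fano}.

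The main obstacle is precisely this last step: making the identification of $\pi_X^*\omega$ with the period data of $X$ explicit and verifying that conics confined to $Y$ span an isotropic subspace of $T\wt{\mathsf{Y}}_{A(X)^{\perp}}$. This is a genuine Hodge-theoretic and deformation-theoretic computation on the normal bundles $\cN_{c/Y}\subset \cN_{c/X}$ of conics, and reproducing it in full is where all the work sits.

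As an alternative that sidesteps the conic geometry and stays inside the framework of this paper, I would instead invoke the Bridgeland-theoretic descriptions: by \cite{JLLZ2021gushelmukai,GLZ2021conics} and Remark \ref{rmk-5fold-epw} there are isomorphisms $\wt{\mathsf{Y}}_{A(X)^{\perp}}\cong M^X_{\sigma_X}(a,b)$ and $\wt{\mathsf{Y}}^{\geq 2}_{A(Y)^{\perp}}\cong M^Y_{\sigma_Y}(a,b)$ for a suitable pair of coprime integers $a,b$, under which $i$ corresponds to the morphism $r$ induced by $\pr_X\circ j_*$. Granting this identification, the Lagrangian property of the image of $i$ is immediate from Theorem \ref{thm-FGLZ-moduli}, while connectedness and the immersed structure follow as above; this route replaces the differential-geometric computation by the purely categorical task of matching the two constructions of $i$.
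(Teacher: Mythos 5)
Your primary route is essentially what the paper does: the paper gives no independent argument but simply records the commutative square relating $F(Y)\subset F(X)$ to the contractions $\pi_Y,\pi_X$ and attributes the Lagrangian property to \cite[Proposition 5.2]{iliev2011fano}, with smoothness/connectedness of $\wt{\mathsf{Y}}^{\geq 2}_{A(Y)^{\perp}}$ and the finite unramified structure of $i$ coming from Lemma \ref{lem-compatible-double-cover}(2), exactly as you set it up. Your reduction via injectivity of $\pi_Y^*$ on holomorphic $2$-forms is a reasonable gloss on that citation.

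One caution about your proposed alternative: the identifications $\wt{\mathsf{Y}}_{A(X)^{\perp}}\cong M^X_{\sigma_X}(1,0)$ and $\wt{\mathsf{Y}}^{\geq 2}_{A(Y)^{\perp}}\cong M^Y_{\sigma_Y}(1,0)$ from \cite{JLLZ2021gushelmukai,GLZ2021conics} and \cite[Corollary B.9]{FGLZ} are only established for \emph{very general} $X$, whereas the proposition is stated (and later used in Theorem \ref{thm-double-epw}) for \emph{general} $X$; indeed the paper's whole deformation argument in Section \ref{sec-new-example} exists precisely to bridge that gap, and it takes Proposition \ref{prop-lag-4fold} for general $X$ as an input. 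So the Bridgeland route cannot replace the Iliev--Manivel computation at the stated level of generality without first extending the moduli identification beyond the very general locus.
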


\begin{remark}\label{rmk-covering-4fold}
The locus swept out by the image of $\wt{\mathsf{Y}}_{A(Y)^{\perp}}^{\geq 2}$ in $\wt{\mathsf{Y}}_{A(X)^{\perp}}$ dominates $\wt{\mathsf{Y}}_{A(X)^{\perp}}$ as $Y\subset X$ varies in the linear series. This follows from the fact that a general conic on $X$ is contained in a general hyperplane section $Y\subset X$. For the case when $Y$ is a GM fivefold containing $X$, see Remark \ref{rmk-covering-5fold}.
\end{remark}

Therefore, combined with the modular descriptions in \cite{JLLZ2021gushelmukai} and \cite{GLZ2021conics}, we obtain:

\begin{lemma}\label{lem-EPW-atomic-very-general}
Let $X$ be a very general GM fourfold and $Y\hookrightarrow X$ be a general hyperplane section. Then $i\colon \wt{\mathsf{Y}}_{A(Y)^{\perp}}^{\geq 2}\to \wt{\mathsf{Y}}_{A(X)^{\perp}}$ is a connected immersed atomic Lagrangian submanifold. 
\end{lemma}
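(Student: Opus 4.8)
The plan is to reduce the assertion to Theorem~\ref{thm-pushforward-atomic} by means of the modular descriptions of the two double dual EPW varieties. Fix the coprime pair $(a,b)$ and the stability conditions (Serre-invariant on $\Ku(Y)$, and lying in $\Stab^{\circ}(\Ku(X))$) for which these descriptions hold; four-dimensionality of $\wt{\mathsf{Y}}_{A(X)^{\perp}}$ pins down the relevant class. By the modular interpretation of conics on GM fourfolds in \cite{JLLZ2021gushelmukai}, for a very general $X$ the contraction $\pi_X\colon F(X)\twoheadrightarrow \wt{\mathsf{Y}}_{A(X)^{\perp}}$ identifies $\wt{\mathsf{Y}}_{A(X)^{\perp}}$ with $M^X_{\sigma_X}(a,b)$, and by the analogous description for GM threefolds in \cite{GLZ2021conics}, the contraction $\pi_Y\colon F(Y)\twoheadrightarrow \wt{\mathsf{Y}}^{\geq 2}_{A(Y)^{\perp}}$ identifies $\wt{\mathsf{Y}}^{\geq 2}_{A(Y)^{\perp}}$ with $M^Y_{\sigma_Y}(a,b)$.

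The crucial step is to verify that, under these identifications, the geometric morphism $i\colon \wt{\mathsf{Y}}^{\geq 2}_{A(Y)^{\perp}}\to \wt{\mathsf{Y}}_{A(X)^{\perp}}$ coincides with the categorical morphism $r\colon M^Y_{\sigma_Y}(a,b)\to M^X_{\sigma_X}(a,b)$ of Theorem~\ref{thm-FGLZ-moduli} induced by $\pr_X\circ j_*$. I would prove this by tracing a general conic $C$ lying in the hyperplane section $Y\subset X$ through the commutative square preceding Proposition~\ref{prop-lag-4fold}: on the one hand $\pi_Y$ and $\pi_X$ send $C$ to the classes of the projected objects $\pr_Y(\oh_C)\in\Ku(Y)$ and $\pr_X(\oh_C)\in\Ku(X)$, while the inclusion $F(Y)\hookrightarrow F(X)$ sends $[C]$ to $[C]$; on the other hand, the base change of the exact triangle in \cite[Equation (35)]{FGLZ} together with \cite[Lemma 5.2]{FGLZ} shows that $\pr_X\bigl(j_*\pr_Y(\oh_C)\bigr)$ has the same $S$-equivalence class as $\pr_X(\oh_C)$. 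Hence $i$ and $r$ agree on the dense locus of objects coming from conics, and, both being finite morphisms between the same normal projective varieties, they agree.

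Granting $i=r$, Theorem~\ref{thm-pushforward-atomic} applies, since very general implies general, and shows that $i_*\oh_{\wt{\mathsf{Y}}^{\geq 2}_{A(Y)^{\perp}}}=r_*\oh_{M^Y_{\sigma_Y}(a,b)}$ is an atomic sheaf on $\wt{\mathsf{Y}}_{A(X)^{\perp}}$; thus $i$ is an immersed atomic Lagrangian submanifold in the sense of Definition~\ref{def-atomic-lag}. Its connectedness and the immersed Lagrangian property are exactly Proposition~\ref{prop-lag-4fold}. As a self-contained alternative one may argue cohomologically: Proposition~\ref{prop-H2-moduli-space} gives that $i^*$ has rank one, Corollary~\ref{cor-c1-moduli} gives $c_1(\wt{\mathsf{Y}}^{\geq 2}_{A(Y)^{\perp}})\in\QQ\cdot\ell_{\sigma_Y}$, and Proposition~\ref{prop-restrict-BM-divisor} applied to $r=i$ gives $\ell_{\sigma_Y}=i^*\ell_{\sigma_X}\in\im(i^*)$, whence the atomicity criterion Theorem~\ref{atomic-criterion-thm} applies directly.

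The main obstacle is the identification $i=r$: it requires reconciling the two independent modular descriptions of \cite{JLLZ2021gushelmukai} and \cite{GLZ2021conics} and checking that restriction along $j\colon Y\hookrightarrow X$ intertwines the two families of Kuznetsov-component objects parametrized by conics. Everything else is a formal consequence of the results already established in the excerpt, so the entire content of the lemma is concentrated in this compatibility.
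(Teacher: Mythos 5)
Your proposal follows essentially the same route as the paper: identify $i$ with the moduli morphism $r$ induced by $\pr_X\circ j_*$ on $M^Y_{\sigma_Y}(1,0)\to M^X_{\sigma_X}(1,0)$ and then invoke Theorem~\ref{thm-pushforward-atomic} (whose proof in the very general case is exactly your ``self-contained alternative''). The one step you try to argue by hand --- that the geometric map $\wt{\mathsf{Y}}^{\geq 2}_{A(Y)^{\perp}}\to\wt{\mathsf{Y}}_{A(X)^{\perp}}$ agrees with $r$ under the modular identifications --- is precisely what the paper cites as \cite[Corollary B.9]{FGLZ} (with the modular descriptions coming from \cite{GLZ2021conics} for the fourfold and \cite{JLLZ2021gushelmukai} for the threefold, the reverse of your attributions); your sketch of that compatibility via S-equivalence of $\pr_X(j_*\pr_Y(\oh_C))$ and $\pr_X(\oh_C)$ is the only place that would need genuine work, since the discarded components of $\pr_Y$ do not obviously die under $\pr_X\circ j_*$.
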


\begin{proof}
Since $X$ is very general, by \cite[Corollary B.9]{FGLZ}, there is a commutative diagram
\[\begin{tikzcd}
	{\wt{\mathsf{Y}}_{A(Y)^{\perp}}^{\geq 2}} & {M_{\sigma_Y}^Y(1,0)} \\
	{\wt{\mathsf{Y}}_{A(X)^{\perp}}} & {M_{\sigma_X}^X(1,0)}
	\arrow["{r}", from=1-2, to=2-2]
	\arrow["{i}"', from=1-1, to=2-1]
	\arrow["\cong"', from=2-1, to=2-2]
	\arrow["\cong"', from=1-1, to=1-2]
\end{tikzcd}\]
where $i$ is the natural morphism and $r$ is the induced morphism in Theorem \ref{thm-FGLZ-moduli}. Then the result follows from Theorem \ref{thm-pushforward-atomic}.
\end{proof}

Now we aim to extend Lemma \ref{lem-EPW-atomic-very-general} to the general case. Since the results of \cite{GLZ2021conics} are established only for very general GM fourfolds, we employ a deformation argument through relative constructions of GM varieties and double EPW varieties.

\begin{theorem}\label{thm-double-epw}
Let $X$ be a general GM fourfold and $Y\hookrightarrow X$ be a general hyperplane section. Then $i\colon \wt{\mathsf{Y}}_{A(Y)^{\perp}}^{\geq 2}\to \wt{\mathsf{Y}}_{A(X)^{\perp}}$ is a connected immersed atomic Lagrangian submanifold with
\[\rH^1(\wt{\mathsf{Y}}_{A(Y)^{\perp}}^{\geq 2}, \CC)=\CC^{20}.\]
Moreover, we have $$(h)^{\perp}=\ker\big(\rH^2(\wt{\mathsf{Y}}_{A(X)^{\perp}}, \QQ)\to \rH^2(\wt{\mathsf{Y}}_{A(Y)^{\perp}}^{\geq 2}, \QQ)\big),$$
where $h$ is the pull-back of the ample class on $\mathsf{Y}_{A(X)^{\perp}}\subset \PP(V_6^{\vee})$.
\end{theorem}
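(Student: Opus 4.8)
The plan is to reduce the general case to the very general case treated in Lemma~\ref{lem-EPW-atomic-very-general}, exploiting the local completeness of the family of double dual EPW sextics together with the deformation invariance of atomicity. First I would set up the relative geometry. Since $X$ and $Y$ are ordinary, their Lagrangian data are $(V_6,V_5,A(X))$ and $(V_6,V_5,A(Y))$ with $\dim(A(X)\cap A(Y))=9$ (Lemma~\ref{lem-compatible-double-cover}). I would choose a smooth connected base $B$ parametrizing pairs of such Lagrangian data inside the fixed $\bigwedge^3 V_6$, with a point $1\in B$ giving $(X,Y)$ and a very general point $0\in B$. Shrinking $B$ if necessary, one obtains a smooth proper family $\pi\colon \wt{\cY}_{A(\cX)^{\perp}}\to B$ of projective hyper-K\"ahler fourfolds of $\mathrm{K3}^{[2]}$-type, a family $p\colon \wt{\cY}^{\geq 2}_{A(\cY)^{\perp}}\to B$ whose fibers are smooth connected surfaces (so its total space is smooth, hence equidimensional), and the relative version of the finite unramified morphism $i$ of Lemma~\ref{lem-compatible-double-cover}(2), all compatible with the fiberwise constructions.

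The connectedness of $\wt{\mathsf{Y}}^{\geq 2}_{A(Y)^{\perp}}$ and the fact that $i$ is a finite unramified Lagrangian immersion are Proposition~\ref{prop-lag-4fold}, which already holds for general $X$. For atomicity I would apply Proposition~\ref{prop-immersed-deform} to the proper morphism $i\colon \wt{\cY}^{\geq 2}_{A(\cY)^{\perp}}\to \wt{\cY}_{A(\cX)^{\perp}}$ over $B$ with $\cF=\oh$: the fibers of $p$ are equidimensional of the same dimension $2$, and over the very general point $0$ the sheaf $i_{0*}\oh$ is atomic by Lemma~\ref{lem-EPW-atomic-very-general}. Hence $i_{1*}\oh=i_*\oh_{\wt{\mathsf{Y}}^{\geq 2}_{A(Y)^{\perp}}}$ is atomic, so $i$ is an immersed atomic Lagrangian submanifold. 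The equality $\rH^1(\wt{\mathsf{Y}}^{\geq 2}_{A(Y)^{\perp}},\CC)=\CC^{20}$ is \cite[Proposition 2.5]{debarre:GM-jacobian} and \cite[Proposition 0.5]{Log12}.

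For the last assertion I would show that $\ker(i^*)$ and $h^{\perp}$ coincide as corank-one flat subbundles of the local system $\mathrm{R}^2\pi_*\QQ$ over $B$, and then restrict to the fiber over $1$. By atomicity (now established over all of $B$) and Theorem~\ref{atomic-criterion-thm}, the morphism $i^*_b$ has rank one for every $b\in B$, so $\ker(i^*)$ is a flat subbundle of corank one; and $h$ extends to the flat section given by the relative polarization pulled back from the fixed $\PP(V_6^{\vee})$, with $q(h,h)>0$, so $h^{\perp}$ is likewise flat of corank one. Since parallel transport preserves both, it suffices to verify the equality over the very general point $0$. There, $\wt{\mathsf{Y}}_{A(\cX_0)^{\perp}}\cong M^{\cX_0}_{\sigma}(1,0)$ and $\wt{\mathsf{Y}}^{\geq 2}_{A(\cY_0)^{\perp}}\cong M^{\cY_0}_{\sigma}(1,0)$, with $i$ corresponding to the morphism $r$ of Theorem~\ref{thm-FGLZ-moduli} by \cite[Corollary B.9]{FGLZ}; taking $\sigma$ rational, Corollary~\ref{cor-rk1-H2-moduli} gives $\ker(i^*)=(\ell_{\sigma})^{\perp}$. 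As $\cX_0$ is non-Hodge-special, $\NS(\wt{\mathsf{Y}}_{A(\cX_0)^{\perp}})$ has rank one by Remark~\ref{rmk-non-hodeg-special}, so the ample classes $h$ and $\ell_{\sigma}$ are proportional; hence $\ker(i^*)=h^{\perp}$ over $0$, and therefore over $1=X$.

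The main obstacle is the construction of the relative family with all the required properties: organizing the double (dual) EPW constructions in families so that $\pi$ is a smooth proper family of hyper-K\"ahler fourfolds, $p$ has equidimensional fibers, and $i$ is proper, so that Proposition~\ref{prop-immersed-deform} and the local-system argument apply. Concretely, the fiberwise statements of Lemma~\ref{lem-isolated-sing} and Lemma~\ref{lem-compatible-double-cover} must be made uniform over $B$, and one must keep $B$ inside the locus where the surfaces $\wt{\cY}^{\geq 2}_{A(\cY)^{\perp}}$ stay smooth and where the very general fiber satisfies the hypotheses of \cite[Corollary B.9]{FGLZ}; ensuring that such a $B$ connects a very general GM fourfold to the given general one is the delicate point.
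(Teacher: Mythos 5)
Your proposal follows essentially the same route as the paper: reduce to the very general case (Lemma~\ref{lem-EPW-atomic-very-general}) by putting everything in a family and invoking the deformation invariance of atomicity via Proposition~\ref{prop-immersed-deform}, then quote \cite{debarre:GM-jacobian} and \cite{Log12} for $b_1=20$. The ``delicate point'' you flag at the end --- connecting the given general $(X,Y)$ to a very general one inside a base over which all the relative EPW constructions stay smooth --- is exactly what the paper spends most of its proof on: it works over a \emph{curve} $C$ in the moduli of pairs, uses Lemma~\ref{lem-curve-intersect-coutably-divisor} to guarantee that $C$ meets the complement of the countably many Hodge-special divisors in an uncountable set (which survives the finitely many shrinkings of $C$ needed to keep the relative double covers, Fano schemes of conics, and the simultaneous normalization of $\overline{\mathsf{Y}}^{\geq 2}$ well behaved), and builds the relative finite unramified morphism $j$ from the families of Lagrangian data of \cite{debarre:GM-moduli}. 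Your monodromy argument for the final assertion (both $\ker(i^*)$ and $h^{\perp}$ are corank-one flat subsystems of $R^2\pi_*\QQ$, compared at the very general fiber via Corollary~\ref{cor-rk1-H2-moduli} and the rank-one N\'eron--Severi group) is a clean and slightly more explicit alternative to the paper's terse citation of Corollary~\ref{cor-rk1-H2-moduli} and Remark~\ref{rmk-bm-divisor-4fold}; both are correct.
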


\begin{proof}
As $X$ and $Y$ are general, we know that $F(X)$ and $F(Y)$ are both smooth by \cite[Theorem 3.2]{iliev2011fano} and \cite{Log12}. Moreover, from \cite[Theorem B.2]{debarre2015gushel}, we may assume that $$\mathsf{Y}^3_{A(Y)^{\perp}}=\mathsf{Y}^3_{A(X)^{\perp}}=\varnothing,$$ then $\wt{\mathsf{Y}}_{A(Y)^{\perp}}^{\geq 2}$ and $\wt{\mathsf{Y}}_{A(X)^{\perp}}$ are smooth by \cite[Theorem 5.2]{debarre2020double}. Consequently, according to Proposition~\ref{prop-lag-4fold}, $i\colon\wt{\mathsf{Y}}_{A(Y)^{\perp}}^{\geq 2}\to \wt{\mathsf{Y}}_{A(X)^{\perp}}$ is an immersed Lagrangian submanifold.

Recall that the moduli stacks of ordinary GM varieties of dimension $n=3$ or $4$ are smooth irreducible separated Deligne--Mumford stacks of finite type over $\CC$ (cf.~\cite[Corollary 5.12]{debarre:GM-moduli} and \cite[Proposition A.2]{kuznetsov2018derived}). Since the locus of Hodge-special GM fourfolds is the union of countably many divisors by \cite[Corollary 4.6]{debarre2015special}, using Lemma \ref{lem-curve-intersect-coutably-divisor}, we can construct a smooth family of ordinary GM fourfolds $\cX\to C$ over a smooth connected curve $C$ and a smooth divisor $\cY\subset \cX$ such that the fibers over a point $0\in C$ are $\cX_0\cong X$ and $\cY_0\cong Y$. Moreover, there exists an uncountable set $F\subset C$ such that $\cX_c$ is very general in the sense of Lemma \ref{lem-EPW-atomic-very-general} for each $c\in F$. Indeed, we start by considering a curve $C'$ along with a family $\cX'\to C'$ that satisfies the above assumptions. Next, we lift $C'$ to a curve $C$ in the relative Hilbert scheme of hyperplane sections of $\cX'\to C'$, obtaining the family $\cY\subset \cX\to C$. 

From the existence of relative Hilbert schemes, after replacing $C$ with an open neighborhood of $0\in C$, the Fano varieties of conics $F(\cY_c)$ and $F(\cX_c)$ are smooth for each $c\in C$. Moreover, we can assume that $\mathsf{Y}^3_{A(\cY_c)^{\perp}}=\mathsf{Y}^3_{A(\cX_c)^{\perp}}=\varnothing$ for each $c\in C$ as well.

Applying \cite[Theorem 3.7, Proposition 4.1]{debarre:GM-moduli} to $\cX\to C$ and $\cY\to C$, we obtain two families of Lagrangian data $(C, \mathscr{V}_6,\mathscr{V}_5, \mathscr{A}(\cX))$ and $(C, \mathscr{V}_6,\mathscr{V}_5, \mathscr{A}(\cY))$ over $C$\footnote{See \cite[Definition 3.15]{debarre:GM-moduli} for the precise definition.}, where $\mathscr{V}_6$, $\mathscr{V}_5$, $\mathscr{A}(\cX)$, and $\mathscr{A}(\cY)$ are all locally free sheaves on $C$ such that
\[(\mathscr{V}_6|_c,\mathscr{V}_5|_c, \mathscr{A}(\cX)|_c)=(V_6(\cX_c), V_5(\cX_c), A(\cX_c))~\text{and}~ (\mathscr{V}_6|_c,\mathscr{V}_5|_c, \mathscr{A}(\cY)|_c)=(V_6(\cY_c), V_5(\cY_c), A(\cY_c))\]
for each $c\in C$. After replacing $C$ with an open dense neighborhood of $0\in C$, we can assume that these bundles are all trivial. Therefore, as in the proofs of \cite[Theorem 5.2]{debarre2020double} and \cite[Proposition 5.27]{debarre2019gushel}, we get a family $p\colon \wt{\mathscr{Y}}_{\cX}\to C$ of double dual EPW sextics and a family $q\colon \wt{\mathscr{Y}}^{\geq 2}_{\cY}\to C$ of double dual EPW surfaces such that $p^{-1}(c)\cong \wt{\mathsf{Y}}_{A(\cX_c)^{\perp}}$ and $q^{-1}(c)\cong \wt{\mathsf{Y}}_{A(\cY_c)^{\perp}}^{\geq 2}$ for each $c\in C$ by replacing $C$ with an open dense neighborhood of $0\in C$. Note that by our assumptions of families $\cX$ and $\cY$, both $p$ and $q$ are smooth. We denote by $\mathscr{Y}_{\cX}\to C$, $\mathscr{Y}_{\cY}^{\geq 2}\to C$, and $\mathscr{Y}^{\geq 2}_{\cX}\to C$ the corresponding families of dual EPW sextics and dual EPW surfaces, respectively. Then $\mathscr{Y}_{\cX}, \mathscr{Y}^{\geq 2}_{\cX}$ and $\mathscr{Y}^{\geq 2}_{\cY}$ are smooth. Note that they are all closed subschemes of $\PP(\mathscr{V}^{\vee}_6)\cong \PP(V^{\vee}_6)\times C$ and $\mathscr{Y}^{\geq 2}_{\cY_c}\subset \mathscr{Y}_{\cX_c}$ for each $c\in C$, we see $\mathscr{Y}^{\geq 2}_{\cY}\subset \mathscr{Y}_{\cX}$. Therefore, by passing to double coverings, taking simultaneous normalization (cf.~\cite[Theorem 12]{kollar:normalization}), and using a relative version of Lemma \ref{lem-compatible-double-cover}, we have a commutative diagram
\[\begin{tikzcd}
	{\wt{\mathscr{Y}}_{\cY}^{\geq 2}} & {\wt{\mathscr{Y}}_{\cX}} \\
	& C
	\arrow["p", from=1-2, to=2-2]
	\arrow["q"', from=1-1, to=2-2]
	\arrow["j", from=1-1, to=1-2]
\end{tikzcd}\]
such that $j_c\colon \wt{\mathsf{Y}}_{A(\cY_c)^{\perp}}^{\geq 2}\to \wt{\mathsf{Y}}_{A(\cX_c)^{\perp}}$ is the natural morphism in Lemma \ref{lem-compatible-double-cover}(2) for each $c\in C$. Hence, $j$ is also finite and unramified.

Up to now, we have shrunk $C$ near $0\in C$ finitely many times. As $F$ is an infinite set, there is a point $t\in C\cap F$ such that Lemma \ref{lem-EPW-atomic-very-general} can be applied to $\cY_{t}\hookrightarrow \cX_t$, i.e.~$j_t\colon \wt{\mathsf{Y}}_{A(\cY_t)^{\perp}}^{\geq 2}\to \wt{\mathsf{Y}}_{A(\cX_t)^{\perp}}$ is an immersed atomic Lagrangian submanifold. Therefore, using Proposition \ref{prop-immersed-deform}, we know that the Lagrangian submanifold $i_c\colon \wt{\mathsf{Y}}_{A(\cY_c)^{\perp}}^{\geq 2}\to \wt{\mathsf{Y}}_{A(\cX_c)^{\perp}}$ is atomic for each $c\in C$, and in particular, $j_0=i\colon \wt{\mathsf{Y}}_{A(Y)^{\perp}}^{\geq 2}\to \wt{\mathsf{Y}}_{A(X)^{\perp}}$ is atomic. Finally, the computation of the cohomology group is done in \cite[Proposition 2.5]{debarre:GM-jacobian} and \cite[Proposition 0.5]{Log12}, and the orthogonal complement of the kernel of the restriction map is computed in Corollary \ref{cor-rk1-H2-moduli} and Remark \ref{rmk-bm-divisor-4fold}.
\end{proof}

\begin{remark}
One can also use relative Hilbert schemes of conics and the constructions in \cite{iliev2011fano} to employ the deformation argument in Theorem \ref{thm-double-epw}.
\end{remark}

\begin{lemma}\label{lem-curve-intersect-coutably-divisor}
Let $X$ be an irreducible quasi-projective variety of $\dim X\geq 2$ and $\{D_i\}_{i\in \NN}$ be a set of irreducible Weil divisors in $X$. Then for any point $x\in X$, there exists an irreducible curve $C\subset X$ passing through $x$ such that $C$ intersects properly with each divisor in $\{D_i\}_{i\in \NN}$ and $C\cap D_i\neq C\cap D_j$ for $i\neq j$.
\end{lemma}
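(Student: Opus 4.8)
The plan is to realize $C$ as a general codimension-$(\dim X-1)$ linear section of $X$ through $x$, and to force the required genericity against the countably many divisors by a Baire-type argument available over the uncountable field $\CC$. Fix a projective closure $\bar X\subseteq\PP^N$ with $X\subseteq\bar X$ dense open, set $d:=\dim X\geq 2$, and write $\bar D_i\subseteq\bar X$ for the closure of $D_i$, an irreducible divisor with $\bar D_i\cap X=D_i$. Let $\mathbb{G}$ be the irreducible parameter variety of linear subspaces $\Lambda\subseteq\PP^N$ of codimension $d-1$ with $x\in\Lambda$, and for $\Lambda\in\mathbb{G}$ put $\bar C_\Lambda:=\bar X\cap\Lambda$. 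The output curve will be $C:=\bar C_\Lambda\cap X$ for a suitably general $\Lambda$; note $C$ is closed in $X$ and passes through $x$, and that ``$C$ meets $D_i$ properly'' is equivalent to $C\not\subseteq D_i$, i.e.\ to $\bar C_\Lambda\not\subseteq\bar D_i$.

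First I would isolate, inside $\mathbb{G}$, a nonempty open locus $U_0$ on which $\bar C_\Lambda$ is an integral curve. This is the one genuinely delicate input: it is Bertini's irreducibility theorem for linear sections through the \emph{fixed} point $x$, and it can fail when $\bar X$ is a cone with vertex $x$ (for instance a quadric cone cut by a plane through its vertex splits into two lines). I would remove this obstruction by re-embedding $\bar X$ via a sufficiently positive (e.g.\ Veronese) linear system, so that $\bar X$ is no longer a cone with vertex $x$ and projection from $x$ is generically finite onto a $d$-dimensional image; Bertini's theorem then gives $U_0\neq\varnothing$. This is the step I expect to be the main obstacle, precisely because of the cone case and the need to track irreducibility through a fixed base point rather than a general one.

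Next I would produce the countably many proper closed subsets of $\mathbb{G}$ to be avoided. For proper intersection, the locus $\{\Lambda:\bar C_\Lambda\subseteq\bar D_i\}$ is constructible and not dense, since the universal curve dominates $\bar X\not\subseteq\bar D_i$; let $Z_i$ be its closure, so that for $\Lambda\in U_0\setminus Z_i$ the set $C\cap D_i=\bar C_\Lambda\cap D_i$ is finite. For distinctness I would run an incidence-and-dimension count, taking care that $x$ may lie on several of the $D_i$ at once; this is why the point $x$ is removed below. Consider
\[
J_i:=\{(\Lambda,y)\in\mathbb{G}\times(D_i\setminus\{x\}):y\in\Lambda\},\qquad J_{ij}:=J_i\cap(\mathbb{G}\times D_j).
\]
Fibering $J_i$ over $D_i\setminus\{x\}$ (dimension $d-1$) and $J_{ij}$ over $(D_i\cap D_j)\setminus\{x\}$ (dimension $\leq d-2$, since $\bar D_i\neq\bar D_j$ are distinct irreducible divisors), with incidence fibres of constant dimension $\dim\mathbb{G}-(d-1)$, gives $\dim J_i=\dim\mathbb{G}$ with finite general fibres and $\dim J_{ij}\leq\dim\mathbb{G}-1$. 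Hence the image of $J_i$ in $\mathbb{G}$ is dense while that of $J_{ij}$ is not; let $V_i$ be the closure of the complement of the image of $J_i$, and $W_{ij}$ the closure of the image of $J_{ij}$, both proper closed. For $\Lambda\notin V_i\cup W_{ij}$ there is a point $p\in(C\cap D_i)\setminus\{x\}$, and no point other than $x$ lies in $C\cap D_i\cap D_j$; thus $p\notin D_j$, whence $C\cap D_i\neq C\cap D_j$.

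Finally I would conclude. The set $U_0\setminus\big(\bigcup_i(Z_i\cup V_i)\cup\bigcup_{i\neq j}W_{ij}\big)$ is $U_0$ minus a countable union of proper closed subvarieties of the irreducible variety $\mathbb{G}$; since $\CC$ is uncountable, an irreducible $\CC$-variety is never the union of countably many proper closed subsets, so this set has a $\CC$-point $\Lambda$. For this $\Lambda$ the curve $C:=\bar C_\Lambda\cap X$ is irreducible, passes through $x$, meets every $D_i$ properly, and satisfies $C\cap D_i\neq C\cap D_j$ for all $i\neq j$, which is exactly the assertion.
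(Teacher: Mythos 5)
Your argument is correct, but it takes a genuinely different route from the paper's. The paper first blows up $X$ at $x$ and adds the exceptional divisor $E$ to the collection $\{D_i\}$, converting ``passes through $x$'' into ``meets $E$ properly''; this completely sidesteps Bertini with a base point. It then runs an induction on $\dim X$, cutting by one \emph{general} (base-point-free) hyperplane section $H$ at a time and tracking that each $H\cap D_i$ stays irreducible while $\dim(H\cap D_i\cap D_j)$ drops, with the uncountability of $\CC$ invoked at each stage to intersect the countably many dense opens; quasi-projectivity is handled by compactifying and throwing the boundary divisors into the collection. You instead cut by a single codimension-$(d-1)$ linear space through $x$ and confront the base-point Bertini issue head-on, and you correctly identify both the obstruction (cones with vertex $x$) and the fix (a Veronese re-embedding, after which $\bar X$ contains no lines, so neither it nor any of its linear sections is a cone, and the iteration closes). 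Your incidence-variety dimension counts for $J_i$ and $J_{ij}$ then replace the paper's inductive bookkeeping and have the advantage of never needing $H\cap D_i$ to be irreducible. Two small points to tighten: (i) the Bertini statement you actually need is the Jouanolou/Zariski form for morphisms to projective space --- for $f\colon V\to\PP^{N-1}$ with $V$ irreducible and $\dim\overline{f(V)}\geq 2$, the general $f^{-1}(H)$ is irreducible --- applied to the blow-up of $\bar X$ at $x$ projecting from $x$; ``generically finite onto the image plus Bertini on the image'' is not by itself enough, since preimages under generically finite maps can be reducible, though your non-cone hypothesis is exactly what makes the correct theorem apply; (ii) one should note that $C=\bar C_\Lambda\cap X$ cannot degenerate to the single point $x$, which follows since $\bar C_\Lambda$ is an irreducible curve not contained in the closed set $\bar X\setminus X$. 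Neither affects the validity of the argument.
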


\begin{proof}
Let $p\colon X'\to X$ be the blow-up at $x$ and $E\subset X'$ be the exceptional divisor. We only need to show that there is an irreducible curve $C'\subset X'$ such that $C'$ intersects with each divisor in the set $\{E\}\cup\{p^{-1}(D_i)\}_{i\in \NN}$ and $C'\cap p^{-1}(D_i)\neq C'\cap p^{-1}(D_j)$ for $i\neq j$. By replacing $X$ with~$X'$, relabeling divisors, and using induction on $\dim X$, it suffices to show that for any irreducible quasi-projective variety $X$ with $\dim X\geq 2$ and a set $\{D_i\}_{i\in \NN}$ of irreducible Weil divisors in $X$, there exists an irreducible Weil divisor $H\subset X$ such that each divisor $D_i$ intersects $H$ properly and $H\cap D_i$ is irreducible with $\dim H\cap D_i\cap D_j\leq \dim X-3$ for $i\neq j$.


To this end, we first assume that $X$ is projective. Let $l\colon X\hookrightarrow \PP^n$ be an embedding, then each hyperplane $H\subset \PP^n$ intersects with each divisor in $\{D_i\}_{i\in \NN}$. By \cite[Theorem 3.3.1]{lazar:positivity-I}, for each $t\in \NN$, there exists an open dense subset $U_t\subset |\oh_{\PP^n}(1)|$ such that when $[H]\in U_t$, $H\cap X$ and $H\cap D_i$ are irreducible and $H$ intersects with $D_i$ and $D_i\cap D_j$ properly for each $i,j\leq t, i\neq j$. Thus, $\dim H\cap D_i\cap D_j\leq \dim X-3$ for $i,j\leq t, i\neq j$. We define $U:=\cap_{t\in \NN} U_t$. Note that~$U$ contains a closed point in $|\oh_{\PP^n}(1)|$ as it is the complement of the union of countably many closed subvarieties and we work over $\CC$. Then any $[H]\in U$ satisfies the required conditions. For the general case, let $\overline{X}$ be a projective variety containing $X$ as an open dense subset. Then $\overline{X}\setminus X$ has codimension at least one, and we can assume that there exist irreducible divisors $E_j\subset \overline{X}$ for $1\leq j\leq m$ such that $\overline{X}\setminus X\subset \cup_{1\leq j\leq m} E_j$. Thus, applying the above result to $\overline{X}$ and $\{E_j\}_{1\leq j\leq m}\cup \{D_i\}_{i\in \NN}$, we get an irreducible curve $\overline{C}\subset \overline{X}$. Then $C:=\overline{C}\cap X$ satisfies all desired properties in the statement.
\end{proof}

\subsubsection{GM fivefolds containing a fixed GM fourfold}

Now, we consider a general GM fourfold $X$ and a general GM fivefold $Y$ containing $X$. By Lemma \ref{lem-compatible-double-cover}, we also have a commutative diagram:
\[\begin{tikzcd}
	{\wt{\mathsf{Y}}^{\geq 2}_{A(Y)^{\perp}}} & {\mathsf{Y}^{\geq 2}_{A(Y)^{\perp}}} \\
	{\wt{\mathsf{Y}}_{A(X)^{\perp}}} & {\mathsf{Y}_{A(X)^{\perp}}}
	\arrow[from=1-1, to=1-2]
	\arrow["{i}", from=1-1, to=2-1]
	\arrow[hook, from=1-2, to=2-2]
	\arrow[from=2-1, to=2-2]
\end{tikzcd}\]

According to \cite[Section 5.2]{iliev2011fano} and \cite{Debarre2021quadrics}, there is a $\PP^1$-fibration $\pi_Y\colon F_{qs}(Y)\twoheadrightarrow \wt{\mathsf{Y}}^{\geq 2}_{A(Y)^{\perp}}$ and we have a commutative diagram
\[\begin{tikzcd}
	{F_{qs}(Y)} & {\wt{\mathsf{Y}}^{\geq 2}_{A(Y)^{\perp}}} \\
	{F(X)} & {\wt{\mathsf{Y}}_{A(X)^{\perp}}}
	\arrow[hook, from=1-1, to=2-1]
	\arrow["{\pi_X}", from=2-1, to=2-2]
	\arrow["{i}", from=1-2, to=2-2]
	\arrow["{\pi_Y}", from=1-1, to=1-2]
\end{tikzcd}\]
where $F_{qs}(Y)$ is a connected component of the Hilbert scheme of quadric surfaces in $Y$ such that $S\cap X$ is a conic for each $[S]\in F_{qs}(Y)$\footnote{A quadric surface $S\subset Y$ is in $F_{qs}(Y)$ if and only if $S\subset \Gr(2, V_4)$ for a $4$-dimensional subspace $V_4\subset V_5$. It is called a $\tau$-quadric in \cite[Section 7.3]{debarre2015special}.}. Using this relation, the following result is obtained in \cite[Proposition 5.6]{iliev2011fano}, which is an analog of Proposition \ref{prop-lag-4fold}.

\begin{proposition}\label{prop-lag-5fold}
Let $X$ be a general GM fourfold and $X\hookrightarrow Y$ be an embedding such that~$Y$ is a general GM fivefold. Then $i\colon\wt{\mathsf{Y}}_{A(Y)^{\perp}}^{\geq 2}\to \wt{\mathsf{Y}}_{A(X)^{\perp}}$ is a connected immersed Lagrangian submanifold.
\end{proposition}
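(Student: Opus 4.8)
The plan is to deduce the immersed-submanifold structure from the material already assembled and to reduce the Lagrangian assertion to the isotropy of the holomorphic symplectic form, which is the geometric core of \cite[Proposition 5.6]{iliev2011fano}. First I would fix the manifold structure. Under the generality hypotheses one may assume, exactly as in the proof of Theorem~\ref{thm-double-epw}, that $\mathsf{Y}^{\geq 3}_{A(Y)^{\perp}}=\mathsf{Y}^{\geq 3}_{A(X)^{\perp}}=\varnothing$; then \cite[Theorem 5.2]{debarre2020double} guarantees that $\wt{\mathsf{Y}}^{\geq 2}_{A(Y)^{\perp}}$ is a smooth connected surface and that $\wt{\mathsf{Y}}_{A(X)^{\perp}}$ is a smooth hyper-K\"ahler fourfold of $\mathrm{K3}^{[2]}$-type. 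Lemma~\ref{lem-compatible-double-cover}(2), which also covers the fivefold inclusion $X\hookrightarrow Y$, shows that $i$ is finite and unramified. Hence $i$ is already an immersed submanifold in the sense of our conventions, and because $\wt{\mathsf{Y}}^{\geq 2}_{A(Y)^{\perp}}$ is smooth of dimension $2$ with $i$ unramified, the image of $i$ is a Lagrangian subvariety if and only if $i^{*}\omega=0$ for the symplectic form $\omega\in\rH^{0}(\wt{\mathsf{Y}}_{A(X)^{\perp}},\Omega^2)$.

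For the isotropy I would exploit the commutative square relating $F_{qs}(Y)$ and $F(X)$. As $\pi_Y\colon F_{qs}(Y)\to\wt{\mathsf{Y}}^{\geq 2}_{A(Y)^{\perp}}$ is a $\PP^1$-fibration and $\pi_X|_{F_{qs}(Y)}=i\circ\pi_Y$, the image $i(\wt{\mathsf{Y}}^{\geq 2}_{A(Y)^{\perp}})=\pi_X(F_{qs}(Y))$ is the surface swept out by the conics $S\cap X$ as $[S]$ runs over the $\tau$-quadrics of $Y$; this pins down the explicit family of conics in $X$ whose isotropy must be verified. The vanishing $i^{*}\omega=0$ is then precisely \cite[Proposition 5.6]{iliev2011fano}, established by the same mechanism as the hyperplane-section case \cite[Proposition 5.2]{iliev2011fano} underlying our Proposition~\ref{prop-lag-4fold}: along this family one computes, from the local structure of the EPW double cover, the tangent plane to the image inside $\wt{\mathsf{Y}}_{A(X)^{\perp}}$ and checks that the symplectic pairing annihilates it.

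The decisive and, I expect, the only genuinely hard step is this isotropy; smoothness, connectedness, finiteness and unramifiedness all drop out formally from Lemma~\ref{lem-compatible-double-cover}(2) and \cite[Theorem 5.2]{debarre2020double}, whereas $i^{*}\omega=0$ reflects real geometry of the EPW construction. As an internal consistency check, and one that moreover upgrades the conclusion to atomicity in the sense of Theorem~\ref{thm-push-atomic-5fold}, one can, for very general $X$, identify $i$ with the morphism $r\colon M^Y_{\sigma_Y}(1,0)\to M^X_{\sigma_X}(1,0)$ of Theorem~\ref{thm-FGLZ-moduli} through the modular descriptions of \cite{GLZ2021conics} and Remark~\ref{rmk-5fold-epw}; Corollary~\ref{cor-rk1-H2-moduli-5fold} then forces $i^{*}$ to have rank one on $\rH^2$ with image the ample Bayer--Macr\`i line $\QQ\,\ell_{\sigma_Y}$, and as $i^{*}$ respects the Hodge structure this gives $i^{*}\omega=0$ directly, the general case following by a deformation argument as in Theorem~\ref{thm-double-epw}.
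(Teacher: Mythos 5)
Your proposal is correct and follows essentially the same route as the paper: after setting up the $F_{qs}(Y)$ diagram and invoking Lemma~\ref{lem-compatible-double-cover}(2) for the finite unramified structure, the paper simply attributes the Lagrangian property to \cite[Proposition 5.6]{iliev2011fano}, exactly as you do. One caveat: your final ``consistency check'' deriving $i^*\omega=0$ from Corollary~\ref{cor-rk1-H2-moduli-5fold} would be circular if promoted to a proof, since the rank-one statement there rests on Proposition~\ref{prop-H2-moduli-space}, whose hypothesis already assumes the image of $i$ is Lagrangian.
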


Therefore, we can prove:

\begin{lemma}\label{lem-5fold-epw}
Let $X$ be a very general GM fourfold and $X\hookrightarrow Y$ be an embedding such that $Y$ is a general GM fivefold. Then $i\colon \wt{\mathsf{Y}}_{A(Y)^{\perp}}^{\geq 2}\to \wt{\mathsf{Y}}_{A(X)^{\perp}}$ is a connected immersed atomic Lagrangian submanifold.
\end{lemma}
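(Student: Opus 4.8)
The plan is to run the same reduction used for the hyperplane-section case in Lemma \ref{lem-EPW-atomic-very-general}, namely to identify $i$ with a morphism of the form treated in Theorem \ref{thm-push-atomic-5fold} and then quote that theorem. Note first that by Proposition \ref{prop-lag-5fold} the morphism $i\colon \wt{\mathsf{Y}}_{A(Y)^{\perp}}^{\geq 2}\to \wt{\mathsf{Y}}_{A(X)^{\perp}}$ is already known to be a connected immersed Lagrangian submanifold; thus the only genuinely new assertion is the atomicity of $i_*\oh_{\wt{\mathsf{Y}}_{A(Y)^{\perp}}^{\geq 2}}$. Since Theorem \ref{thm-push-atomic-5fold} produces, for a very general GM fourfold $X$ and a general GM fivefold $Y\supset X$, an immersed atomic Lagrangian $r\colon M^Y_{\sigma_Y}(a,b)\to M^X_{\sigma_X}(a,b)$, it suffices to realize $i$ as such an $r$ for a suitable class $(a,b)$.

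First I would invoke the Bridgeland-moduli descriptions of the two double dual EPW varieties. Using the very general hypothesis on $X$, Remark \ref{rmk-5fold-epw} (which rests on \cite{JLLZ2021gushelmukai} and \cite{GLZ2021conics}) gives isomorphisms $\wt{\mathsf{Y}}_{A(X)^{\perp}}\cong M^X_{\sigma_X}(1,0)$ for the fourfold and $\wt{\mathsf{Y}}_{A(Y)^{\perp}}^{\geq 2}\cong M^Y_{\sigma_Y}(1,0)$ for the fivefold. The goal is to fit these into a commutative square
\[\begin{tikzcd}
	{\wt{\mathsf{Y}}_{A(Y)^{\perp}}^{\geq 2}} & {M_{\sigma_Y}^Y(1,0)} \\
	{\wt{\mathsf{Y}}_{A(X)^{\perp}}} & {M_{\sigma_X}^X(1,0)}
	\arrow["{\cong}", from=1-1, to=1-2]
	\arrow["{i}"', from=1-1, to=2-1]
	\arrow["{r}", from=1-2, to=2-2]
	\arrow["{\cong}"', from=2-1, to=2-2]
\end{tikzcd}\]
in which $r$ is the finite morphism induced by $j^*$ as in Theorem \ref{thm-FGLZ-moduli}(2) and Theorem \ref{thm-push-atomic-5fold}. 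Once this square is available, the atomicity of $i_*\oh_{\wt{\mathsf{Y}}_{A(Y)^{\perp}}^{\geq 2}}$ follows immediately from Theorem \ref{thm-push-atomic-5fold}, and the connectedness is inherited from Proposition \ref{prop-lag-5fold}.

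The main obstacle will be checking that the geometrically defined map $i$ agrees, under these two identifications, with the categorically defined $r$. To do this I would exploit the incidence geometry recorded in the commutative diagram relating $F_{qs}(Y)$, $F(X)$, $\wt{\mathsf{Y}}_{A(Y)^{\perp}}^{\geq 2}$ and $\wt{\mathsf{Y}}_{A(X)^{\perp}}$ that precedes Proposition \ref{prop-lag-5fold}: a $\tau$-quadric $S\subset Y$ restricts to the conic $S\cap X\subset X$, and the contractions $\pi_X,\pi_Y$ send these families of conics and quadric surfaces to the two EPW varieties. The point to verify is that, after passing through the modular descriptions, the object of $\Ku(X)$ attached to the conic $S\cap X$ is precisely $j^*$ of the object of $\Ku(Y)$ attached to $S$ (up to the involution $T$), so that the geometric restriction operation matches the functor inducing $r$; this is exactly the compatibility one expects from the derived-categorical interpretation of conics and $\tau$-quadrics, and it is the step where the very general assumption on $X$ is essential in order to have both the GLZ-type modular description and the hypotheses of Theorem \ref{thm-push-atomic-5fold} simultaneously in force. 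With the square shown to commute, the lemma follows.
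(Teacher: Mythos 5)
Your proposal is a genuinely different route from the one the paper takes, and it is in fact the alternative the authors themselves flag in Remark \ref{rmk-5fold-epw}. The paper's own proof is considerably more economical and sidesteps both of the obstacles you identify. It never matches the geometric map $i$ with the categorical map $r$, and it never needs a Bridgeland-moduli description of $\wt{\mathsf{Y}}_{A(Y)^{\perp}}^{\geq 2}$: it only uses the isomorphism $M^X_{\sigma_X}(1,0)\cong \wt{\mathsf{Y}}_{A(X)^{\perp}}$ on the \emph{fourfold} side, so that Proposition \ref{prop-H2-moduli-space} --- which applies to an arbitrary finite morphism with Lagrangian image, not just to the functorially induced $r$ --- immediately gives that $i^*$ on $\rH^2$ has rank one. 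The second hypothesis of the criterion Theorem \ref{atomic-criterion-thm}, namely $c_1(\wt{\mathsf{Y}}_{A(Y)^{\perp}}^{\geq 2})\in\im(i^*)$, is then read off from the explicit formula $c_1(\wt{\mathsf{Y}}_{A(Y)^{\perp}}^{\geq 2})=-3h$ of Debarre--Kuznetsov, with $h$ pulled back from the polarization on $\wt{\mathsf{Y}}_{A(X)^{\perp}}$. By contrast, your route would deduce $c_1\in\im(i^*)$ from the Bayer--Macr\`i divisor machinery behind Theorem \ref{thm-push-atomic-5fold}, which buys uniformity across all classes $(a,b)$ but, in this specific $(1,0)$ case, costs you two nontrivial verifications that you only sketch: (a) the identification $M^Y_{\sigma_Y}(1,0)\cong\wt{\mathsf{Y}}_{A(Y)^{\perp}}^{\geq 2}$ via the projection functor and the $\PP^1$-fibration $F_{qs}(Y)\twoheadrightarrow M^Y_{\sigma_Y}(1,0)$, which is asserted but not carried out in the paper or its references, and (b) the commutativity of your square, i.e.\ that the object of $\Ku(X)$ attached to the conic $S\cap X$ is $j^*$ of the object attached to the $\tau$-quadric $S$. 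Both are believable and the authors endorse the strategy, but as written your argument leaves exactly the step you call ``the main obstacle'' unresolved, whereas the published proof needs neither.
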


\begin{proof}
By \cite{GLZ2021conics}, we have an isomorphism $M^X_{\sigma_X}(1,0)\cong \wt{\mathsf{Y}}_{A(X)^{\perp}}$. Therefore, Proposition \ref{prop-H2-moduli-space} implies that
\[i^*\colon \rH^2(\wt{\mathsf{Y}}_{A(X)^{\perp}}, \QQ)\to \rH^2(\wt{\mathsf{Y}}_{A(Y)^{\perp}}^{\geq 2}, \QQ)\]
is of rank one. And from \cite[Theorem 5.2(2)]{debarre2020double}, we know that
\[c_1(\wt{\mathsf{Y}}_{A(Y)^{\perp}}^{\geq 2})=-3h\in \rH^2(\wt{\mathsf{Y}}_{A(Y)^{\perp}}^{\geq 2}, \QQ),\]
where $h$ is the canonical polarization on $\wt{\mathsf{Y}}_{A(X)^{\perp}}$. Then, by Theorem \ref{atomic-criterion-thm} and Proposition \ref{prop-lag-5fold}, we conclude that  $\wt{\mathsf{Y}}_{A(Y)^{\perp}}^{\geq 2}$ is an immersed atomic Lagrangian in $\wt{\mathsf{Y}}_{A(X)^{\perp}}$.
\end{proof}

\begin{remark}\label{rmk-5fold-epw}
Using similar computations as in \cite{JLLZ2021gushelmukai,GLZ2021conics}, it is not hard to prove that the projection functor induces a $\PP^1$-fibration $F_{qs}(Y)\twoheadrightarrow M^Y_{\sigma_Y}(1,0)$ for a GM fivefold $Y$. Then we have $M^Y_{\sigma_Y}(1,0)\cong \wt{\mathsf{Y}}_{A(Y)^{\perp}}^{\geq 2}$. Thus, Lemma \ref{lem-5fold-epw} also follows from Theorem \ref{thm-push-atomic-5fold}.
\end{remark}

Finally, employing the same proof as in Theorem \ref{thm-double-epw}, but replacing Lemma \ref{lem-EPW-atomic-very-general} with Lemma~\ref{lem-5fold-epw}, we extend Lemma \ref{lem-5fold-epw} to the general case:

\begin{theorem}\label{thm-double-epw-5fold}
Let $X$ be a general GM fourfold and $X\hookrightarrow Y$ be an embedding such that $Y$ is a general GM fivefold. Then $i\colon \wt{\mathsf{Y}}_{A(Y)^{\perp}}^{\geq 2}\to \wt{\mathsf{Y}}_{A(X)^{\perp}}$ is a connected immersed atomic Lagrangian submanifold with
\[\rH^1(\wt{\mathsf{Y}}_{A(Y)^{\perp}}^{\geq 2}, \CC)=\CC^{20}.\]
Moreover, we have $$(h)^{\perp}=\ker\big(\rH^2(\wt{\mathsf{Y}}_{A(X)^{\perp}}, \QQ)\to \rH^2(\wt{\mathsf{Y}}_{A(Y)^{\perp}}^{\geq 2}, \QQ)\big),$$
where $h$ is the pull-back of the ample class on $\mathsf{Y}_{A(X)^{\perp}}\subset \PP(V_6^{\vee})$.
\end{theorem}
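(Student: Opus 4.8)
The plan is to run the same deformation argument as in the proof of Theorem~\ref{thm-double-epw}, with the family of hyperplane-section threefolds replaced by a family of GM fivefolds \emph{containing} the fourfolds, and with Lemma~\ref{lem-EPW-atomic-very-general} replaced by Lemma~\ref{lem-5fold-epw}. First I would record the geometric input for a single general pair. For $X$ a general GM fourfold and $Y\supset X$ a general GM fivefold, the conditions $\mathsf{Y}^3_{A(X)^{\perp}}=\mathsf{Y}^3_{A(Y)^{\perp}}=\varnothing$, which we may assume by \cite[Theorem B.2]{debarre2015gushel}, together with \cite[Theorem 5.2]{debarre2020double} guarantee that $\wt{\mathsf{Y}}_{A(X)^{\perp}}$ and $\wt{\mathsf{Y}}_{A(Y)^{\perp}}^{\geq 2}$ are smooth, while \cite{iliev2011fano} and \cite{Debarre2021quadrics} ensure $F(X)$ and $F_{qs}(Y)$ are smooth. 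Proposition~\ref{prop-lag-5fold} then shows that $i$ is a connected immersed Lagrangian submanifold, so only its atomicity remains to be deformed into place.

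The core step is the construction of the deforming family. Using Lemma~\ref{lem-curve-intersect-coutably-divisor} on the smooth irreducible moduli stack of ordinary GM fourfolds, together with \cite[Corollary 4.6]{debarre2015special}, I would first produce a smooth connected curve $C$, a point $0\in C$, and a smooth family of ordinary GM fourfolds $\cX\to C$ with $\cX_0\cong X$ whose classifying map meets each Hodge-special divisor properly; thus $\cX_c$ is very general for all $c$ in an uncountable subset $F\subset C$. Unlike in Theorem~\ref{thm-double-epw}, here the fourfolds appear as hyperplane sections of the ambient fivefolds, so I would then lift $C$ through the relative family of GM fivefolds containing the fibers of $\cX\to C$ --- nonempty and positive-dimensional by \cite[Lemma 5.4]{iliev2011fano} --- to obtain a smooth family of GM fivefolds $\cY\to C$ with $\cX\subset \cY$ a relative hyperplane section and $\cY_0\cong Y$. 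Shrinking $C$ around $0$ finitely often, I may assume $F_{qs}(\cY_c)$ and $F(\cX_c)$ are smooth and $\mathsf{Y}^3_{A(\cX_c)^{\perp}}=\mathsf{Y}^3_{A(\cY_c)^{\perp}}=\varnothing$ for all $c$. Passing to the associated families of Lagrangian data $(C,\mathscr{V}_6,\mathscr{V}_5,\mathscr{A}(\cX))$ and $(C,\mathscr{V}_6,\mathscr{V}_5,\mathscr{A}(\cY))$ via \cite[Theorem 3.7, Proposition 4.1]{debarre:GM-moduli}, and imitating \cite[Theorem 5.2]{debarre2020double} and \cite[Proposition 5.27]{debarre2019gushel}, I would build smooth families $\wt{\mathscr{Y}}_{\cX}\to C$ and $\wt{\mathscr{Y}}_{\cY}^{\geq 2}\to C$ of double dual EPW sextics and surfaces, together with a finite unramified relative morphism $j\colon \wt{\mathscr{Y}}_{\cY}^{\geq 2}\to \wt{\mathscr{Y}}_{\cX}$ over $C$ (using simultaneous normalization \cite[Theorem 12]{kollar:normalization} and a relative form of Lemma~\ref{lem-compatible-double-cover}) whose fiber over $c$ is $i_c\colon \wt{\mathsf{Y}}_{A(\cY_c)^{\perp}}^{\geq 2}\to \wt{\mathsf{Y}}_{A(\cX_c)^{\perp}}$.

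With the family in hand the atomicity propagates formally. Choosing $t\in C\cap F$, Lemma~\ref{lem-5fold-epw} applies to $\cX_t\hookrightarrow \cY_t$, so $i_t$ is atomic; Proposition~\ref{prop-immersed-deform} then transports atomicity across $C$, giving that $j_0=i$ is atomic. The Betti number $\rH^1(\wt{\mathsf{Y}}_{A(Y)^{\perp}}^{\geq 2},\CC)=\CC^{20}$ is cited directly from \cite[Proposition 2.5]{debarre:GM-jacobian} and \cite[Proposition 0.5]{Log12}. Finally, for the description of $\ker(i^*)$ I would use the isomorphism $\wt{\mathsf{Y}}_{A(X)^{\perp}}\cong M^X_{\sigma_X}(1,0)$ from \cite{GLZ2021conics}, under which $h$ is proportional to the Bayer--Macr\`i divisor $\ell_{\sigma_X}$ by Remark~\ref{rmk-bm-divisor-4fold}, so that Corollary~\ref{cor-rk1-H2-moduli-5fold} yields $(h)^{\perp}=\ker(i^*)$.

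I expect the main obstacle to be the construction and smoothness control of the deformation family. Because the GM fourfolds here sit as hyperplane sections of the varying fivefolds rather than as the ambient spaces, one must simultaneously force the fourfold fibers to be very general (avoiding the countable union of Hodge-special divisors) and realize them inside a smooth family of fivefolds through \cite[Lemma 5.4]{iliev2011fano}, then verify that all the resulting relative dual EPW sextics and surfaces stay smooth over $C$ --- this is precisely what is needed for Proposition~\ref{prop-immersed-deform} to apply, and it is the only place where the present argument genuinely differs from that of Theorem~\ref{thm-double-epw}.
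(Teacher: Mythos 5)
Your proposal is correct and takes essentially the same approach as the paper, which proves this theorem precisely by rerunning the deformation argument of Theorem~\ref{thm-double-epw} with Lemma~\ref{lem-EPW-atomic-very-general} replaced by Lemma~\ref{lem-5fold-epw}. Your fleshed-out construction of the family (lifting through the fivefolds containing the fourfold fibers) and your use of Corollary~\ref{cor-rk1-H2-moduli-5fold} together with Remark~\ref{rmk-bm-divisor-4fold} for the description of $\ker(i^*)$ match the intended argument.
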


\begin{remark}\label{rmk-covering-5fold}
As in Remark \ref{rmk-covering-4fold}, the locus swept out by the image of  $\wt{\mathsf{Y}}_{A(Y)^{\perp}}^{\geq 2}$ in $ \wt{\mathsf{Y}}_{A(X)^{\perp}}$ dominates $\wt{\mathsf{Y}}_{A(X)^{\perp}}$ as $Y\supset X$ varies. This follows from the fact that for a general conic $C$ on~$X$, one can find a general GM fivefold $Y$ containing $X$ and a quadric surface $S\subset Y$ such that $X\cap S=C$.
\end{remark}

\begin{remark}\label{rmk-non-inj}
We can also demonstrate that $i\colon \wt{\mathsf{Y}}_{A(Y)^{\perp}}^{\geq 2}\to \wt{\mathsf{Y}}_{A(X)^{\perp}}$ is not injective as follows. If $i$ is an embedding, then by Theorem \ref{thm-double-epw} and \ref{thm-double-epw-5fold}, $\wt{\mathsf{Y}}_{A(Y)^{\perp}}^{\geq 2}\subset \wt{\mathsf{Y}}_{A(X)^{\perp}}$ is an atomic embedded Lagrangian. However, by \cite[Equation (7.11)]{markman:rank-1-obstruction}, we have $$\sqrt{\frac{\chi(\wt{\mathsf{Y}}_{A(Y)^{\perp}}^{\geq 2})}{3}}\in \mathbb{Z},$$ which contradicts $\chi(\wt{\mathsf{Y}}_{A(Y)^{\perp}}^{\geq 2})=2\chi(\mathsf{Y}_{A(Y)^{\perp}}^{\geq 2})=384$ by \cite[Proposition 1.10]{Ferretti12}.
\end{remark}

\subsection{Double EPW cubes}\label{subsec-cube}

Similar to double EPW varieties, there is another construction of hyper-K\"ahler manifolds using the projective geometry of Grassmannians.

Given a GM variety $X$ of dimension $n\geq 3$ with the Lagrangian data $(V_6, V_5, A(X))$, analogous to EPW varieties discussed in Section \ref{subsec-double-epw}, one can define a subscheme of $\Gr(3, V_6)$ for each $k$ as
\[\mathsf{Z}^{\geq k}_{A(X)}:=\{[U]\in\Gr(3,V_6) \mid \mathrm{dim}(A(X)\cap(V_6\wedge\bigwedge^2U))\geq k\}\subset \Gr(3, V_6).\]
In \cite{IKKR19}, it is shown that there exists a double covering
\[\wt{\mathsf{Z}}^{\geq 2}_{A(X)} \to \mathsf{Z}^{\geq 2}_{A(X)}\]
such that $\wt{\mathsf{Z}}^{\geq 2}_{A(X)}$ is a hyper-K\"ahler sixfold of $\mathrm{K3^{[3]}}$-type when $X$ is general.

More generally, for each $0\leq k\leq 3$, a double covering 
\[\wt{\mathsf{Z}}^{\geq k}_{A(X)} \to \mathsf{Z}^{\geq k}_{A(X)}\]
is constructed in \cite[Theorem 5.7]{debarre2020double}, such that $\wt{\mathsf{Z}}^{\geq 3}_{A(X)}$ is a smooth connected projective threefold when $X$ is general.

As in Lemma \ref{lem-compatible-double-cover}, for a smooth hyperplane section $Y\subset X$, we have a commutative diagram
\[\begin{tikzcd}
	{\wt{\mathsf{Z}}^{\geq 3}_{A(Y)}} & {\wt{\mathsf{Z}}^{\geq 2}_{A(X)}} \\
	{\mathsf{Z}^{\geq 3}_{A(Y)}} & {\mathsf{Z}^{\geq 2}_{A(X)}}
	\arrow[from=1-1, to=1-2]
	\arrow[from=1-1, to=2-1]
	\arrow[from=1-2, to=2-2]
	\arrow[hook, from=2-1, to=2-2]
\end{tikzcd}\]
Although in this case, we do not know whether $\wt{\mathsf{Z}}^{\geq 3}_{A(Y)}\to \wt{\mathsf{Z}}^{\geq 2}_{A(X)}$ is Lagrangian, we can still construct atomic Lagrangians in $\wt{\mathsf{Z}}^{\geq 2}_{A(X)}$ as follows.

According to \cite{kapustka2022epw}, for a very general GM fourfold $X$, there is an isomorphism
\[\wt{\mathsf{Z}}^{\geq 2}_{A(X)}\cong M^X_{\sigma_X}(1,-1).\]
Moreover, in \cite[Section 8]{FGLZ}, the authors construct a rational map
\[\pi_X\colon \Hilb^{3t+1}_X\dashrightarrow M^X_{\sigma_X}(1,-1)\cong \wt{\mathsf{Z}}^{\geq 2}_{A(X)}\]
induced by the projection functor, where $\PP^1$ is the general fiber. By \cite{FGLZ}, for a general hyperplane section $Y\subset X$, we have a commutative diagram
\[\begin{tikzcd}
	{\Hilb^{3t+1}_Y} & {M_{\sigma_Y}^Y(1,-1)} \\
	{\Hilb^{3t+1}_X} & {M_{\sigma_X}^X(1,-1)}
	\arrow["{\pi_Y}", from=1-1, to=1-2]
	\arrow[hook, from=1-1, to=2-1]
	\arrow["r", from=1-2, to=2-2]
	\arrow["{\pi_X}", dashed, from=2-1, to=2-2]
\end{tikzcd}\]
where $\pi_Y$ is induced by the projection functor, and $r$ is the morphism in Theorem~\ref{thm-FGLZ-moduli}. Moreover, $\pi_Y$ maps birationally onto a connected component of $M_{\sigma_Y}^Y(1,-1)$ by \cite[Theorem C.13]{FGLZ}, and $r$ is an embedding when restricted to this component by \cite[Lemma 8.20]{FGLZ}.

In particular, denoting by $L_Y$ the image of $r\circ \pi_Y$, it follows that $L_X$ is isomorphic to a connected component of $M_{\sigma_Y}^Y(1,-1)$ and  $L_Y\subset M^X_{\sigma_X}(1,-1)\cong \wt{\mathsf{Z}}^{\geq 2}_{A(X)}$ is a Lagrangian subvariety. Moreover, $L_Y$ is smooth by \cite[Theorem C.13, Corollary C.14]{FGLZ} and $\Hilb^{3t+1}_Y\twoheadrightarrow L_Y$ is a divisorial contraction. Thus, Theorem \ref{thm-pushforward-atomic} can be applied in this case,  yielding:

\begin{corollary}\label{cor-double-epw-cube}
Let $X$ be a very general GM fourfold and $Y\subset X$ be a general hyperplane section. Then 
\[L_Y\subset \wt{\mathsf{Z}}^{\geq 2}_{A(X)}\]
is a connected atomic Lagrangian submanifold.
\end{corollary}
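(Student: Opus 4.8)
The plan is to reduce the statement directly to Theorem \ref{thm-pushforward-atomic}, applied to the coprime pair $(a,b)=(1,-1)$. First I would verify that the hypotheses of that theorem are met in the present setting: $X$ is a very general (hence, in particular, general) GM fourfold, and $Y\subset X$ is a general hyperplane section, so $Y$ is a general GM threefold in the sense of Lemma \ref{lem-structure-3fold-moduli}. Fixing a generic $\sigma_X\in\Stab^{\circ}(\Ku(X))$ and a Serre-invariant stability condition $\sigma_Y$ on $\Ku(Y)$, Theorem \ref{thm-pushforward-atomic} then supplies the finite morphism $r\colon M^Y_{\sigma_Y}(1,-1)\to M^X_{\sigma_X}(1,-1)$ of Theorem \ref{thm-FGLZ-moduli} and guarantees that $r_*\oh_{M^Y_{\sigma_Y}(1,-1)}$ is an atomic sheaf; moreover $M^Y_{\sigma_Y}(1,-1)$ is smooth and projective by Lemma \ref{lem-structure-3fold-moduli}.

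Next I would isolate the relevant connected component. By the discussion immediately preceding the statement, $\pi_Y$ maps $\Hilb^{3t+1}_Y$ birationally onto a connected component $M^0\subset M^Y_{\sigma_Y}(1,-1)$, and $r|_{M^0}$ is an embedding with image $L_Y$. Since $M^0$ is a (single) union of connected components of the smooth moduli space $M^Y_{\sigma_Y}(1,-1)$, the Remark following Theorem \ref{thm-pushforward-atomic}, which rests on Lemma \ref{lem-union-component-1obs}, shows that $r_*\oh_{M^0}$ is again atomic. Because $r|_{M^0}$ is an embedding, $L_Y\cong M^0$ is a smooth connected projective variety, and $i:=r|_{M^0}\colon L_Y\hookrightarrow M^X_{\sigma_X}(1,-1)$ is finite and unramified with $i_*\oh_{L_Y}=r_*\oh_{M^0}$ atomic; its image $L_Y$ is Lagrangian, as already recorded above. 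Transporting this along the isomorphism $\wt{\mathsf{Z}}^{\geq 2}_{A(X)}\cong M^X_{\sigma_X}(1,-1)$ of \cite{kapustka2022epw} then exhibits $L_Y\subset\wt{\mathsf{Z}}^{\geq 2}_{A(X)}$ as a connected atomic Lagrangian submanifold in the sense of Definition \ref{def-atomic-lag}.

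Once Theorem \ref{thm-pushforward-atomic} is available, the argument is essentially bookkeeping, so the only point requiring genuine care is checking that the geometric inputs imported from \cite{FGLZ} and \cite{kapustka2022epw} are compatible with the generality hypotheses of Theorem \ref{thm-pushforward-atomic}. Concretely, one must confirm that a general hyperplane section $Y$ of a very general GM fourfold is general enough for the smoothness of $M^Y_{\sigma_Y}(1,-1)$ in Lemma \ref{lem-structure-3fold-moduli}, and that the locus $M^0$ singled out by $\pi_Y$ is an honest connected component of $M^Y_{\sigma_Y}(1,-1)$, so that Lemma \ref{lem-union-component-1obs} applies verbatim. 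I expect no serious obstacle beyond verifying these compatibilities.
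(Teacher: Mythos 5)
Your proposal is correct and follows essentially the same route as the paper: the text preceding the corollary records exactly the inputs you use (the birational description of the connected component via $\pi_Y$, the fact that $r$ restricts to an embedding on it, the Lagrangian property, and the identification $\wt{\mathsf{Z}}^{\geq 2}_{A(X)}\cong M^X_{\sigma_X}(1,-1)$), and the paper then invokes Theorem \ref{thm-pushforward-atomic} with $(a,b)=(1,-1)$ together with Lemma \ref{lem-union-component-1obs} just as you do. Your extra care about the compatibility of the generality hypotheses is reasonable but matches what the paper implicitly assumes.
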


We expect that $L_Y\hookrightarrow \wt{\mathsf{Z}}^{\geq 2}_{A(X)}$ coincides with $\wt{\mathsf{Z}}^{\geq 3}_{A(Y)}\to \wt{\mathsf{Z}}^{\geq 2}_{A(X)}$. This assertion can be verified once~$\wt{\mathsf{Z}}^{\geq 3}_{A(Y)}$ also has a description in terms of $M^Y_{\sigma_Y}(1,-1)$ and $\Hilb^{3t+1}_Y$.

\section{Further discussions and examples}\label{sec-discussion}

In this section, we further explore the example in Section \ref{subsec-double-epw}. Then we provide two examples of atomic sheaves supported on non-atomic Lagrangians, constructed from cubic hypersurfaces.

\subsection{The relation to the relative Jacobian fibration of GM fivefolds}\label{subsec-jacobian}

We fix $X$ to be a general GM fourfold. Let $\cB_X$ be the space that parametrizes general GM fivefolds containing $X$, which is an open dense subscheme of $|\cI_{X/\Gr(2,V_5)}(2)|\cong \PP^{10}$. Then Theorem \ref{thm-double-epw-5fold} gives a smooth projective family of immersed atomic Lagrangians
\[\begin{tikzcd}
	{\wt{\mathscr{Y}}^{\geq 2}} & {\wt{\mathsf{Y}}_{A(X)^{\perp}}} \\
	{\cB_X}
	\arrow[from=1-1, to=1-2]
	\arrow["\varphi"', from=1-1, to=2-1]
\end{tikzcd}\]
which also defines a morphism $s\colon\cB_X\to \cH$, where $\cH$ is the component of the Hilbert scheme of Lagrangian subvarieties in $\wt{\mathsf{Y}}_{A(X)^{\perp}}$ containing the image $\overline{\mathsf{Y}}^{\geq 2}_{A(Y)^{\perp}}$ of $\wt{\mathsf{Y}}^{\geq 2}_{A(Y)^{\perp}}$ for $[Y]\in \cB_X$.

\begin{lemma}\label{lem-open-embed}
The morphism $s\colon\cB_X\to \cH$ is an open embedding, hence $\cH$ is smooth along $s(\cB_X)$.
\end{lemma}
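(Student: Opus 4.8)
The plan is to check three things: that $s$ is injective on closed points, that its differential $ds_{[Y]}$ is injective everywhere, and that $\dim T_{s([Y])}\cH \leq \dim \cB_X = 10$. Granting these, $ds_{[Y]}$ is an isomorphism, $\cH$ is smooth of dimension $10$ along $s(\cB_X)$, and an injective morphism of finite-type $\CC$-schemes which is étale onto its image is an open embedding.

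First I would prove injectivity by recovering $[Y]$ from the image Lagrangian $L := \overline{\mathsf{Y}}^{\geq 2}_{A(Y)^{\perp}} \subset \wt{\mathsf{Y}}_{A(X)^{\perp}}$. Composing the inclusion with the double covering $\wt{\mathsf{Y}}_{A(X)^{\perp}} \to \mathsf{Y}_{A(X)^{\perp}} \subset \PP(V_6^{\vee})$ and using the diagram of Lemma~\ref{lem-compatible-double-cover}(2) together with the fibre-product description $\overline{\mathsf{Y}}^{\geq 2}_{A(Y)^{\perp}} = \wt{\mathsf{Y}}_{A(X)^{\perp}} \times_{\mathsf{Y}_{A(X)^{\perp}}} \mathsf{Y}^{\geq 2}_{A(Y)^{\perp}}$, the image of $L$ is exactly the dual EPW surface $\mathsf{Y}^{\geq 2}_{A(Y)^{\perp}} \subset \PP(V_6^{\vee})$. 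By O'Grady's theory of EPW stratifications the Lagrangian subspace $A(Y)^{\perp}$, and hence $A(Y)$, is recovered from this surface for general $Y$; since the flag $(V_6, V_5)$ is fixed, uniqueness of Lagrangian data \cite[Theorem 3.10]{debarre2015gushel} then recovers $Y$, so distinct $[Y]$ give distinct $L$ and $s$ is injective.

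Next I would bound the Zariski tangent space $T_{s([Y])}\cH \cong \rH^0(L, N_{L/\wt{\mathsf{Y}}_{A(X)^{\perp}}})$. Since $L$ is Lagrangian in the holomorphic symplectic manifold $\wt{\mathsf{Y}}_{A(X)^{\perp}}$, the symplectic form identifies $N_{L/\wt{\mathsf{Y}}_{A(X)^{\perp}}}$ with $\Omega^1_L$ over the smooth locus, which by Lemma~\ref{lem-isolated-sing} and Lemma~\ref{lem-compatible-double-cover}(2) is the complement of finitely many points. Restricting sections to the smooth locus and then transporting along the normalization $\iota\colon \wt{\mathsf{Y}}^{\geq 2}_{A(Y)^{\perp}} \to L$, which is an isomorphism away from those points, I would use Hartogs extension on the smooth surface $\wt{\mathsf{Y}}^{\geq 2}_{A(Y)^{\perp}}$ to embed $\rH^0(L, N_{L/\wt{\mathsf{Y}}_{A(X)^{\perp}}})$ into $\rH^0(\wt{\mathsf{Y}}^{\geq 2}_{A(Y)^{\perp}}, \Omega^1)$, whose dimension is $h^{1,0} = \tfrac{1}{2}b_1 = 10$ by the computation $\rH^1(\wt{\mathsf{Y}}^{\geq 2}_{A(Y)^{\perp}}, \CC) = \CC^{20}$ of Theorem~\ref{thm-double-epw-5fold}. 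This gives $\dim T_{s([Y])}\cH \leq 10$. For injectivity of $ds_{[Y]}$, a nonzero tangent vector at $[Y]$ is a nontrivial first-order deformation of $Y$ containing $X$, equivalently a first-order deformation of $A(Y)$ with $(V_6, V_5)$ and the intersection condition $\dim(A(X)\cap A(Y)) = 9$ fixed; this induces a first-order deformation of $\mathsf{Y}^{\geq 2}_{A(Y)^{\perp}}$ in $\PP(V_6^{\vee})$ and hence of $L$. Running the recovery of Step~1 to first order shows a nonzero deformation of $A(Y)$ yields a nonzero embedded deformation of $L$, so $ds_{[Y]}$ is injective; combined with the bound and $\dim \cB_X = 10$ it is an isomorphism.

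The main obstacle is the tangent-space comparison in the singular case. Concretely, I must show the restriction map $\rH^0(L, N_{L/\wt{\mathsf{Y}}_{A(X)^{\perp}}}) \to \rH^0(L^{\mathrm{sm}}, \Omega^1_{L^{\mathrm{sm}}})$ is injective, i.e. that the normal sheaf of the singular surface $L$ carries no sections supported at the finitely many singular points, and that the resulting infinitesimal embedded deformations of $L$ correspond precisely to deformations of the Lagrangian datum $A(Y)$ rather than to extra motions created by the singularities. Establishing this matching across the isolated singularities — so that $h^0(L, N_{L/\wt{\mathsf{Y}}_{A(X)^{\perp}}})$ equals $10$ and not more — is the crux; the injectivity on points and the $b_1 = 20$ dimension count are comparatively routine once it is in place.
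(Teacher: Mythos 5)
Your two main ingredients are exactly the two ingredients of the paper's proof: injectivity of $s$ by recovering $A(Y)$ from the dual EPW surface $\mathsf{Y}^{\geq 2}_{A(Y)^{\perp}}$ (the paper cites \cite[Proposition 2.5]{debarre:GM-jacobian} plus \cite[Proposition 3.14]{debarre2015gushel} for this recovery), and the bound $\dim\rH^0(L,\cN_{L})\le 10$ obtained by restricting to the smooth locus, transporting along the normalization $\wt{\mathsf{Y}}^{\geq 2}_{A(Y)^{\perp}}\to L$, and identifying the normal bundle of the immersed Lagrangian with $\Omega^1$ of a surface with $b_1=20$. The difference lies in how you close the argument, and here your route is both harder and, as you yourself note, incomplete. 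You propose to prove injectivity of $ds_{[Y]}$ directly by a first-order version of the recovery argument; this is not needed. Injectivity of $s$ on points already forces $\dim_{s([Y])}\cH\ge\dim\cB_X=10$, so together with $\dim T_{s([Y])}\cH\le 10$ one gets that $\cH$ is smooth of dimension $10$ along $s(\cB_X)$ and that $\rH^0(L,\cN_L)$ is exactly $10$-dimensional; an injective morphism from the smooth $10$-fold $\cB_X$ onto a smooth $10$-dimensional locus of $\cH$ is then \'etale, hence an open immersion. This sandwich is how the paper concludes, and it removes the vaguest step of your plan. As for the point you single out as the crux --- that $\cN_L$ has no sections supported at the finitely many singular points --- this is not a delicate matching of deformations across the singularities but the standard fact that the normal sheaf $\sheafhom(\cI_L/\cI_L^2,\oh_L)$, being a dual, is reflexive and in particular torsion-free (this is \cite[Corollary 1.2]{har80}, which the paper invokes for both $\cN_1$ and $\cN_2$); torsion-freeness immediately gives the injectivity of restriction to the smooth locus. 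With these two repairs your argument is the paper's argument.
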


\begin{proof}
First, we show that $s$ is injective, which is equivalent to showing that for any two different points $[Y],[Y']\in \cB_X$, we have $\overline{\mathsf{Y}}^{\geq 2}_{A(Y)^{\perp}}\neq \overline{\mathsf{Y}}^{\geq 2}_{A(Y')^{\perp}}$. Indeed, if $\overline{\mathsf{Y}}^{\geq 2}_{A(Y)^{\perp}}= \overline{\mathsf{Y}}^{\geq 2}_{A(Y')^{\perp}}$, then we have $\mathsf{Y}^{\geq 2}_{A(Y)^{\perp}}= \mathsf{Y}^{\geq 2}_{A(Y')^{\perp}}\subset \mathsf{Y}_{A(X)^{\perp}}$. However, by \cite[Proposition 2.5]{debarre:GM-jacobian}, we have a canonical identification $A(Y)=A(Y')$, which implies that $Y=Y'$ by \cite[Propoition 3.14]{debarre2015gushel}.

Consequently, we have $\dim \cH\geq \dim \cB_X=10$. For any point $[Y]\in \cB_X$, we are going to compare $\rH^0(\overline{\mathsf{Y}}^{\geq 2}_{A(Y)^{\perp}}, \cN_1)$ and $\rH^0(\wt{\mathsf{Y}}^{\geq 2}_{A(Y)^{\perp}}, \cN_2)$, where
$$\cN_1:=\cN_{\overline{\mathsf{Y}}^{\geq 2}_{A(Y)^{\perp}}/\wt{\mathsf{Y}}_{A(X)^{\perp}}},\quad\cN_2:=\cN_{\wt{\mathsf{Y}}^{\geq 2}_{A(Y)^{\perp}}/\wt{\mathsf{Y}}_{A(X)^{\perp}}}.$$
Note that $\cN_1$ and $\cN_2$ are reflexive sheaves by \cite[Corollary 1.2]{har80}. Let $U\subset \overline{\mathsf{Y}}^{\geq 2}_{A(Y)^{\perp}}$ be the smooth locus and $V\subset \wt{\mathsf{Y}}^{\geq 2}_{A(Y)^{\perp}}$ be the pullback of $U$. Hence, $V\cong U$ and $\cN_1|_V\cong \cN_2|_U$. From the fact that $\dim (\wt{\mathsf{Y}}^{\geq 2}_{A(Y)^{\perp}}\setminus V)=0$ by Lemma \ref{lem-compatible-double-cover} and $\cN_2$ is reflexive, we have
\[\rH^0(\wt{\mathsf{Y}}^{\geq 2}_{A(Y)^{\perp}}, \cN_2)=\rH^0(V, \cN_2)=\CC^{10}.\]
Since $\cN_1$ is torsion-free, there is an embedding $\rH^0(\overline{\mathsf{Y}}^{\geq 2}_{A(Y)^{\perp}}, \cN_1)\hookrightarrow \rH^0(U, \cN_1)$. As $\cN_1|_V\cong \cN_2|_U$, we get an inclusion
\[\rH^0(\overline{\mathsf{Y}}^{\geq 2}_{A(Y)^{\perp}}, \cN_1)\hookrightarrow \rH^0(V, \cN_2)=\rH^0(\wt{\mathsf{Y}}^{\geq 2}_{A(Y)^{\perp}}, \cN_2)=\CC^{10}.\]
By $10\leq \dim \cH\leq \dim \rH^0(\overline{\mathsf{Y}}^{\geq 2}_{A(Y)^{\perp}}, \cN_1)$, we finally obtain $\dim \cH=10$ and
\[\rH^0(\overline{\mathsf{Y}}^{\geq 2}_{A(Y)^{\perp}}, \cN_1)=\rH^0(\wt{\mathsf{Y}}^{\geq 2}_{A(Y)^{\perp}}, \cN_2).\]
This implies that $s$ is \'etale. Then $s$ is an open embedding by its injectivity.
\end{proof}

\begin{proof}[{Proof of Corollary \ref{cor-epw-bundle}}]
This follows from Theorem \ref{thm-double-epw-5fold} and the same argument as in Theorem \ref{thm-bundle}. Note that the dimension of the deformation space is $20$ since the morphism $i\colon \wt{\mathsf{Y}}_{A(Y)^{\perp}}^{\geq 2}\to \wt{\mathsf{Y}}_{A(X)^{\perp}}$ is a closed embedding away from finitely many points by Lemma \ref{lem-compatible-double-cover}(2) and there exists a nice family of immersed Lagrangians of $\wt{\mathsf{Y}}_{A(X)^{\perp}}$ as required in Lemma \ref{lem-immerse-ext1}. Indeed, the family $L_V\to V$ in Lemma \ref{lem-immerse-ext1} is constructed by a relative version of Theorem \ref{thm-double-epw-5fold} as in the proof of Theorem \ref{thm-double-epw}, which satisfies (1) and (2) in Lemma \ref{lem-immerse-ext1} by Lemma \ref{lem-compatible-double-cover}. The assumption (3) in Lemma \ref{lem-immerse-ext1} follows from Lemma \ref{lem-open-embed}. Hence, we can apply Lemma \ref{lem-immerse-ext1} in this case to determine the dimension of the deformation space.
\end{proof}

Since $\cH$ is smooth along $s(\cB_X)$, according to \cite[Theorem 8.1]{markman:spectral-curve}, $\Pic^0(\wt{\mathscr{Y}}^{\geq 2}/\cB_X)$ is equipped with a holomorphic symplectic two-form and the relative Picard fibration $$\Pic^0(\wt{\mathscr{Y}}^{\geq 2}/\cB_X)\to \cB_X$$ is a Lagrangian fibration. In fact, while \cite[Theorem 8.1]{markman:spectral-curve} is stated only for embedded Lagrangians, it can also be applied in our case by Lemma \ref{lem-open-embed}. This is because the proof of \cite[Theorem 8.1]{markman:spectral-curve} involves a local study, as explained in \cite[Section 3.1]{IM08cubic}. 

We denote by $\cM$ the component of stable sheaves on $\wt{\mathsf{Y}}_{A(X)^{\perp}}$ containing $i_*\cL$, where $[Y]\in \cB_X$ and $\cL\in \Pic^0(\wt{\mathsf{Y}}^{\geq 2}_{A(Y)^{\perp}})$. Note that $i_*\cL$ is stable because it is a rank one torsion-free sheaf supported on the image of the normalization map $i$. Then $\cM$ has an open dense subscheme $\cM^0$ isomorphic to $\Pic^0(\wt{\mathscr{Y}}^{\geq 2}/\cB_X)$ such that the support morphism $\cM^0\to \cB_X$ is isomorphic to the relative Picard fibration via the identification of $\cB_X$ as an open subscheme of $\cH$ by Lemma \ref{lem-open-embed}. Thus, $\cM^0$ also carries a holomorphic symplectic structure. This strategy is explained in \cite[Section 7.4]{markman:rank-1-obstruction}.

On the other hand, according to \cite[Theorem 1.1]{debarre:GM-jacobian}, there is a canonical isomorphism of principally polarized abelian varieties
\[\mathrm{J}(Y)\cong \mathrm{Alb}(\wt{\mathsf{Y}}^{\geq 2}_{A(Y)^{\perp}})\]
for any $[Y]\in \cB_X$. Thus, $\mathrm{J}(Y)\cong \mathrm{Alb}(\wt{\mathsf{Y}}^{\geq 2}_{A(Y)^{\perp}})\cong \Pic^0(\wt{\mathsf{Y}}^{\geq 2}_{A(Y)^{\perp}})$ and the fiber of $\cM^0\to \cB_X$ over $[Y]\in \cB_X$ is canonically isomorphic to $\mathrm{J}(Y)$. This situation is very similar to the case of cubic fourfolds (cf.~\cite[Example 8.22]{markman:spectral-curve} and \cite{LSV,IM08cubic}), and we hope that the techniques in \cite{LSV} can be applied to the Gushel--Mukai case to construct a suitable compactification $\overline{\cM}$ of $\cM^0\to \cB_X$.

In practice, constructing a smooth hyper-K\"ahler compactification $\overline{\cM}$ can be challenging. However, it is natural to ask the following question: 

\begin{question}
Let $X$ be a general GM fourfold and $\cB_X\subset \PP^{10}$ be the space of general GM fivefolds containing $X$. Does the relative Picard fibration
\[\cM^0\to \cB_X\]
have a compactification $\overline{\cM}\to \PP^{10}$ such that $\overline{\cM}$ is a projective irreducible holomorphic symplectic variety with at worst $\QQ$-factorial terminal singularities?
\end{question}

A positive answer to this question will lead to a projective irreducible holomorphic symplectic variety of a new deformation type.

\subsection{Atomic sheaves from non-atomic Lagrangians}\label{sec-llsvs}

In this subsection, we study two families of hyper-K\"ahler manifolds associated with cubic fourfolds: LLSvS eightfolds \cite{LLSvS17} and Fano varieties of lines \cite{beauville:fano-variety-cubic-4fold}. 

For a cubic fourfold $X$, its semiorthogonal decomposition is given by $$\Db(X)=\langle\Ku(X),\oh_X,\oh_X(H),\oh_X(2H)\rangle=\langle\oh_X(-H),\Ku(X),\oh_X,\oh_X(H)\rangle,$$
where $H$ is the hyperplane class of $X$, satisfying $S_{\Ku(X)}=[2]$. In the numerical Grothendieck group $\Knum(\Ku(X))$, there is a rank two lattice generated by $\Lambda_1$ and $\Lambda_2$ with $$\ch(\Lambda_1)=3-H-\frac{1}{2}H^2+\frac{1}{6}H^3+\frac{1}{8}H^4,\quad \ch(\Lambda_2)=-3+2H-\frac{1}{3}H^3.$$ 
When $X$ is non-Hodge-special, we have $\Knum(\Ku(X))=\langle\Lambda_1, \Lambda_2 \rangle$.

For a cubic threefold $Y$, we have a semiorthogonal decomposition
\[\Db(Y)=\langle \Ku(Y), \oh_Y, \oh_Y(H)\rangle.\]
In this case, $S^3_{\Ku(Y)}=[5]$. Moreover, $\Knum(\Ku(Y))$ is a rank two lattice generated by $\lambda_1$ and $\lambda_2$ with 
$$\ch(\lambda_1)=2-H-\frac{1}{6}H^2+\frac{1}{6}H^3 ,\quad \ch(\lambda_2)=-1+H-\frac{1}{6}H^2-\frac{1}{6}H^3.$$

The stability conditions on $\Ku(X)$ and $\Ku(Y)$ are constructed in \cite{bayer2017stability}. We denote by $M^Y_{\sigma_Y}(a,b)$ (resp.~$M^X_{\sigma_X}(a,b)$) the moduli space of S-equivalence classes of $\sigma_Y$-semistable (resp.~$\sigma_X$-semistable) objects in $\Ku(Y)$ (resp.~$\Ku(X)$) with class $a\lambda_1+b\lambda_2$ (resp.~$a\Lambda_1+b\Lambda_2$).

We will utilize Theorem \ref{thm-relative-stability-threefold}, Theorem \ref{thm-relative-stability-fourfold}, Theorem \ref{thm-fourfold-moduli}, and Corollary \ref{cor-rk1-H2-moduli} for cubic fourfolds and threefolds, as the same argument applies in this context (cf.~\cite[Section 29]{BLMNPS21}).

\subsubsection{LLSvS eightfolds}

Let $X$ be a cubic fourfold not containing a plane and $j\colon Y\hookrightarrow X$ be a smooth hyperplane section. Let $M_3(X)$ be the Hilbert scheme of generalized twisted cubics on~$X$, which is a smooth projective tenfold by \cite[Theorem 4.7]{LLSvS17}.

In \cite{LLSvS17}, the authors construct a birational $\mathbb{P}^2$-fibration
$$u \colon M_3(X)\rightarrow \mathsf{Z}_X$$
such that $\mathsf{Z}_X$ is a hyper-K\"ahler eightfold, referred to as  \emph{the LLSvS eightfold associated with $X$}. If we define $\mathsf{Z}_Y:=u(M_3(Y))$, where $M_3(Y)$ is the Hilbert scheme of twisted cubics on $Y$, then~$\mathsf{Z}_Y$ is a connected smooth Lagrangian submanifold of $\mathsf{Z}_X$ according to \cite{shinder2017geometry}.

On the other hand, we have the moduli interpretation $\mathsf{Z}_X\cong M^X_{\sigma_X}(2,1)$ (see \cite{li2018twisted}) and $\mathsf{Z}_Y\cong M^Y_{\sigma_Y}(2,1)$ (see \cite{bayer2020desingularization} or \cite[Appendix B.2]{FGLZ}). In particular, if we denote by $i\colon \mathsf{Z}_Y\hookrightarrow \mathsf{Z}_X$ the Lagrangian embedding constructed in \cite{shinder2017geometry}, the moduli construction is shown to be compatible with $i$ by \cite[Corollary B.7]{FGLZ}, as the following commutative diagram

\[\begin{tikzcd}\label{diagram}
\mathsf{Z}_Y \arrow[d, "i", hook] \arrow[r, "\cong"] & { M^Y_{\sigma_Y}(2,1)} \arrow[d, "r", hook] \\
\mathsf{Z}_X \arrow[r, "\cong"]                      & { M^X_{\sigma_X}(2,1)}                     
\end{tikzcd}\]
where the morphism $r$ is induced by the functor $\pr_X\circ j_*$.

Therefore, by Corollary \ref{cor-rk1-H2-moduli}, the restriction map
\[i^*\colon \rH^2(\mathsf{Z}_X, \QQ) \to \rH^2(\mathsf{Z}_Y, \QQ)\]
is of rank one when $X$ is non-Hodge-special and $\ker(i^*)=h^{\perp}$, where $h$ is the natural polarization on $\mathsf{Z}_X\cong M^X_{\sigma_X}(2,1)$. Furthermore, a deformation argument using Theorem \ref{thm-relative-stability-fourfold} and \ref{thm-relative-stability-threefold} shows that this holds for a general $X$ as well. Subsequently, based on Theorem~\ref{atomic-criterion-thm} and Theorem \ref{thm-double-epw}, one might speculate that $\mathsf{Z}_Y\subset \mathsf{Z}_X$ is an atomic Lagrangian submanifold. However, the following result demonstrates that this is not the case.

\begin{proposition}\label{prop-llsvs-not-atomic}
Let $X$ be a cubic fourfold not containing a plane and $Y\subset X$ be a smooth hyperplane section. Then \begin{enumerate}
    \item $i\colon \mathsf{Z}_Y\hookrightarrow \mathsf{Z}_X$ is not an atomic Lagrangian submanifold, but

    \item $i_*\cL\in \Coh(\mathsf{Z}_X)$ is a stable atomic sheaf for any line bundle $\cL$ on $\mathsf{Z}_Y$ satisfying $$c_1(\cL)=D|_{\mathsf{Z}_Y}\in \rH^2(\mathsf{Z}_Y, \QQ),$$ where $D$ is the exceptional divisor of the blow-up of $\mathsf{Z}_X$ along $X\subset \mathsf{Z}_X$.
\end{enumerate}

\end{proposition}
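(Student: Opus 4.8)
The plan is to deduce both parts from the criterion of Theorem~\ref{atomic-criterion-thm} together with one genuinely geometric input: the position of the canonical class $K_{\mathsf{Z}_Y}$ relative to the rank-one subspace $\im(i^*)\subset \rH^2(\mathsf{Z}_Y,\QQ)$, expressed through the exceptional divisor $D$. Recall that by the discussion preceding the proposition (the moduli identifications $\mathsf{Z}_X\cong M^X_{\sigma_X}(2,1)$, $\mathsf{Z}_Y\cong M^Y_{\sigma_Y}(2,1)$ of \cite{li2018twisted,bayer2020desingularization} and Corollary~\ref{cor-rk1-H2-moduli}, extended to general $X$ by the deformation argument using Theorem~\ref{thm-relative-stability-fourfold} and Theorem~\ref{thm-relative-stability-threefold}) the pullback $i^*\colon \rH^2(\mathsf{Z}_X,\QQ)\to \rH^2(\mathsf{Z}_Y,\QQ)$ already has rank one, with $\im(i^*)=\QQ\cdot(h|_{\mathsf{Z}_Y})$ and $h|_{\mathsf{Z}_Y}$ ample. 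Hence, by Theorem~\ref{atomic-criterion-thm}, $i$ is atomic if and only if $c_1(\mathsf{Z}_Y)\in \im(i^*)$, so everything hinges on locating $c_1(\mathsf{Z}_Y)=-K_{\mathsf{Z}_Y}$ modulo $\im(i^*)$.

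First I would pin down the class $D|_{\mathsf{Z}_Y}$. Using the LLSvS geometry (\cite{LLSvS17}, \cite{shinder2017geometry}), the blow-up centre $X\subset \mathsf{Z}_X$ meets the Lagrangian $\mathsf{Z}_Y$ exactly along the cubic threefold $Y=X\cap \mathsf{Z}_Y$, which is a smooth divisor of $\mathsf{Z}_Y$. Since $Y$ is already Cartier in $\mathsf{Z}_Y$, the proper transform of $\mathsf{Z}_Y$ in $\mathsf{Z}'_X:=\mathrm{Bl}_X\mathsf{Z}_X$ is isomorphic to $\mathsf{Z}_Y$, and a local computation in the blow-up chart identifies the restriction of the exceptional divisor with the cubic-threefold divisor, $D|_{\mathsf{Z}_Y}=[Y]\in \rH^2(\mathsf{Z}_Y,\QQ)$.

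The key step, and the \emph{main obstacle}, is the intrinsic relation
\[
K_{\mathsf{Z}_Y}\equiv 2\,[Y]\pmod{\im(i^*)},\qquad \text{i.e.}\qquad c_1(\mathsf{Z}_Y)+2\,D|_{\mathsf{Z}_Y}\in \im(i^*).
\]
This cannot be extracted from the adjunction formula for $\sigma\colon \mathsf{Z}'_X\to \mathsf{Z}_X$ alone: since the proper transform is $\mathsf{Z}_Y$ itself, that formula is self-consistent and returns no information. Instead I would compute $K_{\mathsf{Z}_Y}$ directly from the explicit geometry of the image of $M_3(Y)$ under the LLSvS contraction, using \cite{shinder2017geometry} (and the moduli description $\mathsf{Z}_Y\cong M^Y_{\sigma_Y}(2,1)$); one must also record that $[Y]$ is genuinely independent from $\im(i^*)=\QQ\cdot(h|_{\mathsf{Z}_Y})$, for instance by the Hodge index theorem applied to the ample class $h|_{\mathsf{Z}_Y}$ together with the numerical data of $[Y]$, or via Markman's numerical obstruction for atomic Lagrangians in $\mathrm{K3^{[4]}}$-type manifolds. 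I emphasize that, unlike the GM case, there is no analogue of Corollary~\ref{cor-c1-moduli} for cubic threefolds (the Serre functor of $\Ku(Y)$ is not an involution, so no fixed-locus mechanism forces $c_1(\mathsf{Z}_Y)$ to be proportional to $\ell_{\sigma_Y}$), which is precisely why $c_1(\mathsf{Z}_Y)$ can, and does, escape $\im(i^*)$.

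Granting the displayed relation, both conclusions follow formally. For (1): it gives $c_1(\mathsf{Z}_Y)\equiv -2\,[Y]\pmod{\im(i^*)}$, so $c_1(\mathsf{Z}_Y)\in \im(i^*)$ would force $[Y]=D|_{\mathsf{Z}_Y}\in \im(i^*)$, contradicting the independence of $[Y]$; hence $c_1(\mathsf{Z}_Y)\notin \im(i^*)$ and $i$ is not atomic by Theorem~\ref{atomic-criterion-thm}. For (2): choose $\lambda\in \rH^{1,1}_{\QQ}(\mathsf{Z}_X)$ with $2\,i^*\lambda=K_{\mathsf{Z}_Y}-2\,D|_{\mathsf{Z}_Y}$; then $\tfrac{c_1(\mathsf{Z}_Y)}{2}=-D|_{\mathsf{Z}_Y}-i^*\lambda=-\gamma-i^*\lambda$ with $\gamma=c_1(\cL)=D|_{\mathsf{Z}_Y}$, so Remark~\ref{rmk-prop} yields $v(i_*\cL)\exp(\lambda)=i_*[\mathsf{Z}_Y]$, and Proposition~\ref{prop-general-criterion} (whose hypothesis that $i^*$ has rank one holds) shows $i_*\cL$ is atomic. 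Finally, $i_*\cL$ is stable for any polarization because it is a rank-one torsion-free, hence pure, sheaf supported on the integral variety $\mathsf{Z}_Y$ (\cite[Corollary 2.73]{Kollar13}). As before, the computation for non-Hodge-special $X$ propagates to general $X$ by the deformation invariance of atomicity (Proposition~\ref{prop-immersed-deform}).
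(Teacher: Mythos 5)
Your proposal correctly reduces both parts to the single relation $c_1(\mathsf{Z}_Y)+2\,D|_{\mathsf{Z}_Y}\in \im(i^*)$, and from there the deductions (non-atomicity via Theorem~\ref{atomic-criterion-thm}, atomicity of $i_*\cL$ via Proposition~\ref{prop-general-criterion} and Remark~\ref{rmk-prop}) match the paper. But the relation itself is exactly where your argument stops: you flag it as ``the main obstacle'' and then only say you ``would compute $K_{\mathsf{Z}_Y}$ directly from the explicit geometry of the image of $M_3(Y)$.'' That computation \emph{is} the proof, and it is not a black box one can defer to \cite{shinder2017geometry}. The paper carries it out as follows: the Stein factorization of $M_3(X)\to \Gr(4,V_6)$ gives a $\PP^2$-fibration $f_X\colon \mathsf{Z}'_X\to\Gr(4,V_6)$, and $\mathsf{Z}'_Y=f_X^{-1}(\Gr(4,V_5))$ is the zero locus of a regular section of $f_X^*\cU^{\vee}_{\Gr(4,V_6)}$, so $\det\cN_{\mathsf{Z}'_Y/\mathsf{Z}'_X}\cong f_X^*\oh_{\Gr(4,V_6)}(1)$ and adjunction gives $\omega_{\mathsf{Z}'_Y}\cong(\omega_{\mathsf{Z}'_X}\otimes f_X^*\oh_{\Gr(4,V_6)}(1))|_{\mathsf{Z}'_Y}$. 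Combining this with the two facts from \cite{LLSvS17} that $\omega_{\mathsf{Z}'_X}\cong\oh_{\mathsf{Z}'_X}(3D)$ and $\oh_{\mathsf{Z}'_X}(D)\otimes f_X^*\oh_{\Gr(4,V_6)}(1)\cong\phi^*L$ for an ample $L$ on $\mathsf{Z}_X$ yields $\omega_{\mathsf{Z}_Y}\cong i^*L\otimes\oh_{\mathsf{Z}'_X}(2D)|_{\mathsf{Z}_Y}$, which is the displayed relation with $\im(i^*)$ spanned by $i^*c_1(L)$. Without some such argument your proof of both (1) and (2) is incomplete.

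A secondary soft spot: to conclude (1) you also need that $D|_{\mathsf{Z}_Y}\notin\im(i^*)$, and your two suggested routes are not worked out. The Hodge index theorem alone does not obviously rule out $[Y]$ being a positive multiple of the ample generator of $\im(i^*)$ (an effective divisor proportional to an ample class is not a contradiction), and Markman's numerical obstruction from \cite[Equation (7.11)]{markman:rank-1-obstruction} constrains embedded atomic Lagrangians, which is not the situation here. The paper instead uses the concrete fact $\oh_Y(D|_Y)\cong\oh_Y(-1)$ from \cite[Proposition 4.5]{LLSvS17}: restricting $D|_{\mathsf{Z}_Y}$ to the divisor $Y\subset\mathsf{Z}_Y$ produces a negative class, which is incompatible with $D|_{\mathsf{Z}_Y}$ lying in the line spanned by the restriction of the ample class $c_1(L)$. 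Your identification $D|_{\mathsf{Z}_Y}=[Y]$ and the final steps for (2), including the stability of $i_*\cL$ as a rank-one torsion-free sheaf on the integral variety $\mathsf{Z}_Y$, are fine; the closing deformation remark is unnecessary since the argument, once the canonical bundle formula is in place, works for every $X$ in the statement.
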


\begin{proof}
We have already seen that $i^*$ is of rank one. Next, we proceed to compute $\omega_{\mathsf{Z}_Y}$. Assume that $X\subset \PP(V_6)$ for a $6$-dimensional vector space $V_6$ and $Y=X\cap \PP(V_5)$ for a $5$-dimensional vector space $V_5$. Recall that, by the construction of \cite{LLSvS17}, we have a natural $\PP^2$-fibration $$f_X\colon \mathsf{Z}'_X\rightarrow \Gr(4,V_6),$$ induced by the Stein factorization of the natural morphism 
$M_{3}(X)\rightarrow\Gr(4,V_6)$, which maps a twisted cubic to its generating $\PP^3$. Moreover, $\mathsf{Z}'_X$ is smooth by \cite[Theorem 4.7]{LLSvS17}. Similarly, there exists a smooth subvariety $\mathsf{Z}'_Y\subset \mathsf{Z}'_X$ induced by $M_3(Y)\subset M_3(X)$, and $f_X$ induces a morphism $f_Y\colon \mathsf{Z}'_Y\to \Gr(4,V_5)$, which maps a twisted cubic on $Y$ to its generating $\PP^3$ as well. Therefore, it is straightforward to check that there is a Cartesian diagram
\[\begin{tikzcd}
	{\mathsf{Z}'_Y} & {\mathsf{Z}'_X} \\
	{\Gr(4,V_5)} & {\Gr(4,V_6)}
	\arrow[hook, from=2-1, to=2-2]
	\arrow[hook, from=1-1, to=1-2]
	\arrow["{f_X}"', from=1-2, to=2-2]
	\arrow["{f_Y}"', from=1-1, to=2-1]
\end{tikzcd}\]
In particular, $\mathsf{Z}'_Y$ is the zero locus of a regular section of $f_X^*\cU_{\Gr(4,V_6)}^\vee$, where $\cU_{\Gr(4,V_6)}$ is the tautological subbundle on $\Gr(4,V_6)$. Thus, we have 
\[\omega_{\mathsf{Z}'_Y}\cong \omega_{\mathsf{Z}'_X}|_{\mathsf{Z}'_Y}\otimes \det(\mathcal{N}_{\mathsf{Z}'_Y/\mathsf{Z}'_X})\cong (\omega_{\mathsf{Z}'_X}\otimes f^*_X\oh_{\Gr(4,V_6)}(1))|_{\mathsf{Z}'_Y},\]
where $\oh_{\Gr(4,V_6)}(1)$ is the very ample generator of $\Pic(\Gr(4,V_6))$.

According to \cite[Theorem 4.11]{LLSvS17}, there is an embedding $X\hookrightarrow \mathsf{Z}_X$ such that a point $[C]\in M_3(X)$ maps to $p\in X\subset \mathsf{Z}_X$ if and only if $C$ has $p$ as an embedded point. Moreover, the blow-up of $\mathsf{Z}_X$ along $X$ is $\mathsf{Z}_X'$. We denote by $D\subset \mathsf{Z}_X'$ the exceptional divisor, and let $\phi\colon \mathsf{Z}_X'\to \mathsf{Z}_X$ be the blow-up morphism. As any two twisted cubics with embedded points in $Y$ have the same embedded point if and only if they span the same $\PP^3$, we have an identification $\mathsf{Z}_Y'=\mathsf{Z}_Y$. Then we obtain a commutative diagram
\[\begin{tikzcd}
	{\mathsf{Z}_Y} & {\mathsf{Z}_Y} \\
	{\mathsf{Z}_X'} & {\mathsf{Z}_X}
	\arrow[no head, from=1-1, to=1-2]
	\arrow[shift left, no head, from=1-1, to=1-2]
	\arrow[hook, from=1-1, to=2-1]
	\arrow["i", hook, from=1-2, to=2-2]
	\arrow["\phi"', from=2-1, to=2-2]
\end{tikzcd}\]

By \cite[pp.38]{LLSvS17}, there exists an ample line bundle $L$ on $\mathsf{Z}_X$ such that we have an isomorphism $\oh_{\mathsf{Z}_X'}(D)\otimes f^*_X\oh_{\Gr(4,V_6)}(1)\cong \phi^*L$. Moreover, according to  \cite[pp.37]{LLSvS17}, we have $\omega_{\mathsf{Z}'_X}\cong \oh_{\mathsf{Z}_X'}(3D)$, which implies
\[\omega_{\mathsf{Z}_Y}\cong (\oh_{\mathsf{Z}_X'}(3D)\otimes f^*_X\oh_{\Gr(4,V_6)}(1)))|_{\mathsf{Z}_Y}\cong i^*L\otimes \oh_{\mathsf{Z}_X'}(2D)|_{\mathsf{Z}_Y}.\]
Given that the restriction map
\[i^*\colon \rH^2(\mathsf{Z}_X, \QQ) \to \rH^2(\mathsf{Z}_Y, \QQ)\]
has rank one and $L$ is ample, we know that $i^*c_1(L)\in \rH^2(\mathsf{Z}_Y, \QQ)$ generates $\im(i^*)$. Therefore, if $$c_1(\mathsf{Z}_Y)=-2D|_{\mathsf{Z}_Y}-i^*c_1(L)\in \im(i^*)\subset \rH^2(\mathsf{Z}_Y, \QQ),$$ then $D|_{\mathsf{Z}_Y}\in \im(i^*)$ is either ample, zero, or anti-ample. As $\mathcal{O}_Y(D|_{Y})\cong \oh_Y(-1)$ by \cite[Proposition 4.5]{LLSvS17}, we know that $-D|_{\mathsf{Z}_Y}$ is ample. In particular, we obtain $$\rH^0(\mathsf{Z}_Y,\oh_{\mathsf{Z}_Y}(D|_{\mathsf{Z}_Y}))=\rH^4(\mathsf{Z}_Y,\omega_{\mathsf{Z}_Y}(-D|_{\mathsf{Z}_Y}))=0$$ by Kodaira vanishing theorem, which leads to a contradiction since $Y=D\cap \mathsf{Z}_Y\in |\oh_{\mathsf{Z}_Y}(D|_{\mathsf{Z}_Y})|$. Therefore, we conclude that $c_1(\mathsf{Z}_Y)\notin \im(i^*)$ and $\mathsf{Z}_Y\subset \mathsf{Z}_X$ is not atomic by Theorem \ref{atomic-criterion-thm}. This completes the proof of (1).

Now part (2) directly follows from Proposition \ref{prop-general-criterion} and Remark \ref{rmk-prop}, using the description of~$c_1(\mathsf{Z}_Y)$ above.
\end{proof}

Applying the argument in Theorem \ref{thm-bundle} to the atomic sheaf in Proposition \ref{prop-llsvs-not-atomic}, we have the following result.

\begin{corollary}\label{cor-llsvs-bundle}
Let $M$ be a hyper-K\"ahler manifold of $\mathrm{K3^{[4]}}$-type. Then there exists a stable, atomic, projectively hyperholomorphic twisted bundle on $M$ with a $10$-dimensional deformation space.
\end{corollary}

\begin{proof}
Since $i\colon \mathsf{Z}_Y\hookrightarrow \mathsf{Z}_X$ is a closed embedding that realizes $\mathsf{Z}_Y$ as the zero locus of a regular section of a rank $4$ bundle on $\mathsf{Z}_X$ as in the proof of Proposition \ref{prop-llsvs-not-atomic}, we have 
\[\Ext^1_{\mathsf{Z}_X}(i_*\cL, i_*\cL)\cong \rH^1(\mathsf{Z}_Y, \CC)\cong \CC^{10},\]
where $\cL$ is defined in Proposition \ref{prop-llsvs-not-atomic}. Alternatively, we can also use Lemma \ref{lem-immerse-ext1}, where the family $L_V\to V$ is constructed from the relative Bridgeland moduli space of the smooth universal hyperplane in $X$.

We know that the locus of $\mathsf{Z}_X$ form an open dense subset of the moduli space of projective hyper-K\"ahler manifold of degree $3$ and divisibility $2$ (cf.~\cite{LLSvS17}). Then by the Torelli Theorem \cite[Theorem 8.1]{Huy99}, we can take $X$ such that $\rH^{1,1}_{\ZZ}(\mathsf{Z}_X)=\langle h, f \rangle$, where $h$ is the natural polarization and $f$ induces a Lagrangian fibration $\mathsf{Z}_X\to \PP^4$ such that $q(h,f)=d$ for a suitable positive integer $d$. Moreover, we have $f^4.i_*[\mathsf{Z}_Y]\neq 0$ by $\ker(i^*)=h^{\perp}$. Hence, $\mathsf{Z}_Y$ is finite over $\PP^4$ as in Theorem \ref{thm-bundle}, since $\im(i^*)$ is spanned by an ample class. Now, the rest of the argument in the proof of Theorem \ref{thm-bundle} works without any change.
\end{proof}


\subsubsection{Fano variety of lines}
Let $X$ be a cubic fourfold and $Y\subset X$ be a general hyperplane section. Recall that the Fano variety of lines of $X$, denoted by $F(X)$,  is a hyper-K\"ahler fourfold of $\mathrm{K3^{[2]}}$-type by \cite{beauville:fano-variety-cubic-4fold}. The classical result of \cite{voisin1992stabilite} shows that the Fano surface of lines $F(Y)\subset F(X)$ is Lagrangian. Using the explicit calculation of the Chern classes of $F(Y)$, it is proved in \cite{markman:rank-1-obstruction} and \cite{beckmann:atomic-object} that $F(Y)$ is atomic in $F(X)$. 

Besides this construction, the authors of \cite{IM08cubic} consider another Lagrangian surface of  $F(X)$ arising from a cubic fivefold. Let $Z$ be a cubic fivefold, where $X$ is a hyperplane section of $Z$. We consider the Fano variety of planes of $Z$, denoted by $F_2(Z)$. When $X$ and $Z$ are general, $F_2(Z)$ is a smooth surface and there is a morphism $i\colon F_2(Z)\to F(X)$ such that $F_2(Z)$ is a connected immersed Lagrangian submanifold of $F(X)$ (cf.~\cite[Proposition 4]{IM08cubic}). Using a standard deformation argument of relative Hilbert schemes, one can show that $i^*\colon \rH^2(F(X), \QQ)\to \rH^2(F_2(Z), \QQ)$ is of rank one and $\ker(i^*)=h^{\perp}$, where $h$ is the natural Pl\"ucker polarization on $ F(X)$. However, we have the following result.

\begin{proposition}\label{prop-fanoplane-not-atomic}
Let $X$ be a general cubic fourfold and $Z\supset X$ be a general cubic fivefold. Then

\begin{enumerate}
    \item the immersed Lagrangian submanifold $i\colon F_2(Z)\to F(X)$ is not atomic, but 

    \item $i_*\cL\in \Coh(F(X))$ is a stable atomic sheaf for any line bundle $\cL$ on $F_2(Z)$ satisfying $c_1(\cL)=7(h_2-h_1)\in\rH^2(F_2(Z), \QQ)$, where $h_1$ is the pull-back of the Pl\"ucker class via the natural morphism $F_2(Z)\to \Gr(3, 7)$ and $h_2$ is the pull-back of the Pl\"ucker class $h$ on $F(X)$ via $i$.
\end{enumerate}
\end{proposition}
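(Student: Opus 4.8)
The plan is to establish both parts of Proposition~\ref{prop-fanoplane-not-atomic} using the criterion in Theorem~\ref{atomic-criterion-thm} and its refinement Proposition~\ref{prop-general-criterion}. The essential input is an explicit understanding of the first Chern class $c_1(F_2(Z))$ expressed in the pull-back classes $h_1$ and $h_2$, together with the already-noted fact that $i^*\colon \rH^2(F(X),\QQ)\to\rH^2(F_2(Z),\QQ)$ has rank one. For part~(1), by Theorem~\ref{atomic-criterion-thm} it suffices to show that $c_1(F_2(Z))\notin\im(i^*)$. Since $i^*$ has rank one, its image is spanned by $i^*$ of any ample class on $F(X)$, and the natural candidate is $h_2$, the pull-back of the Pl\"ucker polarization on $F(X)\subset\Gr(2,6)$. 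So the first step is to compute $\im(i^*)=\QQ\cdot h_2\subset\rH^2(F_2(Z),\QQ)$ and then compute $c_1(F_2(Z))=-K_{F_2(Z)}$ as a rational combination of $h_1$ and $h_2$, where $h_1$ is the Pl\"ucker class from $F_2(Z)\to\Gr(3,7)$. The main work is the adjunction/normal-bundle computation: $F_2(Z)$ sits inside $F(X)$ (or rather admits the immersion $i$), and one realizes $F_2(Z)$ as a degeneracy or zero locus inside the Grassmannian $\Gr(3,7)$ of planes, so that $c_1$ is obtained by the standard formula $c_1(\det\cU^\vee)$-type contributions minus the first Chern class of the relevant normal/obstruction bundle.

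Concretely, I would follow the approach of \cite{IM08cubic}: $F_2(Z)$ is the zero locus in $\Gr(3,V_6)$ (planes in $\PP^6=\PP(V_7)$ contained in $Z$) of a section of $\Sym^3\cU^\vee$, where $\cU$ is the tautological rank-three subbundle. By the adjunction formula, $$\omega_{F_2(Z)}\cong \big(\omega_{\Gr(3,7)}\otimes\det(\Sym^3\cU^\vee)\big)\big|_{F_2(Z)}.$$ Computing $c_1(\omega_{\Gr(3,7)})=-7\,c_1(\cU^\vee)$ and $c_1(\Sym^3\cU^\vee)=10\,c_1(\cU^\vee)$ (as $\Sym^3$ of a rank-three bundle has rank ten and first Chern class $\binom{3+2}{3}\cdot\tfrac{1}{3}$-weighted, which one checks gives the factor $10$), one finds $c_1(F_2(Z))$ proportional to $c_1(\cU^\vee)=h_1$ up to contributions coming from the difference between $h_1$ and $h_2$. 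The relation between $h_1$ and $h_2$ must be extracted from the geometry of the map $i\colon F_2(Z)\to F(X)$, which sends a plane $P\subset Z$ to the line $P\cap X$; this gives a precise linear relation, and the stated normalization $c_1(\cL)=7(h_2-h_1)$ in part~(2) strongly suggests that $$\tfrac{1}{2}c_1(F_2(Z))=-7(h_2-h_1)-i^*\lambda$$ for a suitable $\lambda\in\rH^{1,1}_\QQ(F(X))$ with $i^*\lambda$ a multiple of $h_2$. Verifying this precise identity is the computational heart of part~(1): once it holds, $c_1(F_2(Z))/2$ has a component $7(h_2-h_1)$ transverse to $\im(i^*)=\QQ h_2$ (since $h_1\notin\QQ h_2$, which itself needs checking, e.g.\ via intersection numbers on $F_2(Z)$), so $c_1(F_2(Z))\notin\im(i^*)$ and atomicity fails.

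For part~(2), the work is then immediate given the identity above: by Remark~\ref{rmk-prop}, writing $\tfrac{1}{2}c_1(F_2(Z))=-\gamma-i^*\lambda$ with $\gamma=7(h_2-h_1)$ wait---I would instead arrange the signs so that for a line bundle $\cL$ with $c_1(\cL)=\gamma$ one has $v(i_*\cL)\exp(\lambda)=i_*[F_2(Z)]$, and then Proposition~\ref{prop-general-criterion} yields that $i_*\cL$ is atomic. The stability of $i_*\cL$ follows because $i$ is finite onto its image and $\cL$ is a line bundle, so $i_*\cL$ is a rank-one torsion-free (indeed Cohen--Macaulay) sheaf on the integral Lagrangian $i(F_2(Z))$, hence stable with respect to any polarization, exactly as in the argument of Proposition~\ref{prop-llsvs-not-atomic}(2). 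The sign matching means ensuring $c_1(\cL)=7(h_2-h_1)$ is precisely the $\gamma$ appearing in the decomposition $\tfrac{1}{2}c_1(F_2(Z))=-\gamma-i^*\lambda$, which is why part~(2) specifies this exact class.

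The hard part will be the explicit Chern class computation establishing the relation $\tfrac{1}{2}c_1(F_2(Z))=-7(h_2-h_1)-i^*\lambda$ and, in particular, correctly identifying the linear relationship between $h_1$ (the Pl\"ucker class on $\Gr(3,7)$) and $h_2$ (the Pl\"ucker class pulled back from $F(X)\subset\Gr(2,6)$) under the incidence morphism $i$. This requires care because $i$ is only an immersion, not an embedding, so one must work with the pullback of cohomology classes rather than restriction of divisors, and one must verify that $h_1$ and $h_2$ are genuinely independent in $\rH^2(F_2(Z),\QQ)$ to conclude that the $h_1$-component obstructs membership in $\im(i^*)$. I expect the numerical coefficients ($7$ and $10$ from the Grassmannian and the symmetric cube) to be routine, but pinning down the $h_2$ contribution from the geometry of $P\mapsto P\cap X$---equivalently, computing $i^*h_2$ in terms of $h_1$ and the classes of the degeneracy loci where $P\cap X$ fails to be a line---will be the delicate point, and I would lean on the explicit description in \cite[Proposition 4]{IM08cubic} and \cite{voisin1992stabilite} to carry it out.
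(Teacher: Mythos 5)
Your overall strategy --- Theorem~\ref{atomic-criterion-thm} for part (1), Proposition~\ref{prop-general-criterion} with Remark~\ref{rmk-prop} for part (2), and finiteness of $i$ plus rank-one torsion-freeness for stability --- is exactly the paper's. The gap is in the central Chern class computation, which you set up differently from the paper and which, as proposed, does not prove the statement. Realizing $F_2(Z)$ as the zero locus of a regular section of $\Sym^3\cU^\vee$ on $\Gr(3,7)$ and applying adjunction gives $c_1(F_2(Z))=(7-10)h_1=-3h_1$ on the nose; there are no further ``contributions from the difference between $h_1$ and $h_2$'' in that computation. Two problems follow. First, this does not produce the class $7(h_2-h_1)$: writing $\tfrac12 c_1(F_2(Z))=-\gamma-i^*\lambda$ as in Remark~\ref{rmk-prop} with $c_1(F_2(Z))=-3h_1$ forces $\gamma=\tfrac32(h_1-h_2)$ modulo $\im(i^*)$, not $7(h_2-h_1)$. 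Second, and more seriously, part (1) then reduces entirely to showing $h_1\notin\QQ h_2=\im(i^*)$, which you defer to unspecified ``intersection numbers.'' That is the whole content of the statement, and it is precisely the step the paper's proof is organized around: the paper never works in $\Gr(3,7)$, but in the projective bundle $\PP=\PP_{F(X)}(\mathcal{W}/\cT')$ over $F(X)$, where $F_2(Z)$ is cut out by a regular section of the rank-six bundle $\cK=\ker(\Sym^3\cS^\vee\twoheadrightarrow\Sym^3\cT^\vee)$. There $c_1(F_2(Z))$ comes out as $14\,t^*\alpha+4\,t^*\beta$ with $t^*\alpha=h_1-h_2$ and $t^*\beta=h_2$ (whence the coefficient $7$), and the nonvanishing of $t^*\alpha$ is proved by pairing $\alpha$ with the fundamental class $63\alpha^4c_2(\cT^\vee)$ of $F_2(Z)$ in $\PP$, using the relation $\alpha^5=-\beta.\alpha^4$ and the ampleness of $\beta$. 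Nothing in your proposal plays the role of this intersection-theoretic step.

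A further warning: your adjunction and the paper's compute the same canonical bundle, so they must agree, and equating $-3h_1$ with $14(h_1-h_2)+4h_2$ forces $h_1\in\QQ h_2$, which would contradict part (1). Indeed, along your route one is naturally led to the bundle map $\cS/\cT\to (V_7/V_6)\otimes\oh_{F_2(Z)}$ induced by $W/(W\cap V_6)\hookrightarrow V_7/V_6$, which is fiberwise injective (for general $X$ no plane of $Z$ lies in $\PP(V_6)$) and hence an isomorphism of line bundles; this gives $\det\cS\cong\det\cT$, i.e.\ $h_1=h_2$, and then $c_1(F_2(Z))=-3h_2\in\im(i^*)$ --- the opposite of what part (1) asserts. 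So the Grassmannian adjunction cannot simply be grafted onto the stated conclusion. Before this approach can work you must reconcile it with the projective-bundle description (scrutinizing the scheme structure of the zero locus, the identification of $h_1$ with the restricted Pl\"ucker class, and the precise definition of the map $i$) and, above all, supply an actual proof that $h_1-h_2\ne 0$ in $\rH^2(F_2(Z),\QQ)$. As written, the proposal establishes neither part (1) nor the specific normalization $c_1(\cL)=7(h_2-h_1)$ in part (2).
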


\begin{proof}
First, we prove (1). Assume that $F_2(Z)$ is atomic. Then, by Theorem \ref{atomic-criterion-thm}, the restriction map $i^*\colon \rH^2(F_2(Z), \QQ)\to \rH^2(F(X), \QQ)$ is of rank one and $c_1(F_2(Z))\in \im(i^*)$. In particular, $c_1(F_2(Z))$ is proportional to $i^*\alpha$ for any $\alpha\in \rH^2(F(X), \QQ)$. 

We assume that $X=Z\cap \PP(V_6)\subset \PP(V_7)$. Let $\cT'$ be the pull-back of the rank two tautological subbundle on $\Gr(2,V_6)$ via the natural embedding $F(X)\hookrightarrow \Gr(2,V_6)$ and $\mathcal{W}:=V_6\otimes \oh_{F(X)}$. In \cite[Lemma 6]{IM08cubic}, it is shown that $i$ can be decomposed as
\[F_2(Z)\hookrightarrow \PP:=\PP_{F(X)}(\mathcal{W}/\cT')\to F(X),\]
where the first embedding, denoted by $t$, realizes $F_2(Z)$ as the zero locus of a regular section of the rank six bundle $$\mathcal{K}:=\ker(\Sym^3\cS^{\vee}\twoheadrightarrow \Sym^3\cT^{\vee}).$$ Here, $\cT$ is the pull-back of $\cT'$ to $\PP$, and $\mathcal{S}$ is the rank three bundle induced by the natural map $\PP\to \Gr(3, V_7)$ such that $\cS/\cT=\oh_{\PP}(-1)$. Therefore, we obtain $$c_1(F_2(Z))=t^*(c_1(\cK)+c_1(\PP)).$$
If we denote by $\alpha:=c_1(\oh_{\PP}(1))$ and $\beta:=c_1(\cT^{\vee})$, then we get $c_1(\cK)=10\alpha+3\beta$ and $c_1(\PP)=4\alpha+\beta$. Hence,  $c_1(F_2(Z))=14t^*\alpha+4t^*\beta$. However, as $h_2=t^*\beta$ is the pull-back of the class of the Pl\"ucker line bundle on $F(X)$, we have $0\neq t^*\beta\in \im(i^*)$ and $\im(i^*)$ is spanned by $t^*\beta$. Thus, as  $c_1(F_2(Z))\in \im(i^*)$, we have $t^*\alpha=0\in \rH^2(F_2(Z), \QQ)$. Since the fundamental class of $F_2(Z)$ in~$\PP$ is $63\alpha^4c_2(\cT^{\vee})$, we obtain $63\alpha^5c_2(\cT^{\vee})=\alpha.t_*[F_2(Z)]=t_*t^*\alpha=0.$ However, as computed in \cite[Lemma 6]{IM08cubic}, we have $\alpha^5=-\beta.\alpha^4$. This implies that $$0=\alpha.t_*[F_2(Z)]=-\beta.t_*[F_2(Z)],$$ which is a contradiction since $\beta$ is the class of an ample line bundle.

Next, we prove (2). Since $\cS/\cT=\oh_{\PP}(-1)$, we have $h_1-h_2=t^*\alpha$. Therefore, (2) follows from the description of $c_1(F_2(Z))$, Proposition \ref{prop-general-criterion}, and Remark \ref{rmk-prop}.

\end{proof}

Applying the argument in Theorem \ref{thm-bundle} to the atomic sheaf in Proposition \ref{prop-fanoplane-not-atomic}, we have the following result.

\begin{corollary}\label{cor-fano-bundle}
Let $M$ be a hyper-K\"ahler manifold of $\mathrm{K3^{[2]}}$-type. Then there exists a stable, atomic, projectively hyperholomorphic twisted bundle on $M$ with a $42$-dimensional deformation space.
\end{corollary}

\begin{proof}
Since $i\colon F_2(Z)\to F(X)$ is a closed embedding away from finitely many points and there exists a nice family of immersed Lagrangians of $F(X)$ as required in Lemma \ref{lem-immerse-ext1} (cf.~\cite{IM08cubic}), we have 
\[\Ext^1_{F(X)}(i_*\cL, i_*\cL)\cong \rH^1(F_2(Z), \CC)\cong \CC^{42}\]
by applying Lemma \ref{lem-immerse-ext1}, where $\cL$ is defined in Proposition \ref{prop-fanoplane-not-atomic}. 

We know that the locus of $F(X)$ form an open dense subset of the moduli space of projective hyper-K\"ahler manifold of degree $6$ and divisibility $2$ (cf.~\cite{beauville:fano-variety-cubic-4fold}). Then by the Torelli Theorem \cite[Theorem 8.1]{Huy99}, we can take $X$ such that $\rH^{1,1}_{\ZZ}(F(X))=\langle h, f \rangle$, where $h$ is the natural Pl\"ucker polarization and $f$ induces a Lagrangian fibration $F(X)\to \PP^2$ such that $q(h,f)=d$ for a suitable positive integer $d$. Moreover, we have $f^2.i_*[F_2(Z)]\neq 0$ by $\ker(i^*)=h^{\perp}$. Hence, $i(F_2(Z))$ is finite over $\PP^2$ as in Theorem \ref{thm-bundle}, since $\im(i^*)$ is spanned by an ample class. Now, the rest of the argument in the proof of Theorem \ref{thm-bundle} works without any change.
\end{proof}

\bibliography{atomic}

\bibliographystyle{alpha}

\end{document}